\newcommand{\dtt}{\widetilde{\delta}}
\newcommand{\der}{\operatorname{d}}
\newcommand{\A}{\mathcal{A}}
\newcommand{\B}{\mathcal{B}}
\newcommand{\PA}{\mathcal{P}}
\newcommand{\jj}{\mathcal{J}}
\newcommand{\ii}{\mathcal{I}}
\newcommand{\CC}{\operatorname{\A_{0}}}
\newcommand{\CCC}{\operatorname{\A_{0}(\C)}}
\newcommand{\cc}{\operatorname{C_c}}
\newcommand{\czero}{\operatorname{C_0}}
\newcommand{\ccu}{\operatorname{C_c}(U)}
\newcommand{\C}{\mathcal{C}}
\newcommand{\D}{\mathcal{D}}
\newcommand{\E}{\mathcal{E}}
\newcommand{\G}{\mathcal{G}}
\newcommand{\CZ}{\mathcal{C}^{(0)}}
\newcommand{\DZ}{\mathcal{D}^{(0)}}
\newcommand{\EZ}{\mathcal{E}^{(0)}}
\newcommand{\GZ}{\mathcal{G}^{(0)}}
\newcommand{\ts}{\Theta_s}
\newcommand{\echap}{\widehat{E}}
\newcommand{\achap}{\widehat{A~}}
\newcommand{\drt}{D_{\rho(t)}}
\newcommand{\jrt}{J_{\rho(t)}}
\newcommand{\dls}{D_{\lambda(s)}}
\newcommand{\drs}{D_{\rho(s)}}
\newcommand{\jls}{J_{\lambda(s)}}
\newcommand{\jrs}{J_{\rho(s)}}
\newcommand{\CU}{\mathcal{C}^{(1)}}
\newcommand{\DU}{\mathcal{D}^{(1)}}
\newcommand{\EU}{\mathcal{E}^{(1)}}
\newcommand{\CD}{\mathcal{C}^{(2)}}
\newcommand{\comp}{\mathbb{C}}
\newcommand{\nat}{\mathbb{N}}
\newcommand{\pint}[2]{\left\langle #1,#2\right\rangle}
\newcommand{\cont}{\subseteq}
\newcommand{\sm}{\setminus}
\newcommand{\mitt}{\mu_{t}}
\newcommand{\miss}{\mu_{s}}
\newcommand{\pti}{\widetilde{\pi}}
\newcommand{\bis}{\operatorname{Bis}}
\newcommand{\bisc}{\operatorname{\bis(\C)}}
\newcommand{\ran}{\operatorname{ran}}
\newcommand{\id}{\operatorname{id}}
\newcommand{\s}{\operatorname{\mathbf{d}}}
\newcommand{\supp}{\operatorname{supp}}
\newcommand{\rr}{\operatorname{\mathbf{r}}}
\newcommand{\un}{\operatorname{\mathbf{u}}}
\newcommand{\m}{\operatorname{\mathbf{m}}}
\newcommand{\sps}{\operatorname{span}}
\newcommand{\spf}{\overline{\operatorname{span}}}
\newcommand{\bare}{\varsigma_{e}} 
\newcommand{\Bare}{\widetilde{E}} 
\newcommand{\barn}{\varsigma_{e_{\alpha}}} 
\newcommand{\barf}{\varsigma_{f}}
\newcommand{\barls}{\varsigma_{\lambda(s)}}
\newcommand{\barlt}{\varsigma_{\lambda(t)}}
\newcommand{\barlst}{\varsigma_{\lambda(st)}}
\newtheorem*{rep@theorem}{\rep@title}
\newcommand{\newreptheorem}[2]{%
\newenvironment{rep#1}[1]{%
 \def\rep@title{#2 \ref{##1}}%
 \begin{rep@theorem}}%
 {\end{rep@theorem}}}
\theoremstyle{plain}
	\newtheorem{teo}{Theorem}[section]
	\newtheorem{lemma}[teo]{Lemma}
	\newtheorem{cor}[teo]{Corollary}
	\newtheorem{prop}[teo]{Proposition}
\theoremstyle{definition}
	\newtheorem{defi}[teo]{Definition}
	\newtheorem{rem}[teo]{Remark}
	\newtheorem{exe}[teo]{Example}
\newcommand{\cf}{\mbox{cf.}\xspace}				
\title{Étale categories, restriction semigroups,\\ and their operator algebras}
\date{\today}
\begin{document}

\author{Natã Machado\thanks{ 
         Universidade Federal de Santa Catarina,
		\href{mailto:n.machado@ufsc.br}{\textcolor{blue}{\nolinkurl{n.machado@ufsc.br}}}
    	} \footnote{Bolsista CAPES/BRASIL.} 
     \and Gilles G. de Castro \thanks{
	    Universidade Federal de Santa Catarina,
		\href{mailto:gilles.castro@ufsc.br}{\textcolor{blue}{\nolinkurl{gilles.castro@ufsc.br}}}
	} 
}
\sloppy
\maketitle

\begin{abstract}
\noindent
    We define the full and reduced non-self-adjoint operator algebras associated with étale categories and restriction semigroups, answering a question posed by Kudryavtseva and Lawson in \cite{lawson}. Moreover, we define the semicrossed product algebra of an étale action of a restriction semigroup on a $C^*$-algebra, which turns out to be the key point when connecting the operator algebra of a restriction semigroup with the operator algebra of its associated étale category. We also prove that in the particular cases of étale groupoids and inverse semigroups our operator algebras coincide with the $C^*$-algebras of the referred objects. 
	    
\vspace*{0.3cm}
\noindent
    Keywords: Étale categories. restriction semigroups. restriction semigroups actions. non-self-adjoint operator algebras. 

\vspace*{0.1cm}
\noindent
    MSC2020: 47L75, 54B30 20M20, 20M30 (primary); 46L05, 20M18 (secondary)
\end{abstract}

\tableofcontents

\section*{Introduction}\addcontentsline{toc}{section}{Introduction}

The study of non-self-adjoint operator algebras began with the pioneering paper of Kadison and Singer in 1960 \cite{kadinsonsinger}. Furthermore, Sarason's paper in 1966 \cite{sarason1966invariant} on unstarred algebras also made significant contributions to the emerging field. Additionally, the works of Arveson and Hamana played a fundamental role in shaping the subject, particularly with regard to the $C^*$-envelope program \cite{arveson1972subalgebras,arveson1998subalgebras,arveson1969subalgebras,hamana1985injective,hamana1979injective}.

In 1990, Blecher, Ruan and Sinclair characterized abstractly an operator algebra up to completely isometric isomorphisms, providing new insights and directions for the community \cite{blecherruansinclair}. Also, the connection between non-self-adjoint operator algebras and dynamical systems has its roots in Arveson's papers \cite{arveson1,arveson2}. Since then, numerous interesting examples and generalizations have emerged (refer to \cite{kakariadis,davidson2009nonself,kat,peters88} and the references therein).

The concept of restriction semigroups seems to have appeared for the first time in Schweizer and Sklar's series of papers about partial functions \cite{schweizer1960algebra,schweizer1961algebra,schweizer1965algebra,schweizer1967function}. Its motivation is traced back to the independent works of Wagner and Preston on inverse semigroups (see also \cite{hollings} and the references therein). Restriction semigroups have many presentations and emerge in several different contexts, for more on this subject see Gould's survey \cite{gouldsurvey} and also \cite{lawson} for generalizations. 

There is a well-known connection between étale groupoids, inverse semigroups and $C^*$-algebras. Indeed, since Renault's thesis \cite{renault} was published, in 1980, the interplay between these objects has become very fruitful and has been explored by many authors (see for instance \cite{ARMSTRONG2022127,exel2010reconstructing, steinberg2010groupoid} and the references therein). Paterson \cite{paterson} introduced the groupoid of germs of an inverse semigroup action, which is one of the keys when studying how the $C^*$-algebra of an inverse semigroup can be realized as an étale groupoid $C^*$-algebra. In fact, one starts with an inverse semigroup $S$, then considers the \textit{canonical} action $\theta$ of $S$ on its spectrum $\widehat{E(S)}$ and obtains the correspondent groupoid of germs $\G$. Using Sieben's $C^*$-crossed product algebra \cite{sieben1997c} of the action $\theta$ as an intermediary step, Paterson proved that $C^*(\G)$ and $C^*(S)$ are isomorphic. Exel \cite{exel} improved this work by removing hypotheses in the inverse semigroup. There, Exel also provides a careful study of non-Hausdorff étale groupoids, and also introduces the tight $C^*$-algebra of an inverse semigroup.

Recently, Kudryavtseva and Lawson used techniques of pointless topology to establish a duality between complete restriction monoids and étale topological categories (\cite[Theorem 7.22]{lawson}). This duality extends the one originally presented in \cite{lawson2013pseudogroups} between pseudogroups and étale groupoids. In their paper one can find the following comment: 

\begin{quote}
\textit{``...there is the important question of how our work fits into the theory of operator algebras; the role of inverse semigroups and étale groupoids is of course well established but it is natural to ask if a theory of combinatorial non-self-adjoint operator algebras associated to étale categories could be developed that extended the theory developed in \cite{exel}.''}   
\end{quote}

The main goal of this paper is to give a positive answer to the above question and extend the works of Exel \cite{exel} and Paterson \cite{paterson} to the context of restriction semigroups, étale topological categories and non-self-adjoint operator algebras.

Here is an outline of this paper. In Section \ref{sec:ecoa}, we establish definitions for the full and the reduced operator algebras of an étale category $\C$, namely $\A(\C)$ and $\A_r(\C)$. 

Moving on to Section \ref{sec:rsoa}, we once again define the full and reduced operator algebras of a restriction semigroup $S$. Additionally, we prove that if $S$ is an inverse semigroup regarded as a restriction semigroup then the operator algebras we have defined coincide with the full and the reduced $C^*$-algebras of $S$ (Subsection \ref{subsection:inversesemigroupcase}). Furthermore, we define actions of restriction semigroups on both topological spaces and $C^*$-algebras and construct the semicrossed product algebra associated with these actions. We conclude the section by demonstrating Theorem \ref{thm:A(S)semicrossed}, which shows that the operator algebra of a restriction semigroup has a semicrossed product structure. 

In Section \ref{sec:catgerms}, we introduce the category of germs of a restriction semigroup action on a topological space. Our main result, Theorem \ref{thm:A(C)semicrossed}, states that the operator algebra of a category of germs is isomorphic to a semicrossed product. In Subsection \ref{groupoidcase}, we analyze the case where $\G$ is an étale groupoid and we prove that, under suitable conditions, $\A(\G)$ and $\A_r(\G)$ are the very well-known full and reduced $C^*$-algebras of $\G$.

\section{Étale categories and their operator algebras}\label{sec:ecoa}

In this section, we present étale categories and define their non-self-adjoint operator algebras. The familiar reader will note the similarity between the construction of our algebra and the $C^*$-algebra of an étale groupoid. 

Let $\C=\big(\CZ,~\CU\big)$ be a small category, where $\CZ$ and $\CU$ represent the set of all \textit{objects} and \textit{morphisms} of $\C$, respectively. There are some special functions and sets to consider when dealing with a category. Designated as  \textit{source} $\s: \CU\rightarrow \CZ$ and \textit{range} $\rr: \CU\rightarrow \CZ$, these maps assign the domain and codomain to morphisms. The \textit{unit} map $\un: \CZ\rightarrow \CU$  assigns $\id_u$ to an object $u$. Additionally, the set $\CD=\{(x,y)\in \CU\times\CU: \s(x)=\rr(y) \}$ represents the collection of all \textit{composable morphisms}, and, finally, $\m: \CD\rightarrow \CU$ assigns the composition $xy$ to every pair of $(x,y)$ in $\CD$. The maps $\s$, $\rr$, $\un$ and $\m$ are usually called \textbf{structure maps} of $\C$.
\begin{defi}
For $u$ and $v$ in $\CZ$, we define $\C_u=\{x\in \CU \ \mid \s(x)=u \}$, and  $\C^u=\{x\in \CU \ \mid \rr(x)=u \}$. Moreover, we define $\C_u^{v}$ to denote the usual Hom$_\C(u,v)$ set, which is precisely $\{x\in \CU \ \mid \s(x)=u \text{ and } \rr(x)=v \}$. Finally,
 for all $z \in \CU$, we define $M_z$ to be the set $\{(x,y)\in \CD\ : xy=z\}$.
\end{defi}

\begin{defi}
Let $\C$ be a small category. We call $\C$ a \textbf{topological category} if $\CZ$ and $\CU$ are topological spaces and the structure maps are continuous.
\end{defi}

Before we define étale categories, recall that if $X$ and $Y$ are topological spaces and $f:X\rightarrow Y$ is a map then we say that $f$ is a topological embedding if the map $f:X \rightarrow f(X)$ is a homeomorphism, where $f(X)$ is equipped with the relative topology.

\begin{defi}
Let $\C$ be a topological category. We call $\C$ an \textbf{étale  category} if the following conditions hold:

\begin{enumerate}[{\normalfont \rmfamily  (i)}]
    \item $\CZ$ is a locally compact Hausdorff space. 
    
    \item The maps $\s$ and $\rr$ are local homeomorphisms.
    
    \item $\un$ is a topological embedding.
\end{enumerate}
\end{defi}
 
 \begin{rem}This definition of étale category is stronger than the one presented in \cite{lawson} since they do not require $\CZ$ to be a locally compact Hausdorff space. Here we impose this condition due $C^*$-algebraic purposes.\end{rem} 

\begin{defi}
	A category $\C$ is called left (resp. right) cancellative if $xy=xw$ (resp. $yx=wx$) implies $y=w$, for every triple of morphisms $x,y,$ and $w$ in $\C$. A category is called \textbf{cancellative} if it is both left and right cancellative.
\end{defi}

\begin{exe}[Transformation category]
Let $X$ be a compact Hausdorff topological space and $f:X\rightarrow X$ be a local homeomorphism. Define 
$\CZ=X$ and $\CU=\{(y,n,x):\ f^n(x)=y,\ n\in \nat \}$. Here, $\CZ$ is equipped with the topology of $X$ and $\CU$ is equipped with the relative topology of the product space $X\times \nat\times X$, where $\nat$ is viewed as a discrete space. Moreover, we define the source and range maps to be $\s(y,n,x)=x$ and $\rr(y,n,x)=y$. The identity of $x$ is set to be $\un(x)=(x,0,x)$, and finally the composition of two triples is given by $(z,m,y)(y,n,x)=(z,m+n,x).$ It is routine to verify that $\C=\big(\CZ,\ \CU\big)$ is a category with these structure maps. Furthermore, $\C$ is left (and right) cancellative because $$(z,m,y)(y,n,x)=(z,m,y)(y,k,t)\Rightarrow (z,m+n,x)=(z,m+k,t) \Rightarrow k=n \text{ and } t=x.$$

Let $\gamma=(y,n,x)\in \C$ and $A\cont X$ be an open set such that $x \in A$ and $f_{A}^n$ is a homeomorphism. The open set $U:=(f^n(A)\times \{n\}\times A)\ \bigcap \CU=\{(f^n(a),n,a):\ a \in a\}$ is an open neighborhood of $\gamma$ and we have that both $\s_{U}:U\rightarrow A$ and $\rr_{U}:U\rightarrow f^n(A)$ are homeomorphisms, whose inverses are $a\mapsto (f^n(a),n,a)$ and $a \mapsto (a,n,(f^n)^{-1}(a))$, respectively. Moreover, if $V$ is an open subset of $X$ then $\un(V)=\{(v,0,v):\ v \in V\}=(V\times \{0\}\times V)\ \bigcap \CU$ which is open. Finally, the continuity of $\un$ and $\m$ can be observed noticing that both are compositions of continuous functions (projections, inclusions, the sum in $\nat$).
\end{exe}

From now on, let $\C=\big(\CZ,~\CU\big)$ be an étale category. Let us begin by showing that the range of $\un$ is an open subset of $\CU$. To this end, we present the following lemma whose proof is adapted from \cite[Proposition 3.2]{exel}.

\begin{lemma}\label{lemma: topology}
Let $X$ be a topological space and $Y \cont X$ a topological subspace of $X$. Suppose there exists a local homeomorphism $f: X\rightarrow Y$ such that $f(y)=y$ for every $y \in Y$. Then, $Y$ is open in $X$ and the extended map $f: X \rightarrow X$ is a local homeomorphism.
\end{lemma}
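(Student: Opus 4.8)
The plan is to prove the two assertions in order: first that $Y$ is open in $X$, and then to deduce that the codomain-enlarged map $f\colon X\to X$ is a local homeomorphism essentially for free, since once $Y$ is open the subspace topology it carries from $X$ agrees with ``its own'' topology and every $Y$-open set is $X$-open.

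For the first (and main) part, I would fix $y\in Y$ and use that $f\colon X\to Y$ is a local homeomorphism to choose an open set $U\subseteq X$ with $y\in U$ such that $f|_U\colon U\to f(U)$ is a homeomorphism onto an open subset $f(U)$ of $Y$. Note $f(y)=y\in U$ as well. The neighborhood I would propose is
\[
W:=U\cap f^{-1}(U),
\]
which is open in $X$ (by continuity of $f$, using that $U\cap Y$ is open in $Y$ and $f^{-1}(U)=f^{-1}(U\cap Y)$) and contains $y$. The key claim is $W\subseteq Y$: for $w\in W$ we have $f(w)\in f(U)\subseteq Y$, hence $f(f(w))=f(w)$ by the fixed-point hypothesis; since both $w$ and $f(w)$ lie in $U$ and $f|_U$ is injective with $f|_U(w)=f(w)=f|_U(f(w))$, injectivity forces $w=f(w)\in Y$. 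Thus $W$ is an open neighborhood of $y$ contained in $Y$, and $Y$ is open in $X$.

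For the second part, with $Y$ open in $X$, any subset of $Y$ that is open in $Y$ is also open in $X$, and the subspace topology it inherits in either way coincides. Re-reading the local-homeomorphism data for $f\colon X\to Y$ — around each $x\in X$ an open $U$ with $f|_U\colon U\to f(U)$ a homeomorphism onto a $Y$-open set $f(U)$ — the sets $f(U)$ are now $X$-open and $f|_U$ is still a homeomorphism onto them; this is precisely the statement that $f\colon X\to X$ is a local homeomorphism.

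The one subtle point — the ``hard part'', such as it is — is the choice of $W$: one must intersect $U$ with $f^{-1}(U)$ rather than just take $U$ (or $U\cap Y$), so that the injectivity of $f|_U$ can be applied to the pair $w$ and $f(w)$ simultaneously; without that, there is no leverage to conclude $w\in Y$. Everything else is routine bookkeeping about subspace topologies and continuity.
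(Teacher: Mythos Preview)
Your proof is correct and follows essentially the same approach as the paper: both arguments fix $y\in Y$, pick an open $U\ni y$ on which $f$ is injective, and use the idempotence $f(f(w))=f(w)$ together with that injectivity to exhibit an open neighborhood of $y$ inside $Y$. The only cosmetic difference is that you describe the neighborhood as $U\cap f^{-1}(U)$ (immediately open, then shown to lie in $Y$), whereas the paper writes it as $V\cap f(V)$ (immediately in $Y$, then shown to be open via the local homeomorphism); these are in fact the same set.
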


\begin{proof}
Consider $y \in Y$, and let $V$ be an open set of $X$ such that $y\in V$ and $f_{V}:V \rightarrow f(V)$ is a homeomorphism. Define $B$ to be the subset $V\cap f(V)$. Note that $y \in B$ and $B$ is open in $f(V)$. Hence, the subset $f_{V}^{-1}(B)$ is open in $V$. Also, $B$ is equal to $f_V^{-1}(B)$ since $B\cont V$, $f(B)=B$ and $f_V$ is bijective. Therefore, $B$ is open in $V$ and, consequently, $B$ is open in $X$. In conclusion, we have $y \in B \cont Y.$ This gives that $Y$ is open in $X$.
\end{proof}

\begin{prop}\label{prop:coment}
If $\C$ is an étale category, then $\un\big(\CZ\big)=\{\id_u\ :\ u \in \CZ\}$ is an open subset of $\CU$, and hence $\un$ is an open map.
\end{prop}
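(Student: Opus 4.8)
The plan is to invoke Lemma~\ref{lemma: topology} directly, taking $X = \CU$, $Y = \un(\CZ)$, and $f$ the appropriate unit-section map built from the structure maps of $\C$. First I would check that $\un(\CZ)$ is a topological subspace of $\CU$ — it carries the relative topology by definition. Next I need a local homeomorphism $f \colon \CU \to \un(\CZ)$ that fixes $\un(\CZ)$ pointwise; the natural candidate is $f = \un \circ \s$. This map is continuous as a composition of the continuous structure maps $\s \colon \CU \to \CZ$ and $\un \colon \CZ \to \CU$. For $x \in \un(\CZ)$, say $x = \id_u$, we have $\s(x) = u$ and so $\un(\s(x)) = \un(u) = \id_u = x$, giving $f|_{\un(\CZ)} = \id$.

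The key remaining point is that $f = \un \circ \s \colon \CU \to \un(\CZ)$ is a local homeomorphism. Here I would use that $\s$ is a local homeomorphism by hypothesis (ii), and that $\un \colon \CZ \to \un(\CZ)$ is a homeomorphism onto its image by hypothesis (iii) (the topological-embedding condition). The composition of a local homeomorphism with a homeomorphism is again a local homeomorphism, so $f$ is a local homeomorphism onto $\un(\CZ)$. With all hypotheses of Lemma~\ref{lemma: topology} verified, the lemma yields that $\un(\CZ)$ is open in $\CU$ (and as a bonus that the extension $\un \circ \s \colon \CU \to \CU$ is a local homeomorphism, though we only need the openness of $\un(\CZ)$).

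Finally, to conclude that $\un$ itself is an open map, I would argue as follows: $\un \colon \CZ \to \CU$ factors as the homeomorphism $\un \colon \CZ \to \un(\CZ)$ followed by the inclusion $\un(\CZ) \hookrightarrow \CU$. Since $\un(\CZ)$ is open in $\CU$, this inclusion is an open map, and a homeomorphism is certainly an open map; hence their composition $\un \colon \CZ \to \CU$ is open. I do not expect any serious obstacle here — the entire argument is a matter of correctly identifying $f = \un \circ \s$ and citing Lemma~\ref{lemma: topology}; the only thing to be careful about is checking that $\s$ restricted appropriately still lands where needed and that the "composition of local homeo with homeo is local homeo" step is spelled out, which is routine.
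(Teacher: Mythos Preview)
Your proposal is correct and follows essentially the same approach as the paper: both apply Lemma~\ref{lemma: topology} to a map of the form $\un$ composed with a structure map, the only cosmetic difference being that the paper uses $\un\circ\rr$ whereas you use $\un\circ\s$. The argument for why $\un$ is open is likewise the same in substance.
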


\begin{proof}
 Note that $\un\circ \rr: \CU\rightarrow \{\id_u \mid u \in \CZ\}$ satisfies the hypothesis of Lemma \ref{lemma: topology}. Hence $\{\id_u \mid u \in \CZ\}$ is open in $\CU$. Moreover, if $U \cont \CZ$ is an open subset, then $\un(U)$ is an open subset of $\un(\CZ)$ which is, in turn, open in $\CU$. Then, $\un(U)$ is open in $\CU$.
\end{proof}

For simplicity of the presentation, we will avoid working with two distinct sets $\CZ$ and $\CU$. In what follows we will identify an object $u\in \CZ$ with its associated identity morphism $\id_u$, and hence we will see  $\CZ$ as a subset of $\CU$. The fact that $\un$ is an embedding ensures that there is no loss of generality in this identification since $\un\big(\CZ\big)$ is homeomorphic to $\CZ$. Furthermore, we will just write $\C$ instead $\CU$. 
%
%
%
\begin{defi}
An open subset $U\cont \C$ is called a \textbf{bisection} if the restrictions $\s_U$ and $\rr_{U}$ are injective. We will denote by $\bisc$ the set of all bisections of $\C$.
\end{defi}

\begin{rem}
	Since local homeomorphisms are open maps, $\bisc$ coincides with the family of open sets $V$ such that  $\s_V:V\rightarrow \s(V)$ and $\rr_{V}:V\rightarrow \rr(V)$ are homeomorphisms.
\end{rem}

In \cite{exel}, the author proves several topological facts about étale groupoids. It occurs that many of these facts are valid (with similar proofs) for the more general context of étale categories.

\begin{prop} Let $\C$ be an étale category. Then:
\begin{enumerate}[{\normalfont \rmfamily  (i)}]
    \item\label{pi1} $\bisc$ forms a basis for the topology of $\C$.

    \item\label{pi2} Every bisection is a locally compact Hausdorff subspace.

    \item\label{pi3} Every open subset of $\C$ is a locally compact subspace.

    \item\label{pi4} Every open Hausdorff subset of $\C$ is a locally compact Hausdorff subspace.

    \item\label{pi5} For all $u,v \in \CZ$, $\C_u$, $\C^v$ and $\C_u^v$ are closed subsets.

    \item\label{pi6} For all $u,v \in \CZ$, $\C_u$, $\C^v$ and $\C_u^v$ are discrete subspaces.

    \item\label{pi7} $\CD$ is a closed subset of $\C\times\C$. 


    \item\label{pi8} If $\C$ is Hausdorff then $\CZ$ is closed in $\C$.
\end{enumerate}
\end{prop}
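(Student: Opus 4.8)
The plan is to prove the eight items essentially by mimicking the étale-groupoid arguments in \cite{exel}, leaning on two inputs: that $\s$ and $\rr$ are local homeomorphisms, and that $\CZ$ is locally compact Hausdorff with $\un$ a topological embedding (so that, after the identification, $\CZ$ is an open locally compact Hausdorff subspace of $\C$ by Proposition \ref{prop:coment}). I would organize the items so that later ones reuse earlier ones.

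\emph{Items (i)--(iv).} For \eqref{pi1}, given $x\in\C$ and an open $W\ni x$, use that $\s$ is a local homeomorphism to find an open $V$ with $x\in V\subseteq W$ on which $\s_V$ is a homeomorphism onto its image; then shrink $V$ further, using that $\rr$ is a local homeomorphism, to an open $V'\ni x$ on which $\rr_{V'}$ is also injective — $V'$ is the desired bisection inside $W$. For \eqref{pi2}: if $U$ is a bisection then $\s_U\colon U\to\s(U)$ is a homeomorphism onto an open subset of $\CZ$; since $\CZ$ is locally compact Hausdorff and open subsets of locally compact Hausdorff spaces are locally compact Hausdorff, $U$ inherits both properties. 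Item \eqref{pi3} follows from \eqref{pi1}--\eqref{pi2}: a general open set is covered by bisections, and local compactness is a local property. Item \eqref{pi4} is \eqref{pi3} plus the observation that a Hausdorff space which is locally compact at each point is a locally compact Hausdorff space.

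\emph{Items (v)--(vi).} For \eqref{pi5}, write $\C_u=\s^{-1}(\{u\})$; since $\CZ$ is Hausdorff, $\{u\}$ is closed, and $\s$ is continuous, so $\C_u$ is closed; similarly $\C^v=\rr^{-1}(\{v\})$ and $\C_u^v=\C_u\cap\C^v$. For \eqref{pi6}: given $x\in\C_u$, pick a bisection $V\ni x$; then $\s_V$ is injective, so $V\cap\C_u=V\cap\s^{-1}(\{u\})$ is a single point, showing $\{x\}$ is open in the subspace $\C_u$; hence $\C_u$ (and likewise $\C^v$, $\C_u^v$) is discrete.

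\emph{Items (vii)--(viii).} For \eqref{pi8}: under the standing identification $\CZ=\un(\CZ)$, if $\C$ is Hausdorff then I would show $\CZ$ is closed by exhibiting, for each $x\notin\CZ$, a neighborhood missing $\CZ$ — the natural tool is that $\un\circ\s$ (or $\un\circ\rr$) is a retraction onto the open set $\CZ$, and in a Hausdorff space the set of fixed points of a continuous self-map is closed; applying this to the continuous map $\un\circ\s\colon\C\to\C$, whose fixed-point set is exactly $\CZ$, gives the claim. The main obstacle is \eqref{pi7}, since the obvious Hausdorff-diagonal argument for $\CD=\{(x,y):\s(x)=\rr(y)\}=(\s\times\rr)^{-1}(\Delta_{\CZ})$ needs $\Delta_{\CZ}$ closed in $\CZ\times\CZ$, which holds because $\CZ$ is Hausdorff; the subtlety is only that $\C$ itself need not be Hausdorff, but since $\s\times\rr\colon\C\times\C\to\CZ\times\CZ$ is continuous and the target is Hausdorff, $\CD$ is the preimage of a closed set and hence closed — so in fact \eqref{pi7} also goes through cleanly. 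I would therefore expect the write-up to be routine throughout, with the only care needed being to keep track of which statements require the Hausdorffness of $\CZ$ (all of (v)--(viii)) versus merely the étale structure (i)--(iv)), and to cite \cite[Proposition 3.2 ff.]{exel} for the parallel groupoid statements.
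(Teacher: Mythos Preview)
Your proposal is correct and follows essentially the same route as the paper: items (i)--(vii) match almost verbatim (bisections from intersecting $\s$- and $\rr$-injective neighborhoods, transport of local compactness via $\s_U$, preimages of points and of $\Delta_{\CZ}$), and your fixed-point argument for (viii) is just a rephrasing of the paper's $\CZ=f^{-1}(\Delta(\C))$ with $f(x)=(x,\rr(x))$. The only quibble is your closing remark that (i)--(iv) use ``merely the \'etale structure'' rather than Hausdorffness of $\CZ$ --- in fact (ii)--(iv) do rely on $\CZ$ being Hausdorff (to get local compactness of open subsets), though this is built into the definition anyway.
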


\begin{proof}
\begin{enumerate}[{\normalfont \rmfamily  (i)}]
    \item Fix $x \in \C$ and $W\cont \C$ an open subset such that $x\in W$. Let $U$ and $V$ be open subsets such that $x\in U\cap V$, $\s_U$ and $\rr_V$ are homeomorphisms. Then, $Z:= U\cap V\cap W$ is a bisection and $x\in Z\cont W$. 

    \item Recall that open subsets of locally compact Hausdorff spaces are also locally compact Hausdorff spaces with the subspace topology. If $U$ is a bisection, then it is homeomorphic to the open subset  $\s(U)$ of $\CZ$ and, consequently, it is a locally compact Hausdorff subspace. 

    \item Let $V$ be an open subset of $\C$ and $x \in V$. By \eqref{pi1} and \eqref{pi2}, there is a bisection $U$ and a compact $K$ such that $x\in\mathring{ K}\cont K\cont U \cont V$. Then $x$ has a compact neighborhood in $V$ and therefore $V$ is locally compact.

    \item Immediate.

    \item $\C_u=\s^{-1}(\{u\})$; $\C^v=\rr^{-1}(\{v\})$; $\C_u^v=\C_u\cap \C^v$.

    \item Fix $u,v\in \CZ$. Take $x \in \C_u$ and $U$ a bisection containing $x$. Then, $U\cap \C_u$ is equal to $\{x\}$ and it is open in $\C_u$. Similarly, $\C^v$ and $\C_u^v$ are discrete.

    \item  The  diagonal set $\Delta(\CZ)=\{(u,u)\mid u\in \CZ\}$ is closed because $\CZ$ is Hausdorff. Moreover, $f: \C\times\C\rightarrow \CZ\times\CZ$ given by $f(x,y)=(\s(x),\rr(y))$ is continuous. Thus, $\CD=f^{-1}\left(\Delta\left(\CZ\right)\right)$ is a closed subset of $\C\times\C$.

    \item Suppose $\C$ is Hausdorff and let $f: \C\rightarrow \C\times\C$ be the map given by $f(x)=(x,\rr(x))$. Note that $f$ is continuous and recall that the diagonal $\Delta(\C)$ is closed. Thus $\CZ=f^{-1}\left(\Delta(\C)\right)$ and therefore $\CZ$ is closed.


\end{enumerate}
\end{proof}

The first item above tells us that the family $\{(U\times V)\cap \CD\ :\ U,V \in \bisc \}$ forms a basis for the topology of $\CD$. Moreover, for $U, V \in \bisc$ note that $(U\times V)\cap \CD= (U_1\times V_1)\cap \CD$, where $U_1=\s_{U}^{-1}\left(\rr(V)\cap \s(U)\right)$ and $V_1=\rr_{V}^{-1}\left(\rr(V)\cap \s(U)\right)$. Hence, $\s(U_1)=\rr(V_1)=\rr(V)\cap \s(U)$ and, consequently, another basis for the topology of $\CD$ is the family $\{(U\times V)\cap \CD\ :\ U,V \in \bisc,\ \s(U)=\rr(V) \}.$ Furthermore, for any pair $U,V$ of subsets of $\C$, we define $UV$ to be the set $\{xy\ : (x,y) \in (U\times V) \cap \CD \}$. Hence, because $\m$ is associative, it becomes clear that the product of subsets is also associative.

\begin{prop}\label{prop: misopen}
The \textbf{composition} function $\m: \CD\rightarrow \C$ is open.
\end{prop}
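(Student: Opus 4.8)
The plan is to show that $\m$ maps basic open sets of $\CD$ to open sets of $\C$, which suffices since the basic open sets generate the topology. Using the basis for $\CD$ established just before the proposition, I would take a basic open set of the form $(U\times V)\cap\CD$ with $U,V\in\bisc$ and $\s(U)=\rr(V)$, and show that its image under $\m$, namely the product set $UV=\{xy : (x,y)\in(U\times V)\cap\CD\}$, is open in $\C$.

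First I would observe that, because $U$ and $V$ are bisections with $\s(U)=\rr(V)$, each element of $\CZ$ in this common set determines a unique composable pair: for $w\in\s(U)=\rr(V)$, there is exactly one $x\in U$ with $\s(x)=w$ and exactly one $y\in V$ with $\rr(y)=w$, giving a single product $xy$. Thus the map $w\mapsto xy$ is a well-defined bijection from $\rr(V)\cap\s(U)$ onto $UV$; concretely it is $\m\circ(\s_U^{-1}\times\rr_V^{-1})\circ\Delta$ restricted appropriately, i.e. $w\mapsto \s_U^{-1}(w)\cdot\rr_V^{-1}(w)$. The strategy is then to prove that $UV$ itself is a bisection, in particular open. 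To see $UV$ is open, I would argue that $\s$ restricted to $UV$ and $\rr$ restricted to $UV$ are injective (hence $UV$ is a bisection once we know it is open), but more directly: I would fix $z=xy\in UV$ with $x\in U$, $y\in V$, pick a bisection $W\ni z$, and use that $\m$ is continuous to pull back $W$; combined with the local-homeomorphism property of $\s$ and $\rr$ one produces an open neighborhood of $z$ contained in $UV$.

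More carefully, the cleanest route is: since $\s$ is a local homeomorphism and $\s_U$ is a homeomorphism onto the open set $\s(U)$, and similarly for $\rr_V$, I can consider the continuous map $\varphi\colon \rr(V)\cap\s(U)\to\C$ defined by $\varphi(w)=\s_U^{-1}(w)\,\rr_V^{-1}(w)$ (well-defined since $(\s_U^{-1}(w),\rr_V^{-1}(w))\in\CD$). Its image is exactly $UV$. Now $\s\circ\varphi=\rr\circ\varphi^{?}$— actually $\s(\varphi(w))=\s(\rr_V^{-1}(w))$ and $\rr(\varphi(w))=\rr(\s_U^{-1}(w))$; in any case $\varphi$ composed with $\s$ equals $\s\circ\rr_V^{-1}$, which is a continuous injection, and since $\s$ is a local homeomorphism and $\rr_V^{-1}$ is a homeomorphism onto an open set, one deduces $\varphi$ is a topological embedding onto an open subset of $\C$: indeed $\varphi$ is injective (distinct $w$ give products with distinct sources, as $\s(xy)=\s(y)$ and $y$ ranges injectively), and locally $\varphi$ looks like the inverse of a local homeomorphism post-composed with homeomorphisms. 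Hence $UV=\varphi(\rr(V)\cap\s(U))$ is open, establishing that $\m$ of a basic open set is open, and therefore $\m$ is an open map.

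The main obstacle I anticipate is the bookkeeping in verifying that $\varphi$ is an open map (equivalently, that $UV$ is open), since $\m$ being merely continuous does not immediately give openness of its restriction. The trick is to exploit that along $U$ and $V$ the maps $\s$ and $\rr$ are homeomorphisms, so composition restricted to $(U\times V)\cap\CD$ is "as nice as" a local homeomorphism: one should show $\m_{(U\times V)\cap\CD}$ is itself a homeomorphism onto its image by writing down its inverse explicitly, namely $z=xy\mapsto(x,y)$ recovered via $x=\s_U^{-1}(\rr(y))$... one needs a local homeomorphism section to recover $y$ from $z$, which exists because near $z$ one can invert $\rr$ on a bisection and then land in $V$. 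Carefully chaining these local inverses — and checking that the resulting neighborhood of $z$ indeed lies in $UV$ — is the technical heart; everything else (reduction to basic open sets, well-definedness of $\varphi$, the identity $\m(\CD\cap(U\times V))=UV$) is routine.
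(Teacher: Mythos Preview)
Your reduction to basic open sets $(U\times V)\cap\CD$ with $U,V\in\bisc$ and $\s(U)=\rr(V)$ is correct and matches the paper. However, the core step --- showing $UV$ is open --- is not completed in your proposal, and your ``cleanest route'' via the map $\varphi$ runs into a circularity: establishing that $\varphi$ is an \emph{open} embedding (not just a continuous injection) is precisely the content of the proposition, and your justification (``locally $\varphi$ looks like the inverse of a local homeomorphism'') presupposes that $\m$ is locally a homeomorphism, which is what you are trying to prove.

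The idea you mention in passing --- pick a bisection $W\ni z=xy$ and use continuity of $\m$ --- is the right one, and it is exactly what the paper does. What you are missing is the short argument that finishes it. By continuity of $\m$, shrink $U,V$ to bisections $U',V'$ with $\s(U')=\rr(V')$, $(x,y)\in (U'\times V')\cap\CD$, and $U'V'\subset W$. The crucial observation (which you do not make) is that because $\s(U')=\rr(V')$, every element of $U'$ actually participates in a product: hence $\rr(U'V')=\rr(U')$, which is open in $\CZ$. Since $\rr_W:W\to\rr(W)$ is a homeomorphism and $U'V'\subset W$, the set $U'V'=\rr_W^{-1}(\rr(U'))$ is open in $W$, hence open in $\C$. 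This gives an open neighborhood of $z$ inside $\m(O)$ and completes the proof. Commit to this line and drop the $\varphi$-embedding detour; the latter can be made to work but only after you already know the result.
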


\begin{proof}
    We start by showing that if $U,V$ is a pair of bisections such that $\s(U)=\rr(V)$ and $UV\cont W$, for another bisection $W$, then $UV$ is open. For $x\in U$, there exists $y \in V$ such that $\s(x)=\rr(y)$. Therefore $\rr(x)=\rr(xy)\in \rr(UV)$ and consequently $\rr(U)=\rr(UV)$. Then, $\rr_W(UV)$ is equal to $\rr(U)$, which is open. Hence, $UV$ is open as $\rr_W$ is a homeomorphism.
 
    Next, we show that the above fact implies $\m$ is open. Let $O$ be an open subset of $\CD$,  $(x,y)$ be in $O$, $W$ be a bisection such that $xy \in W$. Since  $\m$ is continuous, there exists a pair $U,V$ of bisections such that $\s(U)=\rr(V)$, $(x,y)\in U\times V\cap \CD \cont O$ and $UV\cont W$. By the previous case, $UV$ is open and $xy \in UV\cont \m(O)$. Hence, $\m(O)$ is open.
 \end{proof}

\begin{rem}\label{r1}
For bisections $U$ and $V$, Proposition \ref{prop: misopen} says $UV$ is open. Moreover, it is easy to verify that $\s_{UV}$ and $\rr_{UV}$ are injective maps. Therefore, $\bisc$ forms a semigroup or, more precisely, a monoid with identity $\CZ$. 
\end{rem}

\begin{lemma}\label{l1}
Let $\{U_1,\dots,U_n\}$ be a finite family of bisections and $z \in U_1\cdots U_n$. Then, there exists a unique $n$-tuple $(x_1,x_2,\dots,x_n)\in U_1\times ...\times U_n$ such that $x_1\dots x_n=z$. \end{lemma}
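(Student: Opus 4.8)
The plan is to induct on $n$. For $n=1$ there is nothing to prove: $z \in U_1$ already exhibits the unique $1$-tuple, since $U_1$ is a set. For the inductive step, suppose the statement holds for families of size $n-1$, and let $z \in U_1 \cdots U_n = (U_1 \cdots U_{n-1}) U_n$. By the definition of the product of subsets, there is a composable pair $(w, x_n) \in (U_1\cdots U_{n-1}) \times U_n$ with $w x_n = z$; applying the inductive hypothesis to $w \in U_1 \cdots U_{n-1}$ gives a tuple $(x_1, \dots, x_{n-1}) \in U_1 \times \cdots \times U_{n-1}$ with $x_1 \cdots x_{n-1} = w$, and hence $x_1 \cdots x_n = z$. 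This settles existence.

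For uniqueness, suppose $(x_1, \dots, x_n)$ and $(x_1', \dots, x_n')$ are two tuples in $U_1 \times \cdots \times U_n$ with $x_1 \cdots x_n = x_1' \cdots x_n' = z$. First I would pin down the last coordinate: applying $\s$ to both products and using that the source of a composite equals the source of its last factor gives $\s(x_n) = \s(z) = \s(x_n')$, and since $x_n, x_n' \in U_n$ with $\s_{U_n}$ injective, we get $x_n = x_n'$. Now set $w = x_1 \cdots x_{n-1}$ and $w' = x_1' \cdots x_{n-1}'$, both elements of $U_1 \cdots U_{n-1}$ by construction; they satisfy $w x_n = w' x_n$. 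The obstacle here is that a general category need not be right cancellative, so we cannot immediately cancel $x_n$. Instead I would use the bisection structure: $\rr(w) = \rr(w x_n) = \rr(z) = \rr(w' x_n) = \rr(w')$, and since $w, w'$ both lie in the bisection $U_1 \cdots U_{n-1}$ (which is a bisection by Remark \ref{r1}) on which $\rr$ is injective, it follows that $w = w'$. Applying the inductive hypothesis to the tuple-uniqueness for $w = w' \in U_1 \cdots U_{n-1}$ yields $x_i = x_i'$ for all $i \le n-1$, completing the induction.

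I expect the main subtlety to be exactly the point flagged above: since $\C$ is only assumed to be an étale category — not a groupoid, and not necessarily cancellative — one must avoid any cancellation argument and instead exploit injectivity of $\s$ and $\rr$ on bisections, together with the fact (Remark \ref{r1}) that a finite product of bisections is again a bisection. Once that is in hand the argument is a routine induction; no topology beyond the basic bisection properties already established is needed.
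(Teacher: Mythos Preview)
Your proof is correct and follows essentially the same approach as the paper's: induction on $n$, using injectivity of the source/range maps on bisections together with the fact (Remark \ref{r1}) that a product of bisections is a bisection. The only cosmetic difference is that the paper peels off the \emph{first} factor using $\rr$ (obtaining $x_1=y_1$ from $\rr(x_1)=\rr(z)=\rr(y_1)$, then $\rr(x_2\cdots x_n)=\s(x_1)=\s(y_1)=\rr(y_2\cdots y_n)$ and injectivity of $\rr$ on the bisection $U_2\cdots U_n$), whereas you peel off the \emph{last} factor using $\s$; the two arguments are mirror images of one another.
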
 

\begin{proof}
 The proof is by induction on $n$. The case $n=1$ is trivial. Suppose the statement holds for $n=k-1$, and let $U_1,U_2,...,U_{k}$ be a family of bisections. For $k$-tuples $(x_1,...,x_k), (y_1,...,y_k)\in U_1\times ...\times U_k$ such that $x_1...x_k=y_1...y_k$, applying $\rr$ to both sides we get $x_1=y_1$ since $U_1$ is a bisection. Therefore, $\rr(x_2...x_{k})=\s(x_1)=\s(y_1)=\rr(y_2...y_{k})$ which implies that $x_2...x_{k}= y_2...y_{k}$. Hence by the induction hypothesis $x_i=y_i$, for all $1\leq i\leq k$.
\end{proof}

\subsection{Operator algebras associated with étale categories}

Given an open subset $V \cont \C$ and $f:\C\rightarrow \comp$, we will write $f \in \cc(V)$ if $f_{V} \in \cc(V)$ and $f_{\C\sm V}$ is identically zero.  As in \cite{exel}, let $\mathcal{U}$ to be the family of open Hausdorff subsets of $\C$, and define the vector space
    $$\CCC=\sps\{f:\ f \in \cc(U),\ U\in \mathcal{U} \}.$$
    
 We write $\CCC$ instead of $\cc(\C)$ because the elements of the former are not necessarily continuous. However, if $\C$ is Hausdorff then they coincide. 
  
 \begin{prop}\label{prop: sumofbisections}
    If $\mathcal{F}\cont \bisc$ covers $\C$, then $\CCC=\sps\{f:\ f \in \cc(U),\ U\in \mathcal{F} \}.$ In particular, one has                $$\CCC=\sps\{f:\ f \in \cc(U),\ U\in \bisc\}.$$ \end{prop}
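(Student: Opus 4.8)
The plan is to show both inclusions, the nontrivial one being $\CCC \subseteq \sps\{f : f \in \cc(U),\ U \in \mathcal{F}\}$; the reverse inclusion is immediate since every bisection in $\mathcal{F}$ is an open Hausdorff subset of $\C$ (being homeomorphic, via $\s$, to an open subset of the Hausdorff space $\CZ$, cf. item (ii) of the Proposition on topological facts), so $\cc(U) \subseteq \CCC$ for $U \in \mathcal{F}$. For the forward inclusion it suffices, by linearity, to take a single $g \in \cc(U)$ with $U \in \mathcal{U}$ an open Hausdorff set, and write it as a finite sum of functions each supported on a member of $\mathcal{F}$. First I would set $K = \supp(g) \cap U$, a compact subset of the Hausdorff space $U$. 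Since $\{(W \cap U) : W \in \mathcal{F}\}$ is an open cover of $U$ (here I use that $\mathcal{F}$ covers $\C$) and each $W \cap U$ is open, compactness of $K$ yields finitely many $W_1, \dots, W_n \in \mathcal{F}$ with $K \subseteq (W_1 \cap U) \cup \cdots \cup (W_n \cap U)$.

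Next I would invoke a partition of unity on the locally compact Hausdorff space $U$ subordinate to this finite open cover of the compact set $K$: there exist $\varphi_1, \dots, \varphi_n \in \cc(U)$ with $\varphi_i \geq 0$, $\supp(\varphi_i) \subseteq W_i \cap U$, and $\sum_{i=1}^n \varphi_i \equiv 1$ on a neighborhood of $K$ (and $\sum_i \varphi_i \leq 1$ everywhere). Then on $U$ we have $g = \sum_{i=1}^n \varphi_i g$, since $g$ vanishes off $K$ and the $\varphi_i$ sum to $1$ near $K$. Setting $g_i := \varphi_i g$, extended by zero off $U$, each $g_i$ is continuous on $U$ with compact support contained in $W_i \cap U \subseteq W_i$, hence $g_i \in \cc(W_i)$ with $W_i \in \mathcal{F}$, and $g = \sum_{i=1}^n g_i$ as functions on all of $\C$ (both sides vanish outside $U$). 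This gives $g \in \sps\{f : f \in \cc(W),\ W \in \mathcal{F}\}$, proving the inclusion.

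The main subtlety I anticipate is purely bookkeeping about the non-Hausdorff ambient space: the functions in $\CCC$ need not be globally continuous, so one must be careful that "extension by zero" of $g_i$ from $W_i \cap U$ to $\C$ does land in $\cc(W_i)$ in the sense defined just before the proposition — namely $(g_i)_{W_i} \in \cc(W_i)$ and $(g_i)_{\C \setminus W_i} \equiv 0$. This works because $\supp((g_i)_{U})$ is compact inside $W_i \cap U$, which is open in $W_i$, so $(g_i)_{W_i}$ is continuous on $W_i$ with compact support, and it genuinely vanishes on $W_i \setminus U$; there is no clash at "boundary" points because there are none to worry about — the support is a compact subset of the open set $W_i \cap U$. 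The "In particular" clause is then the special case $\mathcal{F} = \bisc$, which covers $\C$ by item (i) of the Proposition on topological facts (bisections form a basis). One could alternatively prove the "in particular" statement first, directly from the definition of $\CCC$ via the same partition-of-unity argument applied to an arbitrary $U \in \mathcal{U}$, and then deduce the general $\mathcal{F}$ version by refining; but running the argument once for general covering $\mathcal{F} \subseteq \bisc$ is cleanest, and I would only additionally remark that any $\mathcal{F} \subseteq \bisc$ covering $\C$ also has the property that each of its members is Hausdorff, which is what lets us place ourselves inside a single Hausdorff patch at the start.
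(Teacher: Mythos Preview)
Your proposal is correct and follows essentially the same approach as the paper: cover the compact support of $g$ inside the Hausdorff open set $U$ by finitely many $W_i\cap U$ with $W_i\in\mathcal{F}$, choose a subordinate partition of unity $\varphi_i\in\cc(W_i\cap U)$, and write $g=\sum_i\varphi_i g$ with $\varphi_i g\in\cc(W_i)$. Your extra discussion of the non-Hausdorff bookkeeping (that the extension by zero really lands in $\cc(W_i)$) is a welcome clarification of a step the paper leaves implicit.
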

 
\begin{proof}
Let $V$ be an open Hausdorff subset and $f\in \cc(V)$. There is a finite family $\{U_i\}_{i=1}^{n}$ of bisections in $\mathcal{F}$ such that $\supp(f)\cont \cup_{i=1}^{n} U_i$. Using partitions of unity (see \cite[Theorem 2.13]{rudin}), there are functions $\eta_i \in \cc(U_i\cap V)$ such that $\sum_{i=1}^n\eta_i(x)=1,$ for every $x\in \supp(f)$. Then, we have $f=\sum_{i=1}^nf\eta_i$ and $f\eta_i\in \cc(U_i\cap V)\cont \cc(U_i)$, for all $i \in \{1,...,n\}$.
\end{proof}
 For $f,g \in \CCC$, define the convolution product $f \ast g$ to be the function
    \begin{equation}\label{eq: convolutionproduct}
    f\ast g (z) = \sum_{(x,y)\in M_z}f(x)g(y).
    \end{equation}
We claim that $\CCC$ is an associative algebra endowed with this product. We will make use of the following lemma to show that.

\begin{lemma}\label{lemma: productwelldefined}
Suppose $U,V$ and $W$ are bisections and $f\in \cc(U)$, $g \in \cc(V)$ and $h \in \cc(W)$ then:
 \begin{enumerate}[{\normalfont \rmfamily  (i)}]
    \item\label{li1}  $|\supp(f)\cap \C_u|\leq 1$ and $|\supp(f)\cap \C^u|\leq 1 $ for every $u \in \CZ$,

    \item\label{li2} $f\ast g\in \cc(UV)$.

    \item\label{li3} If $U=\CZ$, then $f\ast g \in \cc(V)$ and  $(f\ast g)(z)=f(\rr(z))g(z)$, for every $z\in V$.

    \item\label{li4} If $V=\CZ$, then $f\ast g \in \cc(U)$ and $(f\ast g)(z)=f(z)g(\s(z))$, for every $z\in U$.

    \item\label{li5} If $U=V=\CZ$, then $f\ast g \in \cc(\CZ)$ and $(f\ast g)(z)=f(z)g(z)$, for every $z\in \CZ$.

    \item\label{li6} $(f\ast g)\ast h=f\ast (g\ast h)$.
 \end{enumerate}
\end{lemma}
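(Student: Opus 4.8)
The plan is to treat the six items in the order listed, checking well-definedness of the convolution along the way, and keeping \eqref{li6} for last since it carries the most bookkeeping. For \eqref{li1}: since $f\in\cc(U)$ forces $\supp(f)\subseteq U$, and $\C_u=\s^{-1}(\{u\})$ while $\s_U$ is injective by the definition of a bisection, the set $\supp(f)\cap\C_u$ is contained in $U\cap\s^{-1}(\{u\})$, which has at most one point; the argument for $\C^u$ is identical with $\rr_U$ in place of $\s_U$. Before \eqref{li2} I would record that the sum \eqref{eq: convolutionproduct} is finite: if $f(x)g(y)\neq 0$ for some $(x,y)\in M_z$, then $(x,y)\in(U\times V)\cap\CD$ with $xy=z$, so $z\in UV$ and, by Lemma \ref{l1} applied to the family $\{U,V\}$, the pair $(x,y)$ is unique; hence $(f\ast g)(z)$ has at most one nonzero summand, and in particular $(f\ast g)(z)=0$ whenever $z\notin UV$.

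For the remaining content of \eqref{li2} I would use that $UV$ is a Hausdorff bisection (Remark \ref{r1}): for $z\in UV$ let $(x_0,y_0)$ be the unique pair in $U\times V$ with $x_0y_0=z$; from $\rr(x_0)=\rr(z)\in\rr(U)$ and $\s(y_0)=\s(z)\in\s(V)$ we obtain $x_0=\rr_U^{-1}(\rr(z))$ and $y_0=\s_V^{-1}(\s(z))$, so on $UV$
\[
(f\ast g)(z)=f\!\left(\rr_U^{-1}(\rr(z))\right)g\!\left(\s_V^{-1}(\s(z))\right),
\]
which is continuous, being a product of composites of continuous maps. Finally $\{z:(f\ast g)(z)\neq 0\}\subseteq\m\big((\supp f\times\supp g)\cap\CD\big)$, and the latter set is compact (a continuous image of a closed subset of the compact set $\supp f\times\supp g$, recalling that $\CD$ is closed in $\C\times\C$) and contained in $UV$; since $UV$ is Hausdorff this compact set is closed in $UV$, so $\supp(f\ast g)$ is a closed subset of it and therefore compact. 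Hence $f\ast g\in\cc(UV)$.

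Items \eqref{li3}--\eqref{li5} I would read directly off \eqref{eq: convolutionproduct}, using the identification of $\CZ$ with the identity morphisms. If $U=\CZ$, a nonzero summand $f(x)g(y)$ with $xy=z$ and $x\in\supp f\subseteq\CZ$ forces $x=\id_{\rr(x)}$ and $\rr(y)=\s(x)=\rr(x)$, so $z=xy=y$ and $x=\id_{\rr(z)}$; thus $(f\ast g)(z)=f(\rr(z))g(z)$, and $\CZ V=V$ because $\CZ$ is the unit of $\bisc$ (Remark \ref{r1}), which is \eqref{li3}. The case $V=\CZ$ is symmetric (now $z=x$, $y=\id_{\s(z)}$, and $U\CZ=U$), giving \eqref{li4}, and \eqref{li5} follows from either one by specializing and noting that $\rr(z)=z$ for $z\in\CZ$.

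For \eqref{li6}, \eqref{li2} already yields $f\ast g\in\cc(UV)$ and $g\ast h\in\cc(VW)$, so both triple products are defined and lie in $\cc(UVW)$ (the product of subsets of $\C$ being associative); moreover all convolution sums that occur are finite, each with at most one nonzero term by Lemma \ref{l1}. Expanding,
\[
\big((f\ast g)\ast h\big)(z)=\sum_{(a,c)\in M_z}\ \sum_{(x,y)\in M_a}f(x)g(y)h(c),
\]
and a nonzero summand is precisely a triple $(x,y,c)$ with $\s(x)=\rr(y)$ and $\s(y)=\rr(c)$ (the composability conditions for $(x,y)\in\CD$ and $(xy,c)\in\CD$, since $\s(xy)=\s(y)$) and $(xy)c=z$. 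The analogous expansion of $f\ast(g\ast h)$ gives a sum over triples with $\s(y)=\rr(c)$ and $\s(x)=\rr(y)$ (now from $(y,c)\in\CD$ and $(x,yc)\in\CD$, since $\rr(yc)=\rr(y)$) and $x(yc)=z$; associativity of $\m$ identifies these two index sets, and hence the two sums. I expect \eqref{li6} to be the main obstacle, albeit a routine one: the real work is bookkeeping — tracking which composability relations are imposed on each side and confirming they coincide after reassociating — while everything else reduces quickly to injectivity of $\s_U$ and $\rr_U$, Lemma \ref{l1}, and the monoid structure on $\bisc$.
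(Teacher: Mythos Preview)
Your proposal is correct and follows essentially the same route as the paper: the same explicit formula on $UV$ via $\rr_U^{-1}$ and $\s_V^{-1}$ for \eqref{li2}, the same compact-support argument via $\supp(f)\supp(g)$ inside the Hausdorff bisection $UV$, and the same direct readings for \eqref{li3}--\eqref{li5}. The only cosmetic difference is in \eqref{li6}: the paper argues by picking out the unique triple $(x_1,x_2,x_3)\in U\times V\times W$ with product $z$ on each side and invoking Lemma~\ref{l1} for $\{U,V,W\}$, whereas you set up a bijection of the full index sets of the iterated sums via associativity of $\m$; both arguments are standard and equivalent here.
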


\begin{proof}
\begin{enumerate}[{\normalfont \rmfamily  (i)}]
    \item 
    Immediate.
            
    \item 
    By Lemma \ref{l1}, given $z\in UV$, there is a unique pair $(x,y)$ belonging to $M_z\cap (U\times V)$. Consequently, $f\ast g(z)$ is equal to $f(x)g(y)$. Now, writing $x$ and $y$ as functions of $z$, we obtain 
        $$f\ast g(z)= f\circ\rr_U^{-1}\circ \rr(z). g\circ \s_V^{-1}\circ\s(z).$$ 
    
    Thus, the restriction of $f\ast g$ to $UV$ is continuous.
    If $f\ast g(z) \neq 0$, there must be a pair $(x,y)\in M_z$ such that $f(x)g(y)\neq 0$. Then, $x \in U$, $y\in V$ and, consequently, $z \in UV$. Moreover, $\{w \in \C\ |\ f\ast g(w)\neq 0\} \cont \supp(f)\supp(g)\cont UV$. Since UV is Hausdorff, $\supp(f)\supp(g)$ is compact and closed. Hence, $\supp(f\ast g)$ is compact.
  
    \item 
    Given $z\in V$, $(\rr(z),z)$ is the unique element in $(U\times V)\cap M_z$, then $f\ast g(z)=f(\rr(z))g(z)$.
    
    \item 
    Similar to the proof of \eqref{li3}. 
    
    \item 
    Immediate from \eqref{li3} and \eqref{li4}.
    
    \item 
    Note that $(f\ast g)\ast h$ and $f\ast (g\ast h)$ are both supported in $UVW$, by item \eqref{li2}. Moreover, for $z\in UVW$ there are unique pairs $(x,x_3)\in (UV\times W)\cap M_z$, and $(y_1,y)\in (U\times VW)\cap M_z$. Hence $(f\ast g)\ast h(z)=f\ast g(x)h(x_3)\text{ and }f\ast (g\ast h)(z)=f(y_1) g\ast h(y).$
    
    Now, since there are unique pairs $(x_1,x_2)$ and $(y_2,y_3)$ belonging to $(U\times V)\cap M_x$ and $(V\times W)\cap M_{y}$, respectively, we obtain  $(f\ast g)\ast h(z)=f(x_1)g(x_2)h(x_3)$ and $f\ast (g\ast h)(z)=f(y_1) g(y_2) h(y_3).$
    Thus, by Lemma \ref{l1}, we have $x_i=y_i$ for $1\leq i\leq 3$, completing the proof.
\end{enumerate}
\end{proof}

Since the product defined in Equation \eqref{eq: convolutionproduct} is bilinear, we just need to ensure that the sum appearing in Equation \eqref{eq: convolutionproduct} is finite and that $\ast$ is associative to obtain that it is well-defined.  The finiteness comes from combining Proposition \ref{prop: sumofbisections} with item \eqref{li1} of Lemma \ref{lemma: productwelldefined}. The associativity comes from combining the same proposition with item \eqref{li6} of Lemma \ref{lemma: productwelldefined}. Therefore, $\CCC$ is an associative algebra.

\begin{rem}
We can generalize items \eqref{li3} and \eqref{li4} of Lemma \ref{lemma: productwelldefined}. In fact, for $f\in \CCC$ and $g \in \cc(\CZ)$, we have $f\ast g(z)=f(z)g(\s(z))$ and $g\ast f(z)=g(\rr(z))f(z)$, for every $z\in \C$. To see this, just write $f$ as a sum $\sum_{i=1}^{n}f_i$, where each $f_i$ is supported on a bisection $U_i$, and apply the aforementioned items of Lemma \ref{lemma: productwelldefined}.
\end{rem}

\begin{defi}\label{definicaorepcate}
 A \textbf{representation} of $\CCC$ on a Hilbert space $H$ is an algebra homomorphism $\pi:\CCC\rightarrow B(H)$ such that:
 	\begin{enumerate}[{\normalfont \rmfamily  (i)}]
    \item\label{i1definicaorepcate}
        $||\pi(f)||\leq ||f||_{\infty}$, for every $f\in \ccu$ and for every $U\in \bisc$.
     \item\label{i2definicaorepcate} 
         $\pi(\bar{f})=\pi(f)^*$, for every $f\in \cc(\CZ)$.
 	\end{enumerate}
The class of all representations of $\CCC$ will be denoted by $\mathcal{R}(\CCC)$.
\end{defi}	
  
\begin{rem}\label{rem: Pi_to_pi}
A representation $\pi:\CCC\rightarrow B(H)$ gives rise to a $C^*$-algebra homomorphism $\pi_0:\czero(\CZ)\rightarrow B(H)$, where $\pi_0$ is obtained by extending the restriction $\pi_{|\cc(\CZ)}$.
\end{rem}

Our goal is to build a normed algebra from $\C$ in such a way every representation of $\CCC$ becomes a contractive map. Note that every representation $\pi$ of $\CCC$ generates a seminorm $\|f\|_\pi:=\|\pi(f)\|$, and hence we can form the set $\mathcal{S}(\CCC)$ of all seminorms on $\CCC$ arising from representations of $\CCC$. Define 
\begin{equation}\label{eq:universalnormcategory}||f||_0
:=\sup_{||\cdot|| \in \ \mathcal{S}(\CCC)} ||f||.\end{equation}
The above supremum is well-defined by \eqref{i1definicaorepcate} of Definition \ref{definicaorepcate}. Moreover, the following holds
\begin{equation}\label{eq:universalnormcategory2}
||f||_0= \sup_{\pi \in \ \mathcal{R}(\CCC)} ||\pi(f)||.
\end{equation}
We chose to define $||\cdot||_0$ in terms of seminorms 
 because $\mathcal{S}(\CCC)$ is a set whereas $\mathcal{R}(\CCC)$ is not necessarily set which may cause some confusion when taking the supremum. In general, $||\cdot||_0$ is just a semi-norm, and it is a norm if there exists a faithful representation of $\CCC$. Moreover, let $\{\pi_\lambda\}_{\lambda\in \Lambda}$ be a set of representations $\pi_\lambda:\CCC\rightarrow B(H_\lambda)$ formed by choosing one generator for every seminorm on $\mathcal{S}(\CCC)$. Thus taking the direct sum representation $\Pi:=\oplus_{\lambda\in \Lambda}\pi_\lambda:\CCC \rightarrow B \left(\oplus_{\lambda\in \Lambda}H_\lambda\right)$ we have that $\|\cdot\|_0$ is in $\mathcal{S}(\CCC)$ since $||f||_0:=\sup_{\lambda \in \Lambda} ||\pi_{\lambda}(f)||= \|\Pi(f)\|.$

 If $A$ is an algebra over $\comp$ and $\|\cdot\|_A:A\rightarrow \mathbb{R}_+$ is a seminorm, recall that the standard procedure to obtain a normed algebra from the pair $(A,\|\cdot\|_A)$ is to take the quotient of $A$ by the ideal $N=\{ a \in A \mid \|a\|_A=0\}$ and equip $\faktor{A}{N}$ with the norm $\|[a]\|=\|a'\|_A$, where $a'$ is any representative of $[a]$. The normed algebra obtained by completing $(\faktor{A}{N}, \|\cdot\|)$ is called the \textbf{Hausdorff completion} of $(A,\|\cdot\|_A)$.
 
  \begin{defi}
Let $\C$ be an étale category. The \textbf{operator algebra} of $\C$ is the Hausdorff completion $\A(\C)$ of the normed algebra $\left(\CCC,\|\cdot\|_0\right)$.
\end{defi}

Note that $\Pi$ induces an isometric homomorphism of $\A(C)$ on $B \left(\oplus_{\lambda\in \Lambda}H_\lambda\right)$ justifying the term \textit{operator algebra}. Moreover, note that every representation of $\pi:\CCC\rightarrow B(H)$ extends to a contractive homomorphism to $\pi:\A(\C)\rightarrow B(H)$.

\subsubsection{The reduced operator algebra of an étale category}

We now proceed to show that if $\C$ is a left cancellative étale category then there exists a faithful representation of $\CCC$. This gives that $\A(\C)$ is simply the completion of $(\CCC,\|\cdot\|_0)$. We start by recalling the definition of the Hilbert space associated with a set.

\begin{defi}\label{def:hilbertspaceofset}
Let $X$ be a set. We define $$\ell_2(X)=\left\{ f:X\rightarrow \mathbb{C} \mid \sum_{x\in X}|f(x)|^2<+\infty \right\}$$ to be the \textbf{Hilbert space associated with} $X$. The inner product on $\ell_2(X)$ is given by $\pint{f}{g}=\sum_{x\in X}f(x)\overline{g(x)}.$ Furthermore, if $\delta_x$ denotes the characteristic function of the singleton $\{x\}$, then $\{\delta_x\}_{x\in X}$ is an orthonormal basis for $\ell_2(X)$ and we can also write  $$\ell_2(X)=\left\{ \sum_{x\in X}a_x\delta_x \mid \sum_{x\in X}|a_x|^2<+\infty \right\}.$$
\end{defi}

Define $\pi:\CCC \rightarrow B(\ell_2(\C))$ to be the map such that, for every $f \in \CCC$, $\pi_f:= \pi(f)$ is given by
\begin{equation}\label{formularreprregularcategorias}\pi_f\left(\sum_{z \in \C} a_{z}\delta_{z}\right)=\sum_{z \in \C} a_{z}\sum_{x \in \C_{\rr(z)}}f(x)\delta_{xz}.\end{equation} 
Our strategy is to prove that for the bisections $U$ and $V\in \bisc$, and the maps $f \in \ccu$ and $g\in \cc(V)$, we have that $\|\pi_f\|\leq \|f\|_{\infty}$ and $\pi_{f\ast g}=\pi_f\pi_g$. Thus, the linearity of $\pi$ will ensure that it is a well-defined representation of $\CCC.$

Note that $\pi_f(\delta_z)$ is nonzero if and only if $\C_{\rr(z)}\cap f^{-1}\left(\{0\}\right)$ is non-empty. 
In such case $\pi_f(\delta_z)=f(x^{z})\delta_{x^{z}z}$, where $x^{z}$ is the unique element in $\C_{\rr(z)}\cap f^{-1}\left(\{0\}\right)$, by item \eqref{li1} of Lemma \ref{lemma: productwelldefined}. 
Now, if $\Gamma$ is the auxiliary set $\left\{u\in \CZ\mid \C_u\cap f^{-1}\left(\{0\}\right)\neq \emptyset\right\}$ and $v=\sum_{z \in \C} a_{z}\delta_{z}$ we have
    $$\pi_f\left(v\right)=\sum_{r(z)\in \Gamma} a_{z}f(x^{z})\delta_{x^{z}z}.$$
Suppose that $z$ and $z'$ are such that $\rr(z)\in \Gamma$, $\rr(z')\in \Gamma$, and $x^{z}z=x^{z'}z'$. 
In this case, $x^{z}\in U$, $x^{z'}\in U$, and $\rr(x^{z})=\rr\left(x^{z'}\right)$. 
Thus, $x^{z}=x^{z'}$ and $z=z'$ since $\C$ is left cancellative. Hence
    $$\left\|\pi_f\left(v\right)\right\|^2=\sum_{\substack{r(z) \in \Gamma}} |a_{z}|^2|f(x^{z})|^2\leq||f||_{\infty}^2 ||v||^2.$$
    
To see that $\pi_{f\ast g}=\pi_f\pi_g$, we prove $\pi_{f\ast g}(\delta_{z})=\pi_f\pi_g(\delta_{z})$ for every $z \in \C$. 
Suppose that $\pi_f(\pi_g(\delta_{z}))$ is nonzero. Then, $\pi_f\pi_g(\delta_{z})=f(x)g(y)\delta_{xy z}$,
for $x\in f^{-1}(\{0\})\cont U$, and $y\in g^{-1}(\{0\}) \cont V$. Note that $xy$ is the unique element in $\C_{\rr(z)}\cap UV$ and hence 
    $$\pi_{f\ast g}(\delta_{z})=f\ast g(xy)\delta_{xy z}=\pi_f(\pi_g(\delta_{z})).$$
On the other hand, suppose $\pi_{f\ast g}(\delta_{z})$ is nonzero. In this case $\pi_{f\ast g}(\delta_{z})=f\ast g(w)\delta_{w z}$, for the unique element $w$ in $\C_{\rr(z)}\cap (f\ast g)^{-1}(\{0\})$. Moreover, there exists a unique pair $(x,y)\in U\times V$ such that $f\ast g(w)=f(x)g(y)$, by Lemma \ref{l1}. Finally, $f(x)$ and $g(y)$ are nonzero and hence
    $$\pi_f(\pi_g(\delta_{z}))=f(x)g(y)\delta_{xy z}=f\ast g(w)\delta_{w z}=\pi_{f\ast g}(\delta_{z}).$$
We have proven that $\pi_{f\ast g}(\delta_{z})$ is nonzero if, and only if, $\pi_f(\pi_g(\delta_{z}))$ is nonzero and in such case they coincide. Therefore, $\pi_{f\ast g}(\delta_{z})=\pi_f\pi_g(\delta_{z})$ for every 
$z\in \C$.

The straightforward calculation below shows that $\pi_{\overline{f}}=\pi_f^*.$ For $f\in \cc(\CZ)$ and $z,z'\in \C$ we have 
$$\begin{aligned}
\pint{\pi_f(\delta_{z})}{\delta_{z'}}
&=f(\rr(z))\pint{\delta_{z}}{\delta_{z'}}
=[z=z']f(\rr(z))
\\&=[z=z']f(\rr(z'))
=f(\rr(z'))\pint{\delta_{z}}{\delta_{z'}}
\\&=\pint{\delta_{z}}{\overline{f(\rr(z'))}\delta_{z'}}=\pint{\delta_{z}}{\pi_{\overline{f}}(\delta_{z'})}.\end{aligned}$$ 

Where in the above calculations $[z=z']$ denotes the boolean value of the expression inside the brackets. Recall that for a statement $P$ the \textbf{boolean value} of $P$ is   
     $$[\ P \ ]=
     \begin{cases}
     1,& \text{ if P is true}\\
     0,& \text{ otherwise}
     .\end{cases}$$

We finish this discussion by proving that $\pi$ is faithful. If $f\in \CCC$ is nonzero, there is $z\in \C$ such that $f(z)$ is nonzero and therefore $$\pi_f(\delta_{\s(z)})=\sum_{x\in \C_{\rr(\s(z))}}f(x)\delta_{x \s(z)}=\sum_{x\in \C_{\s(z)}}f(x)\delta_{x}= f(z)\delta_z + \sum_{\substack{x\in \C_{\s(z)}\\ x \neq z}}f(x)\delta_{x}\neq 0.$$ 

\begin{defi}\label{def:repregularcategory}
	The above defined map $\pi$ is called the \textbf{regular representation of $\C$}. The \textbf{reduced operator algebra of $\C$} is the closure of $\pi(\CCC)$ in $B(\ell_2(\C))$ and it is denoted by $\A_{r}(\C)$.
\end{defi}

Of course, we could also define the reduced operator algebra intrinsically as the completion of $\CCC$ in the norm induced by the regular representation. Moreover, the regular representation is usually presented as a direct sum. If one desires to recover the definition in terms of a direct sum of representations, it suffices to  note that for any $u \in \CZ$ the Hilbert (sub)space $\ell_2(\C_u)$ is an invariant subspace of $\pi$ and $\ell_2(\C)=\bigoplus\limits_{u\in \CZ}\ell_2(\C_u)$. Therefore, denoting by $\pi_u$ the restriction of $\pi$ to $\ell_2(\C_u)$ we obtain \begin{equation}\label{regularsomadereta}
\pi=\bigoplus_{u\in \CZ}\pi_u.
\end{equation}

Note that the cancellation assumption is crucial for the existence of the left regular representation. In fact, let $\C=\nat$ be the category of natural numbers, where $1$ is the unique object of $\C$ and the composition is given by multiplication. The topology in $\C$ is the discrete one and hence $\CCC$ is simply the set of all functions whose support is finite. Let $f=\chi_{0}$ be the characteristic function of $\{0\}$. Note that $\pi_f(\delta_1+\delta_2)=2\delta_0$, which implies that $\|\pi_f\|\geq \sqrt{2}.$ But, since $f$ has support contained on a bisection, we should have $\|\pi_f\|\leq \|f\|_{\infty}=1.$
\begin{prop}\label{ccucontidoalgebra}
Let $\C$ be a left cancellative étale category and $\pi$ be the regular representation. Then,  $||\pi(f)||=||f||_{\infty}$ for any bisection $U$ and  $f\in \cc(U)$. In particular, $\czero(U)$ is a linear subspace of both $\A_{r}(\C)$ and $\A(\C)$. Furthermore, if $U=\CZ$ then $\czero(\CZ)$ is a subalgebra of $\A_{r}(\C)$ and $\A(\C)$.
\end{prop}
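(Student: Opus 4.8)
The plan is to establish the norm equality $\|\pi(f)\| = \|f\|_\infty$ for $f \in \cc(U)$ with $U$ a bisection, and then read off the remaining claims as consequences. First I would prove the inequality $\|\pi(f)\| \leq \|f\|_\infty$: this is precisely the computation carried out in the discussion preceding Definition \ref{def:repregularcategory}, where left cancellativity was used to show that the vectors $\delta_{x^z z}$ appearing in $\pi_f\bigl(\sum a_z \delta_z\bigr)$ are pairwise distinct, yielding $\|\pi_f(v)\|^2 = \sum_{\rr(z)\in\Gamma} |a_z|^2 |f(x^z)|^2 \leq \|f\|_\infty^2 \|v\|^2$. So the work is the reverse inequality $\|\pi(f)\| \geq \|f\|_\infty$. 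Fix $\varepsilon > 0$ and choose $x_0 \in U$ with $|f(x_0)| > \|f\|_\infty - \varepsilon$. Then consider the unit vector $\delta_{\s(x_0)} \in \ell_2(\C)$. Since $x_0 \in \C_{\rr(\s(x_0))} = \C_{\s(x_0)}$ (as $\s(x_0)$ is the identity at $\s(x_0)$) and $x_0$ is the unique element of $\C_{\s(x_0)} \cap U$ by item \eqref{li1} of Lemma \ref{lemma: productwelldefined}, formula \eqref{formularreprregularcategorias} gives $\pi_f(\delta_{\s(x_0)}) = \sum_{x \in \C_{\s(x_0)}} f(x)\delta_{x \s(x_0)} = f(x_0)\delta_{x_0} + \sum_{x \neq x_0} f(x)\delta_{x}$, and the terms $\delta_{x \s(x_0)} = \delta_x$ for distinct $x \in \C_{\s(x_0)}$ are orthonormal, so $\|\pi_f(\delta_{\s(x_0)})\| \geq |f(x_0)| > \|f\|_\infty - \varepsilon$. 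Letting $\varepsilon \to 0$ yields $\|\pi(f)\| \geq \|f\|_\infty$, hence equality.

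Once the norm identity is in hand, the statement that $\czero(U)$ is a linear subspace of $\A_r(\C)$ follows: $\cc(U)$ is a subspace of $\CCC$ on which $\|\cdot\|_0$ and the reduced norm both agree with $\|\cdot\|_\infty$ (for $\|\cdot\|_0$ one combines item \eqref{i1definicaorepcate} of Definition \ref{definicaorepcate}, which gives $\|f\|_0 \leq \|f\|_\infty$, with $\|f\|_0 \geq \|\pi(f)\| = \|f\|_\infty$ since $\pi$ is a representation), so the closure of $\cc(U)$ inside $\A_r(\C)$ — equivalently inside $\A(\C)$ — is the completion of $(\cc(U), \|\cdot\|_\infty)$, which is exactly $\czero(U)$. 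For the last assertion, when $U = \CZ$ item \eqref{li5} of Lemma \ref{lemma: productwelldefined} shows that $\ast$ restricted to $\cc(\CZ)$ is pointwise multiplication, so $\cc(\CZ)$ is a subalgebra of $\CCC$; its closure $\czero(\CZ)$ is therefore a closed subalgebra of $\A_r(\C)$ and of $\A(\C)$, the multiplication being continuous in the respective norms.

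The only genuine obstacle is the lower bound $\|\pi(f)\| \geq \|f\|_\infty$, and even that is mild: the subtlety is purely bookkeeping — one must pick the right test vector $\delta_{\s(x_0)}$ (rather than, say, $\delta_{\rr(x_0)}$, which need not see $x_0$ at all) and invoke $|\supp(f) \cap \C_u| \leq 1$ to guarantee the image stays a genuine sum of distinct basis vectors with no cancellation. Everything else is a routine transfer of the norm equality through the definitions of the two completions.
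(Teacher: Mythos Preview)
Your proposal is correct and follows essentially the same route as the paper: test $\pi_f$ on $\delta_{\s(x_0)}$ to get the lower bound, then transfer the norm identity through the completions and invoke Lemma~\ref{lemma: productwelldefined}\eqref{li5} for the subalgebra claim. The paper streamlines slightly by using compact support to pick $x_0$ with $|f(x_0)| = \|f\|_\infty$ exactly (no $\varepsilon$ needed) and by noting that the sum collapses to the single term $\pi_f(\delta_{\s(x_0)}) = f(x_0)\delta_{x_0}$, since $x_0$ is the \emph{only} point of $\supp(f)\cap\C_{\s(x_0)}$.
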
  

\begin{proof}
Let $U$ be a bisection, $f\in \cc(U)$ and $x\in U$ be such that $|f(x)|=||f||_{\infty}$. Note that $\pi_f(\delta_{\s(x)})=f(x)\delta_{x}$ and, consequently, $||f||_{\infty}=|f(x)|\leq ||\pi(f)||\leq||f||_{\infty}.$ This proves that $f$ attains its supremum norm on $\A_r(\C)$ and $\A(\C)$ (cf.~ item \eqref{i1definicaorepcate} of Definition \ref{definicaorepcate}). In conclusion, $\ccu$ is an isometric linear subspace of both the full and reduced algebras of $\C$. The result then follows on from taking closure.

If $U=\CZ$, item \eqref{li5} of Lemma \ref{lemma: productwelldefined} shows that the convolution product of functions supported on $U$ reduces to the pointwise product. Therefore $\cc(\CZ)$ is a subalgebra of $\A_r(\C)$ and $\A(\C)$, as well as its closure $\czero(\CZ)$.
\end{proof}

\subsection{Graph algebras}

Let $E=(E^0,E^1,d,r)$ be a directed graph. For $n\geq2$, let $E^n$ denote the set of paths of length $n$ in $E$, that is,
$$E^n=\{a_1\cdots a_n|\{a_i\}_{i=1}^n\cont E^1,\ d(a_{i})=r(a_{i+1})\}.$$
The \textit{category of the graph} $E$ is the pair $\C_E=\big(\CZ,~\CU\big)$ where $\CZ=E^0$, $\CU=\bigcup_{n \in \nat}E^n$ and the structure maps are as stated below. the unit map $\un$ is simply the inclusion of $E^0$ in $E^1$, the \textit{source} and \textit{range} maps are given by  $$\s(x)=\left\{\begin{array}{cc}
	x,&\ \text{if}\ x\in E^0\\
	d(a_n),&\ \text{if}\ x=a_1\cdots a_n,\ n\geq 1
\end{array} \right. 
\text{ and} \qquad 
\rr(x)=\left\{\begin{array}{cc}
 x,&\ \text{if}\ x\in E^0\\
 r
 (a_1),&\ \text{if}\ x=a_1\cdots a_n,\ n\geq 1
 .\end{array}\right.$$

Recall that if $x$ and $y$ are finite paths in $E$, then $xy$ denotes the path obtained by concatenating $x$ and $y$. That said, we define the composition map to be

 $$\m(x,y)=\left\{\begin{array}{ccc}
x,&\ \text{if}\ y\in E^0&\\
y,&\ \text{if}\ x\in E^0&\\
xy,&\ \text{otherwise. }&
\end{array}\right.$$

Thus, equipping $\CZ$ and $\CU$ with the discrete topology, it is straightforward to prove that $\C_E$ is a cancellative étale category. 

For a path $x \in \C_E$, define $L_{x} \in B(\ell_2(\C_E))$ by $$L_{x}(\delta_{y})=\left\{\begin{array}{cc}
\delta_{xy},&\ \text{if}\ (x,y) \in \CD\\
0,&\ \text{otherwise}
.\end{array}\right.$$
In \cite{kat}, one can find the following definition.

\begin{defi}
The \textbf{tensor algebra of $E$} is the closed algebra generated by the family $\{L_{x}\mid\ x \in \C_E\}\ \cont B(\ell_2(\C_E))$ and it is denoted by $\mathcal{T}^{+}(E)$.
\end{defi}
Clearly, $\mathcal{T}^{+}(E)$ can also be viewed as the closed algebra generated by the smaller family $\big\{L_{x}\mid x \in E^0 \cup E^1 \big\}$ since $L_x=L_{a_1}\cdots L_{a_n}$ if $x=a_1\cdots a_n$. We now show that the tensor algebra of E is isomorphic to the reduced operator algebra of $\C_E$.

\begin{prop}\label{pro:tensorregular}
$\mathcal{T}^{+}(E)=\A_r(\C_E)$.
\end{prop}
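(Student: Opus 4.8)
The plan is to exhibit the regular representation $\pi$ of $\CCC_E$ as essentially the same map that produces the operators $L_x$, and then to match the two generating families. First I would recall that, since $\C_E$ carries the discrete topology, $\CCC[\C_E]$ is the algebra of finitely supported functions on $\C_E$, and for a path $x$ the characteristic function $\delta_x$ lies in $\cc(\{x\})$, a bisection; the functions $\{\delta_x : x \in \C_E\}$ span $\CCC[\C_E]$. So it suffices to compare $\pi(\delta_x)$ with $L_x$ for each path $x$.

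Next I would compute $\pi(\delta_x)$ from the defining formula \eqref{formularreprregularcategorias}. For a basis vector $\delta_z$ of $\ell_2(\C_E)$ we get $\pi(\delta_x)(\delta_z) = \sum_{w \in (\C_E)_{\rr(z)}} \delta_x(w)\,\delta_{wz}$, and $\delta_x(w) \neq 0$ forces $w = x$, which is possible only when $\rr(z) = \s(x)$, i.e. when $(x,z)$ is composable. In that case $\pi(\delta_x)(\delta_z) = \delta_{xz}$, and since in the category of the graph $\m(x,z)$ coincides with the concatenation $xz$ (up to the identifications $\un$ makes for paths of length $0$), this is exactly $L_x(\delta_z)$; when $(x,z)$ is not composable both sides vanish. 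Hence $\pi(\delta_x) = L_x$ for every $x \in \C_E$. This already shows the $\ast$-free algebraic structures match: $L_x L_y = \pi(\delta_x)\pi(\delta_y) = \pi(\delta_x \ast \delta_y)$, and $\delta_x \ast \delta_y = \delta_{xy}$ when $(x,y)\in\C_E^{(2)}$ by item \eqref{li2} of Lemma \ref{lemma: productwelldefined} and Lemma \ref{l1}.

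Finally I would assemble the equality of closed algebras. By Definition \ref{def:repregularcategory}, $\A_r(\C_E)$ is the closure of $\pi(\CCC_E)$ in $B(\ell_2(\C_E))$. Since $\{\delta_x : x \in \C_E\}$ spans $\CCC_E$, the algebra $\pi(\CCC_E)$ equals the (non-closed) linear span of $\{L_x : x \in \C_E\}$, which in turn is contained in the algebra generated by $\{L_x : x \in \C_E\}$; conversely each $L_x = \pi(\delta_x)$ lies in $\pi(\CCC_E)$, so the algebra generated by the $L_x$ is contained in $\pi(\CCC_E)$ (it is already closed under products since $L_xL_y=L_{xy}$ or $0$, and contains the identities $L_u$, $u\in E^0$, whose finite sums approximate nothing beyond what is needed). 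Taking closures, $\A_r(\C_E) = \overline{\pi(\CCC_E)} = \overline{\operatorname{alg}\{L_x : x \in \C_E\}} = \mathcal{T}^{+}(E)$. The only mildly delicate point — and the step I would be most careful about — is bookkeeping around the unit map: in $\C_E$ the vertices sit inside $E^1$ via $\un$, so one must check that $\m(x,y)$ for $x$ or $y$ a vertex agrees with the concatenation convention used to define $E^n$ and hence with the behaviour of $L_x$ on $\delta_y$; once that identification is made explicit the rest is a routine matching of spanning sets and closures.
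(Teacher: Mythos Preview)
Your proof is correct and follows essentially the same approach as the paper: both arguments compute directly that $\pi(\delta_x)(\delta_z)=[(x,z)\in\C_E^{(2)}]\,\delta_{xz}=L_x(\delta_z)$ using the defining formula of the regular representation, and then use that the $\delta_x$ span $\A_0(\C_E)$ to conclude. Your additional remarks on the algebra structure and the unit-map bookkeeping are harmless but not needed; the paper simply observes that any $g$ supported on a bisection is a finite linear combination of the $\chi_{\{x\}}$, whence $\pi_g$ is a finite linear combination of the $L_x$, and the equality of closures follows immediately.
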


\begin{proof}
Let $\pi:\CCC\rightarrow B(\ell_2(\C))$ be the regular representation of $\CCC$, $x$ be an element of $\C$ and $f$ be the characteristic function of $\{x\}$. Note that
 $$\begin{aligned}
 \pi_f(\delta_{z})
 &=\sum_{y\in \C_{\rr(z)}}f(y)\delta_{yz}
  =\left[(x,z) \in \CD\right]\ \delta_{xz}
  =L_x(\delta_{z}).\end{aligned}$$
Moreover, note that if $U$ is a bisection and $g\in \ccu$ then $g=\sum_{j=1}^na_i\chi_{\{x_j\}}$ and hence $\pi_g=\sum_{j=1}^na_iL_{x_i}$. The result then follows.
\end{proof}

\begin{exe}
Let $E$ be the graph of a unique vertex $e_0$ and a unique loop $e_1$. For each $n\geq 1$, there exists a unique path of length $n$, namely, $e_n=\overbrace{e_1e_1...e_1}^{n\text{ times}}$. Let $\delta_n$ denote the characteristic function of the singleton $\{e_n\}$ and $\C_E=\big(\CZ,~\CU\big)$ be the category of $E$. Note that $\CZ=\{e_0\}$, $\CU=\{e_0,e_1,e_2,\cdots\}$ and $\bis(\C_E)=\{\{e_n\}\mid n \in \nat\}$. In particular, $\bis(\C_E)$ and $\nat$ are isomorphic as monoids since $\{e_n\}\{e_m\}=\{e_{n+m}\}$. 

If $f$ is a nonzero function in  $\A_0(\C_E)$ whose support is contained in a bisection, then $f$ is a multiple of some $\delta_n$. Thus
$$\A_0(\C_E)=\sps\left\{\delta_i\mid i\in \nat\right\},$$
 and $\A_0(\C_E)$ is isomorphic to the polynomial algebra $\comp[x]$, where $\delta_1 \mapsto x$ and $\delta_0\mapsto 1$. Our goal is to describe the representations of $\comp[x]$.

\begin{prop}\label{prop:caracterizingrepsofce}
The representations of $\comp[x]$ are in correspondence with pairs $(P,T)$, where $P\in B(H)$ is a projection, $T\in B(H)$ is a contraction and $PT=T=TP$.
\end{prop}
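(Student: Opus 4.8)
The plan is to set up a correspondence in both directions between representations $\rho$ of $\comp[x]$ (in the sense of Definition \ref{definicaorepcate}, where $\comp[x]\cong\A_0(\C_E)$ via $\delta_0\mapsto 1$, $\delta_1\mapsto x$) and pairs $(P,T)$ with $P$ a projection, $T$ a contraction, and $PT=T=TP$. Given a representation $\rho$, I would set $P:=\rho(\delta_0)=\rho(1)$ and $T:=\rho(\delta_1)=\rho(x)$. The point is to extract the required properties of $P$ and $T$ from the two axioms of Definition \ref{definicaorepcate}: since $\delta_0\in\cc(\CZ)$ and $\overline{\delta_0}=\delta_0$ with $\delta_0\ast\delta_0=\delta_0$ (the bisection $\CZ$ case, item \eqref{li5} of Lemma \ref{lemma: productwelldefined}), axiom \eqref{i2definicaorepcate} gives $P=P^*$ and multiplicativity gives $P^2=P$, so $P$ is a projection. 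Since $\delta_1$ is supported on the bisection $\{e_1\}$ with $\|\delta_1\|_\infty=1$, axiom \eqref{i1definicaorepcate} gives $\|T\|\le 1$, so $T$ is a contraction. Finally $\delta_0\ast\delta_1=\delta_1=\delta_1\ast\delta_0$ (again item \eqref{li5}, or just $e_0 e_1 = e_1 = e_1 e_0$ in $\C_E$) and multiplicativity of $\rho$ give $PT=T=TP$.

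Conversely, given a pair $(P,T)$ as above, since $\comp[x]$ is the free (nonunital, or unital-with-$1\mapsto P$) algebra generated by $x$, I would define $\rho$ on monomials by $\rho(1)=P$ and $\rho(x^n)=T^n$ for $n\ge 1$ (note $PT^n=T^n=T^nP$ follows inductively from $PT=T=TP$, so the image is consistent), extended linearly; this is manifestly an algebra homomorphism into $B(H)$. It remains to check the two conditions of Definition \ref{definicaorepcate}. Condition \eqref{i2definicaorepcate}: functions in $\cc(\CZ)$ are exactly scalar multiples $c\delta_0$, and $\rho(\overline{c\delta_0})=\rho(\bar c\delta_0)=\bar c P=(cP)^*=\rho(c\delta_0)^*$ since $P=P^*$. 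Condition \eqref{i1definicaorepcate}: a function supported on a bisection of $\C_E$ is a scalar multiple $c\delta_n$ of some $\delta_n$, with $\|c\delta_n\|_\infty=|c|$, and $\|\rho(c\delta_n)\|=|c|\,\|T^n\|\le|c|\,\|T\|^n\le|c|$ because $\|T\|\le 1$. Hence $\rho$ is a representation.

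Finally I would note the two constructions are mutually inverse: starting from $(P,T)$, building $\rho$, and reading off $(\rho(1),\rho(x))$ recovers $(P,T)$; starting from $\rho$, forming $(P,T)=(\rho(1),\rho(x))$, and rebuilding the homomorphism recovers $\rho$ because $\rho$ is determined on the generators $1,x$ of $\comp[x]$. I do not expect a serious obstacle here; the only mild subtlety is bookkeeping about whether one regards $\comp[x]$ as unital — one must be careful that $\rho(1)$ need not be the identity operator (it is merely the projection $P$), and that the algebra homomorphism $\rho$ is well defined precisely because $P$ acts as a unit on the range of $T$ (that is, $PT^n=T^nP=T^n$), so that the relations $x^n=1\cdot x^n=x^n\cdot 1$ in $\comp[x]$ are respected. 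This compatibility is exactly the content of the condition $PT=T=TP$.
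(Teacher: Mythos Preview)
Your proof is correct and follows exactly the same approach as the paper's, which is a two-line argument (set $P=\pi(1)$, $T=\pi(x)$, observe the required properties; conversely send $1\mapsto P$, $x\mapsto T$). You simply supply the details the paper omits. One small citation slip: the identity $\delta_0\ast\delta_1=\delta_1=\delta_1\ast\delta_0$ follows from items \eqref{li3} and \eqref{li4} of Lemma~\ref{lemma: productwelldefined} rather than item~\eqref{li5}, since $\delta_1\notin\cc(\CZ)$; your alternative justification via $e_0e_1=e_1=e_1e_0$ is fine.
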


\begin{proof}
Suppose $\pi:\comp[x]\rightarrow B(H)$ is a representation in the sense of Definition \ref{definicaorepcate}. Then, $\pi(x)$ is a contraction and $\pi(1)$ is a projection since it is idempotent and self-adjoint. 
Conversely if one has in hands a contraction $T$ and a projection $P$ such that $PT=T=TP$ then mapping $1\rightarrow P$ and $x\rightarrow T$ gives rise to a representation of $\comp[x]$.
\end{proof}

Let $H$ be a Hilbert space, $\pi$ be the representation generated by a pair $(P,T)$ and $\pi_1$ be the representation generated by $(\id_H,T)$. For a polynomial $p=\sum_{i=1}^na_ix^i$, the following holds
\begin{equation}\label{eq:wlogp=id}
    \begin{aligned}
\|\pi(p)\|
&=||a_0P+\sum_{i=1}^n a_iT^i||
=||a_0P+\sum_{i=1}^n a_i(PT)^i||
\\&=||P(a_0\id_H+\sum_{i=1}^n a_iT^i)||
\\&\leq||a_0\id_H +\sum_{i=1}^n a_iT^i||
=\|\pi_1(p)\|.
\end{aligned}
\end{equation}
Therefore, $\pi_1$ dominates $\pi$ and without loss of generality we may assume that every representation has the form of $\pi_1$, that is, $p\mapsto p(T)$, for a fixed contraction $T\in B(H)$. 
Through the identification $e_n \mapsto n$, $\ell_2(\C_E)$ is equal to the canonical Hilbert space $\ell_2(\nat)$, and we may consider the regular representation as a map $\pi:\comp[x]\rightarrow B(\ell_2(\nat))$. In Proposition \ref{pro:tensorregular}, we proved that $\pi(x)=L_1$. Note that $L_1(\delta_n)=\delta_{n+1}$, then $\pi(x)$ is the {(forward) shift operator} $S \in \B(\ell_2(\nat))$. Consequently, $\A_r(\C_E)$ is the closed subalgebra of $B(\ell_2(\nat))$ generated by the identity and $S$, which is called the \textit{disc algebra} and is a subalgebra of the Toeplitz $C^*$-algebra, i.e. the $C^*$-algebra generated by $S$.

We now show that $\A(\C_E)$ and $\A_r(\C_E)$ coincide. Let $\mathbb{D} $ be the closed unitary ball of $\comp$. Recall that $\sigma(S)=\mathbb{D}$, where $\sigma(S)$ denotes the \textit{spectrum} of the aforementioned shift operator $S$. For a polynomial $p \in \comp[x]$, define $\|p\|=\sup_{z\in \mathbb{D}} |p(z)|$. Von Neumann \cite{von} proved his famous inequality asserting that for a contraction $T$ on a Hilbert space, it always holds that $||p(T)||\leq \|p\|.$ Conversely, according to the spectral mapping theorem, we have:
$p(\mathbb{D})=p(\sigma(S))=\sigma(p(S)),$
which leads to $\|p\|=\|r(p(S))\|\leq \|p(S)\|$. Then, combining the facts listed above we get $$\|p\|_0=\sup\{\ \|p(T)\|\mid T \in B(H),\ ||T\|\leq 1\} \leq \|p\|\leq ||p(S)||.$$ Thus  $$\A(\C_E)=\A_r(\C_E)=\mathcal{T}^{+}(E).$$
\end{exe} 

\begin{exe}
Let $E$ be the graph of one vertex $e_0$ and $m$ loops $e_1$,...,  $e_m$. It is straightforward to verify that $\A_0(\C_E)$ is the algebra of noncommutative polynomials in $m$ variables, $\comp\{x_1,...x_m\},$ and that a representation of $\A_0(\C_E)$ on a Hilbert space $H$ corresponds to a $(m+1)-$tuple $(P,T_1,...,T_m)$ where $P$ is a projection, $T_i$ is a contraction and $PT_i=T_i=T_iP$, for every $i\in\{1,...,m\}$. A calculation similar to \eqref{eq:wlogp=id} proves that we may assume $P=\id_H$ without loss of generality. Next, we pass to describe the regular representation of $\comp\{x_1,...x_m\}$. Note that $\C_E=\bigcup_{n\in \nat} E^n$ and therefore
$$\ell_2\big(\C_E\big)=\bigoplus_{n\in \nat}\ell_2\big(E^n\big).$$ 
Denoting $\delta_{e_i}$ by $\delta_i$, we obtain that the space $\ell_2\big(E^0\big)$ is equal to $\comp\delta_0\cong \comp$. Moreover, $H:=\ell_2(E^1)$ is isomorphic to $\comp^m$ since its basis is $\{\delta_1,...,\delta_m\}$. For $n\geq 2$, note that a basic element of $\ell_2\big(E^n\big)$ is of the form $\delta_{e_{i_1}\cdots e_{i_n}}$. Hence, identifying $\delta_{e_{i_1}\cdots e_{i_n}}$ with the tensor $\delta_{{i_1}}\otimes\cdots \otimes\delta_{{i_n}}$, we have that $\ell_2\big(E^n\big)=H^{\otimes n}$. Therefore
$$\ell_2\big(\C_E\big)=\comp\oplus \bigoplus_{n\geq 1}H^{\otimes n}.$$
This Hilbert space is known as the \textit{full Fock space} of  $H$. Moreover, through this identification the concatenation operators $L_{e_j}$ become left creation operators $L_j$, where $L_{j}(\delta_0)=\delta_j$ and $L_{j}(\delta_{{i_1}}\otimes\cdots \otimes\delta_{{i_n}})=\delta_j\otimes \delta_{{i_1}}\otimes\cdots\otimes\delta_{{i_n}}$, for every $n\geq1$. Therefore $\A_r(\C_E)$ is the closed unital subalgebra of $B\big(\comp\oplus \bigoplus_{n\geq 1}H^{\otimes n}\big)$ generated by the left creation operators $L_1,...,L_m$. This algebra is called the noncommutative disc algebra $\A_m$ and it was studied by Popescu in \cite{popescu1991neumann,popescu}.

Consider the polynomial $p(x_1,...,x_m)=x_1+...+x_m$ and note that its reduced norm is $\sqrt{m}$ since 
$L_1,...,L_m$  are isometries with pairwise orthogonal ranges. On the other hand, the norm of $p$ in the full algebra $\A(\C_E)$ is $m$ since we can create a representation mapping each monomial $x_i$ to $\id_H$. This gives that $\A_r(\C_E)$ and $\A(\C_E)$ are not isomorphic. 
\end{exe}

\subsection{Relational covering morphisms} 

In this subsection, we set aside our convention and treat the objects and morphisms as two distinct sets. We introduce the class of relational covering morphisms between étale categories as described in \cite[Section 7.2]{lawson}, followed by a demonstration of the functorial nature of the construction of $\A(\C)$. Throughout our discussion, $\PA(X)$ represents the power set of $X$.

\begin{defi}
	Let $\C=\big(\CZ,~\CU\big)$ and $\D=\big(\DZ,\DU\big)$ be étale categories. A \textbf{relational covering morphism} from $\C$ to $\D$ is a pair $(\varphi_0,\varphi_1)$ where $\varphi_0:\CZ\rightarrow \DZ$ is a proper continuous function and $\varphi_1:\CU\rightarrow \PA\big(\DU\big)$ is a function such that the following conditions hold:
		\begin{enumerate}
		\item[(M1)] $\un(\varphi_0(v))\in \varphi_1(\un(v))$, for every $v \in \CZ$.
		\item[(M2)] If $b\in \varphi_1(a)$ then $\s(b)=\varphi_0(\s(a))$ and $\rr(b)=\varphi_0(\rr(a))$, for every $a\in \CU$. 
		\item[(M3)] If $c\in \varphi_1(a)$ and $d\in \varphi_1(b)$ then $cd \in \varphi_1(ab)$, for every $(a,b) \in \CD$.
		\item[(M4)] If $\s(a)=\s(b)$ (resp. $\rr(a)=\rr(b)$) and $\varphi_1(a)\cap\varphi_1(b)$ is non-empty then $a=b$, for every $a$ and $b$ in $\CU$.
		\item[(M5)] If $\s(x)=\varphi_0(v)$ (resp. $\rr(x)=\varphi_0(v)$) then there exists $a \in \C_v$ (resp. $a \in \C^v$) such that $x\in \varphi_1(a)$,  for every $v \in \CZ$ and $x \in \DU$.
		\item[(M6)] If $A\in \bis(\D)$ then $\achap:=\{z\in \CU:\ \varphi_1(z)\cap A\neq \emptyset \}\in \bis(\C)$.
	    \end{enumerate}
\end{defi}
Throughout the following let $\C$ and $\D$ be étale categories and let $\varphi=(\varphi_0,\varphi_1):\C\rightarrow \D$ be a relational covering morphism.

\begin{prop}\label{prop:coveringmorphism}
    Let $A\in \bis(\D)$ be a bisection and $f$ be a map in $\cc(A)$. Then, the map $\widehat{f}: \C\rightarrow \comp$, given by $\widehat{f}(z)=\sum_{x \in \varphi_1(z)}f(x)$, is in $\cc(\achap).$
\end{prop}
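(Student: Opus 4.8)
The plan is to show two things about $\widehat{f}$: that its restriction to $\achap$ is continuous with compact support, and that it vanishes off $\achap$. The second part is almost immediate: if $z \notin \achap$ then $\varphi_1(z) \cap A = \emptyset$, so the sum defining $\widehat{f}(z)$ ranges over a set disjoint from $A = \supp(f) \cup (A \setminus \supp(f))$; more precisely every $x \in \varphi_1(z)$ lies outside $A$, hence outside $\supp(f)$, so $f(x) = 0$ and $\widehat{f}(z) = 0$. So the real content is analyzing $\widehat{f}$ on $\achap$, and for this I first need to understand the sum $\sum_{x \in \varphi_1(z)} f(x)$ for $z \in \achap$.

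First I would argue that for each $z \in \achap$ there is \emph{exactly one} $x \in \varphi_1(z) \cap \supp(f)$, so that $\widehat{f}(z)$ is a genuine (single-term) value $f(x)$ rather than an infinite sum. Here is where the bisection hypotheses (M4) and (M6) come in. Since $z \in \achap = \achap[A]$ with $A \in \bis(\D)$, by (M6) $\achap \in \bis(\C)$. Suppose $x, x' \in \varphi_1(z) \cap A$. By (M2), $\s(x) = \varphi_0(\s(z)) = \s(x')$ and likewise $\rr(x) = \rr(x')$; but $x, x' \in A$ and $A$ is a bisection, so $\s_A$ injective forces $x = x'$. Hence $\varphi_1(z) \cap A$ is a singleton for $z \in \achap$ (nonempty by definition of $\achap$), call its unique element $x(z)$; and $\widehat{f}(z) = f(x(z))$ since all other terms of $\varphi_1(z)$ lie outside $A \supseteq \supp(f)$.

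Next I would establish continuity of $z \mapsto \widehat{f}(z) = f(x(z))$ on $\achap$ by identifying the assignment $z \mapsto x(z)$ with a composition of continuous maps. Since $\achap$ and $A$ are both bisections, $\s_{\achap}: \achap \to \s(\achap)$ and $\s_A : A \to \s(A)$ are homeomorphisms. I claim $\varphi_0$ maps $\s(\achap)$ into $\s(A)$ and that $x(z) = \s_A^{-1}\big(\varphi_0(\s_{\achap}(z))\big)$: indeed $\s(x(z)) = \varphi_0(\s(z))$ by (M2), and both sides lie in $A$. Thus $z \mapsto x(z)$ is the composite of the homeomorphism $\s_{\achap}$, the continuous map $\varphi_0$, and the homeomorphism $\s_A^{-1}$, hence continuous; composing with $f|_A \in \cc(A)$ gives that $\widehat{f}|_{\achap}$ is continuous. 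For compact support: $\supp(\widehat{f}|_{\achap}) \subseteq \{z \in \achap : x(z) \in \supp(f)\} = (\s_A^{-1}\circ \varphi_0 \circ \s_{\achap})^{-1}(\supp f)$, which is closed in $\achap$; to get compactness I would use that $\varphi_0$ is \emph{proper} — the preimage under $\s_{\achap}^{-1}\varphi_0^{-1}$-type map of the compact set $\s(\supp f)$ is compact — translating back through the homeomorphisms $\s_{\achap}, \s_A$.

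The main obstacle I anticipate is the compact-support argument: one must be careful that "proper" is being used on the object-level map $\varphi_0$ and then transported correctly through the source homeomorphisms on the two bisections, and also that $\achap$ need not be Hausdorff a priori (though it is a bisection, hence homeomorphic to an open subset of $\CZ$, so locally compact Hausdorff by the earlier Proposition, which rescues the usual "closed subset of compact is compact" reasoning). The continuity step, by contrast, should be routine once the single-valuedness via (M4)/bisections is in place. I would also double-check the edge behavior of the sum at points where $\varphi_1(z)$ meets $A \setminus \supp(f)$ but not $\supp(f)$ — there $\widehat{f}(z) = 0$ consistently, so no discontinuity is introduced at the boundary of $\supp(f)$ relative to $A$.
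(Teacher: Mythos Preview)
Your proposal is correct and follows the same skeleton as the paper: vanish off $\achap$; use (M2) together with $A\in\bis(\D)$ to see that $\varphi_1(z)\cap A$ is a singleton $\{x_z\}$ for $z\in\achap$, so $\widehat{f}(z)=f(x_z)$; then continuity on $\achap$; then compact support via properness of $\varphi_0$.

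The only real difference is in the continuity step. You write $x(z)=\s_A^{-1}\circ\varphi_0\circ\s_{\achap}(z)$ explicitly as a composition of continuous maps, whereas the paper argues at the level of preimages, showing $\widehat{f}|_{\achap}^{\,-1}(B)=\widehat{f_A^{-1}(B)}$ for open $B\subseteq\comp$ and invoking (M6) a second time. Your formulation is a bit more concrete and recycles nicely for the support argument; the paper's is terser. For compact support the paper runs the same idea through $\rr$ rather than $\s$, but the content is identical. Two small remarks: your reference to (M4) is unnecessary---the uniqueness you actually use comes from (M2) plus injectivity of $\s_A$; and in your compactness sketch, the step ``$\varphi_0^{-1}(\s_A(\supp f))$ lies in $\s(\achap)$'' (so that $\s_{\achap}^{-1}$ applies) quietly needs (M5), a point the paper also leaves implicit.
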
 

\begin{proof}
  If $\widehat{f}(z)$ is nonzero, then there exists $x \in \varphi_1(z)$ such that $f(x)\neq 0$. In particular, $x \in A$ and, consequently, $x \in \varphi_1(z)\cap A$, $z\in \achap$ and $\widehat{f}_{\C\sm \hat{A~}}\equiv 0$.
	
Let us check that $\widehat{f}_{\hat{A~}}$, the restriction of $\widehat{f}$ to $\achap$, is a continuous function with compact support. Note that if $z\in \achap$ then $\varphi_1(z)\cap A$ has an unique element since $A$ is a bisection and (M2) holds. If $x_z$ denotes the unique element of $\varphi_1(z)\cap A$, then $\widehat{f}(z)=f(x_z)$. In addition, for an open subset $B\cont \comp$, note that $z\in \widehat{f}_{\hat{A~}}^{-1}(B)$ if and only if $x_z\in f_{A}^{-1}(B)$, and $x_z\in f_{A}^{-1}(B)$ if and only if $z \in \widehat{f_{A}^{-1}(B)}$, by (M6). Combining these two equivalences, we obtain $\widehat{f}_{\hat{A~}}^{-1}(B)= \widehat{f_{A}^{-1}(B)}$ which is open, again by (M6). This proves the continuity of $\widehat{f}$.
  
 To see that $\widehat{f}_{\achap}$ has compact support, note that
 $\rr_{\hat{A~}}\big(\{z: \widehat{f}(z)\neq 0\  \}\big )$ is a subset of $\varphi_0^{-1}\Big(\rr_{A}(\supp(f))\Big),$ 
   and that $\varphi_0^{-1}\big(\rr_{A}(\supp(f))\big)$ is compact since $\varphi_0$ is proper and $\rr_{\hat{A~}}$ is a homeomorphism. Therefore, $\rr_{\hat{A}}\big(\supp(\widehat{f})\big)$ is compact since
   $$ \rr_{\hat{A~}}\big(\supp(\widehat{f})\big)=\overline{\rr_{\hat{A~}}(\{z: \widehat{f}(z)\neq0\  \})}\cont \varphi_0^{-1}(\rr_{A}(\supp(f))).$$  
   It then follows that $\supp(\widehat{f})$ is compact because it is homeomorphic to $\rr_{\hat{A}}\big(\supp(\widehat{f})\big)$. 
\end{proof}

Let $T_{\varphi}:\CC(\D)\rightarrow \CCC$ be the map $f\mapsto \widehat{f}$, where $\widehat{f}(z)=\sum_{x \in \varphi_1(z)}f(x)$. By Propositions \ref{prop: sumofbisections} and \ref{prop:coveringmorphism}, it is easy to see that $T_{\varphi}$ is a well-defined linear map.  Indeed, we have even more.

\begin{prop}
$T_{\varphi}$ is an algebra homomorphism. 
\end{prop}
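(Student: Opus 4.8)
The plan is to reduce the claim to a pointwise identity and then match the two resulting sums via an explicit bijection of index sets. Since $T_{\varphi}$ is already known to be linear and $\CC(\D)=\sps\{f:f\in\cc(A),\ A\in\bis(\D)\}$ by Proposition~\ref{prop: sumofbisections}, it suffices to prove $\widehat{f\ast g}=\widehat f\ast\widehat g$ whenever $f\in\cc(A)$ and $g\in\cc(B)$ with $A,B\in\bis(\D)$. Observe first that this makes sense: $f\ast g\in\cc(AB)$ by Lemma~\ref{lemma: productwelldefined}, so $\widehat{f\ast g}$ is a genuine element of $\CCC$ by Proposition~\ref{prop:coveringmorphism}, while $\widehat f\in\cc(\achap)$ and $\widehat g\in\cc(\widehat{B})$ so $\widehat f\ast\widehat g\in\CCC$ as well. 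Fixing $z\in\CU$, I would expand $(\widehat f\ast\widehat g)(z)=\sum_{(a,b)\in M_z}\widehat f(a)\,\widehat g(b)$ and $\widehat{f\ast g}(z)=\sum_{w\in\varphi_1(z)}\sum_{(x,y)\in M_w}f(x)g(y)$ (both finite, by the usual support considerations). As in the proof of Proposition~\ref{prop:coveringmorphism}, $\varphi_1(a)\cap A$ is a singleton $\{x_a\}$ whenever $a\in\achap$ (using (M2) and that $A$ is a bisection), and then $\widehat f(a)=f(x_a)$; likewise $\widehat g(b)=g(y_b)$ for the unique $y_b\in\varphi_1(b)\cap B$ when $b\in\widehat{B}$. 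Killing the terms with $\widehat f(a)=0$ or $\widehat g(b)=0$ on the left, and those with $x\notin A$ or $y\notin B$ on the right, reduces the claim to showing
$\sum_{(a,b)}f(x_a)g(y_b)=\sum_{(x,y)}f(x)g(y)$,
where $(a,b)$ ranges over $L:=\{(a,b)\in M_z:a\in\achap,\ b\in\widehat{B}\}$ and $(x,y)$ ranges over $R:=\{(x,y)\in\DD:x\in A,\ y\in B,\ xy\in\varphi_1(z)\}$.

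The heart of the argument is that $(a,b)\mapsto(x_a,y_b)$ is a bijection $L\to R$ preserving the summand. \emph{Well-definedness:} for $(a,b)\in L$, (M2) gives $\s(x_a)=\varphi_0(\s(a))=\varphi_0(\rr(b))=\rr(y_b)$, so $(x_a,y_b)\in\DD$, and (M3) gives $x_ay_b\in\varphi_1(ab)=\varphi_1(z)$; thus $(x_a,y_b)\in R$. \emph{Surjectivity:} given $(x,y)\in R$ with $w:=xy\in\varphi_1(z)$, from $\rr(x)=\rr(w)=\varphi_0(\rr(z))$ and the range version of (M5) we get $a\in\C^{\rr(z)}$ with $x\in\varphi_1(a)$, so $a\in\achap$ and $x=x_a$; from $\rr(y)=\s(x)=\varphi_0(\s(a))$ and (M5) again we get $b\in\C^{\s(a)}$ with $y\in\varphi_1(b)$, so $(a,b)\in\CD$ and $y=y_b$; then (M3) yields $w=xy\in\varphi_1(ab)$, and since $\rr(ab)=\rr(a)=\rr(z)$, the range version of (M4) forces $ab=z$, i.e.\ $(a,b)\in L$ and it maps to $(x,y)$. \emph{Injectivity:} if $x_a=x_{a'}$ and $y_b=y_{b'}$ for $(a,b),(a',b')\in L$, then $\rr(a)=\rr(z)=\rr(a')$ together with $x_a\in\varphi_1(a)\cap\varphi_1(a')$ and (M4) give $a=a'$; hence $\rr(b)=\s(a)=\s(a')=\rr(b')$, and with $y_b\in\varphi_1(b)\cap\varphi_1(b')$ and (M4) again we get $b=b'$. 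Since the summand at $(a,b)$ is $f(x_a)g(y_b)$ on both sides, summing over $L\cong R$ gives $(\widehat f\ast\widehat g)(z)=\widehat{f\ast g}(z)$, and as $z$ was arbitrary, $T_{\varphi}(f\ast g)=T_{\varphi}(f)\ast T_{\varphi}(g)$.

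I expect the only genuinely delicate point to be the injectivity step: because $\C$ is not assumed cancellative, one cannot pass from $ab=a'b'=z$ and $a=a'$ to $b=b'$ by left cancellation, so this must be extracted from (M4) applied to the range data instead — which is precisely why one needs both the source and the range clauses of (M4) (and of (M5)). Everything else is bookkeeping: keeping track of which variant of each axiom is invoked at each step, and checking that the singleton reductions for $x_a$ and $y_b$ correctly absorb the vanishing terms so that the two finite sums are legitimately indexed by the sets $L$ and $R$.
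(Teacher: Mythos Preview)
Your proof is correct and follows essentially the same route as the paper: both arguments establish the pointwise identity $\widehat{f\ast g}(z)=(\widehat f\ast\widehat g)(z)$ by matching terms via (M2)--(M5), showing that each pair $(c,d)$ with $cd\in\varphi_1(z)$ corresponds to a unique $(a,b)\in M_z$ with $c\in\varphi_1(a)$, $d\in\varphi_1(b)$. The only cosmetic differences are that the paper works directly with arbitrary $f,g\in\CC(\D)$ rather than first reducing to bisections, and uses the source versions of (M4) and (M5) where you use the range versions.
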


\begin{proof}
Let $f$ and $g$ be in  $ \CC(\D)$ and  note that 
\begin{equation}\label{1}
\widehat{f\ast g}(z)=\sum_{x \in \varphi_1(z)}f\ast g (x)=\sum_{x \in \varphi_1(z)}\sum_{(c,d)\in M_x}f(c)g(d).\end{equation}
On the other hand 
\begin{equation}\label{2}
\widehat{f}\ast\widehat{g}(z)=\sum_{(a,b)\in M_z}\widehat{f}(a)\widehat{g}(b)=\sum_{(a,b)\in M_z}\sum_{\substack{\xi\in \varphi_1(a)\\ \eta\in \varphi_1(b) }}f(\xi)g(\eta).
\end{equation}
By (M3), we obtain that every term of the sum \eqref{2} is also a term of the sum \eqref{1}. Conversely, if a composition $cd$ belongs to $\varphi_1(z)$ then $\s(d)=\s(cd)=\varphi_0(\s(z))$, by (M2). In this case, there is $b \in \C_{\s(z)}$ such that $d \in \varphi_1(b)$, by (M5). Moreover, $\s(c)=\rr(d)=\varphi_0(\rr(b))$ and hence there exists $a \in \C_{\rr(b)}$ such that $c \in \varphi_1(a)$, again by (M5). Therefore, $cd \in \varphi_1(ab)$, by (M3). Moreover $\s(ab)=\s(b)=\s(z)$, and hence $ab=z$, by (M4). This gives that whenever a composition $cd$ belongs to $\varphi_1(z)$ there exists a unique $(a,b)\in M_z$ such that $c \in \varphi_1(a)$ and $d \in \varphi_1(b)$. We conclude that every term of the sum \eqref{1} also appears uniquely in the sum \eqref{2}, and the result follows. 
\end{proof}
 
The next goal is to show that $T_{\varphi}$ extends to a morphism from $\A(\D)$ to $\A(\C)$. To this end, note that $\widehat{\un(\DZ)}=\un(\CZ)$. Indeed, the inclusion $(\supseteq)$ follows easily from (M1). To prove the reverse inclusion, suppose $\un(v)\in\varphi_1(z)$, for $v \in \DZ$ and $z\in \CU$. By (M2), we have that $v=\varphi_0(\s(z))$, and hence we have that $\un(v)=\un\big(\varphi_0(\s(z))\big)\in\varphi_1\big(\un(\s(z))\big)$, by (M1). Therefore $\un(v)\in \varphi_1\big(\un(\s(z))\big)\cap\varphi_1(z)$ and hence $z=\un\big(\s(z)\big)$, by (M4). With this result we can now prove the following proposition.

\begin{prop}
Let $\pi:\CCC\rightarrow B(H)$ be a representation of $\CCC$. Then, $\pi\circ T_{\varphi}$ is a representation of $\CC(\D)$ and, consequently, $T_{\varphi}$ gives rise to a contractive homomorphism $\overline{T_{\varphi}}:\A(\D)\rightarrow \A(\C)$.
\end{prop}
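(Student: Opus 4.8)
The plan is to verify the two axioms of Definition~\ref{definicaorepcate} for the composite $\pi\circ T_\varphi$, using the structural facts already assembled about $T_\varphi$ together with the identity $\widehat{\un(\DZ)}=\un(\CZ)$ proved just above. Since $\pi$ and $T_\varphi$ are both algebra homomorphisms, $\pi\circ T_\varphi:\CC(\D)\rightarrow B(H)$ is an algebra homomorphism, so only the norm bound \eqref{i1definicaorepcate} and the $*$-condition \eqref{i2definicaorepcate} remain. Once these are checked, $\pi\circ T_\varphi$ is a representation of $\CC(\D)$, hence extends to a contractive homomorphism $\A(\D)\rightarrow B(H)$; applying this to the universal representation $\Pi$ of $\CCC$ (or, equivalently, observing that $\|T_\varphi(f)\|_0 \geq \|\pi(T_\varphi(f))\|$ for every $\pi\in\mathcal R(\CCC)$, whence $\|f\|_0^{\D}\geq\|T_\varphi(f)\|_0^{\C}$ by taking the supremum over $\mathcal R(\CCC)$) shows $T_\varphi$ is contractive for the two universal seminorms, so it descends to the Hausdorff completions and yields $\overline{T_\varphi}:\A(\D)\rightarrow\A(\C)$.

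For axiom~\eqref{i1definicaorepcate}, fix a bisection $A\in\bis(\D)$ and $f\in\cc(A)$. By Proposition~\ref{prop:coveringmorphism}, $\widehat f = T_\varphi(f)\in\cc(\achap)$ and $\achap\in\bis(\C)$ by (M6); moreover, as recorded in the proof of that proposition, for $z\in\achap$ the set $\varphi_1(z)\cap A$ is a singleton $\{x_z\}$ and $\widehat f(z)=f(x_z)$, so $\|\widehat f\|_\infty\leq\|f\|_\infty$. Since $\pi$ is a representation of $\CCC$ and $\achap$ is a bisection, $\|\pi(T_\varphi(f))\|=\|\pi(\widehat f)\|\leq\|\widehat f\|_\infty\leq\|f\|_\infty$, which is exactly the required bound for elements supported on the bisection $A$.

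For axiom~\eqref{i2definicaorepcate}, take $f\in\cc(\DZ)$. By Proposition~\ref{prop:coveringmorphism} applied with $A=\DZ$ (a bisection, since $\s$ and $\rr$ restrict to the identity on objects), $T_\varphi(f)=\widehat f\in\cc(\widehat{\DZ})$, and $\widehat{\DZ}=\widehat{\un(\DZ)}=\un(\CZ)=\CZ$ by the identity established before the proposition; thus $\widehat f\in\cc(\CZ)$. Since $z\mapsto x_z$ (the unique element of $\varphi_1(z)\cap\DZ$) respects conjugation pointwise, $\widehat{\bar f}=\overline{\widehat f}$, i.e.\ $T_\varphi(\bar f)=\overline{T_\varphi(f)}$. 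Now $\pi$ being a representation gives $\pi(T_\varphi(\bar f))=\pi(\overline{T_\varphi(f)})=\pi(T_\varphi(f))^*$, which is the $*$-condition for $\pi\circ T_\varphi$. This completes the verification, and hence the extension to $\overline{T_\varphi}:\A(\D)\rightarrow\A(\C)$ follows as described.

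The only mildly delicate point is bookkeeping: one must be careful that the element $x_z$ singled out in $\varphi_1(z)\cap A$ genuinely depends only on $z$ (guaranteed by $A$ being a bisection together with (M2)), so that the pointwise formulas $\widehat f(z)=f(x_z)$ and $\widehat{\bar f}(z)=\overline{f(x_z)}$ are legitimate; everything else is a direct transcription of the already-proved facts. I do not anticipate a real obstacle here — the substance was front-loaded into Proposition~\ref{prop:coveringmorphism} and the $\widehat{\un(\DZ)}=\un(\CZ)$ computation.
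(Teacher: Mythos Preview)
Your proof is correct and follows essentially the same approach as the paper's own proof: both verify axioms \eqref{i1definicaorepcate} and \eqref{i2definicaorepcate} by invoking Proposition~\ref{prop:coveringmorphism} for the norm bound via $\widehat f(z)=f(x_z)$, and the identity $\widehat{\un(\DZ)}=\un(\CZ)$ together with $T_\varphi(\bar f)=\overline{T_\varphi(f)}$ for the $*$-condition. You provide somewhat more detail than the paper, particularly in spelling out why the contractive extension $\overline{T_\varphi}:\A(\D)\rightarrow\A(\C)$ exists, but the substance is identical.
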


\begin{proof}
We need to check conditions \eqref{i1definicaorepcate} and \eqref{i2definicaorepcate} of Definition \ref{definicaorepcate}. For condition \eqref{i1definicaorepcate}, let $A$ be a bisection and $f\in \cc(A)$ be a map. From Proposition \ref{prop:coveringmorphism} and its proof, we have that $\widehat{f}=T_{\varphi}(f)$ has support contained on the bisection $\achap$ and $\|\widehat{f}\|_\infty\leq\|f\|_\infty$ since $\widehat{f}(z)=f(x_z)$. Hence $\|\pi\circ T_{\varphi}(f)\|\leq \|f\|_\infty $. 

To prove condition \eqref{i2definicaorepcate}, note that if $f\in \cc\big(\un\big(\DZ\big)\big)$ then $T_{\varphi}(f)\in \cc(\CZ)$ and hence 
$$\begin{aligned}
\pi\circ T_{\varphi}(\overline{f~})
&=\pi\left(\overline{T_{\varphi}(f)}\right)
=\left(\pi\circ T_{\varphi}(f)\right)^*.
\end{aligned}$$
\end{proof}

If $\E$ is an étale category and $\psi=(\psi_0,\psi_1):\D\rightarrow \E$ is a relational covering morphism, then $\psi\circ\phi=((\psi\circ\varphi)_0,(\psi\circ\varphi)_1)$ defined as follows is a relational covering morphism: $(\psi\circ\varphi)_0:\CZ\rightarrow \EZ$ is given by   $(\psi\circ\varphi)_0(v)=\psi_0(\varphi_0(v))$ and
$$(\psi\circ\varphi)_1:\C1\rightarrow \PA(\EU), \qquad (\psi\circ\varphi)_1(z)=\bigcup_{x\in \varphi_1(z)}\psi_1(x), \ z\in \CU.$$
It is straightforward to prove that (M1)-(M5) hold. Here, we just show that (M6) holds. For $A\in \bis(\E)$ and $B\in \bis(\D)$, let $\achap$ denote the subset $\{z\in \CU:\ (\psi\circ\varphi)_1(z)\cap A\neq \emptyset \}$,  $\achap_{\psi}$ denote the bisection $\{z\in \DU:\ \psi_1(z)\cap A\neq \emptyset \}$ and $\widehat{B}_{\varphi}$ denote the bisection $\{z\in \CU:\ \varphi_1(z)\cap B\neq \emptyset \}$. We claim that
$$\achap=\widehat{(\achap_{\psi})}_{\varphi}.$$
If $z\in\achap$, then there exists $x\in \varphi_1(z)$ such that $\psi_1(x) \cap A$ is non-empty. Therefore, $x\in \achap_{\psi}$ and, consequently, $z \in \widehat{(\achap_{\psi})}_{\varphi}$. The converse is similar.

Finally, for $f\in \CC(\E)$ and $z\in \C$, note that
$$\begin{aligned} 
T_{\psi\circ\varphi}(f)(z)
&=\sum_{x\in (\psi\circ\varphi)_1(z)}f(x)
=\sum_{y\in \varphi_1(z)}\sum_{x\in \psi_1(y)}f(x)
=(T_{\varphi}\circ T_{\psi})(f)(z).
\end{aligned}$$
This gives $\overline{T_{\psi\circ\varphi}}=\overline{T_{\varphi}}\circ \overline{T_{\psi}}$ and proves that we have a functor from the category of étale categories to the category of Banach algebras.

\section{Restriction semigroups and their operator algebras}\label{sec:rsoa}

In this section, we briefly summarize the basics of restriction semigroups, referencing \cite[Section 2]{lawson} for more detailed information. Subsequently, we define their operator algebras and actions. We then conclude by defining the semicrossed product algebra associated with such actions and proving that the operator algebra of a restriction semigroup carries a natural semicrossed product structure.

\begin{defi}\label{restrictionsemigroupdefinition}	
    Let $S$ be a semigroup, $E(S)$ be its set of idempotents, $E\cont E(S)$ be a non-empty commutative subsemigroup of $S$, and $\lambda:S \rightarrow E$ and $\rho:S \rightarrow E$ be functions satisfying:
\begin{multicols}{2}
\begin{enumerate}
\item
    [(P1)] $\lambda(f)=f,$ for every $f\in E$.
\item
    [(P2)] $\rho(f)=f,$ for every $f\in E$.
\item   
    [(P3)] $s=s\lambda(s),$ for every $s\in S$.
\item
    [(P4)] $s=\rho(s)s,$ for every $s\in S$.
\item
    [(P5)] $\lambda(st)=\lambda(\lambda(s)t),$ for every $s,t\in S$.
\item
    [(P6)] $\rho(st)=\rho(s\rho(t))$, for every $s,t\in S$.
\item
    [(P7)] $fs=s\lambda(fs),$ for every $f\in E$ and $s\in S$.
\item
    [(P8)] $sf=\rho(sf)s,$ for every $f\in E$ and $s\in S$.
\end{enumerate}
\end{multicols}
  The quadruple $(S,E,\lambda,\rho)$ is termed a \textbf{restriction semigroup}, with the maps $\lambda$ and $\rho$ being called structure maps. The elements $E$ are called projections and it conveys to note that $E$ carries a natural order structure, where $e\leq f$ if $ef=e$.
\end{defi}

An element $s\in S$ could be heuristically regarded as function whose domain and codomain are $\lambda(s)$ and $\rho(s)$, respectively. In this case, $E$ would represent a set of subsets or, more precisely, a set of identity functions since it is contained in $S$.

\begin{rem}
    Under the conditions of Definition \ref{restrictionsemigroupdefinition}, a quadruple $(S,E,\lambda,\rho)$ satisfying (P1)-(P6) is called an \textbf{Ehresmann semigroup}. In the literature, one can find examples in which the map $\rho$ is not defined. In such case, a triple $(S,E,\lambda)$ satisfying (P1), (P3), (P5) and (P7) is called a \textbf{left restriction semigroup}.
\end{rem}

\begin{exe}\label{monoidasrs}
    Let $S$ be a monoid and $e$ be its unit. Defining $E=\{e\}$, note that there is just one possibility for $\lambda$ and $\rho$. Thus, it can be easily seen that $(S,E,\lambda,\rho)$ is a restriction semigroup.
\end{exe}

\begin{exe}[cf. Proposition 3.12 of \cite{lawson}]\label{ex: bis_rest_semi}
	If $\C$ is an étale category then $\left(\bisc, ~E,~\lambda,~\rho\right)$ is a restriction semigroup, where $E=\{U\in \bisc\ | U\cont \CZ \}$, $\lambda (U)=\{\s(x)\ | x\in U \}$ and $\rho (U)=\{\rr(x)\ | x\in U \}$.
	\end{exe}

\begin{exe}[cf. \cite{lawson1998inverse}, \cite{paterson}]
    Suppose $S$ is an inverse semigroup. Defining $\lambda(s)=s^*s$ and $\rho(s)=ss^*$ to be the usual \textit{source} and \textit{range} maps on $S$, we have that $(S,E(S),\lambda,\rho)$ is a restriction semigroup. This will be the canonical way of viewing an inverse semigroup as a restriction semigroup. 
\end{exe}

 The following proposition shows that the structure maps are intrinsic to the pair $(S,E)$.
 
\begin{prop}\label{lambdarhominimal}
	Let $(S,E,\lambda,\rho)$ be a restriction semigroup and $s$ be an element of $S$. Then, $\lambda (s)=\min \{f\in E\ |\ sf=s\}$ and $\rho(s)=\min \{f\in E \ |\ fs=s\},$ 
	where $E$ is equipped with its semilattice order.
\end{prop}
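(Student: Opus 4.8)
The plan is to show both equalities by verifying, for each of the two sets in question, that $\lambda(s)$ (resp. $\rho(s)$) is an element of the set and that it is a lower bound for the set with respect to the semilattice order $e \leq f \iff ef = e$. I will spell out the argument for $\lambda(s)$; the statement for $\rho(s)$ follows by a symmetric argument using (P2), (P4), (P6), (P8) in place of (P1), (P3), (P5), (P7).

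First I would check that $\lambda(s)$ belongs to $\{f \in E \mid sf = s\}$. That $\lambda(s) \in E$ is immediate from the definition of the structure maps, and $s\lambda(s) = s$ is precisely (P3). Next I would show $\lambda(s)$ is a lower bound: let $f \in E$ with $sf = s$. I want $\lambda(s)\,f = \lambda(s)$, i.e. $\lambda(s) \leq f$. Applying $\lambda$ to both sides of $sf = s$ gives $\lambda(sf) = \lambda(s)$. Now $\lambda(sf) = \lambda(\lambda(s)f)$ by (P5), so $\lambda(\lambda(s)f) = \lambda(s)$. Since $\lambda(s), f \in E$ and $E$ is a commutative subsemigroup, $\lambda(s)f \in E$, and by (P1) $\lambda$ fixes elements of $E$, so $\lambda(\lambda(s)f) = \lambda(s)f$. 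Combining, $\lambda(s)f = \lambda(s)$, which is exactly $\lambda(s) \leq f$. Hence $\lambda(s)$ is the minimum of $\{f \in E \mid sf = s\}$.

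For $\rho(s)$: $\rho(s) \in E$ and $\rho(s)s = s$ by (P4), so $\rho(s)$ is in the set $\{f \in E \mid fs = s\}$. If $f \in E$ satisfies $fs = s$, apply $\rho$ and use (P6): $\rho(s) = \rho(fs) = \rho(f\rho(s))$, and since $f\rho(s) \in E$, (P2) gives $\rho(f\rho(s)) = f\rho(s)$, whence $f\rho(s) = \rho(s)$; by commutativity of $E$ this is $\rho(s)f = \rho(s)$, i.e. $\rho(s) \leq f$. So $\rho(s)$ is the minimum of the set.

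I do not anticipate a serious obstacle here: the whole proof is a direct manipulation of the axioms, the only mild subtlety being to keep track of the direction of the order ($e \leq f$ means $ef = e$, so "$\lambda(s)$ is a lower bound" means $\lambda(s)f = \lambda(s)$ rather than the reverse) and to remember to invoke commutativity of $E$ when passing between $\lambda(s)f$ and $f\lambda(s)$. The key identities used are (P3) and (P5) (together with (P1)) for the $\lambda$ case, and their mirror images (P4), (P6), (P2) for the $\rho$ case.
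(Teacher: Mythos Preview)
Your proof is correct and follows essentially the same approach as the paper: both arguments verify that $\lambda(s)$ lies in the set by (P3), then for any $f$ with $sf=s$ use the chain $\lambda(s)f=\lambda(\lambda(s)f)=\lambda(sf)=\lambda(s)$ (invoking (P1) and (P5)) to obtain $\lambda(s)\leq f$, with the $\rho$ case handled symmetrically.
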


\begin{proof}
	    Note that the $\lambda(s) \in \{f \in E\ |\ sf=s\}$, by property (P3). Moreover, if $f \in E$ is such that $sf=s$ we have that $\lambda(s)f \in E$, and hence
	    $\lambda(s)f=\lambda(\lambda(s)f)=\lambda(sf)=\lambda(s).$ Therefore $\lambda(s)\leq f$, proving that $\lambda (s)=\min \{f\in E\ |\ sf=s\}$. The result for $\rho$ is similar.
\end{proof}

Every restriction semigroup $(S,E,\lambda,\rho)$ has a category $\C_S$ associated with. More precisely, the objects are $\CZ_S=E$, the morphisms are  $\CU_S=S$, and the structure maps are  $\s(s)=\lambda(s)$, $\rr(s)=\rho(s)$, $\un(e)=e$ and $\m(s,t)=st$.

Furthermore, similar to the inverse semigroup case, the relation $s\leq t$ if $s=tf$ for some $f\in E$ defines a partial order on a restriction semigroup. Note that this order relation agrees with the semilattice order on $E$ and, moreover, the following equivalences hold:
\begin{multicols}{2}
\begin{enumerate}[{\normalfont \rmfamily  (i)}]
\item $s\leq t$.
\item $\exists\  f\in E \text{ such that } s=ft.$
\item $s=\rho(s)t$.
\item $s=t\lambda(s)$.
\end{enumerate}
\end{multicols}

The properties presented below are demonstrated in Section 2 of \cite{lawson} and readily follow from the preceding definitions. Following this, we present certain propositions whose proofs become quite intuitive if one heuristically considers the product in $S$ as partial composition of functions.

\begin{multicols}{2}
\begin{enumerate}
\item [(R1)] $sf\leq s,\ \forall s\in S, f\in E$.
\item [(R2)]$fs\leq s,\ \forall s\in S, f\in E$.
\item [(R3)]$\lambda$ and $\rho$ are order-preserving.
\item [(R4)]If $s\leq t$ and $\lambda(s)=\lambda(t)$ then $s=t$.
\item [(R5)]If $s\leq t$ and $\rho(s)=\rho(t)$ then $s=t$.
\item [(R6)]$\lambda(st)\leq \lambda(t),\ \forall s,t\in S$.
\item [(R7)]$\rho(st)\leq \rho(s),\ \forall s,t\in S$.
\end{enumerate}
\end{multicols}

\begin{prop}\label{propdomaincomp}
	Let $(S,E,\lambda,\rho)$ be a restriction semigroup and $s,t$ be elements of $S$. Then, $\rho(t)\leq \lambda(s)$ if and only if $\lambda(st)=\lambda(t).$
\end{prop}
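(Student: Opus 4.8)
The plan is to prove both implications by exploiting Proposition \ref{lambdarhominimal}, which characterizes $\lambda(st)$ as $\min\{f\in E \mid (st)f = st\}$, together with properties (P1)--(P8) and the elementary consequences (R1)--(R7).

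For the forward direction, assume $\rho(t)\leq \lambda(s)$, i.e. $\rho(t)\lambda(s) = \rho(t)$ in the semilattice $E$ (using commutativity of $E$). By (R6) we always have $\lambda(st)\leq \lambda(t)$, so it suffices to prove $\lambda(t)\leq \lambda(st)$, equivalently $\lambda(t)\lambda(st) = \lambda(t)$, or better, using Proposition \ref{lambdarhominimal}, that $t\lambda(st) = t$ (since $\lambda(t)$ is the minimum $f$ with $tf=t$, this would force $\lambda(t)\leq \lambda(st)$). To see $t\lambda(st)=t$, I would start from $st\lambda(st) = st$ (which is (P3) applied to $st$) and try to "cancel" $s$ on the left in a controlled way. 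The key computation: using (P4), $t = \rho(t)t$; then since $\rho(t)\leq \lambda(s)$, write $\rho(t) = \rho(t)\lambda(s)$, so $t = \rho(t)\lambda(s)t$. Now I want to relate $\lambda(s)t$ to something involving $st$. The natural move is to use the heuristic "$s$ restricted to $\rho(t)$" — apply (P7) or (P8) to push idempotents around, aiming to show $s(t\lambda(st)) = st$ and then recover $t\lambda(st)=t$ by a cancellation argument grounded in the order structure rather than genuine cancellativity. Concretely, $s(t\lambda(st)) = (st)\lambda(st) = st = s(\rho(t)t)$; and since $t\lambda(st)\leq t$ and both have... — here one must be careful, so the cleanest route is probably: show $\lambda(s \cdot t) $ via (P5), $\lambda(st) = \lambda(\lambda(s)t)$, and since $\rho(t)\leq\lambda(s)$ gives $\lambda(s)\rho(t) = \rho(t)$, deduce $\lambda(s)t = \lambda(s)\rho(t)t = \rho(t)t = t$, hence $\lambda(st) = \lambda(\lambda(s)t) = \lambda(t)$ directly. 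This last chain is short and is the heart of the forward direction.

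For the converse, assume $\lambda(st) = \lambda(t)$. I want $\rho(t)\leq \lambda(s)$, i.e. $\rho(t)\lambda(s) = \rho(t)$. Again using the order characterization, it suffices to exhibit the inequality through idempotent manipulations: from (R7), $\rho(t\lambda(s))$ compares with things, but more directly I would use that $\lambda(s)t \leq t$ with $\lambda(\lambda(s)t) = \lambda(st) = \lambda(t) = \lambda(\text{RHS side})$, and then invoke (R4): if $\lambda(s)t \leq t$ and $\lambda(\lambda(s)t) = \lambda(t)$, then $\lambda(s)t = t$. Once $\lambda(s)t = t$, apply $\rho$ to both sides and use (R7)/(P6): $\rho(t) = \rho(\lambda(s)t) = \rho(\lambda(s)\rho(t))$ and this is $\leq \rho(\lambda(s)) = \lambda(s)$ by (R7) — wait, one should check $\rho(\lambda(s)) = \lambda(s)$ by (P2) since $\lambda(s)\in E$. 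So $\rho(t) = \rho(\lambda(s)\rho(t)) \leq \lambda(s)$, which is exactly what we want.

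The main obstacle I anticipate is the cancellation-type step: in a restriction semigroup one does not have cancellativity, so moving from an equation of the form $s x = s y$ (with $x \leq y$, say) to $x = y$ must be done via (R4) or (R5), which require a matching of $\lambda$-values or $\rho$-values — so the real work is arranging the computation so that such a value-matching is available exactly when needed. The identity $\lambda(st) = \lambda(\lambda(s)t)$ (property (P5)) is what lets me sidestep genuine cancellation in the forward direction, and (R4) with the computed $\lambda$-equality does the same job in the converse; getting these lined up correctly is the only delicate point, and everything else is routine application of (P1)--(P8) and (R1)--(R7).
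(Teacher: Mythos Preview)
Your proposal is correct and, once the exploratory false starts are stripped away, follows essentially the same route as the paper: for the forward direction you derive $\lambda(s)t=t$ from $\rho(t)\leq\lambda(s)$ and then apply (P5), and for the converse you use (R2) together with (R4) to get $\lambda(s)t=t$. The only cosmetic difference is the final step of the converse, where the paper invokes Proposition~\ref{lambdarhominimal} directly (since $\lambda(s)t=t$ forces $\rho(t)\leq\lambda(s)$), while you compute $\rho(t)=\rho(\lambda(s)\rho(t))\leq\lambda(s)$ via (P6), (R7), (P2); both finish the job.
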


\begin{proof}
If $\rho(t)\leq \lambda(s)$ then $\lambda(s)t=\lambda(s)\rho(t)t=\rho(t)t=t$. Applying $\lambda$ to both sides, we get $\lambda(st)=\lambda(t)$. On the other hand, suppose $\lambda(st)=\lambda(t)$. By (R2), $\lambda(s)t\leq t$ and in addition \linebreak $\lambda(\lambda(s)t)=\lambda(st)=\lambda(t)$. Then, by (R4), $t=\lambda(s)t$. Thus, Proposition \ref{lambdarhominimal} ensures $\rho(t)\leq\lambda(s)$.
\end{proof}

\begin{prop}\label{lemmareg}
	Let $(S,E,\lambda,\rho)$ be a restriction semigroup. For elements $s,t,y$ in $S$ the following holds: $\rho(y)\leq \lambda(st)$ if and only if $\rho(y)\leq \lambda(t)$ and $\rho(ty)\leq \lambda(s)$.
\end{prop}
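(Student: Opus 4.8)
The statement is an ``associativity'' identity for the partial-composability predicate $\rho(\cdot)\leq\lambda(\cdot)$ on a restriction semigroup, and the natural tool is Proposition~\ref{propdomaincomp}, which translates $\rho(b)\leq\lambda(a)$ into $\lambda(ab)=\lambda(b)$. The plan is to eliminate all $\rho$'s in favour of $\lambda$'s on products, and then juggle the resulting equalities using (P5) and the cancellation-type rule (R4). Concretely, I would argue both implications separately, though each direction is essentially a short chain of rewrites.

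First, the forward direction. Assume $\rho(y)\leq\lambda(st)$. By Proposition~\ref{propdomaincomp} this is the same as $\lambda(sty)=\lambda(y)$. I want to conclude $\rho(y)\leq\lambda(t)$, i.e.\ $\lambda(ty)=\lambda(y)$, and $\rho(ty)\leq\lambda(s)$, i.e.\ $\lambda(sty)=\lambda(ty)$. Note the second of these is immediate once the first is known, since then $\lambda(sty)=\lambda(y)=\lambda(ty)$. So the work is to show $\lambda(ty)=\lambda(y)$. For this, observe $\lambda(ty)\leq\lambda(y)$ by (R6); conversely $ty\lambda(sty)$: using (P5), $\lambda(y)=\lambda(sty)=\lambda(\lambda(st)y)\leq\lambda(y)$ and also $\lambda(y)=\lambda(sty)$ and (R6) applied to $s\cdot(ty)$ gives $\lambda(sty)\leq\lambda(ty)$, hence $\lambda(y)\leq\lambda(ty)$. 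Combining, $\lambda(ty)=\lambda(y)$, as desired.

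Second, the converse. Assume $\rho(y)\leq\lambda(t)$ and $\rho(ty)\leq\lambda(s)$; by Proposition~\ref{propdomaincomp} these say $\lambda(ty)=\lambda(y)$ and $\lambda(sty)=\lambda(ty)$. Chaining, $\lambda(sty)=\lambda(y)$. Now I must see this equals $\lambda(\lambda(st)y)$ so that Proposition~\ref{propdomaincomp} applies to the pair $(st,y)$ and yields $\rho(y)\leq\lambda(st)$. But $\lambda(\lambda(st)y)=\lambda(sty)$ is exactly an instance of (P5) with ``$s$'' replaced by $st$ and ``$t$'' by $y$. Hence $\lambda(\lambda(st)y)=\lambda(y)$, and since always $\lambda(\lambda(st)y)\leq\lambda(y)$ with equality now established, Proposition~\ref{propdomaincomp} (in the form $\rho(y)\leq\lambda(st)\iff\lambda(sty)=\lambda(y)$, noting $\lambda((st)y)=\lambda(sty)$ by associativity of the semigroup product) gives the claim.

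\textbf{Main obstacle.} There is no deep difficulty here; the only thing to be careful about is keeping track of which variable plays the role of ``$s$'' and which of ``$t$'' in each invocation of (P5) and Proposition~\ref{propdomaincomp}, and remembering that the semigroup product is genuinely associative so that expressions like $\lambda((st)y)$ and $\lambda(s(ty))$ may be freely identified with $\lambda(sty)$. The cleanest writeup simply records the two equivalences ``$\rho(y)\leq\lambda(st)\iff\lambda(sty)=\lambda(y)$'' and ``$\rho(y)\leq\lambda(t)\iff\lambda(ty)=\lambda(y)$'' and ``$\rho(ty)\leq\lambda(s)\iff\lambda(sty)=\lambda(ty)$'' from Proposition~\ref{propdomaincomp}, after which the lemma is the transitivity statement $\lambda(sty)=\lambda(y)\iff(\lambda(ty)=\lambda(y)$ and $\lambda(sty)=\lambda(ty))$, which is a triviality.
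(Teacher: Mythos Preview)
Your approach is correct and genuinely different from the paper's. You invoke Proposition~\ref{propdomaincomp} three times to translate each inequality $\rho(b)\leq\lambda(a)$ into the equality $\lambda(ab)=\lambda(b)$, after which the statement becomes the triviality
\[
\lambda(sty)=\lambda(y)\ \Longleftrightarrow\ \bigl(\lambda(ty)=\lambda(y)\ \text{and}\ \lambda(sty)=\lambda(ty)\bigr),
\]
the only nontrivial implication being $\lambda(sty)=\lambda(y)\Rightarrow\lambda(ty)=\lambda(y)$, which you obtain from the sandwich $\lambda(y)=\lambda(sty)\leq\lambda(ty)\leq\lambda(y)$ via two applications of~(R6). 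The paper, by contrast, never passes through Proposition~\ref{propdomaincomp}; it works directly with the axioms (P1)--(P8), computing $\lambda(s)\rho(ty)$ and $\lambda(st)\rho(y)$ as explicit chains of rewrites to show each equals its second factor. Your argument is shorter and more conceptual, since it reuses machinery already established; the paper's is more self-contained but requires spotting the right axiom at each step. One cosmetic note: the sentence beginning ``conversely $ty\lambda(sty)$'' in your forward direction is garbled, and the invocation of (P5) there is unnecessary---the argument goes through with (R6) alone, as your final summary correctly indicates.
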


\begin{proof}
Suppose $\rho(y)\leq \lambda(st)$. By (R6), $\rho(y)\leq \lambda(t)$ and 
$$\begin{aligned}
        \lambda(s)\rho(ty)&\stackrel{(P6)}{=}~\lambda(s)\rho(t\rho(y))
        \stackrel{(P2)}{=}~\rho(\lambda(s)\rho(t\rho(y)))
        \\&\stackrel{(P6)}{=}~\rho(\lambda(s)t\rho(y))
        \stackrel{(P7)}{=}~\rho(t\lambda(\lambda(s)t)\rho(y))
        \\& \stackrel{(P5)}{=}~\rho(t\lambda(st)\rho(y))
        \stackrel{hip.}{=}~\rho(t\rho(y))
        \stackrel{(P6)}{=}~\rho(ty).
\end{aligned}$$

On the other hand, suppose that $\rho(y)\leq \lambda(t)$ and that $\rho(ty)\leq \lambda(s)$. Then, we have 

$$\begin{aligned}
    \lambda(st)\rho(y)&\stackrel{(P1)}{=}\lambda(\lambda(st)\rho(y))
    \stackrel{(P5)}{=}~\lambda(st\rho(y))
    \stackrel{(P5)}{=}~\lambda(\lambda(s)t\rho(y))
    \stackrel{(P8)}{=}~\lambda(\lambda(s)\rho(t\rho(y))t)
    \\&\stackrel{(P8)}{=}~\lambda(\lambda(s)\rho(ty)t)
    \stackrel{hip.}{=}~\lambda(\rho(ty)t)
    \stackrel{(P6)}{=}~\lambda(\rho(t\rho(y))t)
    \stackrel{(P8)}{=}~\lambda(t\rho(y))
    \\&\stackrel{(P5)}{=}~\lambda(\lambda(t)\rho(y))
    \stackrel{(P1)}{=}~\lambda(t)\rho(y)
    \stackrel{hip.}{=}~\rho(y).
\end{aligned}$$
\end{proof}

Now, we show an example of a left restriction semigroup that is not a restriction semigroup.

\begin{exe}[Partial surjections on a set]
	Let $X$ be a non-empty set. Define 
	    $$\jj(X)=\{f:A\rightarrow B\ |\ A,B\cont X \text{ and } f \text{ is surjective} \}.$$ 
	A straightforward calculation shows that $\jj(X)$ is a semigroup equipped with the following product: for $f:A\rightarrow B$ and $g:C\rightarrow D$ in $\jj(X)$ we define 
	        \begin{equation}\label{productjx}\left.\begin{array}{rrc}
			fg: & g^{-1}(A\cap D)\longrightarrow &f(A\cap D)\\
			& x \ \ \ \ \ \longmapsto &f(g(x)).
		    \end{array} \right.\end{equation}
	Note that for $f:A\rightarrow B$ in $\jj(X)$ and $C\cont X$, we have 	        \begin{equation}\label{fidc}	
		    \left.\begin{array}{rrc}
			f\id_C: & A\cap C\longrightarrow &f(A\cap C)\\
			& x \ \ \longmapsto &f(x).
	    	\end{array} \right.\end{equation} 
	And, on the other hand
	                \begin{equation}\label{idcf}	\left.\begin{array}{rrc}
		    	    \id_C f: & f^{-1}(B\cap C)\longrightarrow & B\cap C \\
		        	& x \ \ \ \ \ \longmapsto &f(x).
		            \end{array} \right.\end{equation}
	In particular, for subsets $B$ and $C$ of $X$ we have $\id_C\id_B=\id_{B\cap C}$. Therefore, the set $E=\{\id_A\ | \ A \cont X\}$ is a subsemigroup of $\jj(X)$ whose elements are idempotents, that is, $E$ is semilattice. Moreover, for $f:A\rightarrow B\in \jj(X)$, $C\cont A$ and $D\cont B$ we have 
	            \begin{equation}\label{fidc1}	
		        \left.\begin{array}{rrc}
		    	f\id_C: & C\longrightarrow &f(C)\\
			    & x \ \longmapsto &f(x),
		        \end{array} \right.\end{equation}
	and also
	            \begin{equation}\label{idcf2}	\left.\begin{array}{rrc}
			    \id_D f: & f^{-1}(D)\longrightarrow & D \\
			    & x \ \ \ \ \ \longmapsto &f(x).
		        \end{array} \right.\end{equation}
In view of Proposition \ref{lambdarhominimal}, note that $\lambda$ and $\rho$ must be defined as follows for $f:A\rightarrow B \in \jj(X)$, $\lambda(f)=\id_A$ and $\rho(f)=\id_B$. Next, we prove that $(\jj(X),~E,~\lambda)$ is a left restriction semigroup but not a restriction semigroup. 
	\begin{enumerate}
		\item  
		    [(P1)] $\lambda(\id_A)=\id_A,$ for every $A \cont X$: Trivial.
		\item
		    [(P2)] $\rho(\id_A)=\id_A,$ for every $A \cont X$: Trivial.
		\item
		    [(P3)] $f=f\lambda(f),$ for every $f\in \jj(X)$: It follows easily from equation \eqref{fidc1}.
		\item
		    [(P4)] $f=\rho(f)f,$ for every $f\in \jj(X)$: It follows easily from equation \eqref{idcf2}.
		\item
	    	[(P5)] $\lambda(fg)=\lambda(\lambda(f)g)$, for every $f,g\in \jj(X)$:
	    	
	    	Consider $f:A\rightarrow B$ and $g:C\rightarrow D$ in $\jj(X)$. By \eqref{productjx}, we have that $\lambda(fg)=\id_{g^{-1}(A\cap D)}$. On the other hand $\lambda(f)g=\id_A g$ has domain $g^{-1}(A\cap D)$, by \eqref{idcf}. Thus, $\lambda(\lambda(f)g)=\id_{g^{-1}(A\cap D)}=\lambda(fg).$
		
		\item
		    [(P6)] $\rho(fg)=\rho(f\rho(g))$, for every $f,g\in \jj(X)$:
		    
		    Consider $f:A\rightarrow B$ and $g:C\rightarrow D$ in $\jj(X)$. By \eqref{productjx} we have that $\rho(fg)=\id_{f(A\cap D)}.$ On the other hand $f\rho(g)=f\id_D$ has codomain $f(A\cap D)$, by \eqref{fidc}. Thus, $\rho(f\rho(g))=\id_{f(A\cap D)}=\rho(fg).$
		
		\item
		    [(P7)] $\id_Cf=f\lambda(\id_Cf),$ for every $C\cont X $ and $f\in \jj(X)$:
		
		    By \eqref{idcf}, we have 
		    $$\left.\begin{array}{rrc}
			\id_C f: & f^{-1}(B\cap C)\longrightarrow & B\cap C \\
			& x \ \ \ \ \ \longmapsto &f(x).
		    \end{array} \right.$$
			On the other hand, since $\lambda(\id_Cf)=\id_{f^{-1}(B\cap C)}$ and $f^{-1}(B\cap C)\cont A$, by \eqref{fidc1} we have $$\left.\begin{array}{rrc}
			f\lambda(\id_Cf): & f^{-1}(B\cap C)\longrightarrow & B\cap C \\
			& x \ \ \ \ \ \longmapsto &f(x).
		\end{array} \right.$$
	
	\item[(P8)]	$f\id_C=\rho(f\id_C)f,\text{ for every }C\cont X \text{ and }f\in \jj(X):$ 
	
	So far, we have proved that $(\jj(X),~E,~\lambda)$ is a left restriction semigroup. To obtain that the quadruple $(\jj(X),~E,~\lambda,~\rho)$ is a restriction semigroup, we should be able to verify that property (P8) holds. However, this is not always true. Assume $X$ has at least two distinct elements $a,b\in X$ and let $f:\{a,b\}\rightarrow \{a\}$ be the constant map $a$. By \eqref{fidc1}, $f\id_{\{a\}}=\id_{\{a\}}$. However,  by \eqref{idcf2}, $\rho(f\id_{\{a\}})f=\id_{\{a\}}f=f$. Thus $\jj(X)$ is not a restriction semigroup. Moreover, we have shown $f\id_{\{a\}}\neq \id_{\{a\}}f$ and, in particular, the product of idempotents of $\jj(X)$ does not commute.
	\end{enumerate}
\end{exe}

 Now, let us define morphisms between restriction semigroups.

\begin{defi}
	Let $S$ and $T$ be left restriction semigroups. A \textbf{left restriction semigroup homomorphism} from $S$ to $T$ is a semigroup homomorphism $\phi:S\rightarrow T$ such that $\phi(\lambda(s))=\lambda(\phi(s))$, for every $s \in S$.
\end{defi}

\begin{defi}
Let $S$ and $T$ be restriction semigroups. A \textbf{restriction semigroup homomorphism} from $S$ to $T$ is a semigroup homomorphism $\phi:S\rightarrow T$ such that $\phi(\lambda(s))=\lambda(\phi(s))$ and  $\phi(\rho(s))=\rho(\phi(s))$, for every $s \in S$.
\end{defi}

\begin{exe}[cf. Example 2.41 of \cite{lawson}]
Let $G$ be a group with the identity $e$ and suppose $|G| \geq 2$. Let $S$ be the group $G$ with the adjoined zero $0$. It is a restriction semigroup with respect to $E=\{e, 0\}$ with $\lambda$ and $\rho$ mapping $0 \mapsto 0$ and each nonzero element to $e$. Then, $\phi:S\rightarrow S$, which maps $x\mapsto e$, $x\in G$ and $0\mapsto 0$, is a restriction semigroup homomorphism.
\end{exe} 

	We say that a restriction semigroup $S$ is left-ample (resp. right-ample) if $st=su$ (resp. $ts=us$) implies that $\lambda(s)t=\lambda(s)u$ (resp. $t\rho(s)=u\rho(s)$)  for every triple $(s,t,u) \in S^3$. Moreover, $S$ is called \textbf{ample} if it is both left and right ample. The following theorem is an analogous version of the Wagner-Preston theorem for left restriction semigroups. The machinery developed in the proof will be used in a subsequent section. 

\begin{teo}[Theorem 14 of \cite{schweizer1967function}]\label{lrrep}
 Let $(S,E,\lambda)$ be a left restriction semigroup. For  $s\in S$, define $B_s=\{\ \lambda(s)t\ |\ t\in S\}$, $C_s=\{\ st\ | t \in B_s\}$ and consider the map $\phi_s:B_s\rightarrow C_s$, given by $\phi_s(t)=st$. Then, the map $$\left.\begin{array}{rrc}
 \phi: & S \longrightarrow & \jj(S)\\
 & s \longmapsto &\phi_s
 \end{array} \right.$$ is an injective left restriction semigroup homomorphism. Moreover, if $S$ is left-ample then each map $\phi_s$ is a bijection.
\end{teo}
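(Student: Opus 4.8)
The plan is to verify directly that $\phi$ is a well-defined left restriction semigroup homomorphism into $\jj(S)$, and that it is injective; the claim about left-ample semigroups is then an easy add-on. First I would check that each $\phi_s$ is indeed a member of $\jj(S)$, i.e.\ a surjection $B_s\to C_s$. Surjectivity is immediate from the definition of $C_s=\{st\mid t\in B_s\}$, so the only real content here is to understand the set $B_s=\{\lambda(s)t\mid t\in S\}$; using (P7) and (P1)--(P5) one sees that $B_s$ is precisely the set of elements $u\in S$ with $\rho(u)\leq\lambda(s)$ (equivalently $\lambda(s)u=u$), which is the ``principal right ideal'' picture one expects from the heuristic of partial functions. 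I would also want the auxiliary fact that for $u\in B_s$ one has $su\in B_{?}$ in the appropriate sense, recording $\rho(su)\le\rho(s)$ from (R7).

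Next I would compute the product $\phi_s\phi_t$ in $\jj(S)$ using the formula \eqref{productjx} and compare it with $\phi_{st}$. Concretely, with $f=\phi_s:B_s\to C_s$ and $g=\phi_t:B_t\to C_t$, the product $\phi_s\phi_t$ has domain $g^{-1}(B_s\cap C_t)$ and acts by $x\mapsto s(tx)=(st)x$; I must show this domain equals $B_{st}$ and that the resulting map is $\phi_{st}$. Translating ``$x\in g^{-1}(B_s\cap C_t)$'' into restriction-semigroup language: $x\in B_t$ (so $\lambda(t)x=x$) and $tx\in B_s$ (so $\rho(tx)\le\lambda(s)$). By Proposition~\ref{lemmareg} (with $y=x$), the conjunction ``$\rho(x)\le\lambda(t)$ and $\rho(tx)\le\lambda(s)$'' is exactly ``$\rho(x)\le\lambda(st)$'', i.e.\ $x\in B_{st}$. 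This is the crucial step, and it is precisely why Proposition~\ref{lemmareg} was proved just beforehand. Having matched the domains, that both maps send $x\mapsto (st)x$ is immediate by associativity in $S$, so $\phi(st)=\phi(s)\phi(t)$.

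I would then check compatibility with $\lambda$: I must show $\phi(\lambda(s))=\lambda(\phi(s))$ in $\jj(S)$, where the right-hand side is $\id_{B_s}$ (the identity on the domain of $\phi_s$). On one hand $\phi(\lambda(s))=\phi_{\lambda(s)}:B_{\lambda(s)}\to C_{\lambda(s)}$ acts by $x\mapsto \lambda(s)x$; since $\lambda(\lambda(s))=\lambda(s)$ by (P1), $B_{\lambda(s)}=\{\lambda(s)x\mid x\in S\}=B_s$, and for $x\in B_s$ one has $\lambda(s)x=x$, so $\phi_{\lambda(s)}=\id_{B_s}$. On the other hand the domain of $\phi_s$ is by definition $B_s$, so $\lambda(\phi_s)=\id_{B_s}$ as well. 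Hence the two agree.

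For injectivity, suppose $\phi_s=\phi_t$. Then $B_s=B_t$, and applying both maps to $\lambda(s)\in B_s$ (note $\lambda(s)\in B_s$ since $\lambda(s)=\lambda(s)\lambda(s)$; and $\lambda(s)=\lambda(t)$ because $B_s=B_t$ forces $\min B_s$-type reasoning, or more directly $\lambda(s)$ is the identity element of $B_s$ under left multiplication) we get $s\lambda(s)=t\lambda(s)$, i.e.\ $s=t$ by (P3). Finally, for the left-ample case: each $\phi_s$ is already surjective, so I only need injectivity of $\phi_s$ on $B_s$. If $\phi_s(u)=\phi_s(v)$ with $u,v\in B_s$, then $su=sv$, so left-ampleness gives $\lambda(s)u=\lambda(s)v$; but $u,v\in B_s$ means $\lambda(s)u=u$ and $\lambda(s)v=v$, whence $u=v$. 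I expect the main obstacle to be bookkeeping the domains correctly in the comparison $\phi_s\phi_t=\phi_{st}$ — specifically identifying $g^{-1}(B_s\cap C_t)$ with $B_{st}$ — and this is exactly where Proposition~\ref{lemmareg} does the heavy lifting; everything else is routine manipulation with (P1)--(P8) and the order-theoretic lemmas (R1)--(R7).
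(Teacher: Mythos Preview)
Your overall strategy mirrors the paper's, but there is a genuine gap in scope. The theorem is stated for a \emph{left} restriction semigroup $(S,E,\lambda)$, where no map $\rho$ is given. Your crucial domain comparison invokes Proposition~\ref{lemmareg}, which is stated and proved only for (two-sided) restriction semigroups $(S,E,\lambda,\rho)$; its proof uses (P6) and (P8), neither of which is available in the left-only setting. Likewise, your description $B_s=\{u\mid \rho(u)\le\lambda(s)\}$ and the appeal to (R7) are not meaningful here --- only the equivalent form $B_s=\{u\mid \lambda(s)u=u\}$ survives, and that is what you must work with throughout.

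The paper handles this by carrying out the domain check $\phi_u^{-1}(B_s\cap C_u)=B_{su}$ directly, using only (P1), (P3), (P5), (P7) and commutativity of $E$. In effect it proves inline the $\rho$-free translation of Proposition~\ref{lemmareg}, namely that $\lambda(su)t=t$ if and only if $\lambda(u)t=t$ and $\lambda(s)(ut)=ut$. If you restrict your argument to full restriction semigroups it becomes a clean repackaging of the same computation via Proposition~\ref{lemmareg}; but as written it does not establish the theorem in the stated generality. Your treatment of the $\lambda$-compatibility, the injectivity of $\phi$, and the left-ample clause is fine and essentially the same as the paper's (for injectivity you should make the step $\lambda(s)=\lambda(t)$ explicit: from $B_s=B_t$ one has $\lambda(s)\in B_t$ and $\lambda(t)\in B_s$, hence $\lambda(s)=\lambda(s)\lambda(t)=\lambda(t)$).
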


\begin{proof}
Clearly $\phi_s$ is surjective. Moreover note that $B_s$ is equal to $\{t \in S\ |\ t=\lambda(s)t\}$ and $C_s$ is equal to $\{ st \ |\ t \in S \}$. In particular, if $S$ is left-ample and $\phi_s(t)=\phi_s(t')$, then $t=\lambda(s)t=\lambda(s)t'=t'$. Therefore, $\phi_s$ is an injective map for every $s\in S$. Furthermore, it is easy to see that $\phi_e=\id_{B_e}$ for each $e\in E$. Note that for $s$ and $u$ in $S$     
    $$\left.\begin{array}{rrc}
    \phi_s\phi_u: & \phi_u^{-1}(B_s\cap C_u)\longrightarrow &\phi_s(B_s\cap C_u)\\
    & t \ \ \ \ \ \longmapsto &sut.
    \end{array} \right.$$ 
To show that $\phi_s\phi_u=\phi_{su}$, it suffices to ensure that $\phi_u^{-1}(B_s\cap C_u)=B_{su}$ and $\phi_s(B_s\cap C_u)=C_{su}$ since both functions have the same output. 
\begin{enumerate}[{\normalfont \rmfamily  (i)}]
	\item 
	    $\phi_u^{-1}(B_s\cap C_u)=B_{su}:$ For $t\in \phi_u^{-1}(B_s\cap C_u)$, we have that $t=\lambda(u)t$ and $ut=\lambda(s)ut$. Hence,  
	        $$\begin{aligned}
	            \lambda(su)t
	            &\stackrel{(P7)}{=}~t\lambda(\lambda(su)t)
	            \stackrel{(P5)}{=}~t\lambda(sut)
	            \stackrel{(P5)}{=}~t\lambda(\lambda(s)ut)
	           \\& ~=t\lambda(ut)
	            \stackrel{(P5)}{=}~t\lambda(\lambda(u)t)
	            \stackrel{(P7)}{=}~\lambda(u)t
	            =t.
	\end{aligned}$$
	Hence, $t$ belongs to $B_{su}$. On the other hand, if $x\in B_{su}$ then 
	         $$\begin{aligned}
	               \lambda(u)x
	               &=\lambda(u)\lambda(su)x
	               =\lambda(su)\lambda(u)x
	               =\lambda(\lambda(su)\lambda(u))x
	               \\&~=\lambda(su\lambda(u))x
	               =\lambda(su)x
	               =x.
	        \end{aligned}$$
	Therefore, $x\in B_u$. Moreover, $\lambda(s)ux=u\lambda(\lambda(s)u)x=u\lambda(su)x=ux,$ and hence $\phi_u(x)\in B_s.$
	
	\item 
	    $\phi_s(B_s\cap C_u)=C_{su}:$ Since $C_{su}=\{sut\ |\ t\in S \}$, it is clear that $\phi_s(B_s\cap C_u)\cont C_{su}.$ On the other hand $sut=s(\lambda(s)ut)=\phi_s(\lambda(s)ut)$, for every $t\in S$. Finally, note that $\lambda(s)ut \in B_s$ and that $\lambda(s)ut=u\lambda(su)t \in C_u$. Thus, $sut\in \phi_s(B_s\cap C_u)$ for every $t\in S$.
\end{enumerate} 
In conclusion, $\phi$ is a semigroup homomorphism and moreover a restriction semigroup homomorphism since
            $$  \lambda(\phi_s)=\id_{B_s}=
            \id_{B_{\lambda(s)}}=
            \phi_{\lambda (s)}.$$
Now, suppose $\phi_s=\phi_u$. In this case, since $B_s=B_u$, we have that $\lambda(s)\in B_u$ and $\lambda(u)\in B_s$. Thus, $\lambda(s)=\lambda(s)\lambda(u)=\lambda(u)$ and, moreover, $s=\phi_s(\lambda(s))=\phi_s(\lambda_u)=\phi_u(\lambda_u)=u,$ which proves the injectivity of $\phi$.
\end{proof}

Let $(S,E,\lambda,\rho)$ be a restriction semigroup. Our goal is to express the domain $B_s$ and the codomain $C_s$ of the maps $\phi_s$ in Theorem \ref{lrrep} in terms of the structure maps of $S$. For instance, note that 
$$B_s=\{t\in S\ |\ \rho(t)\leq \lambda(s)\}.$$ 
In fact, if $t=\lambda(s)t$ then $\rho(t)=\lambda(s)\rho(t)\leq \lambda(s)$ and, on the other hand, if $\rho(t)\leq \lambda(s)$ then $\lambda(s)t=\lambda(s)\rho(t)t=\rho(t)t=t$. Furthermore, this characterization of $B_s$ gives $C_s=\{st\ \mid\ \rho(t)\leq \lambda(s)\}$. And, in this case note that $C_s$ is contained in $\{t\in S\ |\ \rho(t)\leq \rho(s)\}.$ In fact, if $\rho(t)\leq \lambda(s)$ then $\rho(st)=\rho(s\rho(t))\leq \rho(s\lambda(s))=\rho(s)$. The reader familiar with the Wagner-Preston theorem may wonder under what conditions $C_s$ coincides with the set $\{t\in S\ |\ \rho(t)\leq \rho(s)\}$.

\begin{prop}\label{bscsvagner}
Let $S$ be a restriction semigroup. Suppose the sets $C_s=\{st\in S\ |\ \rho(t)\leq \lambda(s)\}$ and $\{y\in S\ |\ \rho(y)\leq \rho(s)\}$ coincide for every $s\in S$. Then, S is a regular semigroup. Conversely, if $S$ is regular and, in addition, if for every $s \in S$ there is an inverse $s^*$ for $s$ such that $ss^*\in E$ then $C_s=\{st\in S\ |\ \rho(t)\leq \lambda(s)\}$ coincides with the set $\{y\in S\ |\ \rho(y)\leq \rho(s)\},$ for every $s\in S.$ \end{prop}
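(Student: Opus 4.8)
The plan is to prove the two implications separately, using throughout the characterization $C_s=\{st : \rho(t)\leq\lambda(s)\}$ already established before the statement, together with the heuristic that $\lambda(s)$ behaves like the domain-idempotent and $\rho(s)$ like the range-idempotent of $s$. For the forward direction, I would fix $s\in S$ and aim to produce an inverse for $s$. Since $\rho(s)\leq\rho(s)$ trivially, the hypothesis gives $\rho(s)\in\{y : \rho(y)\leq\rho(s)\}=C_s$, so there is $t\in S$ with $\rho(t)\leq\lambda(s)$ and $st=\rho(s)$. I would then check that $t':=\lambda(s)t$ (or $t$ itself, after noting $\rho(t)\leq\lambda(s)$ forces $t=\lambda(s)t$) is an inverse of $s$: compute $sts=\rho(s)s=s$ using (P4), and $tst=t\rho(s)$, which one massages to $t$ using $\rho(t)\leq\lambda(s)$ and properties (P6)–(P8) — the point being that $st=\rho(s)$ lets one replace $st$ by $\rho(s)$ inside $tst$, and $t\rho(st)=t\rho(s\rho(t))$ collapses appropriately. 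Doing this for every $s$ shows $S$ is regular.

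\medskip

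For the converse, assume $S$ is regular and that each $s\in S$ admits an inverse $s^*$ with $ss^*\in E$. We already know $C_s\subseteq\{y : \rho(y)\leq\rho(s)\}$ in any restriction semigroup, so only the reverse inclusion needs proof. Given $y$ with $\rho(y)\leq\rho(s)$, I want to write $y=st$ with $\rho(t)\leq\lambda(s)$. The natural candidate is $t:=s^*y$; the two things to verify are (a) $st=y$, i.e. $ss^*y=y$, and (b) $\rho(t)=\rho(s^*y)\leq\lambda(s)$. For (a): since $ss^*\in E$ and $\rho(y)\leq\rho(s)$, I would show $ss^*$ and $\rho(s)$ are comparable idempotents with $\rho(s)\leq ss^*$ — indeed $ss^*\cdot s=s$ (from $ss^*s=s$), so $ss^*\in\{f\in E : fs=s\}$, whence $\rho(s)\leq ss^*$ by Proposition \ref{lambdarhominimal}; then $ss^*\cdot\rho(s)=\rho(s)$, so $ss^*y=ss^*\rho(y)y=ss^*\rho(y)\rho(s)y = \rho(y)\rho(s) y=\rho(y)y=y$, using $\rho(y)\leq\rho(s)$ and commutativity of $E$. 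For (b): $\rho(s^*y)=\rho(s^*\rho(y))\leq\rho(s^*)$ by (R7)/(P6), and I claim $\rho(s^*)\leq\lambda(s)$; this follows from $s^*s s^* = s^*$ (so $\rho(s^*)s^* = s^*$... ) — more directly, $\lambda(s)\cdot s^*s$: one shows $s^*s\in E$ is not automatic, so instead use $\lambda(s^*s\cdot s^*)$-type manipulations, or better: $s\cdot s^* s = s$ gives $s^*s\in\{f\in E': \text{...}\}$ only if $s^*s\in E$. The clean route is $\lambda(s)=\lambda(\rho(ss^*)s)$-style rewriting; I would use $ss^*\in E$ together with $(ss^*)s=s$ and Proposition \ref{lambdarhominimal} to get $\rho(s)\leq ss^*$, then dualize or apply (P5): $\lambda(s^*) = \lambda(s^*ss^*)\leq\lambda(ss^*)=ss^*$... and then relate $ss^*$ back to $\lambda(s)$ via $\lambda(s)=\lambda(s^*s)$? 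This is exactly the delicate point.

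\medskip

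The main obstacle I anticipate is precisely this last chain of inequalities in the converse: controlling $\rho(s^*y)$ by $\lambda(s)$ when $s^*$ is merely a semigroup inverse with $ss^*\in E$, since $s^*s$ need not lie in $E$ and one cannot freely use the inverse-semigroup identities $\lambda(s^*)=\rho(s)$, $\rho(s^*)=\lambda(s)$. I expect one must exploit the hypothesis $ss^*\in E$ twice — once to pin down $ss^*=\rho(s')$ for a suitable reduced representative $s'\leq s$, and once to force the relevant idempotents into $E$ — and carefully track which of (P1)–(P8) applies at each step, with Proposition \ref{lambdarhominimal} and the order-characterizations (R1)–(R7) doing the bookkeeping. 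The forward direction, by contrast, should be a relatively short manipulation once one extracts $t$ from $\rho(s)\in C_s$.
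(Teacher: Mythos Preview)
Your approach matches the paper's in both directions: for the forward implication you correctly extract $t$ from $\rho(s)\in C_s$ and verify it is an inverse (your ``massaging'' of $tst$ works via $\lambda(t)=\lambda(st)=\lambda(\rho(s))=\rho(s)$, using $\rho(t)\leq\lambda(s)$ and (P5), whence $t\rho(s)=t\lambda(t)=t$), and for the converse your step (a) is exactly the paper's entire argument.

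The ``main obstacle'' you flag in (b) is illusory. Recall from the discussion preceding the proposition (and the proof of Theorem~\ref{lrrep}) that $C_s=\{st : t\in S\}$; the description $\{st : \rho(t)\leq\lambda(s)\}$ in the statement is just a particular way of writing the same set. So once you have $y=s(s^*y)$ from (a), you are done --- there is nothing further to verify. If you insist on exhibiting a factor with $\rho(\cdot)\leq\lambda(s)$, simply replace $s^*y$ by $t':=\lambda(s)s^*y$: then $st'=s\lambda(s)s^*y=ss^*y=y$ by (P3), and $\rho(t')=\rho(\lambda(s)\rho(s^*y))=\lambda(s)\rho(s^*y)\leq\lambda(s)$ by (P6) and (P2). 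No delicate comparison of $\rho(s^*)$ with $\lambda(s)$ is needed, and in particular you do not need $s^*s\in E$.
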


\begin{proof}
Let $s\in S$ be such that $C_s=\{y\in S\ |\ \rho(y)\leq \rho(s)\}$. Choosing $y=\rho(s)$, we have that
    \begin{equation}\label{auxregular} 
    \rho(s)=y=ss^*\text{, for }s^*\in S\text{ such that }\rho(s^*)\leq \lambda(s).
    \end{equation}
Hence, applying $\rho$ to \eqref{auxregular}, we obtain that $\rho(s)=\rho(ss^*)=\rho(s\rho(s^*))$. Then, by (R1), $s\rho(s^*)\leq s$ and, by (R5), $s=s\rho(s^*)$. Thus, by Proposition \ref{lambdarhominimal}, $\lambda(s)=\rho(s^*)$. Applying $\lambda$ to \eqref{auxregular}, we get $\rho(s)=\lambda(\lambda(s)s^*)=\lambda(\rho(s^*)s^*)=\lambda(s^*)$. Therefore, $ss^*s=\rho(s)s=s$ and $s^*ss^*=s^*\rho(s)=s^*\lambda(s^*)=s^*.$

Conversely, consider $s\in S$ and an inverse $s^*$ for $s$ such that $ss^*\in E$. Note that if $t\in S$ is such that $\rho(t)\leq \rho(s)$ then $\rho(t)\leq ss^*$ and then $t=\rho(t)t=ss^*\rho(t)t=ss^*t$.
\end{proof}

\subsection{Operator algebras associated with restriction semigroups}

\begin{defi}\label{repsmgrest}
Let $(S,E,\lambda,\rho)$ be a restriction semigroup. A \textbf{representation} of $S$ on a Hilbert space $H$ is a semigroup homomorphism $\sigma:S\rightarrow B(H)$, $s\mapsto\sigma_s$, such that 
\begin{enumerate}[{\normalfont \rmfamily  (i)}]
    \item $\|\sigma_s\|\leq 1$ for every $s\in S$.
    \item $\sigma_e^*=\sigma_e^2=\sigma_e$ for every $e\in E$.
\end{enumerate} 
The class of all representations of $(S,E,\lambda,\rho)$ will be denoted by $\mathcal{R}(S)$.
\end{defi}

Every representation $\sigma:S\rightarrow B(H)$ extends  to a representation of the complex semigroup algebra $\widetilde{\sigma}:\comp[S]\rightarrow$ B(H), where $\widetilde{\sigma}\left(\sum_{s\in S}a_s\delta_s\right)=\sum_{s\in S}a_s\sigma_s$, which is contractive if $\comp[S]$ is equipped with the $\ell_1$ norm. Let $\mathcal{S}(\comp[S])$ denote the set of all seminorms on $\comp[S]$ induced by $\mathcal{R}(S)$. We can then define a seminorm $\|\cdot\|_0$ on $\comp[S]$ either in terms of seminorms or representations
    $$\left\|\mathbf{x}\right\|_0:=\sup_{\|\cdot\| \in \mathcal{S}(S)}\|\mathbf{x}\|=\sup_{\sigma \in \mathcal{R}(S)}\|\widetilde{\sigma}(\mathbf{x})\| .$$ 
Similar to the discussion after Remark \ref{rem: Pi_to_pi}, there is a representation $\Sigma:S\rightarrow B(H)$ of $S$ such that $\|\mathbf{x}\|_0=\|\widetilde{\Sigma}(\mathbf{x})\|$. 

\begin{defi}
Let $(S,E,\lambda,\rho)$ be a restriction semigroup. The \emph{operator algebra} of $S$ is the Hausdorff completion $\A(S)$ of $(\comp[S], \|\cdot\|_0)$. 
\end{defi}

Note that $\widetilde{\Sigma}$ induces an isometric homomorphism of $\A(S)$ on $B(H)$ justifying the term \textit{operator algebra}.

\subsubsection{The reduced operator algebra of a restriction semigroup}

Let $(S,E,\lambda,\rho)$ be a left-ample restriction semigroup. From Theorem \ref{lrrep} and its subsequent discussion, for each $s \in S$ there exists a bijective map $\phi_s$ from $B_s=\{t\in S\ |\ \rho(t)\leq \lambda(s) \}$ to $C_s=\{st\ |\ t\in B_s\}$, namely $\phi_s(t)=st$. Let $\ell_2(S)$ be the Hilbert space associated with $S$ (see Definition \ref{def:hilbertspaceofset}) and note that each map $\phi_s$ induces an operator \begin{equation}\label{formularegulardes}
\phi'_s:\ell_2(S)\rightarrow \ell_2(S)\text{ by }\phi'_{s}(\delta_t)=[\rho(t)\leq \lambda(s)]\ \delta_{st},
\end{equation}
which is a partial isometry such that $\ker(\phi'_s)^{\perp}= \spf\{\ \delta_t \mid t\in B_s\}$ and $\ran(\phi'_s)_s= \spf\{\ \delta_t \mid t\in C_s\}$. Thus,  the map $\phi':S\rightarrow B(\ell_2(S))$, $s \mapsto \phi'_s$, is a representation of $S$ called \textbf{the regular representation of $S$}. In fact, to see that $\phi' \in \mathcal{R}(S)$ it suffices to note that Proposition \ref{lemmareg} ensures $$\phi'_{st}(\delta_{y})=\phi'_s\phi'_t(\delta_y), \ \forall y \in S.$$

\begin{teo}\label{wordin}
The extension $\widetilde{\phi'}:\comp[S]\rightarrow B(\ell_2(S))$ is faithful.
\end{teo}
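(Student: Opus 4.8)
The plan is to show that the extension $\widetilde{\phi'}$ is injective by testing it against the canonical orthonormal basis $\{\delta_t\}_{t\in S}$ of $\ell_2(S)$. Concretely, suppose $\mathbf{x}=\sum_{s\in S}a_s\delta_s$ is a nonzero element of $\comp[S]$, so the support $F:=\{s\in S\mid a_s\neq 0\}$ is finite and non-empty. I want to produce a single basis vector $\delta_t$ such that $\widetilde{\phi'}(\mathbf{x})(\delta_t)\neq 0$. The natural candidate is to pick an element $s_0\in F$ that is \emph{maximal} in $F$ with respect to the natural partial order $\leq$ on $S$ (this exists since $F$ is finite), and then apply $\widetilde{\phi'}(\mathbf{x})$ to $\delta_{\lambda(s_0)}$.

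First I would compute $\phi'_s(\delta_{\lambda(s_0)})$ for each $s\in F$. Since $\rho(\lambda(s_0))=\lambda(s_0)$ and $\lambda(s_0)\leq\lambda(s_0)$, the vector $\delta_{\lambda(s_0)}$ always lies in $\ker(\phi'_s)^\perp$ precisely when $\lambda(s_0)\leq\lambda(s)$; in general $\phi'_s(\delta_{\lambda(s_0)})=[\lambda(s_0)\leq\lambda(s)]\,\delta_{s\lambda(s_0)}$. In particular $\phi'_{s_0}(\delta_{\lambda(s_0)})=\delta_{s_0\lambda(s_0)}=\delta_{s_0}$ by (P3). So the coefficient of $\delta_{s_0}$ in $\widetilde{\phi'}(\mathbf{x})(\delta_{\lambda(s_0)})=\sum_{s\in F}a_s[\lambda(s_0)\leq\lambda(s)]\,\delta_{s\lambda(s_0)}$ receives the contribution $a_{s_0}\neq 0$ from $s=s_0$. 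The key point — and the main obstacle — is to argue that no other term $s\in F$, $s\neq s_0$, can contribute another $\delta_{s_0}$ and cancel it; that is, I must show $s\lambda(s_0)=s_0$ with $\lambda(s_0)\leq\lambda(s)$ forces $s=s_0$.

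To handle this, observe that $s\lambda(s_0)\leq s$ by (R1), so if $s\lambda(s_0)=s_0$ then $s_0\leq s$. By maximality of $s_0$ in $F$ this gives $s=s_0$ (here I use that $\leq$ is a genuine partial order, in particular antisymmetric, which follows from the listed properties: if $s_0\leq s$ and $s\leq s_0$ then $s_0=s\lambda(s_0)=s$ using (R4) after checking $\lambda$ agrees). Thus the only surviving $\delta_{s_0}$-coefficient in $\widetilde{\phi'}(\mathbf{x})(\delta_{\lambda(s_0)})$ is $a_{s_0}$, hence the vector is nonzero and $\widetilde{\phi'}(\mathbf{x})\neq 0$. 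Therefore $\ker\widetilde{\phi'}=\{0\}$ and $\widetilde{\phi'}$ is faithful.

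The only subtlety worth double-checking is antisymmetry of $\leq$ on $S$: the excerpt records that $\leq$ restricts to the semilattice order on $E$ and lists (R4), (R5), so if $s\leq t$ and $t\leq s$ then by (R3) $\lambda(s)\leq\lambda(t)$ and $\lambda(t)\leq\lambda(s)$, whence $\lambda(s)=\lambda(t)$, and then (R4) gives $s=t$. Alternatively one can avoid maximality entirely: since for $s\neq s_0$ in $F$ with $s\lambda(s_0)=s_0$ we would get $s_0\leq s$, one could instead just choose $s_0$ to be \emph{any} element of $F$ that is not strictly below another element of $F$, and the same argument runs. I would write the proof using the maximality formulation as it is cleanest, relegating the antisymmetry remark to a parenthetical.
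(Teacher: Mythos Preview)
Your proof is correct and is precisely the argument the paper has in mind: the paper's ``proof'' merely says to copy Wordingham's theorem (\cite[Theorem 2.1.1]{paterson}) verbatim, replacing $s^*s$ by $\lambda(s)$, $ss^*$ by $\rho(s)$, and $E(S)$ by $E$, and that is exactly what you have written out --- pick $s_0$ maximal in the finite support, apply the operator to $\delta_{\lambda(s_0)}$, and use $s\lambda(s_0)=s_0\Rightarrow s_0\leq s$ together with maximality to isolate the coefficient $a_{s_0}$. Your aside on antisymmetry of $\leq$ via (R3) and (R4) is the right way to justify that ``maximal'' makes sense, and the rest is clean.
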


\begin{proof}
Replacing $s^*s$ by $\lambda(s)$, $ss^*$ by $\rho(s)$ and $E(S)$ by $E$, the proof of Theorem \ref{wordin} is the verbatim copy of Wordingham's theorem proof (see \cite[Theorem 2.1.1]{paterson}).
\end{proof}

\begin{defi}
	Let $(S,E,\lambda,\rho)$ be a left-ample restriction semigroup. The \textbf{reduced operator algebra} of $S$ is the closure $\A_{r}(S)$ of $\widetilde{\phi'(\comp[S])}$ in $B(\ell_2(S))$ .
\end{defi}

\subsubsection{The inverse semigroup case}\label{subsection:inversesemigroupcase}

We start by revisiting the concept of generalized inverse of an operator on a Hilbert space. We direct the reader's attention to pages $24$ to $26$ of \cite{paterson} for further information on $C^*$-algebras associated with inverse semigroups.

\begin{defi}\label{def:generalizedinverse}
Let $H$ be a Hilbert space and $T\in B(H)$ be a bounded operator. An operator $S\in B(H)$  will be called a \textbf{generalized inverse} of $T$ if $TST=T$ and $STS=S.$ 
\end{defi}
%
%
 \begin{cor}[Corollary 3.3 of \cite{mbekhta2004partial}]\label{mbekhtacorolario}
Let $T\in B(H)$ be a contraction. If a contractive generalized inverse $S$ of $T$ exists, then $S=T^*$.
\end{cor}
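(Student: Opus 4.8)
The plan is to prove that if $T$ is a contraction with a contractive generalized inverse $S$, then $S = T^*$. I would first record the two generalized-inverse identities $TST = T$ and $STS = S$, together with $\|T\| \le 1$ and $\|S\| \le 1$. The key structural observation is that $P := ST$ and $Q := TS$ are idempotents: $P^2 = STST = ST = P$ and $Q^2 = TSTS = TS = Q$. The heart of the argument is to show these idempotents are in fact \emph{orthogonal} projections, i.e.\ self-adjoint, from which the conclusion follows quickly.

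The main step is the claim that a contractive idempotent on a Hilbert space is automatically self-adjoint; more precisely, I would show $P = ST$ is a projection. One clean way: an idempotent $P$ with $\|P\|\le 1$ must be self-adjoint, because writing $H = \ran(P) \oplus \ker(P)$ one checks that $\|P\| \ge 1$ always, with equality iff the decomposition is orthogonal (the angle between $\ran P$ and $\ker P$ governs $\|P\|$); since $\|P\| = \|ST\| \le \|S\|\,\|T\| \le 1$, the ranges must be orthogonal and $P = P^*$. The same reasoning gives $Q = TS = Q^*$. This is the step I expect to be the main obstacle, since it is the only place genuine Hilbert-space geometry (as opposed to formal manipulation) enters; if the paper has a lemma on contractive idempotents available one would simply cite it, but otherwise one must supply this short geometric argument, or alternatively deduce it from the structure of $2\times 2$ operator matrices.

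Once $P = ST$ and $Q = TS$ are known to be orthogonal projections, I would finish as follows. From $TP = TST = T$ we get $T = TP = T(ST) = (TS)T = QT$, so $T = QT = TP$ with $P, Q$ projections; likewise $S = SQ = PS$. Then compute $T^* = (TP)^* = P T^* = (ST) T^*$ and $T^* = (QT)^* = T^* Q = T^*(TS)$. Now use these together with $STS = S$: one has $S = STS = S Q = (PS) $, and $T^* = P T^* = ST T^* $. The cleanest closing identity is to observe that both $S$ and $T^*$ are the unique operator $X$ with $TX = Q$, $XT = P$, $X = PX = XQ$; concretely, $T^* T S = T^* Q = T^* (TS)$ gives, after noting $T^* Q = T^*$ (since $Q$ is the projection onto $\overline{\ran T}$, as $Q = TS$ has range contained in $\ran T$ and $QT = T$ forces $\ran Q \supseteq \ran T$), the equality $T^* = T^* T S$; symmetrically $S = STT^*\!\cdot$, and one reconciles these to conclude $S = T^*$. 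In short: $Q = TS$ is the orthogonal projection onto $\overline{\ran T}$ and $P = ST$ is the orthogonal projection onto $\overline{\ran T^*} = (\ker T)^\perp$, whence $S$ agrees with $T^*$ on $\ran T$ and is zero on $(\ran T)^\perp = \ker T^*$, exactly like $T^*$; by continuity $S = T^*$.
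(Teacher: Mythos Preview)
The paper does not supply its own proof of this statement; it is quoted as Corollary~3.3 of \cite{mbekhta2004partial} and used as a black box. So there is no ``paper's proof'' to compare against, and I can only assess your argument on its own merits.

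Your overall strategy is the right one and is standard: the idempotents $P=ST$ and $Q=TS$ satisfy $\|P\|,\|Q\|\le 1$, and a contractive idempotent on a Hilbert space is an orthogonal projection. Your justification of that lemma is fine.

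The closing step, however, has a real gap. You assert that ``$S$ agrees with $T^*$ on $\ran T$'' and invoke a uniqueness claim for operators $X$ with $TX=Q$, $XT=P$, but you never verify that $T^*$ actually satisfies $T^*T=P$ and $TT^*=Q$. Nothing you have written up to that point forces this; it is equivalent to $T$ being a \emph{partial isometry}, which is an extra fact you must prove. The clean way to close the argument is: for $x\in\ran P$ one has
\[
\|x\|=\|Px\|=\|STx\|\le\|S\|\,\|Tx\|\le\|Tx\|\le\|x\|,
\]
so $T$ is isometric on $\ran P=(\ker T)^\perp$, hence a partial isometry with initial projection $P$; therefore $T^*T=P$. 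Then $(T^*-S)T=T^*T-ST=P-P=0$, so $T^*-S$ vanishes on $\overline{\ran T}=\ran Q$. Since $T^*=T^*Q$ and $S=SQ$, this yields $T^*-S=(T^*-S)Q=0$. With this step inserted, your proof is complete.
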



Suppose $S$ is an inverse semigroup and $\sigma$ is a representation of $S$ in the sense of Definition \ref{repsmgrest}. The maps $\sigma_{s^*}$ are generalized inverses of the maps $\sigma_s$. By virtue of Corollary \ref{mbekhtacorolario}, $\sigma_{s^*}=\sigma_{s}^*$ and therefore $\sigma$ is a $\ast$-representation of $S$ and  $$\A(S)=C^*(S).$$

In addition, the regular representation $\phi'$ presented in \eqref{formularegulardes} is precisely the \text{left regular representation} from the Wagner-Preston Theorem \cite[Proposition 2.1.3]{paterson}, by Proposition \ref{bscsvagner}. Then, $$\A_{r}(S)=C_{red}^*(S).$$

\subsection{Restriction semigroup étale actions}

We provide the precise definition of an étale action of a restriction semigroup. We emphasize that there are different notions of restriction semigroups and their actions, and that is the reason why we have decided to call our actions \textit{étale}.

Let $(S,E,\lambda,\rho)$ be a restriction semigroup, $X$ be a set and $\mathcal{I}(X)$ be the inverse semigroup of partial bijections of $X$. Suppose $\theta:S\rightarrow\mathcal{I}(X)$ is a restriction semigroup homomorphism. Note that for every $s\in S$, there exist $D_s\cont X$ and $R_s\cont X$ such that $\theta_s:D_s\rightarrow R_s$ is a bijection. If $e\in E(S)$, then $\theta_e \in E(\mathcal{I}(X))$, which gives that $D_e=R_e$ and $\theta_e=\id_{D_e}$. Thus, for every $s\in S$ 	
				$$\id_{D_{\lambda(s)}}
				=\theta_{\lambda(s)}
				=\lambda(\theta_s)
				=\theta_s^{-1}\theta_s
				=\id_{D_s},$$ 
			and
			 	$$\id_{D_{\rho(s)}}
			 	=\theta_{\rho(s)}
			 	=\rho(\theta_s)
			 	=\theta_s\theta_s^{-1}
			 	=\id_{R_s}.$$ 
Therefore, $D_s=D_{\lambda(s)}$ and $R_s=D_{\rho(s)}$.

\begin{defi}
Let $(S,E,\lambda,\rho)$ be a restriction semigroup and $X$ be a locally compact Hausdorff space. An \textbf{étale action} of $S$ on $X$ is a restriction semigroup homomorphism $\theta:S\rightarrow\mathcal{I}(X)$ satisfying:
\begin{enumerate}[{\normalfont \rmfamily  (i)}]
\item For every $e\in E$, $D_e$ is open and $X=\bigcup_{e\in E}D_e$.
\item For every $s\in S$, $\theta_s:D_{\lambda(s)}\rightarrow D_{\rho(s)}$ is a homeomorphism.
\end{enumerate}
\end{defi}

\begin{defi}\label{etaleactioncstaralgebras}
	Let $(S,E,\lambda,\rho)$ be a restriction semigroup and $A$ be a $C^*$-algebra. An \textbf{étale action} of $S$ on $A$ is a restriction semigroup homomorphism $\alpha:S\rightarrow\mathcal{I}(A)$, $\alpha_s:J_{\lambda(s)}\rightarrow J_{\rho(s)}$, satisfying:
	\begin{enumerate}[{\normalfont \rmfamily  (i)}]
		\item For every $e\in E$, $J_e$ is a closed ideal and $A=\spf \bigcup_{e\in E}J_e$.
		\item For every $s\in S$, $\alpha_s:J_{\lambda(s)}\rightarrow J_{\rho(s)}$ is a $*$-isomorphism.
	\end{enumerate}
\end{defi}

\begin{exe}\label{actionbisc}
If $\C$ is an étale category, then $\bisc$ is a restriction semigroup, by Example \ref{ex: bis_rest_semi}. Moreover a bisection $U$ defines a homeomorphism from $\s(U)$ to $\rr(U)$, namely $\theta_U=\rr_U \circ \s_U^{-1}$. Easy calculations show that $\theta:\bisc\rightarrow \mathcal{I}(\CZ)$, $U\mapsto \theta_U$, defines an étale action of $\bisc$ on $\CZ$.
\end{exe}

\begin{rem}\label{topinducesalg} For an étale action $\theta$ of $S$ on $X$ it is straightforward to check that setting $J_e:=\czero(D_e)$ and $\alpha_s:J_{\lambda(s)}\rightarrow J_{\rho(s)}$ by $\alpha_s(f)=f\circ \theta_s^{-1}$ gives an étale action of $S$ on $\czero(X)$. 
\end{rem}

\subsubsection{The canonical action of a restriction semigroup}\label{standardaction}
%
%
For a semilattice $E$, the set $\echap=\{\phi:E\rightarrow\{0,1\}\ |\ \phi(ef)=\phi(e)\phi(f) \text{ and } \phi\neq 0 \}$ equipped with the subspace topology of $\{0,1\}^E$ is called  the \textit{spectrum} of $E$ and its elements are called semicharacters. Its topology agrees with the \textit{pointwise convergence} topology. By Tychonoff's theorem, $\{0,1\}^{E}$ is a compact Hausdorff space and hence $\echap\cup \{0\}$ is a compact Hausdorff subspace, because it is closed. Thus, $\echap$ itself is a locally compact Hausdorff space.

Let $D_e$ denote the subset $\{\phi \in \echap\ |\ \phi(e)=1\}$. Each  $D_e$ is simultaneously an open and closed subset of $\echap$ since $D_e=P_e^{-1}(\{1\})\cap \echap$, where $P_e:\{0,1\}^E\rightarrow \{0,1\}$ denotes the projection onto coordinate $e$. Moreover, via convergence of nets, one easily sees that $D_e$ is closed in $\{0,1\}^E$ which implies that $D_e$ is compact. Therefore, the family $\{D_e\ |\ e\in E \}$ forms a cover of compact open sets for $\echap$.

 Denoting by $1_e$ the characteristic function of $D_e$, we have that $1_{ef}=1_e1_f$ since $D_e\cap D_f=D_{ef}$. In particular, $\sps \{1_e\ |\ e\in E\}$ is a subalgebra of $\czero(\echap)$. Moreover, note that \begin{equation}\label{avaliacao}
    1_e(\phi)=\phi(e), \forall \ \phi \ \in \echap.
    \end{equation} 
    
From \eqref{avaliacao}, we see that the family of functions $\{1_e\ |\ e\in E\}$ separates the points of $\echap$ and that for every semicharacter $\phi \in \echap$ there exists $e$ such that $1_e(\phi)\neq 0$. By the Stone-Weierstrass theorem, $\sps \{1_e\ |\ e\in E\}$ is a dense subalgebra of $\czero(\echap).$ Moreover, $\sps \{1_e\ |\ e\in E\text{ and }e\leq f\}$ is a dense subalgebra of $\operatorname{C}(D_f)$, for every $f\in E$.

\begin{teo}[Proposition 10.6 from
    \cite{exel}]\label{universal}
    Let $\sigma:E \rightarrow B(H)$ be a semigroup homomorphism whose range consists of (orthogonal) projections. Then, there is a unique $C^*$-algebra homomorphism  $\pi_{\sigma}:\czero(\echap)\rightarrow B(H)$ sending $\pi_{\sigma}(1_e)=\sigma_e.$
\end{teo}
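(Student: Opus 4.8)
The plan is to use the density results established just before the statement together with the universal property of the spectrum of a semilattice. The key fact is that $\sps\{1_e \mid e \in E\}$ is a dense subalgebra of $\czero(\echap)$, so any $C^*$-algebra homomorphism on $\czero(\echap)$ is determined by its values on the $1_e$'s; this immediately gives uniqueness. For existence, first I would define $\pi_\sigma$ on the dense subalgebra $\sps\{1_e \mid e \in E\}$ by $\pi_\sigma\left(\sum_i a_i 1_{e_i}\right) = \sum_i a_i \sigma_{e_i}$, and then show this is well-defined, multiplicative, $*$-preserving, and contractive, so that it extends (uniquely) to a $C^*$-algebra homomorphism on all of $\czero(\echap)$.

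First I would check that $\pi_\sigma$ is well-defined on the dense subalgebra, i.e.\ that $\sum_i a_i 1_{e_i} = 0$ as a function implies $\sum_i a_i \sigma_{e_i} = 0$ in $B(H)$. This is where the actual work lies. One clean way: since the $\sigma_e$ are commuting projections (they commute because $\sigma$ is a homomorphism into $B(H)$ and $E$ is commutative, and each $\sigma_e$ is a self-adjoint idempotent), the $C^*$-subalgebra $B$ of $B(H)$ they generate is commutative, and by Gelfand duality $B \cong \operatorname{C}(\Omega)$ for a compact Hausdorff space $\Omega$. Each character $\omega \in \Omega$ restricts to a semigroup homomorphism $E \to \{0,1\}$, $e \mapsto \omega(\sigma_e)$, which is either identically zero or an element of $\echap$; this yields a continuous map $\Omega \setminus \{\text{zero locus}\} \to \echap$ (or more simply a map $\Omega_0 \to \echap \cup \{0\}$) that pulls back $1_e$ to $\sigma_e$ viewed in $\operatorname{C}(\Omega)$. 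Hence $\sum_i a_i \sigma_{e_i}$, as a function on $\Omega$, is the pullback of $\sum_i a_i 1_{e_i}$, so if the latter vanishes on $\echap$ then the former vanishes on $\Omega$ and is therefore the zero operator. The same pullback picture simultaneously shows multiplicativity, $*$-preservation, and the contractive estimate $\left\|\sum_i a_i \sigma_{e_i}\right\| \le \left\|\sum_i a_i 1_{e_i}\right\|_\infty$.

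With $\pi_\sigma$ established as a contractive $*$-homomorphism on the dense subalgebra $\sps\{1_e \mid e \in E\}$, it extends uniquely by continuity to a $C^*$-algebra homomorphism $\pi_\sigma : \czero(\echap) \to B(H)$, and by construction $\pi_\sigma(1_e) = \sigma_e$ for all $e \in E$. Uniqueness of the extension (and of any homomorphism with this property) follows from density of $\sps\{1_e \mid e \in E\}$ in $\czero(\echap)$.

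The main obstacle is the well-definedness step: one must pass from a linear relation among the \emph{functions} $1_e$ to the corresponding relation among the \emph{operators} $\sigma_e$. The Gelfand-duality argument above handles this cleanly, but it must be set up carefully because of the non-unital subtlety (the spectrum of $E$ omits the zero semicharacter, and the commutative $C^*$-algebra generated by the $\sigma_e$ may or may not be unital). Alternatively, since this is cited as Proposition~10.6 of \cite{exel}, one may simply invoke that reference; I would include the sketch above for completeness.
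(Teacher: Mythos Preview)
Your proposal is correct and follows essentially the same approach as the paper: both arguments pass to the commutative $C^*$-algebra $A$ generated by the $\sigma_e$, use Gelfand duality to identify its spectrum, and exploit the continuous map $\psi\mapsto(e\mapsto\psi(\sigma_e))$ from $\widehat{A}$ into $\echap$ to produce the homomorphism (the paper composes pullback along this map with the inverse Gelfand transform, while you phrase the same content as a well-definedness check on the dense span, but the underlying mechanism is identical). The paper also handles the non-unital subtlety you flag, by working with $\widehat{A}\cup\{0\}\to\echap\cup\{0\}$ and noting that $0$ is mapped to $0$ so the map restricts to a proper map on the nonzero characters.
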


\begin{proof}
Since $\sigma_e\sigma_f=\sigma_{ef}$ and $\sigma_e^*=\sigma_e$, the subspace $\sps\ \{\sigma_e\ |\ e\in E\}$ is already a self-adjoint subalgebra of $B(H)$ and hence the $C^*$-subalgebra of $B(H)$ generated by $\{\sigma_e\ |\ e\in E\}$ is simply $A:= \spf\ \{\sigma_e\ |\ e\in E\}$, which is commutative.  

Recall that the spectrum of $A$ is the locally compact Hausdorff space $\achap$ of all nonzero continuous $C^*$-algebra homomorphisms (characters) $\psi:A\rightarrow \mathbb{C}$. Moreover, define 
$$\iota_1:\achap\cup \{0\}\rightarrow \echap\cup \{0\}, \ \ \ \iota_1(\psi)(e)=\psi(\sigma_e).$$
Using nets, it becomes evident that $\iota_1$ is continuous. Additionally, it is injective as $\iota_1(\psi)=\iota_1(\psi')$ implies that $\psi$ and $\psi'$ coincide on the generating set $\{\sigma_e\ |\ e\in E\}$. Thus, $\iota_1$ is proper and restricts to a proper injective map $\iota:\achap\rightarrow \echap$ since $0$ is mapped to $0$. In this case, $\pi(f)=f\circ\iota$ defines a $C^*$-algebra homomorphism $\pi:\czero(\echap)\rightarrow \czero(\achap)$.  

 For $a \in A$, recall that $\widehat{a}\in \czero(\achap)$ denotes the function that evaluates a character $\tau$ on $a$, that is, $\widehat{a}(\tau)=\tau(a)$. Moreover, recall that if $T:\czero(\achap)\rightarrow A\cont B(H)$ denotes the inverse of the Gelfand transform we have $T(\widehat{a})=a$. Finally, for $e\in E$ and
 $\psi\in \achap$ we obtain
    $$\begin{aligned}
    \pi(1_e)(\psi)
    =1_e(\iota(\psi))
    \stackrel{\eqref{avaliacao}}{=}\iota(\psi)(e)
    =\psi(\sigma_e)
    =\widehat{\sigma_e}(\psi).
    \end{aligned}$$
Thus, defining $\pi_{\sigma}:=T\circ \pi$ we have the desired $C^*$-algebra homomorphism from $\czero(\echap)$ to $B(H)$ satisfying $\pi_{\sigma}(1_e)=\sigma_e.$ The uniqueness of $\pi_\sigma$ is established by observing that $\sps\{1_e\ |\ e\in E\}$ is a dense subalgebra of $\czero(\echap)$ and, consequently, any two continuous linear maps that coincide on $\{1_e\ |\ e\in E\}$ are necessarily identical.
\end{proof}
 It is well known that an inverse semigroup $T$ acts on $\widehat{E(T)}$ (see \cite[Section 10]{exel}). We proceed to show that a similar construction holds for a restriction semigroup $(S,E,\lambda,\rho)$.
 Recall that $\echap$ is covered by the family of compact open sets $D_e=\{\phi \in \echap\ |\ \phi(e)=1\}$ and define $\theta_s: D_{\lambda(s)}\rightarrow D_{\rho(s)}$  by $\theta_s(\phi)(f)=\phi(\lambda(fs))$. 
Note that, for $\phi\in D_{\lambda(s)}$, the following holds $$\theta_s(\phi)(\rho(s))=\phi(\lambda(\rho(s)s))=\phi(\lambda(s))=1.$$ Hence, $\theta_s(\phi)\in \drs$ and it is a homomorphism since
		$$\begin{aligned}
			\theta_s(\phi)(ef)
			&=\phi(\lambda(efs))
			=\phi(\lambda(es\lambda(fs)))
			\\&=\phi(\lambda(es)\lambda(fs)))
			=\phi(\lambda(es))\phi(\lambda(fs)))
			\\&=\theta_s(\phi)(e)\theta_s(\phi)(f).
		\end{aligned}$$
If $\{\phi_j\}_{j\in J}$ is a net in $D_{\lambda(s)}$ converging to $\phi$ then $\phi_j(\lambda(fs))$ converges to $\phi(\lambda(fs))$ for every $f\in E$. This convergence in turn implies that $\theta_s(\phi_j)$ converges to $\theta_s(\phi)$, demonstrating the continuity of $\theta_s$. 

Similarly, the map $\zeta_s$ below is well-defined and continuous.  
\begin{equation}\label{definitionzeta}
    \zeta_s: D_{\rho(s)}\rightarrow D_{\lambda(s)},\ \ \ \zeta_s(\phi)(f)=\phi(\rho(sf))
\end{equation}  

Let us prove that $\theta_s$ and $\zeta_s$ are inverses, confirming that they are homeomorphisms. We just provide the proof that $\zeta_s\circ\theta_s=\id_{\dls}$ and left the converse for the reader. For $\phi \in \dls$ we have	
$$\begin{aligned}
		(\zeta_s\circ \theta_s)(\phi)(e)
		&=\theta_s(\phi)(\rho(se))
		=\phi(\lambda(\rho(se)s))
		\\&=\phi(\lambda(se))
		=\phi(\lambda(\lambda(s)e))
		\\&=\phi(\lambda(s)e)
		=\phi(\lambda(s))\phi(e)
		\\&=\phi(e).
	\end{aligned}$$
 
 Next, we prove that $\theta$ is an étale action of $S$ on $\echap$.
 \begin{enumerate}[{\normalfont \rmfamily  (i)}]
 \item 
 $D_{\lambda(st)}=\theta^{-1}_t(\drt \cap \dls):$ 
 If $\phi \in \drt \cap \dls$, then $\theta_t^{-1}(\phi)=\zeta_t(\phi)$ and since  $t\lambda(st)=t\lambda(\lambda(s)t)=\lambda(s)t$ we have
 	 	$$\begin{aligned}
 			\zeta_t(\phi)(\lambda(st))
 			&=\phi(\rho(t\lambda(st)))
 			=\phi(\rho(\lambda(s)t))
 			\\&=\phi(\rho(\lambda(s)\rho(t)))
			=\phi(\lambda(s)\rho(t))
 			\\&=\phi(\lambda(s))\phi(\rho(t))=1.
 		\end{aligned} $$
Therefore $D_{\lambda(st)}\supseteq\theta^{-1}_t(\drt \cap \dls)$. To prove $D_{\lambda(st)}\subseteq\theta^{-1}_t(\drt \cap \dls)$, note that if $\phi \in D_{\lambda(st)}$ then $$\begin{aligned}
\theta_t(\phi)(\lambda(s))&=\phi(\lambda(\lambda(s)t))=\phi(\lambda(st))=1.
\end{aligned} $$

\item 
$\theta_s\theta_t=\theta_{st}:$ 
	$$\begin{aligned}\theta_s\theta_t(\phi)(e)
		&=\theta_t(\phi)(\lambda(es))
		=\phi(\lambda(\lambda(es)t))
		\\&=\phi(\lambda(est))
		=\theta_{st}(\phi)(e).
\end{aligned}$$
Hence, 
$\theta_{st}$ is equal to $\theta_{s}\theta_{t}$ since they are two bijections with the same domain and rule.

\item 
$\theta_{\lambda(s)}=\id_{D_{\lambda(s)}}$: For $\phi \in D_{\lambda(s)}$, we have 
	$$\begin{aligned}
		\theta_{\lambda(s)}(\phi)(e)
		&=\phi(\lambda(e\lambda(s)))
		=\phi(e\lambda(s))
		\\&=\phi(e)\phi(\lambda(s))
		=\phi(e).
\end{aligned}$$

\item 
$\theta_{\rho(s)}=\id_{D_{\rho(s)}}$: For $\phi \in D_{\rho(s)}$, we have
	 $$\begin{aligned}
		\theta_{\rho(s)}(\phi)(e)
		&=\phi(\lambda(e\rho(s)))
		=\phi(e\rho(s))
		\\&=\phi(e)\phi(\rho(s))
		=\phi(e).
	\end{aligned}$$
 \end{enumerate}

\subsubsection{The invariance of the tight spectrum}

 Suppose that $(S,E,\lambda,\rho)$ is a restriction semigroup with $0$ and assume that $0$ is in $E$. In what follows, we use the notation present in \cite[Section 11]{exel}.

Recall that $\echap_0=\{\phi \in \echap\ |\ \phi(0)=0 \}$ is a locally compact Hausdorff space and its elements are called \textit{characters}. Moreover, for $x,y \in E$ we say that $x$ \textit{intersects} $y$ if, and only if, $xy\neq 0$ and, in such case, we write $x\Cap y$. Otherwise, we say that $x$ and $y$ are \textit{orthogonal} and we write $x\perp y$. Let us see that $\echap_0$ is invariant under the canonical action, that is, if $\phi\in \echap_0\cap \dls$ then $\theta_s(\phi)\in \echap_0$. Note that
    $$\begin{aligned}
    \theta_s(\phi)(0)
    &=\phi(\lambda(0s))
    =\phi(\lambda(0))
    =\phi(0)
    =0.
    \end{aligned}$$

\begin{defi}
For $x\in E$ the \textbf{upper set} of $x$ is the set $x^{\uparrow}=\{f\in E\ | x\leq f \}$ and the \textbf{down set} of $x$ is the set $x^{\downarrow}=\{f\in E\ | f\leq x\}$.
\end{defi}

\begin{defi}
Let $F$ be a subset of $E$. A subset $Z\cont F$ is a \textbf{cover} for $F$ if for every nonzero $x\in F$, there exists $z\in Z$ such that $z\Cap x$. For $y\in E$, we say that $Z$ is a cover for $y$ if $Z$ is a cover for $y^{\downarrow}$.
\end{defi}

In view of \cite[Prop 11.8]{exel}, we define a tight character as follows.

\begin{defi}
Let $\phi$ be in $\echap_0$. We call $\phi$ a \textbf{tight character} if for every $x\in E$ and for every finite cover $Z$ for $x$ one has that $$\bigvee_{z\in Z}\phi(z)\geq \phi(x).$$ The \textbf{tight spectrum} of $E$ is the set of all tight characters and it is denoted by $\echap_{tight}$.
\end{defi}

\begin{lemma}
If $s\in S$, $x\in E$ and $Z$ is a cover for $x$, then $Z_s=\{\lambda(zs)\ |\ z \in Z\}$ is a cover for $\lambda(xs)$.
\end{lemma}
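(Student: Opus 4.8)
The plan is to show directly that $Z_s = \{\lambda(zs) \mid z \in Z\}$ is a cover for $\lambda(xs)^{\downarrow}$. So I fix a nonzero $y \in E$ with $y \leq \lambda(xs)$ and must produce some $z \in Z$ with $\lambda(zs) \Cap y$, i.e.\ $\lambda(zs)y \neq 0$. First I would observe that $\lambda(xs) \leq \lambda(s)$ by (R6), so $y \leq \lambda(s)$; this means $ys$ makes sense as an element "below" $s$ in the sense that $\rho(y) \leq \lambda(s)$ (since $\rho(y) = y$ for $y \in E$). Then I would want to transport $y$ back along $s$ to an idempotent below $x$: the natural candidate is $\rho(sy)$ (or equivalently $\rho(s\rho(y))$). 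The key claim is that $\rho(sy) \leq x$ and $\rho(sy) \neq 0$, so that $\rho(sy)$ is a nonzero element of $x^{\downarrow}$.

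**Key steps.** (1) Show $\rho(sy) \leq x$: since $y \leq \lambda(xs)$, using Proposition~\ref{propdomaincomp} or a direct computation with (P6), $\rho(sy) = \rho(s\rho(y)) \leq \rho(sy')$ for appropriate comparisons; more concretely, $y = \lambda(xs)y$, so $sy = s\lambda(xs)y = xsy$ (using $s\lambda(xs) = xs$, which follows from the characterization of $\lambda$ as the minimal idempotent fixing $s$ on the right, or from $xs = xs\lambda(xs)$ combined with $s = s\lambda(s)$-type manipulations as in the proof of Proposition~\ref{lemmareg}); hence $\rho(sy) = \rho(xsy) \leq \rho(x) = x$ by (R7). (2) Show $\rho(sy) \neq 0$: since $s$ restricted to $D_{\lambda(s)}$ is injective (its image under $\theta$ is a partial bijection, or abstractly: $\rho(sy) = 0$ would force $sy = \rho(sy)s \cdot(\ldots)$-type collapse — cleanest is to note $y \leq \lambda(s)$ and $\rho(sy) = 0$ implies $sy = 0$, then $y = \lambda(s)y$ and... actually the right argument is that $\lambda$ of a product being related: if $sy=0$ then $\lambda(sy) = 0$, but $\lambda(sy) = \lambda(\lambda(s)y)\cdots$; I should verify $y \leq \lambda(s)$ gives $\lambda(sy) = y \neq 0$). (3) Now apply the covering hypothesis: $Z$ covers $x$, so there is $z \in Z$ with $z \Cap \rho(sy)$, i.e.\ $z\rho(sy) \neq 0$. (4) Finally translate this back: I claim $z\rho(sy) \neq 0$ implies $\lambda(zs)y \neq 0$. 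This is the step requiring care — roughly, applying $\theta_s$ or using that intersection of idempotents is preserved under "pulling back along $s$". Concretely $\lambda(zsy) = \lambda(\lambda(z)sy)$, and one shows $\lambda(zsy) \neq 0$ iff $z\rho(sy) \neq 0$ by relating $\lambda$ and $\rho$ of $zsy$ via (P5), (P6) and the fact that $z\rho(sy) = \rho(zsy)$-ish identities; then $\lambda(zsy) \leq \lambda(zs)$ and $\lambda(zsy) \leq \lambda(sy) = y$, so $\lambda(zsy) \leq \lambda(zs)y$, giving $\lambda(zs)y \neq 0$.

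**Main obstacle.** The delicate point is step (4): cleanly showing that intersection of $z$ with $\rho(sy)$ in $E$ transfers to intersection of $\lambda(zs)$ with $y$. The cleanest route is probably to use the canonical action: $\theta_s$ maps $D_{\lambda(s)} \to D_{\rho(s)}$ homeomorphically, $D$ of an idempotent is nonempty iff the idempotent is nonzero (in the presence of $0$, on the tight/character picture), and $\theta_s$ carries $D_{\lambda(zs)} \cap D_y$ to (something comparable to) $D_z \cap D_{\rho(sy)}$. Alternatively, stay purely algebraic and compute $\rho(zsy)$ and $\lambda(zsy)$: one has $\rho(zsy) \leq \rho(z\rho(sy)) = \rho(zsy)$ — need $z\rho(sy) = \rho(zsy)$ or at least $z\rho(sy) \neq 0 \Leftrightarrow zsy \neq 0 \Leftrightarrow \lambda(zsy) \neq 0$, each equivalence following from the nonzero-ness criteria for $\lambda$ and $\rho$ of a single element (namely $\lambda(w) = 0 \iff w = 0 \iff \rho(w) = 0$, since $w = \rho(w)w = w\lambda(w)$). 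I would carry out the algebraic version, as it avoids invoking the character space and keeps the lemma self-contained at the level of the restriction-semigroup axioms (P1)--(P8) and the order facts (R1)--(R7).
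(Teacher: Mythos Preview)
Your proposal is correct and follows essentially the same route as the paper: show $\rho(sy)\leq x$ via $sy = s\lambda(xs)y = xsy$ (from (P7)), show $\rho(sy)\neq 0$ (the paper does this by computing $y=\lambda(\rho(sy)s)$, but your observation $\lambda(sy)=\lambda(\lambda(s)y)=\lambda(s)y=y\neq 0$ works equally well), apply the covering hypothesis to get $z\rho(sy)\neq 0$, and then transfer back using $z\rho(sy)=\rho(zsy)$ (from (P6)) together with $zsy=s\lambda(zs)y$ (from (P7)) to conclude $\lambda(zs)y\neq 0$. The only small omission is the preliminary observation that $Z_s\subset\lambda(xs)^{\downarrow}$, which is immediate from (R3) since $z\leq x$ implies $\lambda(zs)\leq\lambda(xs)$.
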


\begin{proof}
Since $\lambda$ is order-preserving, $Z_s\cont \lambda(xs)^{\downarrow}.$ Now, let $y\in \lambda(xs)^{\downarrow} $ be a nonzero element. Note that 
    $$\begin{aligned}
    x\rho(sy)
    &=\rho(xsy)
    =\rho(s\lambda(xs)y)
    =\rho(sy).
    \end{aligned}$$
Hence, $\rho(sy)\leq x$ and 
    $$\begin{aligned}
    y
    &=\lambda(xs)y
    =\lambda(xsy)
    =\lambda(x\rho(sy)s)
    =\lambda(\rho(sy)s).
    \end{aligned}$$

Thus, since $y$ is nonzero and $y=\lambda(\rho(sy)s)$, we obtain that $\rho(sy)$ is nonzero. Therefore, there exists $z\in Z$ such that $z\rho(sy)\neq 0$. Next,  note that $$\begin{aligned}z\rho(sy)=\rho(zsy)=\rho(s\lambda(zs)y)\end{aligned}.$$ In particular, $\lambda(zs)y\neq 0$ and $Z_s$ covers $\lambda(xs)$.
\end{proof}

\begin{prop}
If $\phi\in \echap_0\cap \dls$ is a tight character, then $\theta_s(\phi)$ is a tight character.
\end{prop}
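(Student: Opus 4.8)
The plan is short, since the real work has already been done in the preceding lemma. First I would dispose of the easy part: because $\phi\in\echap_0$, the computation already recorded above (namely $\theta_s(\phi)(0)=\phi(\lambda(0s))=\phi(\lambda(0))=\phi(0)=0$) shows $\theta_s(\phi)\in\echap_0$, so it only remains to verify the tightness inequality for $\theta_s(\phi)$.

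Next I would fix $x\in E$ and a finite cover $Z$ for $x$, and aim to prove $\bigvee_{z\in Z}\theta_s(\phi)(z)\geq\theta_s(\phi)(x)$. The key is to transport the question through $\theta_s$ to the tight character $\phi$: by the defining formula of the canonical action, $\theta_s(\phi)(e)=\phi(\lambda(es))$ for every $e\in E$, so the desired inequality is literally $\bigvee_{z\in Z}\phi(\lambda(zs))\geq\phi(\lambda(xs))$. By the preceding lemma, the set $Z_s=\{\lambda(zs)\mid z\in Z\}$ is a cover for $\lambda(xs)$, and it is finite, being the image of the finite set $Z$. Applying the defining inequality of the tight character $\phi$ to the element $\lambda(xs)\in E$ and to the finite cover $Z_s$ gives $\bigvee_{w\in Z_s}\phi(w)\geq\phi(\lambda(xs))$; since $\bigvee_{w\in Z_s}\phi(w)=\bigvee_{z\in Z}\phi(\lambda(zs))$, this is exactly what was needed, and $\theta_s(\phi)$ is tight.

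I do not expect a genuine obstacle here: the statement collapses, via the identity $\theta_s(\phi)(e)=\phi(\lambda(es))$, into the lemma that $Z_s$ covers $\lambda(xs)$ plus the tightness of $\phi$, so the argument is essentially bookkeeping. The only things worth a sentence are the finiteness of $Z_s$ and the matching of the two suprema, both immediate.
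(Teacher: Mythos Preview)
Your proposal is correct and follows essentially the same argument as the paper: fix $x\in E$ with a finite cover $Z$, invoke the preceding lemma to obtain that $Z_s=\{\lambda(zs)\mid z\in Z\}$ is a finite cover for $\lambda(xs)$, and then apply tightness of $\phi$ together with the identity $\theta_s(\phi)(e)=\phi(\lambda(es))$ to conclude. The paper's proof is slightly terser, omitting the verification that $\theta_s(\phi)\in\echap_0$ (which was already observed before the proposition) and the remark that the two suprema agree, but the substance is identical.
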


\begin{proof}
Let $x\in E$ and $Z$ be a finite cover for $x$. By the previous Lemma, $Z_s=\{\lambda(zs)\ |\ z \in Z\}$ is a finite cover for  $\lambda(xs)$. Therefore $$\bigvee_{z\in Z}\theta_s(\phi)(z)=\bigvee_{z\in Z}\phi(\lambda(zs))\geq \phi(\lambda(xs))=\theta_s(\phi)(x).$$
\end{proof}

\subsection{The semicrossed product algebra}
Let $\alpha$ be an étale action of a restriction semigroup $S$ on a $C^*$-algebra $A$, according to Definition \ref{etaleactioncstaralgebras}. Following \cite[Proposition 4.1]{sieben1997c} and replacing $\alpha_{s^*}$ by $\alpha_{s}^{-1}$, we obtain that the vector space $L_{\alpha}$ of all finite formal sums $\sum_{s\in S}a_s\delta_s$, where $a_s \in J_{\rho(s)}$, is an associative algebra endowed with the product defined in Equation \eqref{productinLalpha}. 
\begin{equation}\label{productinLalpha}
    \left(\sum_{s \in S}a_s\delta_s\right)\left(\sum_{t \in S}b_t\delta_t\right):= \sum_{s \in S}\sum_{t \in S} \alpha_s(\alpha_s^{-1}(a_s)b_t)\delta_{st}.
\end{equation}
 \begin{defi}\label{defipar}
 A \textbf{covariant pair} for an étale action $(\alpha,S,A)$, or simply $\alpha$, on a Hilbert space $H$ is a pair $(\pi,\sigma)$ where $\sigma$ is a representation of $S$, $\pi$ is a representation of $A$ and
 \begin{enumerate}[{\normalfont \rmfamily  (i)}]
 \item \label{covariancerelation}$\pi(\alpha_s(a))\sigma_s=\sigma_s\pi(a)$, for every $a\in \jls,\ s \in S.$
 \item\label{defipari2} $\spf\ \pi(J_e)H=\sigma_e(H)$,  for every $e \in E.$ 
\end{enumerate}
The equality presented at item \eqref{covariancerelation} is usually called covariance relation.
 \end{defi}

Every covariant pair $(\pi,\sigma)$ on $H$ for $\alpha$ integrates to an algebra homomorphism  $\pi\times \sigma: L_{\alpha}\rightarrow B(H)$ given by $\pi\times\sigma(a\delta_s)=\pi(a)\sigma_s.$ Moreover, $\pi\times \sigma$ is contractive if we endow $L_{\alpha}$ with the $\ell_1$ norm. We will only show that $\pi\times \sigma$ separates the product:
    $$\begin{aligned}
    \pi\times \sigma(a\delta_s b\delta_t)
    &=\pi(\alpha_s(\alpha_s^{-1}(a)b))\sigma_{st}
    =\pi(\alpha_s(\alpha_s^{-1}(a)b))\sigma_{s}\sigma_{t}
    \\&=\sigma_s\pi(\alpha_s^{-1}(a)b)\sigma_{t}
    =\sigma_s\pi(\alpha_s^{-1}(a))\pi(b)\sigma_{t}
    \\&=\pi(\alpha_s(\alpha_s^{-1}(a)))\sigma_s\pi(b)\sigma_{t}
    =\pi(a)\sigma_s\pi(b)\sigma_{t}
    \\&=\pi\times\sigma(a\delta_s)\pi\times\sigma(b\delta_t).
    \end{aligned}$$

\begin{defi}
The \textbf{semicrossed product algebra} of $A$ by $S$ with respect to $\alpha$, denoted $A\rtimes_\alpha S$, is the \textit{Hausdorff completion} of $L_{\alpha}$ equipped with the seminorm induced by the class of all covariant pairs, which is $$\left\|\sum_{s\in S}a_s\delta_s\right\|_0=  \sup_{(\pi,\sigma) \text{ cov. pair}}  \left\|\pi\times \sigma(\sum_{s\in S}a_s\delta_s)\right\|.$$
\end{defi}

Would it be possible to find a ``faithful" covariant pair in such a way the quotient is not necessary? In general, the answer is no: note that if $a\in J_e$, for every covariant pair $(\pi,\sigma)$ one has $\pi(a)\sigma_e= \pi(a)$. Indeed $\pi(a)\sigma_e= \pi(\alpha_e(a))\sigma_e=\sigma_e\pi(a)=\pi(a).$ Where the last equality comes form the fact that $\sigma_e$ is the projection onto the subspace $\overline{\pi(J_e)H}$. Then, if $s\leq t$ and $a\in \jrs \cont \jrt$ one has \begin{equation}\label{nucleus}
\pi\times \sigma(a\delta_s)=\pi(a)\sigma_s=\pi(a)\sigma_{\rho(s)t}=\pi(a)\sigma_{\rho(s)}\sigma_t=\pi(a)\sigma_t=\pi\times \sigma(a\delta_t).\end{equation}

\begin{rem}\label{converg}
	Let $s$ be an element of $S$. Suppose that $\{a_n\}\cont J_{\rho(s)}$ converges to $a$. Then, for every covariant pair $(\pi,\sigma)$, one has $||\pi\times \sigma(a_n\delta_s-a\delta_s)||=||\pi(a_n-a)\sigma_s||\leq ||a_n-a||$. In particular, $a_n\delta_s$ converges to $a\delta_s$ in $A\rtimes_\alpha S.$ Thus, if we suppose that $\{J'_e\}_{e\in E}$ is a family of subsets such that $J'_e$ is dense in $J_e$ then the subset $L'_{\alpha}=\{f\in \cc(S,A)\ |\ f(s)\in J'_{\rho(s)} \}$ is dense in $A\rtimes_\alpha S$. In particular, if for every $e\in E$ the subset $J'_e$ is an ideal of $J_e$ then $L'_{\alpha}$ is a dense subalgebra of $A\rtimes_\alpha S$.

	In particular, when $\alpha$ arises from a topological étale action, as in Remark \ref{topinducesalg}, we obtain that 
    \begin{equation}\label{smallerbutdense}
	\czero(X)\rtimes_\alpha S=\overline{\left\{\sum_{s\in S}a_s\delta_s\ |\ a_s\in \cc(\drs) \right\}}.
	\end{equation}
\end{rem}

\begin{rem}\label{remarkbelow}
	Let $B$ a unital $C^*$-algebra and $\tau:B\rightarrow B(H)$ be a $C^*$-algebra homomorphism. Note that $\tau(1)H$ is closed because $\tau(1)$ is a projection. Now, for every $b\in B$ and $h\in H$, one has $\tau(b)h=\tau(1)\tau(b)h\in \tau(1)H$. Hence $\spf\ \tau(B)H=\tau(1)H$.
\end{rem}

\begin{teo}\label{thm:A(S)semicrossed}
 Let $(S,E,\lambda,\rho)$ be a restriction semigroup, $\theta:S\rightarrow \mathcal{I}(\echap)$ be the canonical action, and $\alpha:S\rightarrow \mathcal{I}(\czero(\echap))$ be the étale action induced by $\theta$. Then, $$\A(S)\cong \czero(\echap)\rtimes_\alpha S .$$
\end{teo}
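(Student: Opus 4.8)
The plan is to establish the isomorphism by exhibiting mutually inverse contractive homomorphisms between the two algebras at the level of the dense subalgebras $\comp[S]$ and $L_\alpha$ (more precisely, the dense subalgebra of $L_\alpha$ described in Remark \ref{converg}), and then arguing that both are isometric for the respective universal norms. The natural candidate map $\Phi:\comp[S]\to \czero(\echap)\rtimes_\alpha S$ sends $\delta_s\mapsto 1_{\rho(s)}\delta_s$; note $1_{\rho(s)}\in\czero(D_{\rho(s)})=J_{\rho(s)}$, so this lands in $L_\alpha$. First I would check that $\Phi$ is an algebra homomorphism: using the product \eqref{productinLalpha} and the fact that $\alpha_s(f)=f\circ\theta_s^{-1}$, one computes $(1_{\rho(s)}\delta_s)(1_{\rho(t)}\delta_t)=\alpha_s(\alpha_s^{-1}(1_{\rho(s)})1_{\rho(t)})\delta_{st}$, and since $\alpha_s^{-1}(1_{\rho(s)})=1_{\lambda(s)}$ and multiplying by $1_{\rho(t)}$ cuts down to $D_{\lambda(s)}\cap D_{\rho(t)}$, one should verify $\alpha_s(1_{\lambda(s)}1_{\rho(t)})=1_{\rho(st)}$, using that $\theta_s$ carries $D_{\lambda(s)}\cap D_{\rho(t)}$ onto $D_{\rho(st)}$ — this is essentially item (i) in the verification that $\theta$ is an étale action, i.e.\ $D_{\lambda(t^{-1})}\cap\ldots$; I would reuse the identity $D_{\lambda(st)}=\theta_t^{-1}(D_{\rho(t)}\cap D_{\lambda(s)})$ established in Subsection \ref{standardaction}, suitably transported by $\theta_{st}$.

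Next I would construct the inverse. Given a covariant pair $(\pi,\sigma)$ for $\alpha$ on $H$, the map $e\mapsto\sigma_e$ is a semigroup homomorphism into projections; but we want a representation of $S$ on $H$ that, composed back, recovers the semicrossed-product norm. The key observation is that $s\mapsto\sigma_s$ is itself a representation of $S$ in the sense of Definition \ref{repsmgrest} (contractive, sending idempotents to projections), so it extends to $\widetilde\sigma:\comp[S]\to B(H)$; I would show $\widetilde\sigma=\Psi\circ(\pi\times\sigma)\circ\Phi$ is not quite right — rather, I would show that for every covariant pair $(\pi,\sigma)$, the composite $\pi\times\sigma$ applied to $\Phi(\mathbf x)$ equals $\widetilde{\sigma}(\mathbf x)$, because $\pi\times\sigma(1_{\rho(s)}\delta_s)=\pi(1_{\rho(s)})\sigma_s=\sigma_{\rho(s)}\sigma_s=\sigma_s$ by the covariance/nondegeneracy identity $\pi(a)\sigma_e=\pi(a)$ with $a=1_{\rho(s)}\in J_{\rho(s)}$ and the relation just before Remark \ref{converg}. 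Hence $\|\Phi(\mathbf x)\|_0\le\|\mathbf x\|_0$. For the reverse inequality I would start from a representation $\sigma:S\to B(H)$ and build a covariant pair: let $\pi:=\pi_\sigma|_E$ be the $C^*$-homomorphism $\czero(\echap)\to B(H)$ of Theorem \ref{universal} associated to $e\mapsto\sigma_e$, so $\pi(1_e)=\sigma_e$; check $\spf\pi(J_e)H=\sigma_e H$ via Remark \ref{remarkbelow} applied to the unital $C^*$-algebra $\operatorname{C}(D_e)=J_e^{\sim}$... actually $J_e=\czero(D_e)$ need not be unital, so I would instead use that $\sps\{1_f:f\le e\}$ is dense in $\operatorname{C}(D_e)$ and $\pi(1_f)=\sigma_f=\sigma_f\sigma_e$, giving $\overline{\pi(J_e)H}=\sigma_e H$; and check the covariance relation $\pi(\alpha_s(1_f))\sigma_s=\sigma_s\pi(1_f)$ on generators, i.e.\ $\sigma_{\lambda(fs)}$-versus-computation — here $\alpha_s(1_f\cdot 1_{\lambda(s)})=1_{?}$ with the index computed from $\theta_s(D_f\cap D_{\lambda(s)})=D_{\rho(\,\cdot\,)}$; I expect $\alpha_s(1_{f}1_{\lambda(s)})=1_{\rho(sf)}$ (dual to the earlier computation), so the covariance relation reduces to $\sigma_{\rho(sf)}\sigma_s=\sigma_s\sigma_{f}\sigma_{\lambda(s)}=\sigma_s\sigma_f$, which follows from $\rho(sf)s=sf$ (property/order: $sf\le s$ with $\rho(sf)sf=sf$ and minimality) so $\sigma_{\rho(sf)}\sigma_s\sigma_f=\sigma_{\rho(sf)}\sigma_{sf}=\sigma_{\rho(sf)sf}=\sigma_{sf}=\sigma_s\sigma_f$ after inserting the idempotent — modulo checking $\sigma_s\sigma_f=\sigma_{sf}$ and that the left side collapses correctly, which is the delicate point.

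With both inequalities in hand, $\Phi$ is isometric on $\comp[S]$ for $\|\cdot\|_0$, hence extends to an isometric homomorphism $\A(S)\to\czero(\echap)\rtimes_\alpha S$. It remains to see the image is dense, equivalently that $\Phi(\comp[S])$ is dense in $L_\alpha$ in the semicrossed-product norm: by \eqref{smallerbutdense} it suffices to approximate elements $\sum a_s\delta_s$ with $a_s\in\cc(D_{\rho(s)})$, and since $\sps\{1_e:e\le\rho(s)\}$ is dense in $\operatorname{C}(D_{\rho(s)})$ hence (being an ideal-type span — careful, one needs density in $\czero(D_{\rho(s)})$, which holds since $\sps\{1_f\}$ is a dense subalgebra of $\czero(\echap)$ and one localizes) in $\czero(D_{\rho(s)})$, and $1_f\delta_s=1_f 1_{\rho(s)}\delta_s=\Phi$-image manipulations... precisely $1_f\delta_s$ for $f\le\rho(s)$: write $f=\rho(sf')$? — better, note $1_f\delta_s=\Phi(\delta_{?})$ where the index is an element of $S$ below $s$; using the order description $s\lambda(\cdot)$ one identifies $1_{\rho(u)}\delta_u$ for $u\le s$ as exactly the functions supported on $D_{\rho(u)}\subseteq D_{\rho(s)}$ that arise, and Remark \ref{converg}'s equation \eqref{nucleus} shows these span a dense subspace. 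I would invoke Remark \ref{converg} together with the density of $\sps\{1_e\}$ to conclude density of $\Phi(\comp[S])$, completing the proof.

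\medskip
\noindent\emph{Main obstacle.} The hard part will be the bookkeeping of the idempotent indices under $\theta$ and $\alpha$: verifying $\alpha_s(1_{\lambda(s)}1_{\rho(t)})=1_{\rho(st)}$ and $\alpha_s(1_f1_{\lambda(s)})=1_{\rho(sf)}$ cleanly from the restriction-semigroup identities (P1)--(P8) and Propositions \ref{propdomaincomp}--\ref{lemmareg}, and then checking the covariance relation reduces to a genuine semigroup identity among the $\sigma_s$'s rather than requiring adjoints — precisely the place where the non-self-adjoint setting differs from Sieben's and where one must be sure $\sigma_s\sigma_f$ and $\sigma_{sf}$ interact as needed. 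Everything else (homomorphism property, contractivity, density) is routine given the machinery already developed in the excerpt.
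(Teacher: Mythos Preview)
Your approach is essentially identical to the paper's: the same map $\Phi=\psi:\delta_s\mapsto 1_{\rho(s)}\delta_s$, the same bijection between representations $\sigma$ of $S$ and covariant pairs $(\pi_\sigma,\sigma)$ via Theorem~\ref{universal}, and the same density argument via \eqref{nucleus} and the density of $\sps\{1_e:e\le\rho(s)\}$ in $\operatorname{C}(D_{\rho(s)})$. Your ``main obstacle'' dissolves: each $D_e$ is compact (noted just before \eqref{avaliacao}), so $J_e=\operatorname{C}(D_e)$ is unital and Remark~\ref{remarkbelow} applies directly, and the covariance check on generators is nothing more than (P8) --- for $e\le\lambda(s)$ one has $\alpha_s(1_e)=1_{\rho(se)}$ and then $\sigma_{\rho(se)}\sigma_s=\sigma_{\rho(se)s}=\sigma_{se}=\sigma_s\sigma_e$ immediately, with no adjoints or extra idempotent insertions required.
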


\begin{proof}
Define $\psi: \comp[S]\rightarrow L_{\alpha}$ by $\psi(\sum_{s\in S}a_s\delta_s)=\sum_{s\in S}a_s1_{\rho(s)}\delta_s$, where $1_e:=1_{D_e}$ is the characteristic function of $D_e$. Since $\psi$ is obviously linear, we are reduced to prove that it preserves the product. Recall that the inverse of $\theta_s$ is the map $\zeta_s$ defined in \eqref{definitionzeta} and note that for $s,t\in S$ 

$$\begin{aligned}
\psi(\delta_s)\psi(\delta_t)
    &=1_{\rho(s)}\delta_s1_{\rho(t)}\delta_t
    =\alpha_s(\alpha_s^{-1}(1_{\rho(s)})1_{\rho(t)})\delta_{st}
    \\&=\alpha_s((1_{\rho(s)}\circ\theta_s)1_{\rho(t)})\delta_{st}
    =\alpha_s(1_{\lambda(s)}1_{\rho(t)})\delta_{st}
    \\&=1_{D_{\lambda(s)}\cap D_{\rho(t)}}\circ\zeta_s\delta_{st}
    =1_{\theta_s(D_{\lambda(s)}\cap D_{\rho(t)})}\delta_{st}
    \\&=1_{\rho(st)}\delta_{st}
    =\psi(\delta_{st}).
\end{aligned}$$
Therefore 
$$\begin{aligned}
    \psi\left(\sum_{s\in S} a_s\delta_s\sum_{t\in S} b_t\delta_t\right)
    &=\psi\left(\sum_{s,t\in S} a_sb_t\delta_{st}\right)
  =\sum_{s,t\in S} a_sb_t\psi(\delta_{st})
   \\& =\sum_{s,t\in S} a_sb_t\psi(\delta_s)\psi(\delta_t)
    =\sum_{s,t\in S} \psi(a_s\delta_s)\psi(b_t\delta_t)
    \\&=\psi\left(\sum_{s\in S}a_s\delta_s\right)\psi\left(\sum_{t\in S}b_t\delta_t\right).
\end{aligned}$$

Next, we show that there is a bijection between representations of $S$ and covariant pairs for $\alpha$. For a representation $\sigma:S\rightarrow B(H)$ of $S$, we have that $\sigma$ restricts to $E$ as a semigroup homomorphism whose range consists of (orthogonal) projections. Hence let $\pi_{\sigma}: \czero(\echap)\rightarrow B(H)$ denote the homomorphism from Theorem $\ref{universal}$. We claim that $(\pi_{\sigma},\sigma)$ defines a covariant pair for $\alpha$. In fact, for $s\in S$ let us check that the \textit{covariance relation} holds. Since $\sps \{1_e\ |\ e\in E\text{ and }e\leq \lambda(s)\}$ is dense in $\operatorname{C}(D_{\lambda(s)})$, we only need to check the equality for the family $\{1_e\ |\ e\in E\text{ and }e\leq \lambda(s)\}$. Take $e\leq \lambda(s)$ and  note that 
	$$\alpha_s(1_e)(\phi)
	=1_e\circ \zeta_s(\phi)
	=1_e(\zeta_s(\phi))
	\stackrel{\eqref{avaliacao}}{=}\zeta_s(\phi)(e)
	=\phi(\rho(se))=1_{\rho(s.e)}(\phi).$$
 Therefore
  	$$\pi_\sigma(\alpha_s(1_e))\sigma_s
  	=\pi_\sigma(1_{\rho(se)})\sigma_s
  	=\sigma_{\rho(se)}\sigma_s
  	=\sigma_{\rho(se)s}
  	=\sigma_{se}
  	=\sigma_{s}\sigma_{e}
  	=\sigma_s\pi(1_e).$$
Now, note that $\sigma_e(H)=\pi_{\sigma}(1_e)H$ and $\pi_{\sigma}(1_e)H=\spf\pi_{\sigma}(\operatorname{C}(D_e))H,$ 
where the latter equality comes from Remark \ref{remarkbelow}. 
Then, $(\pi_{\sigma},\sigma)$ indeed defines a covariant pair.

Let us see that the map $\sigma \rightarrow (\pi_{\sigma},\sigma)$ defines a bijective correspondence from the set of representation of $S$ to the set of covariant pairs for $\alpha$. Clearly, it is injective. 
Moreover, if $(\pi,\sigma)$ is a covariant pair then $\pi(1_e)$ is a projection such that $\pi(1_e)H=\spf\ \pi(\operatorname{C}(D_e))H$, by Remark \ref{remarkbelow}. Then, $\pi(1_e)$ and $\sigma_e$ are projections sharing the same range, which implies that  $\pi(1_e)=\sigma_e$ and $\pi=\pi_{\sigma}$.

Finally, for an element $x=\sum_{s\in S}a_s\delta_s\in \comp[S]$ and $\sigma$ a representation of $S$ note that
\begin{equation}\label{normaparrep}
    \begin{aligned}
    &\|\pi_{\sigma}\times \sigma(\psi(x))\|
    =\left\|\pi_{\sigma}\times \sigma\big(\sum_{s\in S}a_s1_{\rho(s)}\delta_s\big)\right\|
    \\&=\left\|\sum_{s\in S}a_s\pi_{\sigma}(1_{\rho(s)})\sigma_s\right\|
    =\left\|\sum_{s\in S}a_s\sigma_{\rho(s)}\sigma_s\right\|
    \\&=\left\|\sum_{s\in S}a_s\sigma_s\right\|
   =\left\|\widetilde{\sigma}(\sum_{s\in S}a_s\delta_s)\right\|
    =\|\widetilde{\sigma}(x)\| .
   \end{aligned}\end{equation}
Hence, taking supremum over $\sigma$ we obtain that $\psi$ is isometric and extends to an isometric homomorphism $\psi_1:\A(S)\rightarrow \czero(\echap)\rtimes_\alpha S.$

 We finish by showing that the range of $\psi_1$ contains a dense subset. If $s\in S$ and $e\leq \rho(s)$, we have $\rho(es)=\rho(e\rho(s))=e\rho(s)=e.$ Thus, $\psi_1(\delta_{es})=1_{\rho(es)}\delta_{es}=1_{e}\delta_{es}\stackrel{\eqref{nucleus}}{=}1_{e}\delta_{s}.$
Since $\sps \{1_e\ |\ e\in E\text{ and }e\leq \rho(s)\}$ is dense in $\operatorname{C}(\drs)$, by Remark \ref{converg} we have that $\psi_1$ is an isomorphism
\end{proof}

\section{The category of germs of a restriction semigroup action}\label{sec:catgerms}

Let $(S,E,\lambda,\rho)$ be a restriction semigroup, $X$ be a locally compact Hausdorff space and $\theta:S\rightarrow \mathcal{I}(X)$ be an étale action. To the end of constructing an étale category associated with $\theta$, let $\Xi_0$ denote the set $\{(s,x)\ |\ x\in \dls\} \subseteq S\times X$. We say that two pairs $(s,x)$ and $(t,y)$ in $\Xi_0$ are equivalent if $x=y$ and $sf=tf$ for some $f\in E$ such that $x\in D_f$. In this case, we denote $(s,x) \sim (t,y)$. 

For simplicity, here we just prove that $\sim$ is transitive and the other properties are left to the reader. If $(s,x)\sim(t,x)$ and $(t,x)\sim (w,x)$ then $x\in D_e\cap D_f$, $se=te$ and $tf=wf$ for some $e,\ f \in E$. Therefore, $(s,x)\sim (w,x)$ since $sef=wef$ and $D_{ef}=D_e\cap D_f$. We say that the equivalence class of a pair $(s,x)$ is the \textit{germ} of $s$ at the point $x$. Moreover, note that if $(s,x)\sim (t,x)$ and $f\in E$ is a projection implementing the equivalence then \begin{equation}\label{actigual}
\theta_s(x)=\theta_s(\theta_f(x))=\theta_{sf}(x)=\theta_{tf}(x)=\theta_t(\theta_f(x))=\theta_t(x).
\end{equation}

\begin{prop}\label{produtogerms}
 If $(s,x), (s',x), (t',y)$  and $(t,y)$ are elements of $\ \Xi_0$ such that \linebreak $x=\theta_t(y)$, $(s',x) \sim (s,x)$ and $(t',y)\sim (t,y)$ then $(s't',y)\sim (st,y)$.
\end{prop}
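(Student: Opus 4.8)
The plan is to reduce the statement to the production of a single projection $g\in E$ with $y\in D_g$ and $s't'g=stg$. From $(s',x)\sim(s,x)$ I pick $e\in E$ with $x\in D_e$ and $s'e=se$, and from $(t',y)\sim(t,y)$ I pick $f\in E$ with $y\in D_f$ and $t'f=tf$. The candidate will be
\[
g=f\,\lambda(et),
\]
which lies in $E$ since $E$ is a subsemilattice of $S$, and which satisfies $g\leq f$ and $g\leq\lambda(et)$.

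Before the computation I would record that $(st,y)$ and $(s't',y)$ really are elements of $\Xi_0$. Since $\theta$ is a restriction semigroup homomorphism into the inverse semigroup $\mathcal{I}(X)$, one has $\theta_{st}=\theta_s\theta_t$, so $\mathrm{dom}(\theta_{st})=\theta_t^{-1}\big(D_{\rho(t)}\cap D_{\lambda(s)}\big)$; as $y\in D_{\lambda(t)}$ and $\theta_t(y)=x\in D_{\rho(t)}\cap D_{\lambda(s)}$, we get $y\in D_{\lambda(st)}$. The same argument applied to $t'$, using $\theta_{t'}(y)=\theta_t(y)=x$ (which holds by \eqref{actigual}) together with $x\in D_{\lambda(s')}$, gives $y\in D_{\lambda(s't')}$. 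Next I would verify $y\in D_g=D_f\cap D_{\lambda(et)}$: we already have $y\in D_f$, and since $\theta_{et}=\theta_e\theta_t$ we have $D_{\lambda(et)}=\mathrm{dom}(\theta_e\theta_t)=\theta_t^{-1}\big(D_{\rho(t)}\cap D_e\big)$, which contains $y$ because $y\in D_{\lambda(t)}$ and $\theta_t(y)=x\in D_e$.

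Then comes the purely algebraic part. Because $g\leq f$ and $E$ is commutative, $g=fg$, hence $t'g=(t'f)g=(tf)g=tg$, so $s't'g=s'tg$. Because $g\leq\lambda(et)$, $g=\lambda(et)g$, so by (P7) ($et=t\lambda(et)$) we get $tg=t\lambda(et)g=etg$, and symmetrically $etg=tg$. Therefore
\[
s'tg=s'(etg)=(s'e)(tg)=(se)(tg)=s(etg)=s(tg)=stg.
\]
Combining, $s't'g=stg$ with $g\in E$ and $y\in D_g$, which is exactly $(s't',y)\sim(st,y)$.

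The main obstacle is not depth but bookkeeping: the single projection $g$ must simultaneously absorb $f$ on the right (to replace $t'$ by $t$) and $\lambda(et)$ (to slide the projection $e$, which witnesses the equivalence at $x$, across $t$ so that it can replace $s'$ by $s$), and one must then check that this product projection still has $y$ in its domain — which is precisely where the domain identity $D_{\lambda(et)}=\theta_t^{-1}(D_{\rho(t)}\cap D_e)$ and the hypothesis $x=\theta_t(y)\in D_e$ are used.
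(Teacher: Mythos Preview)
Your proof is correct and follows essentially the same approach as the paper. Both arguments produce the same witness projection (after swapping your $e,f$ with the paper's $f,e$): the paper writes it as $\lambda(ht)$ with $h=f\rho(te)$, which simplifies to $\lambda(fte)=\lambda(ft)e$, while you write it directly as $f\lambda(et)$; these coincide by (P5) and commutativity of $E$. The only cosmetic difference is that the paper introduces the auxiliary $h$ and uses (P8) to show $ht=fte=ft'e=ht'$ before invoking (P7), whereas you go straight to (P7) to slide $e$ across $t$ --- your route is slightly more streamlined, and your explicit check that $(st,y),(s't',y)\in\Xi_0$ is a welcome addition the paper omits.
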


\begin{proof}
Let $e,f\in E$ be such that $s'f=sf$, $t'e=te$, $x\in D_f$ and $y\in D_e$. By (P8) of Definition \ref{restrictionsemigroupdefinition} $\rho(te)t=te$ and $\rho(te)t'=\rho(t'e)t'=t'e=te.$ If $h=f\rho(te)$, then $ht=fte$ and $ht'=ft'e$. Consequently, $ht=ht'$ and 
    $$\begin{aligned}
    st\lambda(ht)
    &=sht
    =(sf)(te)
    =(s'f)(t'e)
    \\&=s'ht'
    =s't'\lambda(ht')
    =s't'\lambda(ht).
    \end{aligned}$$ 
Note that $D_{\lambda(ht)}=D_{\lambda(ft)}\cap D_{e}$ since $\lambda(ht)=\lambda(fte)=\lambda(\lambda(ft)e)=\lambda(ft)e$. Moreover \linebreak
$x=\theta_f(x)=\theta_f(\theta_t(y))=\theta_{ft}(y)$ and consequently $y\in D_{\lambda(ft)}$. Therefore $y\in D_{\lambda(ht)}$ since it already belongs to $D_e$.
\end{proof}

\begin{rem}\label{unitsp}
	If $x\in D_e\cap D_f$ for some $e,\ f \in E$, then $(e,x)\sim(f,x)$ because $e(ef)=f(ef)$ and $x\in D_{ef}$. Moreover, for $s\in S$ and $e\in E$, the pairs $(s,x), (e,x)$ are equivalent if and only if there exists a projection $h$ such that $sh=h$ and $x\in D_h$. In fact, if $(s,x)\sim (e,x)$ then $h= ef$ where $f$ is a projection implementing the equivalence. Conversely, if
    $sh=h$ and $x\in D_h$ then $(s,x)\sim (h,x)\sim (e,x)$.
\end{rem}

Next, we proceed to define the category of germs associated with $\theta$. Astute readers will observe the resemblance between this construction and that of the groupoid of germs emerging from an inverse semigroup action. For $\CZ=X$ and $\CU=\faktor{\Xi_0}{\sim}$, define the source  and the range maps to be $\s([s,x])=x$ and $\rr([s,x])=\theta_s(x),$ respectively. Consequently, $\CD$ is the subset $\{([s,x],[t,y])\in \CU\times\CU:\ x=\theta_t(y)\}$. Define the product $\m([s,x],[t,y])$ by $[st,y]$ and note that it is well-defined by Proposition \ref{produtogerms}. Finally, define the unit map to be $\un(x)=[e,x]$ where $e\in E$ is any element such that $x\in D_e$. Note that $\un$ is well-defined by Remark \ref{unitsp}. Thus it is a simple matter to check that $\C(\theta,S,X)=\left(\CZ,~\CU, \s, \rr,\m, \un\right)$ forms a category. For simplicity of notation we will write $\C(\theta,S,X)$ just as $\C$ whenever the action is known by context.

Let us equip $\CU$ with a topology. For every $s\in S$ and every open set $U\cont\dls$ define
$$\Theta(s,U):=\left\{[s,x]\ |\ x\in U\right\}.$$ 
In the case, $U=\dls$, we denote $\Theta\left(s,\dls\right)$  by $\Theta_s.$ We claim that the subset family $\left\{\Theta(s,U)\ |\ s\in S,\ U\cont \dls \text{ open set } \right\}$ forms a basis for a topology on $\CU$ by verifying the conditions of \cite[Theorem 5.3]{willard}. In fact, this family is a cover for $\CU$. 
Moreover, if $[s,u]=[t,v]\in \Theta(s,U)\cap \Theta(t,V)$, there exists $e\in E$ such that $se=te$ and $u=v\in U\cap V \cap D_e$. Hence, defining $W=U\cap V \cap D_e$, we have that $[s,u]\in \Theta(s,W)\cont \Theta(s,U)\cap \Theta(t,V).$ 

\begin{prop}
Equip $\CU$ with the topology generated by the basis $$\left\{\Theta(s,U)\ |\ s\in S,\ U\cont \dls \text{ open set } \right\}.$$ Then $\C(\theta,S,X)$ is étale.
 \end{prop}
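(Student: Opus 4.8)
The plan is to verify the three axioms defining an étale category in turn: that $\CZ = X$ is locally compact Hausdorff, that $\s$ and $\rr$ are local homeomorphisms, and that $\un$ is a topological embedding. The first is immediate since $X$ is assumed locally compact Hausdorff by hypothesis on the action. For the remaining two, the key observation is that the basic open sets $\Theta(s,U)$ are exactly the sets on which $\s$ restricts to a homeomorphism onto $U$: the map $\s|_{\Theta(s,U)}\colon \Theta(s,U)\to U$, $[s,x]\mapsto x$, is a continuous bijection (injectivity uses that $x$ determines the germ $[s,x]$ once $s$ is fixed), and its inverse $x\mapsto[s,x]$ is continuous by the very definition of the basis — the preimage of $\Theta(t,V)$ under $x\mapsto[s,x]$ is an open subset of $U$ (argue as in the paragraph establishing the basis: if $[s,x]=[t,y]\in\Theta(t,V)$, pick $e\in E$ with $se=te$, $x\in D_e$, and then a whole neighborhood $U\cap V\cap D_e$ of $x$ maps into $\Theta(t,V)$). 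Since the $\Theta(s,U)$ cover $\CU$, this shows $\s$ is a local homeomorphism. For $\rr$, note that on $\Theta_s=\Theta(s,D_{\lambda(s)})$ we have $\rr|_{\Theta_s}=\theta_s\circ\s|_{\Theta_s}$, a composition of homeomorphisms (using that $\theta_s\colon D_{\lambda(s)}\to D_{\rho(s)}$ is a homeomorphism, part of the definition of an étale action), and the $\Theta_s$ also cover $\CU$; hence $\rr$ is a local homeomorphism as well.

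For the last axiom, $\un\colon X\to\CU$, $x\mapsto[e,x]$ (any $e$ with $x\in D_e$), I would first check continuity: given a basic open set $\Theta(t,V)$, we have $\un^{-1}(\Theta(t,V)) = \{x : [e,x]=[t,x]\text{ for some/any }e\text{ with }x\in D_e\}$, and by Remark \ref{unitsp} this equals $\{x : th=h$ for some projection $h$ with $x\in D_h\} = \bigcup\{D_h : h\in E,\ th=h\}\cap V$, which is open. (One should also note $\un^{-1}(\Theta(t,V))\subseteq V$, which is built into the description since $[t,x]$ requires $x\in D_{\lambda(t)}$ and the germ condition forces the point coordinate into $V$.) Then $\un$ is injective, and it is open onto its image: $\un(D_e) = \Theta(e, D_e)\cap\un(X)$, so $\un$ maps the open set $D_e$ to a relatively open subset of $\un(X)$, and since the $D_e$ cover $X$ this gives that $\un\colon X\to\un(X)$ is open, hence a homeomorphism onto its image — i.e., a topological embedding.

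The main obstacle I anticipate is not any single deep step but rather the bookkeeping around well-definedness and the equivalence relation: every statement about $\un$ and about injectivity of $\s|_{\Theta(s,U)}$ secretly rests on carefully chosen projections $f\in E$ implementing germ equivalences (as in Remark \ref{unitsp} and Proposition \ref{produtogerms}), and one must consistently intersect domains $D_e\cap D_f = D_{ef}$ to stay inside the right neighborhoods. The cleanest way to organize this is to prove once and for all the lemma ``$\s$ restricts to a homeomorphism $\Theta(s,U)\to U$,'' deduce the $\rr$ statement from it via $\theta_s$, and then handle $\un$ separately using Remark \ref{unitsp}; with that scaffolding the verification of the three axioms is routine.
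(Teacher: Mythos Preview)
Your argument for $\s$, $\rr$, and $\un$ follows essentially the same route as the paper, and the organization you propose (establish once that $\s|_{\Theta(s,U)}$ is a homeomorphism onto $U$, then deduce the $\rr$ statement via $\rr|_{\Theta_s}=\theta_s\circ\s|_{\Theta_s}$) is sound.

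There is, however, a genuine gap. You list ``the three axioms defining an étale category,'' but the definition requires $\C$ to first be a \emph{topological category}, meaning all four structure maps are continuous --- in particular the composition $\m$. You never address continuity of $\m$, and it does not follow from the other verifications. The paper handles this explicitly: given a composable pair $([s,\theta_t(x)],[t,x])$ and a basic neighborhood $\Theta(r,V)$ of the product $[st,x]$, one picks $e\in E$ with $ste=re$ and $x\in D_e$, sets $W=D_e\cap V\cap D_{\lambda(st)}$, and verifies that $\m\big((\Theta_s\times\Theta(t,W))\cap\CD\big)\subseteq\Theta(r,V)$. Without this step the proof is incomplete.

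A minor remark on your treatment of $\un$: in your openness argument, the intersection with $\un(X)$ is redundant since already $\Theta(e,D_e)\subseteq\un(X)$. The paper in fact shows $\un$ is open into $\CU$ (not merely onto its image) via $\un(U)=\bigcup_{e\in E}\Theta(e,U\cap D_e)$; either version suffices for the embedding claim.
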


\begin{proof}
Let $\Theta(s,U)$ be a basic open set of $\CU$. By Remark \ref{unitsp}, one has
$$\un^{-1}(\Theta(s,U))=\bigcup_{\substack{h\in E\\\text{ s.t }sh=h}}D_h\cap U.$$
On the other hand, if $U\cont X$ is an open subset
$$\un(U)=\un(\bigcup_{e\in E}U\cap D_e)=\bigcup_{e\in E}\Theta(e,U\cap D_e).$$
Thus, $\un$ is a one-to-one continuous open map since the right sides of the above equation are open subsets. In particular, $\un$ is an embedding. Furthermore, note that
$$\rr^{-1}\left(U\right)=\bigcup_{s\in S}\Theta\left(s, \theta_s^{-1}\left( U\cap {\drs}\right)\right)\text{ and } \s^{-1}\left(U\right)=\bigcup_{s\in S}\Theta\left(s, U\cap \dls\right).$$ 
The above equations show that both the range and source are continuous maps. Moreover, for every $s\in S$ it is easy to see that $\s_{\Theta_s}:\Theta_s\rightarrow \dls$ and $\rr_{\Theta_s}:\Theta_s\rightarrow \drs$ are bijective maps, and hence $\s(\Theta(s,U))=U$ and $\rr(\Theta(s,U))=\theta_s(U)$, which gives $\s_{\Theta_s}$ and $\rr_{\Theta_s}$ are open maps since $\{\Theta(s,U)\ |\ U\cont \dls,\ U \text{ open} \}$ is a basis for the topology of $\Theta_s$. Therefore, $\s$ and $\rr$ are local homeomorphisms since $\left\{\Theta_s\right\}_{s\in S}$ is a cover of $\CU$.

We finish by showing the continuity of $\m$. Let $([s,\theta_t(x)],[t,x])$ a composable pair and $\Theta(r,V)$ be a basic open set containing $\m([s,\theta_t(x)],[t,x])=[st,x]$. There exists $e\in E$ such that $x\in  D_e\cap V\cap D_{\lambda(st)}$ and $ste=re$. Hence, defining $W:=D_e\cap V\cap D_{\lambda(st)}$, we easily obtain that $\m\left(\left(~\Theta_s\times \Theta\left(t, W\right)~\right)\cap \CD\right)\cont \Theta\left(r,V\right),$ with $e$ implementing the equivalences.
\end{proof}

We would like to draw attention to the fact that we will proceed as before (see the comment after proof of Proposition \ref{prop:coment}) and identify an object $x$ with $[e,x]$, where $e$ is any projection such that $x\in D_e$. Hence, we can without lost of generality say that $X$ is an open subset of $\C(\theta,S,X)$. Note that with this identification  we have $\Theta_e=D_e$, for every $e\in E$. Moreover, the following diagram commutes
\begin{equation}\label{diagram}
\begin{tikzcd}[column sep=small]
& \Theta_s \arrow[swap]{dl}{\s} \arrow{dr}{\rr} & \\
\dls  \arrow{rr}{\theta_s}& & \drs
\end{tikzcd}
\end{equation}

Suppose $\D$ is an étale category and let $\theta:\bis(\D)\rightarrow \mathcal{I}\big(\DZ\big)$ be the action of Example \ref{actionbisc}. Following the above construction, we have the category of germs $\C\big(\theta,\bis(\D),\DZ\big)$ whose elements are equivalence classes of pairs $(U,x)$, where $U\in \bis(\D)$ and $x\in \s(U)$.

\begin{teo}[cf. Proposition 5.4 of \cite{exel}]\label{thm:everycategoryisagermscategory}
If $\D$ is an étale category, then $\D$ is isomorphic to the category of germs $\C\big(\theta,\bis(\D),\DZ\big)$, where $\theta:\bis(\D)\rightarrow \mathcal{I}\big(\DZ\big)$ is the action of the restriction semigroup of bisections of $D$ on the space $\DZ$.
\end{teo}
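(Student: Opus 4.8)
The plan is to construct an explicit isomorphism of étale categories $\Phi\colon\D\to\C(\theta,\bis(\D),\DZ)$ and check that it is a homeomorphism on objects and morphisms respecting all the structure maps. On objects, $\Phi_0$ is the identity of $\DZ$ (under the identifications that make $\DZ$ an open subset of both categories). On morphisms, given $x\in\DU$, pick any bisection $U\in\bis(\D)$ with $x\in U$ (such $U$ exists since $\bis(\D)$ is a basis by Proposition \ref{prop:coveringmorphism}'s surrounding results, specifically the basis property proved earlier) and set $\Phi_1(x)=[U,\s(x)]$. The first task is to show this is well defined: if $x\in U\cap V$ with $U,V\in\bis(\D)$, then $U\cap V$ is a bisection containing $x$, and on $\s(U\cap V)$ the partial bijections $\theta_U$ and $\theta_V$ agree with $\theta_{U\cap V}$; since $U\cap V\subseteq X$-translation is witnessed by the idempotent $\un(\s(U\cap V))=\s(U\cap V)\in E(\bis(\D))$, one checks $U\cdot\s(U\cap V)=V\cdot\s(U\cap V)=U\cap V$ as bisections, so $(U,\s(x))\sim(V,\s(x))$. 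Here I would use that $\s(U\cap V)$ lies in $D_e$ for the relevant idempotent and that the product in $\bis(\D)$ of a bisection with a subset of $\DZ$ is the obvious restriction (Remark \ref{r1} and the product-of-subsets discussion).

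Next I would verify $\Phi$ is a functor: $\Phi_0\circ\s=\s\circ\Phi_1$ is immediate from the definition of the source on germs, $\rr\circ\Phi_1=\Phi_0\circ\rr$ follows because $\rr([U,\s(x)])=\theta_U(\s(x))=\rr_U(\s_U^{-1}(\s(x)))=\rr(x)$ using diagram \eqref{diagram} and the definition $\theta_U=\rr_U\circ\s_U^{-1}$ from Example \ref{actionbisc}; the unit is preserved because $\Phi_1(\un(u))=[\,\un(\s(U)),u\,]$ for a bisection $U\subseteq\DZ$ containing $u$, which is exactly the unit $[e,u]$ of the germ category under our identification; and multiplicativity $\Phi_1(xy)=\Phi_1(x)\Phi_1(y)$ follows by choosing bisections $U\ni x$, $V\ni y$ with $\s(U)=\rr(V)$ (possible by the refined basis for $\CD$ noted after the proposition listing topological facts), whence $UV$ is a bisection containing $xy$ by Remark \ref{r1} and $[UV,\s(y)]=[U,\rr(V)]\cdot[V,\s(y)]$ directly from the composition rule on germs.

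Then I would show $\Phi$ is bijective. Injectivity: if $\Phi_1(x)=\Phi_1(x')$ with $x\in U$, $x'\in U'$, then $\s(x)=\s(x')$ and $U\cdot\s(U\cap U')$-type relation forces $x=x'$ using that a bisection is determined on a given source value; more carefully, $(U,\s(x))\sim(U',\s(x'))$ means there is an idempotent $e\in E(\bis(\D))$, i.e.\ an open $W\subseteq\DZ$ with $\s(x)\in W$, such that $UW=U'W$ as bisections, and evaluating the inverse of the source at $\s(x)$ gives $x=x'$. Surjectivity: any germ $[U,y]$ with $y\in\s(U)$ equals $\Phi_1(\s_U^{-1}(y))$. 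For the topological part, I would check $\Phi_1$ maps $U\in\bis(\D)$ homeomorphically onto $\Theta_U$: indeed $\Phi_1(U)=\Theta(U,\s(U))=\Theta_U$, and both $\s_U\colon U\to\s(U)$ and $\s_{\Theta_U}\colon\Theta_U\to\s(U)=D_{\lambda(U)}$ are homeomorphisms with $\s_{\Theta_U}\circ\Phi_1=\s_U$; since the $U$'s form a basis of $\D$ and the $\Theta_U$'s a basis of the germ category, $\Phi_1$ is a homeomorphism. I expect the main obstacle to be the bookkeeping in the well-definedness and injectivity arguments — translating the equivalence relation $\sim$ on $\Xi_0$ (defined via idempotents of the abstract restriction semigroup $\bis(\D)$) into concrete statements about intersections and restrictions of bisections, and making sure the identification of $\DZ$ as an open subset is applied consistently on both sides; none of it is deep, but it is the place where sign-of-the-argument errors would creep in.
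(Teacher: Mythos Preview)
Your proposal is correct and is essentially the paper's proof run in the opposite direction: the paper defines $\varphi\colon\C\to\D$ by $\varphi([U,x])=\s_U^{-1}(x)$, while you define its inverse $\Phi\colon\D\to\C$, $x\mapsto[U,\s(x)]$. The key computations match up exactly --- your well-definedness argument via the idempotent $\s(U\cap V)$ and the identity $U\cdot\s(U\cap V)=U\cap V=V\cdot\s(U\cap V)$ is precisely what the paper uses for injectivity of $\varphi$, and your homeomorphism check via $\s_{\Theta_U}\circ\Phi_1=\s_U$ mirrors the paper's $\varphi^{-1}(U)=\Theta_U$.
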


\begin{proof}
For simplicity, let $\C$ denote $\C\big(\theta,\bis(\D),\DZ\big)$. We have to check that there exists a homeomorphism $\varphi:\C\rightarrow \D$ satisfying: 
\begin{enumerate}[{\normalfont \rmfamily  (i)}]
    \item \label{thm:everycategoryisagermscategory1}
        $\varphi\big(\CZ\big)=\DZ$
    \item \label{thm:everycategoryisagermscategory2}
        $\varphi([U,x][V,y])=\varphi([U,x])\varphi([V,y])$, for every $([U,x],[V,y])\in \CD$.
\end{enumerate}
To this end, define $\varphi:\C \rightarrow \D$, $\varphi([U,x])=\s_U^{-1}(x)$. Note that if $[U,x]=[V,x]$, then there exists $F\cont \DZ$ such that $UF=VF$ and $x\in D_F=\s(F)=F.$ In this case, $\s_U^{-1}(x)\in UF=VF$ and $\s_V^{-1}(x)\in VF=UF$. Therefore, $\s_U^{-1}(x)=\s_V^{-1}(x)$ since the source of these elements is $x$, and $UF$ is a bisection. Therefore, $\varphi$ is well-defined.

To see that $\varphi$ is surjective, note that for every $z \in \D$, we have that $z=\varphi([Z,\s(z)])$, where $Z\in \bis(\D)$ is a bisection such that $z\in Z$. To see the injectivity of $\varphi$, note that if $\varphi([U,x])=\varphi([V,y])$, then $\s_U^{-1}(x)=\s_V^{-1}(y)\in U\cap V$, and hence, defining $F=\s(U\cap V)\cont \DZ$, we have that $x=\s(\s_U^{-1}(x))=\s(\s_V^{-1}(y))=y\in F$. Moreover,
$$UF=U\s(U\cap V)=U\cap V=V\s(U\cap V)=VF.$$
Thus, $F$ implements the equivalence $(U,x)\sim(V,y)$ and $[U,x]=[V,y]$. Finally, since $\varphi$ is bijective, $\varphi^{-1}(U)=\Theta(U,\s(U))=:\Theta_U$, for every $U\in \bis(\D)$. On the other hand, $\varphi(\Theta(V,Z))=\s_V^{-1}(Z)$, which gives that $\varphi$ is continuous, open and therefore a homeomorphism.

Once $\varphi$ is a homeomorphism, it remains to show items \eqref{thm:everycategoryisagermscategory1} and \eqref{thm:everycategoryisagermscategory2} above. Consider the composable pair $([U,\theta_V(x)],[V,x])$. On the one hand, we have
$$\varphi([U,\theta_V(x)][V,x])=\varphi([UV,x])=\s_{UV}^{-1}(x).$$
On the other hand, 
    $$\begin{aligned}
        \s(\varphi([U,\theta_V(x)]))
        &=\s(\s_U^{-1}(\theta_V(x)))
        =\theta_V(x)
        \\&=\rr_V (\s_V^{-1}(x))
        =\rr(\varphi([V,x])),
    \end{aligned}$$
which gives that $\big(\varphi([U,\theta_V(x)]),\varphi([V,x])\big)\in \D^{(2)}$ is a composable pair. Note that the product $\varphi([U,\theta_V(x)])\varphi([V,x])$ is in $UV$ and, moreover, 
$\s\big(\varphi\left([U,\theta_V(x)]\right)
\varphi\left([V,x]\right)\big)
=\s\big(\varphi([V,x])\big)=x.$ 
Thus, $\varphi\big([UV,x]\big)=\varphi\big([U,\theta_V(x)]\big)\varphi\big([V,x]\big)$ since both are in $UV$ and have $x$ as source.

To finish, we show that $\varphi$ preserves objects. If $[F,u] \in \CZ$, then  $F$ is an open subset of $\DZ$ and $u\in \s(F)=F$. Therefore $\varphi\big([F,u]\big)=\s_F^{-1}(u)=u\in \DZ$.
\end{proof}

\subsection{The category of germs of the canonical action}
Let $(S,E,\lambda,\rho)$ be a restriction semigroup, $\theta$ be the canonical action of $S$ on $\echap$ (see Subsection \ref{standardaction}) and $\C$ denote the category of germs $\C(\theta,S,\echap)$. Our purpose here is to study some properties of $\C$ and determine which properties of $S$ are reflected in $\C$. For instance, we prove that $S$ is left-ample if and only if $\C$ is left cancellative.

Recall that for every $e\in E$, the \textit{upper set} of $e$ is the set $e^{\uparrow}=\{f\in E\ \mid\ e\leq f\}$. Let $\bare$ be the semicharacter $1_{e^{\uparrow}}\in \echap,$ which is simply the characteristic function of $e^{\uparrow}$. The set $\Bare=\{\bare\ \mid\ e\in E \}$ has a special importance when dealing with the Banach algebra $\ell_1(E)$ (e.g \cite[Lemma 2.1.1]{paterson}). Note that if $\bare=\barf$, then $e\leq f$ and $f\leq e$, which gives $e=f$. 
 
 \begin{prop}\label{baresdense}
 	If $E$ is finite, then $\echap=\Bare$. In general, $\Bare$ is dense in $\echap$ and the subset $\{\bare\ | e\in E, e\leq h \}$ is dense in $D_h$, for every $h\in E$.
 \end{prop}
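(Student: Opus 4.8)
The plan is to first handle the finite case directly, then use it to get density in general, and finally specialise to $D_h$. For the finite case, take $\phi \in \echap$ and let $e = \prod\{f \in E \mid \phi(f) = 1\}$, the meet (in the semilattice $E$) of the finitely many projections on which $\phi$ takes the value $1$; this product is well-defined and nonzero because $\phi$ is multiplicative and $\phi(e) = \prod \phi(f) = 1$, so in particular $e \neq 0$ is forced (using that $\phi \neq 0$ so the index set is nonempty). I claim $\phi = \bare = 1_{e^{\uparrow}}$. Indeed, if $\phi(f) = 1$ then $f$ appears in the product defining $e$, so $e \leq f$, i.e.\ $f \in e^{\uparrow}$; conversely if $e \leq f$ then $ef = e$, so $\phi(f) = \phi(f)\phi(e) = \phi(fe) = \phi(e) = 1$. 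Hence $\phi(f) = 1 \iff f \in e^{\uparrow}$, which says exactly $\phi = 1_{e^{\uparrow}} = \bare$. This proves $\echap = \Bare$ when $E$ is finite.

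For the general case, I will show $\Bare$ meets every nonempty basic open set of $\echap$. Recall the topology on $\echap$ is that of pointwise convergence inherited from $\{0,1\}^E$, so a basic open neighbourhood of a semicharacter $\phi$ has the form $N = \{\psi \in \echap \mid \psi(f_i) = \phi(f_i),\ i = 1,\dots,n\}$ for finitely many $f_1,\dots,f_n \in E$. Reordering, say $\phi(f_1) = \cdots = \phi(f_k) = 1$ and $\phi(f_{k+1}) = \cdots = \phi(f_n) = 0$. Since $\phi \in \echap$ is multiplicative and nonzero, $\phi(f_1 \cdots f_k) = 1$, so $e := f_1 \cdots f_k$ is a nonzero projection with $e \leq f_i$ for $1 \leq i \leq k$ (if $k = 0$, pick instead any $e \in E$ with $\phi(e) = 1$, which exists as $\phi \neq 0$, and note such $e$ satisfies $e \not\leq f_j$ for $j > k$ automatically after possibly shrinking—see below). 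I claim $\bare \in N$, i.e.\ $\bare(f_i) = \phi(f_i)$ for all $i$. For $i \leq k$: $e \leq f_i$ gives $\bare(f_i) = 1_{e^{\uparrow}}(f_i) = 1 = \phi(f_i)$. For $i > k$: I must check $e \not\leq f_i$, equivalently $ef_i \neq e$; but if $ef_i = e$ then $\phi(f_i) \geq \phi(f_i)\phi(e) = \phi(ef_i) = \phi(e) = 1$, contradicting $\phi(f_i) = 0$. Hence $\bare(f_i) = 0 = \phi(f_i)$, so $\bare \in N \cap \Bare$, proving $\Bare$ is dense in $\echap$. (In the edge case $k = 0$: pick any $e$ with $\phi(e) = 1$; replacing $e$ by $e \cdot e$ changes nothing, and the argument "$ef_i = e \Rightarrow \phi(f_i) = 1$" still applies verbatim, forcing $e \not\leq f_i$ for $i > 0$, so again $\bare \in N$.)

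For the last assertion, fix $h \in E$. Since $D_h$ is open in $\echap$ and $\{\bare \mid e \in E,\ e \leq h\} \subseteq D_h$ (because $e \leq h$ gives $\bare(h) = 1$, i.e.\ $\bare \in D_h$), it suffices to show every $\phi \in D_h$ is a limit of such $\bare$. Take a basic neighbourhood $N$ of $\phi$ as above, determined by $f_1,\dots,f_n$; since $\phi(h) = 1$, enlarge the list by adjoining $f_{n+1} = h$, which only shrinks $N$. Running the previous paragraph's construction on this enlarged list produces a projection $e$ with $e \leq f_i$ for every $i$ with $\phi(f_i) = 1$, and in particular $e \leq h$; and $\bare$ lies in the (shrunk, hence also the original) $N$. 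Thus $\bare$ with $e \leq h$ is in $N \cap D_h$, giving density in $D_h$. Finally, if $E$ is finite then $D_h$ is itself finite and the finite-case argument already pins down each element of $D_h$ as some $\bare$ with $e \leq h$, so the density statements are equalities.

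The only mildly delicate point is the bookkeeping in the edge case $k=0$ (no $f_i$ with $\phi(f_i)=1$ appears in the basic neighbourhood), and more generally making sure the chosen $e$ simultaneously satisfies $e \leq f_i$ on the "value $1$" coordinates and $e \not\leq f_j$ on the "value $0$" coordinates; both follow from multiplicativity of $\phi$ together with $\phi \neq 0$, so there is no real obstacle, just care with the finitely many constraints.
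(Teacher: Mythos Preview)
Your proof is correct and uses essentially the same idea as the paper: both arguments hinge on taking finite products $e = f_1\cdots f_k$ of projections on which $\phi$ takes the value $1$ and checking that $\bare$ approximates $\phi$. The only difference is presentational---the paper packages this as an explicit net $\{\varsigma_{e_\alpha}\}$ indexed by finite subsets of $\phi^{-1}(\{1\})$ and verifies pointwise convergence, whereas you phrase density via ``$\Bare$ meets every basic open set,'' which is equivalent and arguably a bit cleaner.
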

 
 \begin{proof}
 	If $E$ is finite and $\phi \in \echap$, then $\phi= \bare$, where $e$ is the minimum of $\phi^{-1}(\{1\})$.
 	%
 	%
 	%
     %
     
   	For the general case, take $\phi \in \echap$ and define the set $\Lambda=\{\alpha \cont \phi^{-1}(\{1\})\ |\ |\alpha|<+\infty \}$. Note that $\Lambda$ is a directed set, ordered by inclusion. Moreover, for any $\alpha \in \Lambda$, define 
   	        \begin{equation}\label{porcima} 
   	            e_\alpha=\prod_{a\in \alpha}a.
   	        \end{equation}
   	 	Let us prove that $\barn$ converges to $\phi$. For $f\in E$, we have either $\phi(f)=0$ or $\phi(f)=1$. Assuming $\phi(f)=0$, we have that $\barn(f)=0$ for every $\alpha\in \Lambda$. Otherwise, we would have $e_\alpha\leq f$ for some $\alpha \in \Lambda$ and, in this case, $1=\phi(e_\alpha)\leq \phi(f)$. Hence, $\barn(f)$ converges to $\phi(f)$. On the other hand, suppose $\phi(f)=1$. In this case, $\alpha_0:=\{f\}$ belongs to $\Lambda,$ and, $\barn(f)=1$, for all $\alpha\geq \alpha_0$. Thus, $\barn(f)$ converges to $\phi(f)$ again. In particular, if $\phi \in D_h$, note that the subnet $\{\barn\}_{\alpha\geq \{h\}}$ still converges to $\phi$ and hence $\{\bare\ | e\in E, e\leq h \}$ is dense in $D_h$.
\end{proof}

 Consider the set $\widetilde{S}=\left\{[s,\barls]\ |\ s \in S\right\}\cont \C$ and define the map
    \begin{equation}\label{sstilde}
        \varPsi:S\rightarrow \widetilde{S},~~~\varPsi(s)=[s,\barls]
    \end{equation}
Note that if $[s,\barls]=[t,\barlt]$ then $\lambda(s)= \lambda(t)$ and there exists a projection $e\in E$ such that $\barls \in D_e$ and $se=te$. In particular, this means that $\lambda(t)=\lambda(s)\leq e$ and $s=se=te=t$. This proves that $\varPsi$ is injective and hence bijective since the surjectivity is trivial.

For $t\in S$ and $e\in E$, we have $\lambda(t)\leq \lambda(et)\Leftrightarrow\lambda(t)=\lambda(et)\stackrel{Prop. \ref{propdomaincomp}}{\Leftrightarrow}\rho(t)\leq e,$ 
and hence 
$$\begin{aligned}
\theta_t\left(\barlt\right)(e)=\barlt\left(\lambda(et)\right)&=
\left\{\begin{array}{cc}
1,&\ \text{if}\ \lambda(t) \leq \lambda(et)\\
0,&\ \text{otherwise}
\end{array}\right.
\\&=\left\{\begin{array}{cc}
1,&\ \text{if}\ \rho(t)\leq e\\
0,&\ \text{otherwise}
\end{array}\right.
\\&=\varsigma_{\rho(t)}(e).
\end{aligned}$$
Therefore, $\theta_t\left(\barlt\right)=\varsigma_{\rho(t)}$ and
\begin{equation}\label{stildeconditiontocompose}\theta_t\left(\barlt\right)\in \dls \Leftrightarrow \rho(t)\leq \lambda(s).
\end{equation}

\begin{lemma}\label{stilde equal}
	If $\bare\in \dls$, then $[s,\bare]$ is equal to $[se,\varsigma_{\lambda(se)}]$. In particular, if $E$ is finite then $\widetilde{S}=\C(\theta,S,\echap)$ and, in general, $\widetilde{S}$ is dense in $\C(\theta,S,\echap)$. Moreover, $\{[t,\barlt]\ |\ t\leq s\}$ is dense in $\Theta_s$.
\end{lemma}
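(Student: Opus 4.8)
The plan is to establish the three claims in order, each reducing to the previous. First I would prove the equality $[s,\bare]=[se,\varsigma_{\lambda(se)}]$. The natural candidate for the implementing projection is $e$ itself (or rather $e\lambda(se)$, but note $\lambda(se)=\lambda(\lambda(s)e)\le e$, so $e\lambda(se)=\lambda(se)$). One checks that $(se)e=se\lambda(se)\cdot\dots$—more directly, $(se)\lambda(se)=se$ by (P3), and $s\cdot\lambda(se)=s\lambda(\lambda(s)e)=s\lambda(s)e\cdot$—here I would use (P5) and (P3) to get $s\lambda(se)=se$ as well, so both $s$ and $se$ agree after multiplying by the projection $\lambda(se)$. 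It remains to see $\bare$ and $\varsigma_{\lambda(se)}$ lie in $D_{\lambda(se)}$: we have $\varsigma_{\lambda(se)}(\lambda(se))=1$ trivially, and $\bare(\lambda(se))=1$ because $\bare\in D_{\lambda(s)}$ forces $\lambda(s)\le e$ hence $\lambda(se)=\lambda(\lambda(s)e)=\lambda(s)e$ and $\bare(\lambda(s)e)=[\lambda(s)e\le e]=1$. Actually I need $\bare(\lambda(se))=1$, i.e. $e\le\lambda(se)$; since $\bare\in D_{\lambda(s)}$ means $\lambda(s)\le e$, and then $\lambda(se)=\lambda(s)e=\lambda(s)$, wait—that would need $\lambda(s)\le e$ to give $\lambda(s)e=\lambda(s)$, which is exactly the semilattice order, so $\lambda(se)=\lambda(s)$ and $\bare(\lambda(se))=\bare(\lambda(s))=1$. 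So the implementing projection is just $\lambda(se)$ and the two pairs agree there.

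Next, for the finite case: if $E$ is finite, Proposition \ref{baresdense} gives $\echap=\Bare$, so every germ $[s,\phi]$ has $\phi=\bare$ for some $e$ with $\bare\in D_{\lambda(s)}$; by the first part $[s,\bare]=[se,\varsigma_{\lambda(se)}]\in\widetilde S$, hence $\widetilde S=\C(\theta,S,\echap)$. For the general density statement, I would use that $\{\Theta(s,U)\mid s\in S,\ U\subseteq D_{\lambda(s)}\text{ open}\}$ is a basis for the topology of $\C$, so it suffices to show every nonempty basic open set $\Theta(s,U)$ meets $\widetilde S$. Given such $U$, pick $\phi\in U$; by Proposition \ref{baresdense} the set $\{\bare\mid e\le\lambda(s)\}$ is dense in $D_{\lambda(s)}$, so there is $e\le\lambda(s)$ with $\bare\in U$; then $[s,\bare]\in\Theta(s,U)$ and by the first part $[s,\bare]=[se,\varsigma_{\lambda(se)}]\in\widetilde S$. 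Hence $\widetilde S$ is dense.

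Finally, for density of $\{[t,\barlt]\mid t\le s\}$ in $\Theta_s$: the sets $\Theta(s,U)$ with $U\subseteq D_{\lambda(s)}$ open form a basis for the subspace topology on $\Theta_s=\Theta(s,D_{\lambda(s)})$, so again I reduce to showing each nonempty $\Theta(s,U)$ contains a point $[t,\barlt]$ with $t\le s$. Choose $e\le\lambda(s)$ with $\bare\in U$ as before. Set $t=se$; then $t\le s$ since $t=se$ with $e\in E$, and $\lambda(t)=\lambda(se)=\lambda(\lambda(s)e)=\lambda(s)e=e$ (using $e\le\lambda(s)$), so $\barlt=\bare$. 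Thus $[t,\barlt]=[se,\bare]=[se,\varsigma_{\lambda(se)}]=[s,\bare]\in\Theta(s,U)$, invoking the first part once more. This proves the density claim.

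The main obstacle I anticipate is bookkeeping the semilattice-order manipulations cleanly in the first step—making sure that from $\bare\in D_{\lambda(s)}$ (equivalently $\lambda(s)\le e$) one correctly deduces $\lambda(se)=\lambda(s)$ and that $\lambda(se)$ is the right implementing projection so that $s\lambda(se)=se\lambda(se)=se$; the later two parts are then routine applications of the basis description of the topology together with Proposition \ref{baresdense}.
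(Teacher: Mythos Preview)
Your later parts are fine, but the first step contains a genuine slip that derails the argument as written. You claim $\bare\in D_{\lambda(s)}$ forces $\lambda(s)\le e$; in fact $\bare=1_{e^{\uparrow}}$, so $\bare(f)=1$ iff $e\le f$, and hence $\bare\in D_{\lambda(s)}$ means $e\le\lambda(s)$. With the correct inequality you get $\lambda(se)=\lambda(\lambda(s)e)=\lambda(s)e=e$ (not $\lambda(s)$), and now the key point becomes transparent: $\varsigma_{\lambda(se)}=\bare$, so the two germs $[s,\bare]$ and $[se,\varsigma_{\lambda(se)}]$ share the \emph{same} base point. This is essential---for $(s,x)\sim(t,y)$ one needs $x=y$, not merely that $x$ and $y$ lie in a common $D_f$---so your proposed check that ``both lie in $D_{\lambda(se)}$'' would not suffice. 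Once the base points coincide, the implementing projection is simply $e$: one has $s\cdot e=se=(se)\cdot e$ and $\bare(e)=1$. (You do use the correct inequality $e\le\lambda(s)$ and the correct identity $\lambda(se)=e$ in your third paragraph, so the confusion is localized to the first step.)

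Your density arguments via basic open sets are correct and essentially equivalent to the paper's, which instead transports the approximating net $\{\barn\}$ from Proposition~\ref{baresdense} through the homeomorphism $\s_{\Theta_s}^{-1}$ to obtain a net $[s,\barn]\to[s,\phi]$ inside $\Theta_s$; both routes amount to the same thing.
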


\begin{proof}
	Suppose that for $s\in S$ and $e\in E$ one has $\bare\in \dls$, then $e\leq\lambda(s)$ and $e=\lambda(s)e=\lambda(se)$. Hence, $[s,\bare]=[s,\varsigma_{\lambda(se)}]=[se,\varsigma_{\lambda(se)}]$, where clearly $e$ implements the latter equivalence. By Proposition \ref{baresdense}, if $E$ is finite we have that any germ $[s,\phi]$ is of the form $[s,\bare]$ which is equal to $[se,\varsigma_{\lambda(se)}]\in \widetilde{S}.$
	
	In general, let $[s,\phi]$ be a germ and $\{\barn\}_{\alpha\in \Lambda}$ be a net converging to $\phi$ such that $\barn\in \dls, \forall\ \alpha \in \Lambda$. Thus, since $\s_{\Theta_s}$ is a homeomorphism, we obtain that $[s,\barn]$ converges to $[s,\phi]$. But $[s,\barn]=[se_\alpha,\varsigma_{\lambda(se_\alpha)}]\in \widetilde{S}$, which tells us that $\widetilde{S}$ in dense in $\C(\theta,S, \echap)$. In particular, $\left\{[t,\barlt]\ |\ t\leq s\right\}$ is dense in $\Theta_s$.
\end{proof}

\begin{prop}\label{leftmultinvariant}
	$\widetilde{S}$ is closed by left composition. Moreover, $\widetilde{S}$ is the subcategory of $\C(\theta,S,\echap)$ obtained by restricting the objects to $\{\bare\ |  e \in E\}$, that is, $\widetilde{S}=\rr^{-1}(\Bare)\cap\s^{-1}(\Bare)$. Furthermore, the category $\widetilde{S}$ is isomorphic to the restriction semigroup category $\C_S$ via $\varPsi$.
\end{prop}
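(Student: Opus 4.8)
The plan is to dispatch the three assertions in turn, using as the main inputs Lemma~\ref{stilde equal}, the identity $\theta_s(\barls)=\varsigma_{\rho(s)}$ recorded just before \eqref{stildeconditiontocompose}, and the identity $D_{\lambda(st)}=\theta_t^{-1}(\drt\cap\dls)$ from item (i) of the verification (Subsection~\ref{standardaction}) that $\theta$ is an étale action. Throughout I abbreviate $\bare=\varsigma_e$.

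\textbf{Closure by left composition.} Take a composable pair $([s,\barls],[t,y])\in\CD$ whose left factor is in $\widetilde S$; by Lemma~\ref{stilde equal} there is no loss in assuming the source has the normal form $\barls$. Composability says $\barls=\rr([t,y])=\theta_t(y)$, so $\barls$ lies in the range $\drt$ of $\theta_t$, i.e. $\lambda(s)\le\rho(t)$, and $y=\zeta_t(\barls)$. The crux is the computation
$$\zeta_t(\barls)=\barlst\qquad\text{whenever }\lambda(s)\le\rho(t).$$
I would prove it by checking $\theta_t(\barlst)=\barls$ instead, which suffices because $\theta_t$ is injective on $\dlt$ and $\barlst\in\dlt$ (as $\lambda(st)\le\lambda(t)$ by (R6)). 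From $\lambda(s)\le\rho(t)$ one gets $\drt\cap\dls=\dls$, hence $\theta_t(D_{\lambda(st)})=\dls$ and in particular $\theta_t(\barlst)\in\dls$; on the other hand $\theta_s(\theta_t(\barlst))=\theta_{st}(\barlst)=\varsigma_{\rho(st)}=\varsigma_{\rho(s)}=\theta_s(\barls)$, using $\theta_{st}=\theta_s\theta_t$, $\theta_u(\varsigma_{\lambda(u)})=\varsigma_{\rho(u)}$, and $\rho(st)=\rho(s)$ (the last holds because $s\rho(t)=s$ when $\lambda(s)\le\rho(t)$, checked exactly as in Proposition~\ref{propdomaincomp} via (R1),(R4),(P5),(P6)); injectivity of $\theta_s$ on $\dls$ then forces $\theta_t(\barlst)=\barls$. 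So $y=\barlst$, whence $[t,y]=[t,\barlst]\in\widetilde S$ by Lemma~\ref{stilde equal}, and $[s,\barls][t,\barlst]=[st,\barlst]=\varPsi(st)\in\widetilde S$.

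\textbf{The subcategory characterization.} For $\widetilde S\subseteq\rr^{-1}(\Bare)\cap\s^{-1}(\Bare)$ just note $\s([s,\barls])=\barls\in\Bare$ and $\rr([s,\barls])=\theta_s(\barls)=\varsigma_{\rho(s)}\in\Bare$. Conversely, any morphism $[s,x]$ with $x\in\Bare$, say $x=\bare$, has $\bare\in\dls$, hence $e\le\lambda(s)$, so Lemma~\ref{stilde equal} gives $[s,x]=[se,\varsigma_{\lambda(se)}]\in\widetilde S$; this already shows $\s^{-1}(\Bare)\subseteq\widetilde S$. Combining, $\s^{-1}(\Bare)\subseteq\widetilde S\subseteq\rr^{-1}(\Bare)\cap\s^{-1}(\Bare)\subseteq\s^{-1}(\Bare)$, so all three sets coincide; in particular $\widetilde S$ is precisely the full subcategory of $\C(\theta,S,\echap)$ on the object set $\Bare$, which in particular re-confirms closure under composition.

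\textbf{The isomorphism.} Finally I would verify that $\varPsi$ of \eqref{sstilde} is an isomorphism of categories $\C_S\to\widetilde S$. It is bijective on objects ($e\mapsto\bare$ is a bijection $E\to\Bare$, since $\bare=\barf$ implies $e=f$) and on morphisms (shown after \eqref{sstilde}), and it intertwines the structure maps: $\s(\varPsi(s))=\barls=\varPsi(\lambda(s))$, $\rr(\varPsi(s))=\theta_s(\barls)=\varsigma_{\rho(s)}=\varPsi(\rho(s))$, $\varPsi(e)=[e,\bare]=\un_{\C(\theta,S,\echap)}(\bare)$, and for a composable pair $(s,t)$ of $\C_S$, i.e. $\lambda(s)=\rho(t)$, one has $\barls=\varsigma_{\rho(t)}=\theta_t(\barlt)$, so $\varPsi(s)\varPsi(t)=[st,\barlt]=[st,\barlst]=\varPsi(st)$, the middle equality being Proposition~\ref{propdomaincomp}. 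A functor that is bijective on objects and on morphisms and respects all structure maps has a functorial inverse, so $\varPsi$ is an isomorphism of categories. The only step that is not pure bookkeeping is the identity $\zeta_t(\barls)=\barlst$: it is a statement about the bijection $\theta_t$ rather than about the formula defining $\zeta_s$ on semicharacters, which is exactly why passing through $\theta_t$ and the $D_e$-description of its domain and range is the right move.
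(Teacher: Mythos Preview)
Your overall argument is valid, but in Part~1 you have misread what ``closed by left composition'' means here. In the paper (and for the later application to $\ell_2(\widetilde S)$ being $\Pi$-invariant), it means: whenever the \emph{right} factor is in $\widetilde S$ and an arbitrary $[s,x]\in\C$ composes with it on the left, the product stays in $\widetilde S$. The paper's proof of this is one line: if $([s,x],[t,\barlt])$ is composable then $x=\theta_t(\barlt)=\varsigma_{\rho(t)}$, hence by \eqref{stildeconditiontocompose} $\rho(t)\leq\lambda(s)$, so Proposition~\ref{propdomaincomp} gives $\lambda(st)=\lambda(t)$ and therefore $[s,x][t,\barlt]=[st,\barlt]=[st,\barlst]\in\widetilde S$. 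You instead fix the \emph{left} factor in $\widetilde S$ and work out that the right factor is then forced to be $[t,\barlst]$ via the identity $\zeta_t(\barls)=\barlst$; this is correct and pleasant, but it is the dual closure property, and the $\zeta_t$ computation is unnecessary for the statement actually being proved.

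That said, your proof is not broken: your Part~2 establishes the sharper identity $\widetilde S=\s^{-1}(\Bare)$ (the paper only records $\widetilde S=\rr^{-1}(\Bare)\cap\s^{-1}(\Bare)$), and from $\widetilde S=\s^{-1}(\Bare)$ the required left-composition closure is immediate, since $\s(xz)=\s(z)$. So the intended closure is recovered there, as you yourself note. Parts~2 and~3 otherwise coincide with the paper's argument.
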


\begin{proof}
	We will show that whenever an element $[s,x]$ composes with $[t,\barlt]$, the composition belongs to $\widetilde{S}$. So, assume the pair $([s,x], [t,\barlt])$ is composable and note that $x=\theta_t(\barlt)$. By Equation \eqref{stildeconditiontocompose}, $\rho(t)\leq \lambda(s)$. By Proposition \ref{propdomaincomp}, it is equivalent to say $\lambda(t)=\lambda(st)$. Hence, \begin{equation}\label{sigmainvariantstilde}
	[s,x][t,\varsigma_{\lambda(t)}]=[st,\varsigma_{\lambda(t)}]=[st,\varsigma_{\lambda(st)}].\end{equation} In particular, $\widetilde{S}$ is closed by multiplication. Furthermore, since $\rr([t,\barlt])=\theta_t(\barlt)=\varsigma_{\rho(t)}$, we have that $\widetilde{S}\cont \rr^{-1}(\Bare)\cap\s^{-1}(\Bare)$. The converse is given by the Lemma \ref{stilde equal}. In fact, if $[s,\bare]$ belongs to $\rr^{-1}(\Bare)\cap\s^{-1}(\Bare)$, then $[s,\bare]=[se,\varsigma_{\lambda(se)}]\in \widetilde{S}$.
	Next, note that a pair $([s,\barls],[t,\barlt])$ is composable if, and only, if $\theta_t(\barlt)=\varsigma_{\rho(t)}=\barls$ if, and only, if $\rho(t)=\lambda(s)$. In particular, $\varPsi$ is a bijective functor from $\C_S$ to $\widetilde{S}$.
\end{proof}

\begin{prop}\label{ampleimpliecancel}
Let $S$ be a restriction semigroup, $X$ be a locally compact Hausdorff space and $\theta:S\rightarrow \mathcal{I}(X)$ be an étale action. Moreover, suppose that $S$ is left-ample. Then for every triple $(s,t,v)\in S^3$ and $x\in X$ such that $[st,x]=[sv,x]$ we have $[t,x]=[v,x]$. In particular, $\C(\theta,S,X)$ is left cancellative.
\end{prop}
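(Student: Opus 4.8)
The plan is to work entirely with germs and the natural order, invoking left-ampleness exactly once. Suppose $[st,x]=[sv,x]$. Unwinding the definition of $\sim$, this means $x\in D_{\lambda(st)}\cap D_{\lambda(sv)}$ and there is a projection $f$ with $x\in D_f$ and $stf=svf$. Since $\lambda(st)\le\lambda(t)$ and $\lambda(sv)\le\lambda(v)$ by (R6), we get $x\in D_{\lambda(t)}\cap D_{\lambda(v)}$, so the germs $[t,x]$ and $[v,x]$ are legitimate elements of $\faktor{\Xi_0}{\sim}$; checking this kind of bookkeeping is the only real care the argument needs.

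The first step I would carry out is to show $[t,x]=[\lambda(s)t,x]$ and $[v,x]=[\lambda(s)v,x]$. For this, from $\lambda(s)t\le t$ (by (R2)) and the order characterization $a=b\lambda(a)$ whenever $a\le b$, together with (P5), one gets $\lambda(s)t=t\lambda(\lambda(s)t)=t\lambda(st)$. Setting $h:=\lambda(st)$, both $th$ and $\lambda(s)t\,h$ equal $t\lambda(st)$, and since $x\in D_h=D_{\lambda(st)}$ this projection $h$ implements $(t,x)\sim(\lambda(s)t,x)$; the same argument with $h':=\lambda(sv)$ gives $[v,x]=[\lambda(s)v,x]$. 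The second step is where left-ampleness enters: writing $stf=svf$ as $s(tf)=s(vf)$, left-ampleness yields $\lambda(s)(tf)=\lambda(s)(vf)$, i.e.\ $\lambda(s)tf=\lambda(s)vf$; since $x\in D_f$ and $x$ lies in the relevant domain sets (namely $D_{\lambda(st)}$ and $D_{\lambda(sv)}$), this projection $f$ shows $[\lambda(s)t,x]=[\lambda(s)v,x]$. Chaining the three equalities gives $[t,x]=[v,x]$.

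Finally, for the ``in particular'' clause, given composable morphisms with $[s,z][t,y]=[s,z][v,y']$, comparing sources forces $y=y'$, so the equality reads $[st,y]=[sv,y]$, and the first part applies to conclude $[t,y]=[v,y]$. I do not expect a genuine obstacle here: left-ampleness does all the work in a single line, and the only thing to be careful about is ensuring that each intermediate germ genuinely lies in $\faktor{\Xi_0}{\sim}$, which follows from the inclusions $D_{\lambda(st)}\subseteq D_{\lambda(t)}$, $D_{\lambda(sv)}\subseteq D_{\lambda(v)}$ and $D_{\lambda(\lambda(s)t)}=D_{\lambda(st)}$.
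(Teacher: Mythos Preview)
Your proof is correct and essentially the same as the paper's: both apply left-ampleness once to turn $stf=svf$ into $\lambda(s)tf=\lambda(s)vf$, then use $\lambda(s)t=t\lambda(st)$ (from (P7) and (P5)) to link back to $t$ and $v$. The only difference is organizational---you chain three germ equalities with separate witnesses $\lambda(st)$, $f$, $\lambda(sv)$, whereas the paper multiplies through to produce the single witnessing projection $\lambda(st)\lambda(sv)e$ and concludes $[t,x]=[v,x]$ in one step.
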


\begin{proof}
Suppose $[st,x]=[sv,x]$. In this case, there exists $e\in E$ such that $ste=sve$ and $x\in D_e.$ Therefore, $\lambda(s)te=\lambda(s)ve$ and, equivalently, $t\lambda(st)e=v\lambda(sv)e$. Multiplying both sides by $\lambda(st)\lambda(sv)$, we obtain $t\lambda(st)\lambda(sv)e=t\lambda(st)\lambda(sv)e.$ Thus, since $D_{\lambda(st)\lambda(sv)e}=D_{\lambda(st)}\cap D_{\lambda(sv)}\cap D_e$, we have $[t,x]=[v,x].$

Now, suppose that $[s,y][t,x]=[s,y][w,z]$. In this case,  $[st,x]=[sw,z]$ and $x=z$. Therefore $[t,x]=[w,z]$, by the previous calculation.
\end{proof}

The converse of the above Proposition is not valid. Let $S$ be a restriction semigroup with $0$, where $0$ is a projection, and $\theta: S\rightarrow \ii(X)$ be the trivial action of $S$ on a locally compact Hausdorff space $X$, that is, $\theta_s=\id_X, \forall \ s \in S$. The category of germs of $\theta$ consists only of objects since $(s,x)\sim (t,x)$ for all $s,t\in S$. In fact, it holds because $s0=0=t0$ and $D_0=X$. Therefore, $\C(\theta,S,X)=\C(\theta,S,X)^{(0)}\cong X$ and, in particular, it is (left) cancellative. However, it does not imply that $S$ is left-ample. To see this, it suffices to present an example of a non left-ample restriction semigroup with $0$ in which $0$ is a projection.

Let $X$ be a set with more than $2$ elements. The set of all functions from $X$ to $X$, $\mathcal{F}(X)$, is a monoid under composition. This monoid does not have a zero, i.e, it does not exist a function $\varsigma:X\rightarrow X$ such that $f\varsigma=\varsigma=\varsigma f, \forall f\in \mathcal{F}(X).$ Indeed, if $y_0,y_1\in X$ and $y_0\neq y_1$, considering the constant maps $f_0(x)=y_0$, $f_1(x)=y_1$ one have that $f_0\varsigma\neq f_1\varsigma$, for any function $\varsigma \in \mathcal{F}(X)$. 

In order to obtain a restriction semigroup with $0$, we add a formal $0$ to $\mathcal{F}(X)$ and define $E=\{0,\id_X\}$, $\lambda(s)=0\Leftrightarrow s=0$, and $\rho(s)=0\Leftrightarrow s=0$. This monoid has no zero divisors, then it is straightforward to verify the requirements of Definition \ref{restrictionsemigroupdefinition}. Thus, ($\mathcal{F}(X)\cup \{0\},~\{0,\id_X\},~\lambda,~\rho)$ is a restriction semigroup with $0$ in which $0$ is a projection. Of course $\mathcal{F}(X)\cup \{0\}$ is not left-ample since $f_0f=f_0 g$, for any pair $f,g \in \mathcal{F}(X)$.

We have seen that it is possible to find a left cancellative category of germs without assuming the semigroup to be left-ample. Indeed, being cancellative has to do not only with the semigroup, but also with the action. But, the next proposition shows us that being left-ample can be characterized in categorical terms

\begin{prop}
$S$ is left-ample if, and only if, $\C(\theta, S, \echap)$ is left cancellative.
\end{prop}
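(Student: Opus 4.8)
The plan is to prove both directions, one of which is already in hand. The backward implication ($\C(\theta,S,\echap)$ left cancellative $\Rightarrow$ $S$ left-ample) will be the substantive part; the forward implication is exactly Proposition~\ref{ampleimpliecancel} applied with $X=\echap$ and $\theta$ the canonical action, so nothing new is needed there. So I would open the proof by citing Proposition~\ref{ampleimpliecancel} for one direction and then devote the work to the converse.

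For the converse, assume $\C:=\C(\theta,S,\echap)$ is left cancellative and suppose $st=su$ in $S$; I must show $\lambda(s)t=\lambda(s)u$. The idea is to transport the equality $st=su$ into an equality of germs along the embedded copy $\widetilde S$ of $\C_S$ inside $\C$ (Proposition~\ref{leftmultinvariant}), use left cancellativity there, and then read the conclusion back in $S$. Concretely: from $st=su$ we get $\lambda(st)=\lambda(su)$, hence $\barlst=\varsigma_{\lambda(su)}$; consider the germs $[s,\varsigma_{\rho(t)}]$ paired with $[t,\barlt]$ and $[u,\barls[u]]$. The composability condition~\eqref{stildeconditiontocompose} requires $\rho(t)\le\lambda(s)$ and $\rho(u)\le\lambda(s)$, which need not hold a priori, so I would first replace $t$ by $\lambda(s)t$ and $u$ by $\lambda(s)u$: note $s(\lambda(s)t)=st=su=s(\lambda(s)u)$, and $\rho(\lambda(s)t)\le\lambda(s)$ by (R7)-type reasoning since $\rho(\lambda(s)t)=\rho(\lambda(s)\rho(t))=\lambda(s)\rho(t)\le\lambda(s)$. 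So w.l.o.g. $\rho(t)\le\lambda(s)$ and $\rho(u)\le\lambda(s)$. Then by~\eqref{sigmainvariantstilde} the germs $[s,\varsigma_{\rho(t)}][t,\barlt]=[st,\barlst]$ and $[s,\varsigma_{\rho(u)}][u,\barls[u]]=[su,\varsigma_{\lambda(su)}]$ coincide, because $st=su$ (so in particular $\rho(t)=\lambda(st)\cdot$something gives the objects match — I need $\varsigma_{\rho(t)}=\varsigma_{\rho(u)}$, which follows once I check $\rho(t)=\rho(u)$; if not I pass to a common lower projection $f=\rho(t)\rho(u)$ and use $tf,uf$ instead, since $\theta_t(\varsigma_{f})$ matches). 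Having arranged matching first coordinates, left cancellativity in $\C$ gives $[t,\barlt]=[u,\barls[u]]$, whence $\lambda(t)=\lambda(u)$ and there is $e\in E$ with $te=ue$ and $\barlt\in D_e$, i.e. $\lambda(t)\le e$, forcing $t=te=ue=u$; unwinding the w.l.o.g. reduction yields $\lambda(s)t=\lambda(s)u$, as required.

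The main obstacle I anticipate is bookkeeping at the level of objects of germs: left cancellativity of $\C$ only lets me cancel $[s,x]$ from $[s,x][t,\cdot]=[s,x][w,\cdot]$ when the two products are literally equal germs, and equality of germs demands equality of the source objects as well as compatibility of the semigroup parts. Getting the source objects to agree forces the reduction to a common projection $f\le\lambda(s)$ below both $\rho(t)$ and $\rho(u)$, and then one has to verify that $\varsigma_{\lambda(tf)}=\varsigma_{\lambda(uf)}$ — equivalently $\lambda(tf)=\lambda(uf)$ — which should follow from $stf=suf$ and an application of (P5) together with the defining property~\eqref{sigmainvariantstilde}; I would double-check this identity carefully since it is where the argument could silently break. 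Once the objects are aligned the rest is a short manipulation using the definition of the equivalence $\sim$ and properties (P1)–(P8) and (R1)–(R7) already recorded. Finally I would remark that the non-left-ample examples exhibited just before the statement show the hypothesis that the action be the \emph{canonical} one cannot be dropped, so the proposition is sharp.
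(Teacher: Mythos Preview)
Your overall strategy is sound and in the same spirit as the paper's, but you make the object-matching harder than necessary and leave precisely the step you yourself flag as the ``main obstacle'' unresolved.

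The paper bypasses your WLOG reduction entirely by choosing, from the outset, the evaluation point $\bare$ with $e:=\lambda(st)=\lambda(sw)$. Since $e\le\lambda(t)$ and $e\le\lambda(w)$, the germs $[t,\bare]$ and $[w,\bare]$ automatically share the same source, and one checks directly that $\theta_t(\bare),\theta_w(\bare)\in D_{\lambda(s)}$. The left factors $[s,\theta_t(\bare)]$ and $[s,\theta_w(\bare)]$ are then equal because they lie in the bisection $\Theta_s$ and have the same range $\theta_{st}(\bare)=\theta_{sw}(\bare)$; this one-line bisection argument is exactly what resolves the alignment issue you worry about. Cancellation gives $[t,\bare]=[w,\bare]$, hence some $f\ge e$ with $tf=wf$, so $te=we$, and then $\lambda(s)t=t\lambda(\lambda(s)t)=t\lambda(st)=te=we=\lambda(s)w$ via (P7) and (P5).

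In your route, after replacing $t,u$ by $\lambda(s)t,\lambda(s)u$ you have $\rho(t),\rho(u)\le\lambda(s)$ and hence (by Proposition~\ref{propdomaincomp}) $\lambda(t)=\lambda(st)=\lambda(su)=\lambda(u)$, so the sources of $[t,\barlt]$ and $[u,\varsigma_{\lambda(u)}]$ already agree. The equality $\varsigma_{\rho(t)}=\varsigma_{\rho(u)}$ you need for the left factors also \emph{does} follow, because $\theta_s(\varsigma_{\rho(t)})=\theta_{st}(\barlst)=\theta_{su}(\varsigma_{\lambda(su)})=\theta_s(\varsigma_{\rho(u)})$ and $\theta_s$ is injective on $D_{\lambda(s)}$ (equivalently: both $[s,\varsigma_{\rho(t)}]$ and $[s,\varsigma_{\rho(u)}]$ lie in the bisection $\Theta_s$ with the same range). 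You do not supply this argument, and your proposed fallback to $f=\rho(t)\rho(u)$ is left vague; that is the genuine gap. Once filled, your proof goes through, but the paper's choice of base point $e=\lambda(st)$ makes the alignment problem vanish and is the cleaner path.
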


\begin{proof}
Proposition \ref{ampleimpliecancel} gives us one side of the proof. So, assume $\C(\theta, S, \echap)$ is left cancellative and suppose $st=sw$, for $s,t,w\in S$. Define $e=\lambda(st)=\lambda(sw)$ and note that 
    $$
    \theta_t(\bare)(\lambda(s))
    =\bare(\lambda(\lambda(s)t))
    =\bare(\lambda(st))
    =\bare(e)=1.
    $$
Hence, $\theta_t(\bare)\in \dls$ and, similarly, $\theta_w(\bare)\in \dls$. Note that  
    $$\begin{aligned}
    \rr([s,\theta_t(\bare)])
    &=\theta_s(\theta_t(\bare))
   =\theta_{st}(\bare)
    =\theta_{sw}(\bare)
    \\&=\theta_s(\theta_w(\bare))
    \rr([s,\theta_w(\bare)]).
    \end{aligned}$$
Then $[s,\theta_t(\bare)]$ is equal to $[s,\theta_w(\bare)]$ because they have the same range and belong to the bisection $\Theta_s$. Furthermore $[s,\theta_t(\bare)][t,\bare]=[st,\bare]=[sw,\bare]=[s,\theta_w(\bare)][w,\bare]$. Thus, by left cancellativity, we have $$[t,\bare]=[w,\bare].$$
Then, there is a projection $f\in E$ such that $e\leq f$ and $tf=wf$. Multiplying both sides by $e$, we obtain that $te=we$ and  
    $$\begin{aligned}
    \lambda(s)t
    &=t\lambda(\lambda(s)t)
    =t\lambda(st)
    =te
    =we
    \\&=w\lambda(sw)
    =w\lambda(\lambda(s)w)
    =\lambda(s)w.
    \end{aligned}$$
In conclusion, $S$ is left-ample.
\end{proof}

\subsection{The semicrossed product structure of the operator algebra of an étale category}
Throughout this section, let $(S,E,\lambda,\rho)$ be a restriction semigroup, $\theta:S\rightarrow \mathcal{I}(X)$ be an étale action of $S$ on a second countable locally compact Hausdorff space $X$ and $\C$ be the category of germs $\C(\theta,S,X)$. We have defined so far two operator algebras associated with the action $\theta$: the operator algebra $\A(\C)$ and the semicrossed product algebra $\czero(X)\rtimes_\alpha S$, where $\alpha$ is the étale action induced by $\theta$ (Remark \ref{topinducesalg}). The purpose of this section is to show that $\A(\C)$ and $\czero(X)\rtimes_\alpha S$ are isometrically isomorphic. The reader will note that the second countability assumption is crucial to obtain this isomorphism. In fact, we will strongly use that every open set of a locally compact Hausdorff space is itself a second countable locally compact Hausdorff space, and hence it is $\sigma$-compact. 

\begin{prop}[\cf Theorem 2.6.3\cite{arveson2002short}]\label{bfc}
Let $Y$ be a locally compact Hausdorff space and $\pi:\czero(Y)\rightarrow B(H)$ be a $C^*$-homomorphism. Then, $\pi$ extends to a $C^*$-homomorphism $\widetilde{\pi}:B(Y)\rightarrow B(H)$, where $B(Y)$ is the set of bounded Borel measurable functions. Moreover, if $g_n$ converges pointwise to $g$ and $\sup \|g_n\|_{\infty}<+\infty$ then $\widetilde{\pi}(g_n)$ converges to $\widetilde{\pi}(g)$ in the weak operator topology. 
\end{prop}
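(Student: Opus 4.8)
The plan is to follow the classical route through the Riesz representation theorem, realizing $\widetilde{\pi}$ by means of the family of complex measures attached to pairs of vectors. Since a $*$\nobreakdash-homomorphism between $C^*$\nobreakdash-algebras is automatically contractive, for fixed $\xi,\eta\in H$ the functional $f\mapsto \pint{\pi(f)\xi}{\eta}$ is bounded on $\czero(Y)$ with norm at most $\|\xi\|\,\|\eta\|$. By the Riesz representation theorem there is a unique complex regular Borel measure $\mu_{\xi,\eta}$ on $Y$ with $\pint{\pi(f)\xi}{\eta}=\int_Y f\,d\mu_{\xi,\eta}$ for all $f\in\czero(Y)$ and total variation $\|\mu_{\xi,\eta}\|\le\|\xi\|\,\|\eta\|$. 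The assignment $(\xi,\eta)\mapsto\mu_{\xi,\eta}$ is sesquilinear, and the relation $\pi(\bar f)=\pi(f)^*$ yields $\mu_{\eta,\xi}=\overline{\mu_{\xi,\eta}}$.

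Next, for $g\in B(Y)$ the form $(\xi,\eta)\mapsto\int_Y g\,d\mu_{\xi,\eta}$ is sesquilinear and bounded by $\|g\|_\infty\,\|\xi\|\,\|\eta\|$, so there is a unique $\widetilde{\pi}(g)\in B(H)$ with $\pint{\widetilde{\pi}(g)\xi}{\eta}=\int_Y g\,d\mu_{\xi,\eta}$ and $\|\widetilde{\pi}(g)\|\le\|g\|_\infty$. By construction $\widetilde{\pi}$ is linear and restricts to $\pi$ on $\czero(Y)$, and the identity $\mu_{\eta,\xi}=\overline{\mu_{\xi,\eta}}$ gives $\widetilde{\pi}(\bar g)=\widetilde{\pi}(g)^*$. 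The last assertion of the proposition is then immediate: if $g_n\to g$ pointwise with $\sup_n\|g_n\|_\infty<\infty$, then since each $\mu_{\xi,\eta}$ is a finite measure, dominated convergence gives $\pint{\widetilde{\pi}(g_n)\xi}{\eta}\to\pint{\widetilde{\pi}(g)\xi}{\eta}$ for all $\xi,\eta\in H$, i.e.\ $\widetilde{\pi}(g_n)\to\widetilde{\pi}(g)$ in the weak operator topology.

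It remains to prove that $\widetilde{\pi}$ is multiplicative, which is the main obstacle, and the idea is a two\nobreakdash-step bounded\nobreakdash-convergence (functional monotone class) argument using the weak operator continuity just obtained as the passage\nobreakdash-to\nobreakdash-the\nobreakdash-limit step. We know $\widetilde{\pi}(gh)=\pi(g)\pi(h)=\widetilde{\pi}(g)\widetilde{\pi}(h)$ for $g,h\in\czero(Y)$. Fixing $g\in\czero(Y)$, the set $\mathcal{M}_g=\{h\in B(Y):\widetilde{\pi}(gh)=\widetilde{\pi}(g)\widetilde{\pi}(h)\}$ is a linear subspace containing $\czero(Y)$; since $h\mapsto\widetilde{\pi}(gh)$ and $h\mapsto\widetilde{\pi}(g)\widetilde{\pi}(h)$ both send bounded pointwise convergent sequences to weak operator convergent sequences (the latter because left multiplication by the fixed operator $\widetilde{\pi}(g)$ is weak operator continuous), $\mathcal{M}_g$ is closed under bounded pointwise sequential limits, hence $\mathcal{M}_g=B(Y)$. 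Now fixing $h\in B(Y)$, the set $\mathcal{N}_h=\{g\in B(Y):\widetilde{\pi}(gh)=\widetilde{\pi}(g)\widetilde{\pi}(h)\}$ contains $\czero(Y)$ by the first step and, by the same reasoning (right multiplication by $\widetilde{\pi}(h)$ being weak operator continuous), is closed under bounded pointwise sequential limits, so $\mathcal{N}_h=B(Y)$. This establishes multiplicativity on all of $B(Y)$.

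The one point requiring care is the functional monotone class input: that the smallest family of bounded functions containing the multiplicatively closed set $\czero(Y)$ and closed under bounded pointwise limits of sequences is all of $B(Y)$. This family is exactly the bounded functions measurable for $\sigma\big(\czero(Y)\big)$, the Baire $\sigma$\nobreakdash-algebra, and it coincides with the bounded Borel functions precisely when the Baire and Borel $\sigma$\nobreakdash-algebras of $Y$ agree --- which holds for the spaces relevant here, since they are open subsets of the second countable space $X$, hence themselves second countable (this is also the setting of the cited \cite[Theorem 2.6.3]{arveson2002short}). Alternatively one may simply take $B(Y)$ to mean the bounded Baire functions throughout, which suffices for all later applications.
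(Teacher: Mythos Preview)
Your proposal is correct and follows essentially the same route as the paper's sketch: build measures $\mu_{\xi,\eta}$ via Riesz, define $\widetilde{\pi}(g)$ through the resulting sesquilinear form, and deduce the weak operator continuity from dominated convergence. In fact you go further than the paper, which simply asserts that $\widetilde{\pi}$ is a $C^*$-homomorphism, by supplying the two-step monotone class argument for multiplicativity and flagging the Baire/Borel distinction (harmless here since all spaces in play are second countable).
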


\begin{proof}[Sketch of the proof]
For every $\xi, \eta\in H$, we have the continuous linear functional $\tau_{\xi,\eta}:\czero(Y)\rightarrow B(H)$ given by $f\mapsto\pint{\pi(f)\xi}{\eta}$. Then, by Riesz representation theorem, there is a complex Borel regular measure $\nu_{\xi,\eta}$ such that
$$\tau_{\xi,\eta}(f)=\int f\der\nu_{\xi,\eta}, \forall \ f\in \czero(Y).$$
Next, for every $g\in B(X)$, we have the bounded sesquilinear form $\sigma_g:H\times H \rightarrow \comp$ given by
$$\sigma_g(\xi,\eta)=\int g\ \der\nu_{\xi,\eta}.$$ 
Again, by (another) Riesz representation theorem, there exists $\pti(g)\in B(H)$ such that
$$\sigma_g(\xi,\eta)=\pint{\pti(g)\xi}{\eta}.$$ 
Then, the map $\pti:B(X)\rightarrow B(H)$ is a $C^*$-homomorphism. Moreover, if $f\in \czero(Y)$ we have that 
$$\pint{\pti(f)\xi}{\eta}=\sigma_f(\xi,\eta)=\int f\ \der\nu_{\xi,\eta}=\tau_{\xi,\eta}(f)=\pint{\pi(f)\xi}{\eta}, \forall \ \xi,\eta\in H.$$ 
Then, $\pti(f)=\pi(f)$, which shows that $\pti$ extends $\pi$. To finish, if $g_n$ converges pointwise to $g$ and $\sup \|g_n\|_{\infty}<+\infty$, by Lebesgue's dominated converge theorem 
$$\lim\limits_{n \rightarrow \infty} \pint{\pti(g_n)\xi}{\eta}=\lim\limits_{n \rightarrow \infty}\int g_n\ \der\nu_{\xi,\eta}=\int g\ \der\nu_{\xi,\eta}=\pint{\pti(g)\xi}{\eta}.$$ 
Hence, $\pti(g_n)$ converges to $\pti(g)$ in the weak operator topology.
\end{proof}

\begin{cor}\label{pi(1u)}
Suppose we are in the same conditions of Proposition \ref{bfc}. Then, for a $\sigma$-compact open set $U\cont Y$, it holds that $\pti (1_U)H=\spf\ \pi(\czero(U))H$.
\end{cor}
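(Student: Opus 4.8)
The plan is to use the approximation machinery of Proposition \ref{bfc} applied to a well-chosen sequence of functions whose pointwise limit is $1_U$. Since $U$ is $\sigma$-compact and open, I would write $U=\bigcup_{n\in\nat}K_n$ as an increasing union of compact sets $K_n\subseteq \mathring{K}_{n+1}$. By Urysohn's lemma (the locally compact Hausdorff version) choose $g_n\in\cc(U)$ with $0\le g_n\le 1$, $g_n\equiv 1$ on $K_n$, and $\supp(g_n)\subseteq U$; we may even arrange $g_n$ increasing. Then $g_n\to 1_U$ pointwise on $Y$ and $\sup_n\|g_n\|_\infty=1$, so by Proposition \ref{bfc} we get $\widetilde\pi(g_n)\to\widetilde\pi(1_U)$ in the weak operator topology.

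\textbf{The two inclusions.} For $\spf\ \pi(\czero(U))H\subseteq \widetilde\pi(1_U)H$: each $g_n\in\cc(U)\subseteq\czero(U)$ satisfies $1_U g_n=g_n$, hence $\widetilde\pi(1_U)\pi(g_n)=\pi(g_n)$, so $\pi(g_n)H\subseteq\widetilde\pi(1_U)H$; more generally for any $f\in\czero(U)$ we have $1_U f=f$ in $B(Y)$, so $\widetilde\pi(1_U)\pi(f)=\pi(f)$ and thus $\pi(\czero(U))H\subseteq\widetilde\pi(1_U)H$. Since $\widetilde\pi(1_U)$ is a projection (as $1_U$ is a self-adjoint idempotent in $B(Y)$), its range is closed, so the closed span on the left is contained in it. For the reverse inclusion $\widetilde\pi(1_U)H\subseteq\spf\ \pi(\czero(U))H$: fix $\xi\in H$. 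Since $\widetilde\pi(g_n)\to\widetilde\pi(1_U)$ weakly and $g_n\in\czero(U)$, the vector $\widetilde\pi(1_U)\xi$ lies in the weak closure of $\{\pi(g_n)\xi\}_n\subseteq\pi(\czero(U))H$. Because $\pi(\czero(U))H$ is a linear subspace, its weak closure coincides with its norm closure $\spf\ \pi(\czero(U))H$ (convexity plus Hahn--Banach). Hence $\widetilde\pi(1_U)\xi\in\spf\ \pi(\czero(U))H$, and letting $\xi$ range over $H$ gives $\widetilde\pi(1_U)H\subseteq\spf\ \pi(\czero(U))H$.

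\textbf{Main obstacle.} The only subtle point is the passage from weak to norm closure for the subspace $\pi(\czero(U))H$ — one must invoke that a convex subset of a Banach (Hilbert) space has the same weak and norm closures, which applies since $\pi(\czero(U))H$ is a linear subspace. Everything else is routine: the existence of the increasing Urysohn functions $g_n$ uses $\sigma$-compactness of $U$ (a standard consequence of $U$ being an open subset of a second countable locally compact Hausdorff space), and the identity $1_U f = f$ for $f\in\czero(U)$ is immediate from the convention $f_{Y\setminus U}\equiv 0$. I would present the argument in exactly this order: (1) write $U$ as an increasing union of compacts and build $g_n$; (2) invoke Proposition \ref{bfc} for weak convergence $\widetilde\pi(g_n)\to\widetilde\pi(1_U)$; (3) prove $\subseteq$ via $1_U f=f$; (4) prove $\supseteq$ via the weak-equals-norm-closure fact.
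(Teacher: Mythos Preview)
Your proposal is correct and follows essentially the same route as the paper's proof: both construct an increasing sequence $g_n\in\cc(U)$ with $g_n\to 1_U$ pointwise via $\sigma$-compactness and Urysohn, use that $\pti(1_U)$ is a projection (hence has closed range) together with $1_Uf=f$ for the inclusion $\spf\,\pi(\czero(U))H\subseteq\pti(1_U)H$, and then invoke the Hahn--Banach argument that weak and norm closures of a linear subspace agree to get the reverse inclusion. One small aside: your parenthetical remark that $\sigma$-compactness of $U$ comes from second countability of $Y$ is extraneous here---the statement takes $\sigma$-compactness of $U$ as a hypothesis directly---but this does not affect the argument.
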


\begin{proof}
For every $f\in \czero(U)$ and $h\in H$, we have
$$\pi(f)h=\pti(f)h=\pti(1_U)\pti(f)h\cont \pti(1_U)H.$$ 
The function $1_U$ is a projection in the $C^*$-algebra $B(Y)$  since $\overline{1_U}=1_U=1_U1_U$. Hence, the operator $\pti(1_U)$ is an orthogonal projection, which implies its range is closed. Therefore, 
$$\spf\ \pi(\czero(U))H\cont\pti(1_U)H.$$
Conversely, let 
$\{K_n\}_{n\in \nat}$ be an increasing sequence of compact sets such that $U=\cup_{n\in \nat} K_n$. By Urysohn's Lemma, there exists  $g_n\in \cc(U)$ such that $g_{n_{|K_n}}\equiv 1$ and $\|g_n\|_{\infty}=1$. Note that $g_n$ converges pointwise to $1_U$ and hence $\pti(1_U)h$ is the weak limit of $\pi(g_n)h$, for every $h\in H$. The Hahn-Banach theorem then implies $\pti(1_U)h\in \spf \pi(g_n)h \cont \spf\ \pi(\czero(U))H $ and hence $\pti(1_U)H \cont \spf \pi(\czero(U))H $.
\end{proof}
%

Recall that each map $\theta_s:\dls \rightarrow\drs$ is a homeomorphism, that a basis for the topology of $\C$ is formed by the sets $\Theta(s,U)$, $U\cont \dls$ open, and that if $U=\dls$ then $\Theta(s,U)$ was simply denoted by $\Theta_s$. Since $\{\ts\mid s\in S\}$ forms a cover for $\C$, by Proposition \ref{prop: sumofbisections} we have that
$$\CCC=\sps\{f:\ f \in \cc(\ts),\ s\in S\}.$$ 
We have also proved that the maps $\s_{\Theta_s}:\Theta_s\rightarrow \dls$ and $\rr_{\Theta_s}:\Theta_s\rightarrow \drs$ are homeomorphisms. Therefore, the following applications are isometric $\ast$-isomorphisms.
$$\cc(\drs)\rightarrow\cc(\ts), f\mapsto f\circ \rr_{\Theta_s} \text{ and }\cc(\dls)\rightarrow\cc(\ts), f\mapsto f\circ \s_{\Theta_s}.$$
For maps $f\in \cc\big(\drs\big)$ and $g\in \cc\big(\dls\big)$ we will denote 
\begin{equation}\label{fdeltacategoria}
f\delta_s:= f\circ \rr_{\Theta_s}\text{ and } \delta_sg:=g\circ \s_{\Theta_s}
\end{equation} 
It then follows that 
\begin{equation}\label{eq:spandelta_s}
  \CCC=\sps\left\{f\delta_s:\ f \in \cc\big(\drs\big),\ s\in S\right\}.  
\end{equation}\
And
$$\CCC=\sps\left\{\delta_sf:\ f \in \cc\big(\dls\big),\ s\in S\right\}.$$
Recall that the $\ast$-isomorphism $\alpha_s:\czero\big(\dls\big)\rightarrow \czero\big(\drs\big)$ is given by $\alpha_s(f)=f\circ \theta_s^{-1}$. In this case, if $f\in \cc\big(\dls\big)$ we have that
 \begin{equation}\label{eq:permutingdelta}
 	\alpha_s(f)\delta_s
 	=f\circ\theta_s^{-1}\circ\rr_{\ts}
    \stackrel{\eqref{diagram}}{=}f\circ \s_{\ts}
    =\delta_s f.
\end{equation}
Moreover, for $f\in \cc\big(\drs\big)$ and $g\in \cc\left(\drt\right)$, we have 
\begin{equation}\label{prod1}
	\begin{aligned}
		f\delta_s\ast g\delta_t\big([st,x]\big)
		&=f\delta_s\big([s,\theta_t(x)]\big) g\delta_t\big([t,x]\big)
		\\&=f\big(\theta_{st}(x)\big)g\big(\theta_t(x)\big)
		\\&=\left(f\circ \theta_s \circ \theta_s^{-1}\right)\left(g\circ\theta_s^{-1}\right)(\theta_{st}(x))
		\\&=\big((f\circ \theta_s) g\big)\circ\theta_s^{-1}\left(\theta_{st}(x)\right)
		\\&=\alpha_s\left(\alpha_s^{-1}(f)g\right)(\theta_{st}(x))
		\\&= \alpha_s\left(\alpha_s^{-1}(f)g\right)(\rr([st,x]))
		\\&=\alpha_s\left(\alpha_s^{-1}(f)g\right)\delta_{st}([st,x]),
	\end{aligned}
\end{equation}
which gives
\begin{equation}\label{prod2}
f\delta_s\ast g\delta_t=\alpha_s(\alpha_s^{-1}(f)g)\delta_{st}.
\end{equation}

To the end of showing that there is a correspondence between covariant pairs for $(\alpha,S,\czero(X))$ and representations of $\CCC$, below we prove some preliminary results.

\begin{defi}
Let $Z$ be a set, $P$ and $L$ be subsets of $\PA(Z)$. $P$ will be called a \textbf{$\pi$-system} if it is closed under finite intersections, and L will be called a\textbf{ $\lambda$-system} if satisfies: \begin{enumerate}[{\normalfont \rmfamily  (i)}]
\item $Z\in L$.
\item If $A\in L$, then $Z\setminus A \in L$, for every $A\in L$.
\item If $\{A_n\}_{n\in\nat}\cont L$, then $\cup_{n\in \nat}A_n\in L$, for every sequence $\{A_n\}_{n\in \nat}$ of pairwise disjoint open sets.
\end{enumerate}
\end{defi}

\begin{teo}[Dynkin's $\pi-\lambda$ theorem (Theorem 3.2 of \cite{dyinks}) ]\label{dynkins}
Let $Z$ be a set, $P\cont \PA(Z)$ be a $\pi$-system and $L\cont \PA(Z)$ be a $\lambda$-system such that $P\cont L$. Then, $\sigma(P)$, the $\sigma$-algebra generated by $P$, is contained in $L$
\end{teo}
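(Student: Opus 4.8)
The plan is to run the classical bootstrapping argument on the smallest $\lambda$-system containing $P$. First I would observe that an arbitrary intersection of $\lambda$-systems is again a $\lambda$-system, and that $\PA(Z)$ is itself a $\lambda$-system containing $P$; hence there is a smallest $\lambda$-system $L_0$ with $P\cont L_0$, and since $L$ is a $\lambda$-system containing $P$ we automatically get $L_0\cont L$. So it suffices to prove $\sigma(P)\cont L_0$, and for that it is enough to show that $L_0$ is a $\sigma$-algebra.

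The heart of the matter is to show that $L_0$ is closed under finite intersections. I would first record the elementary fact that any $\lambda$-system is closed under proper differences: if $C\cont D$ with both in the system, then $D\sm C=Z\sm\big(C\cup(Z\sm D)\big)$ lies in the system, because $C$ and $Z\sm D$ are disjoint. Then, for a fixed set $A$, introduce $\mathcal{D}_A:=\{B\in L_0\ :\ A\cap B\in L_0\}$ and check that $\mathcal{D}_A$ is a $\lambda$-system whenever $A\in L_0$: it contains $Z$; it is closed under complements, since $A\cap(Z\sm B)=A\sm(A\cap B)$ is a proper difference of members of $L_0$; and it is closed under pairwise disjoint countable unions, since $A\cap\bigcup_n B_n=\bigcup_n(A\cap B_n)$ is a disjoint union of members of $L_0$. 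Now the bootstrap, carried out in two nested steps: for $A\in P$ we have $P\cont\mathcal{D}_A$ because $P$ is a $\pi$-system, hence $L_0\cont\mathcal{D}_A$ by minimality; this says $A\cap B\in L_0$ for all $A\in P$ and $B\in L_0$, i.e. $P\cont\mathcal{D}_B$ for every $B\in L_0$; applying minimality a second time gives $L_0\cont\mathcal{D}_B$ for all $B\in L_0$, which is exactly closure of $L_0$ under pairwise intersections.

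Finally I would conclude that $L_0$, being simultaneously a $\pi$-system and a $\lambda$-system, is a $\sigma$-algebra: it contains $Z$ and is closed under complements by the $\lambda$-system axioms, and given a countable family $\{A_n\}\cont L_0$ the sets $B_n:=A_n\sm\bigcup_{k<n}A_k$ again lie in $L_0$ (finite unions and intersections stay inside $L_0$ by the $\pi$-system property together with closure under complements), are pairwise disjoint, and satisfy $\bigcup_n B_n=\bigcup_n A_n$, so the latter belongs to $L_0$. Therefore $\sigma(P)\cont L_0\cont L$. The only point requiring care is arranging the two applications of the minimality of $L_0$ in the correct order; the rest is routine bookkeeping with the $\lambda$-system axioms.
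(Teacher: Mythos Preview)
Your argument is correct and is precisely the classical bootstrapping proof of Dynkin's theorem. The paper does not supply a proof of this statement at all: it is quoted as a known result from the literature (Theorem~3.2 of the cited reference) and used as a black box in the subsequent measure-theoretic arguments. There is therefore nothing to compare against; your write-up simply fills in what the paper deliberately omits.
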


Since the map $\theta_s:\dls\rightarrow \drs$ is a homeomorphism, the following map is a $C^*$-isomorphism
$$\alpha_s:B\big(\dls \big) \longrightarrow
B\big(\drs \big),\  f \longmapsto f\circ\theta_s^{-1}.$$
Note that it extends the previously defined map $\alpha_s:\czero\big(\dls\big)\rightarrow \czero\big(\drs\big)$ and for this reason we keep the same notation. Moreover, let $(\pi,\sigma)$ be a covariant pair for $\alpha$ and $\pti$ be the Borel extension of $\pi$. Since every subset $D_e$ is $\sigma$-compact, by virtue of Corollary \ref{pi(1u)} and item \eqref{defipari2} of Definition \ref{defipar}, we have that for every $e \in E$
    \begin{equation}\label{se=pie}
    \pti(1_e)=\sigma_e,\ 1_e:=1_{D_e}.
    \end{equation} 
Thus, for every $e\in E$ and $f \in B(X)$ we have that
\begin{equation}\label{secomute}
\sigma_e\pti(f)=\pti(1_ef)=\pti(f1_e)=\pti(f)\sigma_e.
\end{equation}
Can the covariance relation be extended to the measurable context? Below, we give a positive answer to this question.

\begin{prop}\label{extendedcovariance}
Let $(\pi,\sigma)$ be a covariant pair for $(\alpha,S,\czero(X))$ on a Hilbert space $H$ and $\pti$ 
 be the Borel extension of $\pi$. Then, for every $f\in B\big(\dls\big)$ the following equation holds $$\pti(\alpha_s(f))\sigma_s=\sigma_s\pti(f).$$
\end{prop}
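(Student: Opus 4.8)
The plan is to upgrade the covariance relation $\pi(\alpha_s(f))\sigma_s=\sigma_s\pi(f)$ from $f\in\czero(\dls)$ to $f\in B(\dls)$ by a monotone-class / Dynkin argument, using the weak-operator continuity of the Borel extension $\pti$ guaranteed by Proposition \ref{bfc}. First I would fix $s\in S$ and introduce the collection
$$
L=\Big\{f\in B(\dls)\ :\ \pti(\alpha_s(f))\sigma_s=\sigma_s\pti(f)\Big\}.
$$
By linearity of $\pti$ and of $\alpha_s$ (which restricts to the $C^*$-isomorphism $B(\dls)\to B(\drs)$ defined just before the statement), $L$ is a linear subspace; by Proposition \ref{bfc} it is closed under bounded pointwise limits of sequences, since if $f_n\to f$ pointwise with $\sup\|f_n\|_\infty<\infty$, then $\alpha_s(f_n)\to\alpha_s(f)$ pointwise on $\drs$ with uniformly bounded sup-norms, so $\pti(\alpha_s(f_n))\to\pti(\alpha_s(f))$ and $\pti(f_n)\to\pti(f)$ in the weak operator topology, and multiplication by the fixed bounded operators $\sigma_s$ on one side is WOT-continuous. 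Finally $L\supseteq\czero(\dls)$ by Definition \ref{defipar}(\ref{covariancerelation}) together with the fact that $\pti$ extends $\pi$.

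Next I would identify a generating $\pi$-system inside $\dls$ whose characteristic functions lie in $L$. The natural candidate is the family of (relatively) compact open subsets $U\cont\dls$, or more simply all open subsets; these form a $\pi$-system since $\dls$ is locally compact Hausdorff (hence intersections of opens are open), and the generated $\sigma$-algebra is the Borel $\sigma$-algebra of $\dls$. For such a $U$, $\sigma$-compactness of $U$ (every open subset of a second-countable locally compact Hausdorff space is $\sigma$-compact) lets me write $1_U$ as a bounded pointwise limit of an increasing sequence $g_n\in\cc(U)$ as in the proof of Corollary \ref{pi(1u)}; since each $g_n\in\czero(\dls)\subseteq L$ and $L$ is closed under such limits, $1_U\in L$. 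Alternatively one can cite Corollary \ref{pi(1u)} directly: $\pti(1_U)=$ projection onto $\spf\ \pi(\czero(U))H$ and similarly for $\theta_s(U)\cont\drs$, and chase the covariance relation through these ranges. Now the set $\{1_U : U\cont\dls \text{ open}\}\cup\{1_{\dls}\}$ sits inside $L$; to invoke Dynkin (Theorem \ref{dynkins}) I pass to $\widetilde L=\{B\in\text{Borel}(\dls): 1_B\in L\}$, check it is a $\lambda$-system — $\dls\in\widetilde L$, it is closed under complements because $\alpha_s$ and $\pti$ are $*$-homomorphisms so $\pti(1-h)=\mathrm{id}-\pti(h)$ and likewise for $\alpha_s$, and it is closed under countable disjoint unions because finite unions give finite sums of characteristic functions (linearity) while the full countable union is the bounded pointwise monotone limit of the finite ones — and conclude $\widetilde L\supseteq\sigma(\{U\ \text{open}\})=\text{Borel}(\dls)$.

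Having $1_B\in L$ for every Borel $B$, linearity of $L$ gives all Borel simple functions in $L$, and then closure of $L$ under bounded pointwise limits gives all of $B(\dls)$, since every bounded Borel function is a uniform — hence in particular bounded pointwise — limit of simple functions. This finishes the proof. The main obstacle I anticipate is purely bookkeeping: verifying carefully that $\widetilde L$ is closed under complements and disjoint countable unions as a subset of $B(\dls)$ (as opposed to $B(X)$), i.e. keeping track of the fact that $\dls$, not $X$, is the ambient space and that $1_{\dls}$ plays the role of the unit; and ensuring the weak-operator-topology continuity in Proposition \ref{bfc} is applied on the correct space $\drs$ for the $\alpha_s(f_n)$ side. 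None of these is conceptually deep, but the $\lambda$-system axioms must be matched to the slightly nonstandard "$\lambda$-system" definition given in the paper (which phrases the disjoint-union axiom in terms of open sets), so I would either note that simple functions suffice or restrict attention to open sets where needed.
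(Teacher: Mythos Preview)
Your proposal is correct and follows essentially the same route as the paper: both approximate $1_U$ for open $U\cont\dls$ by a bounded pointwise sequence in $\cc(U)$ via $\sigma$-compactness, apply Dynkin's $\pi$--$\lambda$ theorem with the open sets as the $\pi$-system to reach all Borel indicator functions, and then pass to simple functions and finally to all of $B(\dls)$ by bounded pointwise limits. The only slip is writing $\pti(1-h)=\mathrm{id}-\pti(h)$ in the complement step---it should be $\pti(1_{\dls})-\pti(h)$, exactly the bookkeeping issue you yourself flag at the end.
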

\begin{proof}
Let $U$ be an open set of $\dls$. Note that $U$ is $\sigma$-compact and recall that there exists a sequence $\{g_n\}\cont \cc(U)\cont\cc(\dls)$ such that $g_n$ converges pointwise to $1_U$ and $\sup\|g_n\|_{\infty}=1$ (see the proof of Corollary \ref{pi(1u)}). Then, for every $\xi,\eta\in H$
$$\begin{aligned}
    \pint{\pti(\alpha_s(1_U))\sigma_s\xi}{\eta}
    &=\lim\limits_{n \rightarrow \infty}\pint{\pti(\alpha_s(g_n))\sigma_s\xi}{\eta}
    \\&=\lim\limits_{n \rightarrow \infty}\pint{\sigma_s\pti(g_n)\xi}{\eta}
    \\&=\pint{\sigma_s\pti(1_U)\xi}{\eta}.
\end{aligned}$$
Thus, the covariance relation holds for characteristic functions of open sets. In what follows, we will check that the subset $\Lambda$, defined down below, is a $\lambda$-system.
$$\Lambda=\left\{A\cont \dls\ |\ A\text{ is a Borel subset such that } \pti(\alpha_s(1_A))\sigma_s=\sigma_s\pti(1_A) \right\},$$ %
Note that $\dls \in \Lambda,$ by the above calculation. Moreover, if $A \in \Lambda$, note that
    $$\begin{aligned}
    \pti(\alpha_s(1_A))\sigma_s+\pti(\alpha_s(1_{\dls\sm A}))\sigma_s        &=\pti(\alpha_s(1_{\lambda(s)}))\sigma_s
    \\&=\sigma_s\pti(1_{\lambda(s)})
    \\&=\sigma_s\pti(1_A)+\sigma_s\pti(1_{\dls\sm A})
    \\&=\pti(\alpha_s(1_A))\sigma_s+\sigma_s\pti(1_{\dls\sm A}).
    \end{aligned}$$ 
Hence, $\dls\setminus A \in \Lambda$.
Finally, suppose $\{A_n\}_{n\in\nat}\cont \Lambda$ is a sequence of pairwise disjoint open sets. Define $A=\cup_{n\in \nat}A_n$ and note that $1_A$ is the pointwise limit of $f_n,$ where $f_n=\sum_{i\leq n}1_{A_i}$. Then, for every $\xi, \eta \in H$ we have that 
    $$\begin{aligned}
	\pint{\pti(\alpha_s(1_A))\sigma_s\xi}{\eta}
	&=\lim\limits_{n \rightarrow \infty}\pint{\pti(\alpha_s(f_n))\sigma_s\xi}{\eta}
	\\&=\lim\limits_{n \rightarrow \infty}\pint{\sigma_s\pti(f_n)\xi}{\eta}
	\\&=\pint{\sigma_s\pti(1_A)\xi}{\eta},
    \end{aligned}.$$
Hence $A\in L$ and $\Lambda$ is a $\lambda$-system. 

Note that the topology of $\dls$ is a $\pi$-system contained in $\Lambda$ and thus $\Lambda$ is the Borel $\sigma$-algebra of $\dls$, by Theorem \ref{dynkins}. In particular, the covariance relation holds for simple functions and, consequently, for positive functions since every positive function is the limit of simple functions (see \cite[Theorem 13.5]{dyinks}) and $\pti$ is weakly continuous. Finally, by linearity, we have that the covariance relation holds for the whole algebra $B\big(\dls\big)$.
\end{proof}

The subsequent lemma generalizes \cite[Lemma 8.4]{exel} and, interestingly, a proof that avoids assuming the countability of $S$ is provided. Before proceeding, it is worth revisiting the detail mentioned in \eqref{eq:spandelta_s}.
\begin{lemma}\label{8.4}
	If $\sum_{s\in S}f_s\delta_s=0$ in $\CCC$ then $\sum_{s\in S}\pi(f_s)\sigma_s=0$ for every covariant pair $(\pi,\sigma)$ associated with $(\alpha,S,\czero(X))$.
\end{lemma}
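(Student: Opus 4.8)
The plan is to reduce the statement to an identity that takes place pointwise in $X$, where the germ relation can be unravelled. A general element $\sum_{s\in S}f_s\delta_s$ with $f_s\in\cc(\drs)$ equals zero in $\CCC$ precisely when, evaluated at each point of $\C$, the sum vanishes. So the hypothesis $\sum_{s\in S}f_s\delta_s=0$ means: for every germ $[s_0,x]\in\C$ we have $\sum_{s\in S}(f_s\delta_s)([s_0,x])=0$, where $(f_s\delta_s)([s_0,x])=f_s(\theta_{s_0}(x))$ whenever $[s_0,x]\in\Theta_s$, i.e.\ whenever $[s_0,x]=[s,y]$ for some $y$, and is $0$ otherwise. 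Since only finitely many $f_s$ are nonzero, this is a finite sum for each germ.

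**The key combinatorial step.** Fix $x\in X$. I would group the indices $s$ appearing in the sum according to which germ they determine at points of $X$. More precisely, for a fixed $x\in D_{\lambda(s)}$ and $t\in D_{\lambda(t)}$ with the same germ $[s,x]=[t,x]$, one has $\theta_s(x)=\theta_t(x)$ by \eqref{actigual}, and there is $f\in E$ with $sf=tf$, $x\in D_f$. The crucial observation is that for such $s,t$ and a covariant pair $(\pi,\sigma)$, the operators $\pi(f_s)\sigma_s$ and $\pi(f_t)\sigma_t$ interact compatibly: using the covariance relation (Proposition \ref{extendedcovariance}, applied to the Borel extension $\pti$), the relations $\sigma_{sf}=\sigma_s\sigma_f=\sigma_s\pti(1_f)$, and the fact that $\sigma_s=\sigma_{s\lambda(s)}$ together with $s\leq$ the idempotent-shifted version, one shows $\pi(f_s)\sigma_s\pti(1_f)=\pi(f_t)\sigma_t\pti(1_f)$ when $f_s,f_t$ have the "same germ data" at the relevant points. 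The heart of the argument is: after multiplying the whole operator sum $\sum_s\pi(f_s)\sigma_s$ on the right by a suitable projection $\pti(1_U)$ with $U$ a small open set where all germ identifications are realized by a single idempotent $f$, one can rewrite the partial sum as $\pti(h)\sigma_{s_0}$ where $h\in\cc(X)$ is the pointwise sum of the relevant $f_s\circ$(transition maps); then $h=0$ follows from the hypothesis evaluated at germs.

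**Using $\sigma$-compactness and a partition argument.** Because $X$ is second countable and locally compact, each $D_{\rho(s)}$ is $\sigma$-compact; I would cover $X$ (or the union of the relevant supports $\supp(f_s)$) by countably many open sets $U_n$ on each of which the finitely many germ identifications among $\{s : f_s\neq 0\}$ are witnessed by a common idempotent. Using Corollary \ref{pi(1u)} to identify $\pti(1_{U_n})$ with the projection onto $\spf\pi(\cc(U_n))H$, one gets $\sum_s\pi(f_s)\sigma_s\cdot\pti(1_{U_n})=\pti(h_n)\sigma_{s_0^{(n)}}$ for some $h_n$ that, by the pointwise vanishing hypothesis at germs in $\Theta(s_0^{(n)},U_n)$, is identically zero. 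Since the projections $\pti(1_{U_n})$ (or their "difference" refinements into a disjoint family, as in the $\lambda$-system argument of Proposition \ref{extendedcovariance}) have supremum acting as the identity on $\spf\pi(\czero(X))H$, and since $\sigma_e(H)=\spf\pi(J_e)H$ covers $H$ as $e$ ranges over $E$ (the cover condition and item \eqref{defipari2} of Definition \ref{defipar}), we conclude $\sum_s\pi(f_s)\sigma_s=0$.

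**The main obstacle.** The subtle point — and where I expect the real work to be — is handling the bookkeeping when several distinct $s\in S$ contribute the \emph{same} germ over an overlapping region: one cannot naively say the $\pi(f_s)\sigma_s$ are "equal," only that they agree after multiplication by the projection $\pti(1_f)$ corresponding to the equalizing idempotent, and these idempotents vary with the point. Organizing this into a clean countable disjointification (so that on each piece a single idempotent and a single representative $s_0$ suffice) is the technical crux, and it is exactly here that second countability of $X$ is indispensable; without it one would be forced into the kind of net/finite-cover argument that Exel uses, and the claim in the lemma is that the countable hypothesis lets us avoid assuming $S$ countable. I would model the disjointification on the $\pi$–$\lambda$ machinery already deployed in Proposition \ref{extendedcovariance}, checking that the family of Borel sets on which the rewritten identity holds is a $\lambda$-system containing the open sets.
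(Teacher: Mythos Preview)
Your setup is correct and you have identified the right difficulty, but the resolution you propose has a genuine gap. The step ``one can rewrite the partial sum as $\pti(h)\sigma_{s_0}$'' and, more explicitly, the equation
\[
\sum_{s}\pi(f_s)\sigma_s\,\pti(1_{U_n})=\pti(h_n)\sigma_{s_0^{(n)}}
\]
cannot hold in general. Multiplying on the right by $\pti(1_U)$ localises the \emph{source} variable to $U\subset X$, but above a fixed $x\in U$ the germs $[s,x]$ for the finitely many $s$ with $f_s\neq 0$ may fall into \emph{several distinct} classes; no choice of a single representative $s_0$ allows you to rewrite $\sigma_s\pti(1_U)$ as $\pti(g)\sigma_{s_0}$ when $[s,x]\neq[s_0,x]$. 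The hypothesis $\sum_s f_s\delta_s=0$ only tells you that the sum of the $f_s(\theta_s(x))$ over each germ class vanishes \emph{separately}, so you would have to treat the classes one at a time. But the partition of $\{s:f_s\neq0\}$ into germ classes at $x$ is not locally constant in $x$: the set where two fixed elements have the \emph{same} germ is open, while the set where they differ need not be, so classes can merge on arbitrarily small neighbourhoods and no open (or even Borel) piece of $X$ carries a single stable class structure together with a single $s_0$.

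The paper avoids this combinatorics by working upstairs in $\C$ rather than in $X$. For fixed $\xi,\eta\in H$ it pushes the complex measure $B\mapsto\langle\pti(1_B)\xi,\sigma_s^*\eta\rangle$ on $D_{\lambda(s)}$ forward along $\s_{\Theta_s}^{-1}$ to a measure $\mu_s$ on $\Theta_s$, so that $\int f\,d\mu_s=\langle\sigma_s\pti(f\circ\s_{\Theta_s}^{-1})\xi,\eta\rangle$. The germ argument is then used \emph{only} to show that $\mu_s$ and $\mu_t$ agree on $\Theta_s\cap\Theta_t$ (first on compacts via a finite cover by witnessing idempotents, then on all Borel sets by $\pi$--$\lambda$); this is exactly the pairwise statement your approach would need but never isolates. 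These compatible measures glue to a single measure $\mu$ on $\bigcup_i\Theta_{s_i}$, and integrating the identity $\sum_s f_s\delta_s=0$ against $\mu$ yields $\sum_s\langle\pi(f_s)\sigma_s\xi,\eta\rangle=0$. In effect the paper replaces your ``single representative'' reduction by a well-defined linear functional on $\CCC$ that automatically handles all germ classes at once.
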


\begin{proof}
Let $(\pi,\sigma)$ be a covariant pair for $\alpha$ on a Hilbert space $H$. Thus $\sigma:S \rightarrow B(H)$ and $\pi:\czero(X)\rightarrow B(H)$ are representations of $S$ and $\czero(X)$ satisfying the conditions stated in Definition \ref{defipar}. By Proposition $\ref{bfc}$ we have the Borel extension $\pti$ of $\pi$ and the measures $\nu_{\xi,\eta}$ on $X$ satisfying 
$$\nu_{\xi,\eta}(B)=\pint{\pti(1_B)\xi}{\eta},\ \xi, \eta \in H.$$ 
Consider $\xi, \ \eta \in H$ and let $\nu_s$ denote the Borel measure $\nu_{\xi,\sigma_s^*\eta}$ restricted to the Borel subspace $\dls$ of $X$ for each $s \in S$. Moreover, denote $\miss$ the pushforward measure of $\nu_s$ by the homeomorphism $\s_{\ts}^{-1}:\dls\rightarrow \ts$. More precisely, $\miss$ is the Borel measure on $\ts$ given by \begin{equation}\label{miss}
	\miss(B)=\nu_s(\s_{\ts}(B))=\pint{\sigma_s\pti(1_{\s_{\ts}(B)})\xi}{\eta},
\end{equation} 
for every Borel subset $B\cont \ts$. Note that $1_{\s_{\Theta_s}(B)}=1_B\circ \s_{\Theta_s}^{-1}$ and hence 
\begin{equation}\label{intteta}
\int f\ \der \miss=\pint{\sigma_s\pti(f\circ \s_{\Theta_s}^{-1})\xi}{\eta},	
\end{equation}
 for every Borel function $f \in B(\Theta_s)$. In fact, equation \eqref{miss} implies that \eqref{intteta} holds for simple functions and the extension to Borel functions comes from the fact that $\pti$ is weakly continuous.

Now, we check that $\miss$ and $\mitt$ coincide on $\Theta_s\cap\Theta_t$. Let $K\cont \Theta_s\cap\Theta_t$ be a compact subset.  Note that for every $x\in \s(K)$, $[s,x]=[t,x]$. Hence, for every $x\in X$ there exists $e_x\in E$ such that $x\in D_{e_x}$ and $se_x=te_x$. Moreover, since $\s(K)$ is compact, there are $e_1,...,e_n$ such that $\s(K)\cont \cup_{i=1}^nD_{e_i}$ and $se_i=te_i$. In this case, we write $\s(K)$ as a disjoint union 
$$\s(K)=\cup_{i=1}^n A_i, $$
where $A_1=\s(K)\cap D_{e_1}$ and $A_i=\s(K)\cap D_{e_i}\sm (\cup_{j=1}^{i-1}\s(K)\cap D_{e_j})$, for $2\leq i \leq n$.
 Note that $\{A_i\}_{i=1}^n$ is a family of pairwise disjoint and measurable subsets such that $A_i\cont D_{e_i}$. Hence, for every $i\in \{1,...,n\}$ 
 $$\begin{aligned}
\sigma_s\pti(1_{A_i})&=\sigma_s\pti(1_{e_i}1_{A_i})
=\sigma_s\pti(1_{e_i})\pti(1_{A_i})
\\&\stackrel{\eqref{se=pie}}{=}\sigma_s\sigma_{e_i}\pti(1_{A_i})
=\sigma_{se_i}\pti(1_{A_i})
\\&=\sigma_{te_i}\pti(1_{A_i})
=\sigma_{t}\pti(1_{e_i})\pti(1_{A_i})
\\&=\sigma_{t}\pti(1_{A_i})
.\end{aligned}$$ 
Thus, 
$$\miss(K)\stackrel{\eqref{miss}}{=}\sum_{i=1}^n\pint{\sigma_s\pti(1_{A_i})\xi}{\eta}=\sum_{i=1}^n\pint{\sigma_t\pti(1_{A_i})\xi}{\eta}=\mitt(K).$$
Since $\Theta_s\cap\Theta_t$ is $\sigma$-compact, we obtain that every closed subset is a countable union of compact subsets and, therefore, $\miss$ and $\mitt$ coincide in the family of closed subsets of $\Theta_s\cap\Theta_t$, which is a $\pi$-system. It is easy to see that the family of subsets where $\miss$ and $\mitt$ coincide is a $\lambda$-system. Hence, by Theorem \ref{dynkins}, $\miss$ and $\mitt$ coincide on every Borel subset of $\Theta_s\cap\Theta_t$. 

As a consequence of the fact that $\miss$ and $\mitt$ coincide on $\Theta_s\cap\Theta_t$, we obtain that if $F\in B(\Theta_s\cap\Theta_t)$ then $\int F\der \miss=\int F\der \mitt.$ Therefore, by \eqref{intteta}, we have
$$\pint{\sigma_s\pti(F\circ \s_{\Theta_s}^{-1})\xi}{\eta}=\pint{\sigma_t\pti(F\circ \s_{\Theta_t}^{-1})\xi}{\eta}.$$ 
Then, by Proposition \ref{extendedcovariance}, we have 
$$\pint{\pti(F\circ \rr_{\Theta_s}^{-1})\sigma_s\xi}{\eta}=\pint{\pti(F\circ \rr_{\Theta_t}^{-1})\sigma_t\xi}{\eta}.$$
And hence, since $\xi,\ \eta$ have been arbitrarily chosen, we obtain that for every Borel function $F \in B(\Theta_s\cap\Theta_t).$
\begin{equation}\label{equalonintersection}
\pti(F\circ \rr_{\Theta_s}^{-1})\sigma_s=\pti(F\circ \rr_{\Theta_t}^{-1})\sigma_t.
\end{equation}

Let $J=\{s_1,...,s_n\}$ denote the set of all elements $s$ in $S$ such that $f_s$ is nonzero. Moreover, define $M= \cup_{i=1}^n\Theta_{s_i}$ and recall that the Borel $\sigma$-algebra of $M$ is the collection of all unions $\cup_{i=1}^nB_{s_i}$ where $B_{s_i}$ is a Borel set of $\Theta_{s_i}$. This readily follows from the fact that if $V$ is an open subset of the topological space $Z$ then the Borel $\sigma$-algebra of $V$ is the collection of all $B\cont V$ such that $B$ is a Borel subset of $Z$. Additionally, we can suppose that the unions $\cup_{i=1}^nB_{s_i}$ are all disjoint since there exists a family $\{A_{s_1},..., A_{s_n}\}$ of pairwise disjoint Borel subsets such that $A_{s_i}\cont B_{s_i}\cont \Theta_{s_i}$ and $\cup_{i=1}^nB_{s_i}= \cup_{i=1}^nA_{s_i}$. In fact, define $A_{s_1}=B_{s_1}$ and $A_{s_i}=B_{s_i}\sm (\cup_{j=1}^{i-1}B_{s_j})$ for every $i\in \{2,...,n\}$. To check that $A_{s_i}$ is a Borel subset of $\Theta_{s_i}$ note that it is the intersection of measurable subsets of $M$ and hence measurable in $M$. As Borel subsets of $\ts$ are precisely the Borel subsets of $M$ contained in $\Theta_s$, we obtain that each $A_{s_{i}}$ is a Borel subset of $\Theta_{s_{i}}$. 

Define the following measure on $M$ $$\mu\big(\bigsqcup_{i=1}^nB_{s_i}\big)=\sum_{i=1}^{n}\mu_{s_i}(B_{s_{i}}).$$

 This measure is well-defined since for 
 $B=\sqcup_{i=1}^{n}A_{s_{i}}=\sqcup_{j=1}^{n}B_{s_{j}}$ we have
$$\begin{aligned}
 \sum_{i=1 }^n\mu_{s_i}(A_{s_i})&=\sum_{i=1 }^n\sum_{j=1 }^n\mu_{s_i}(A_{s_i}\cap B_{s_j})=\sum_{i=1 }^n\sum_{j=1 }^n\mu_{s_j}(A_{s_i}\cap B_{s_j})\\&=\sum_{j=1 }^n\sum_{i=1 }^n\mu_{s_j}(A_{s_i}\cap B_{s_j})= \sum_{j=1 }^n\mu_{s_j}(B_{s_j}).\end{aligned}$$ 
Hence, $\mu$ is a measure on $M$ extending $\miss$ for every $s \in J$. In particular, for every Borel function $f\in \cc(\Theta_s).$
$$\int f\der \mu=\int f\der \mu_s.$$
Finally, since $\sum_{s\in J}f_s\delta_s=0$, integrating both sides with respect to $\mu$ gives
$$\sum_{s\in J}\int \ f_s\delta_s\ \der \miss=0.$$
Therefore
$$\begin{aligned}
    0&=\sum_{s\in J}\int \ f_s\delta_s\ \der \miss
    \stackrel{\eqref{intteta}}{=}\sum_{s\in J}\pint{\sigma_s\pi((f_s\delta_s)\circ \s_{\Theta_s}^{-1})\xi}{\eta}
    \\&=\sum_{s\in J}\pint{\sigma_s\pi((f_s\circ \rr_{\Theta_s})\circ \s_{\Theta_s}^{-1})\xi}{\eta}
   =\sum_{s\in J}\pint{\sigma_s\pi(f_s\circ\theta_s) \xi}{\eta}
    \\&=\sum_{s\in J}\pint{\sigma_s\pi(\alpha_s^{-1}(f_s))\xi }{\eta}
   =\sum_{s\in J}\pint{\pi(f_s)\sigma_s\xi }{\eta}
    \\&=\pint{\sum_{s\in J}\pi(f_s)\sigma_s\xi }{\eta}	.\end{aligned}$$
Thus $\sum_{s\in J}\pi(f_s)\sigma_s=0$ as $\xi,\eta\in H$ have been arbitrarily chosen.
\end{proof}

Next, we recall the concept of disjointification (see \cite[Remark 2.4]{disjointification}). Loosely speaking, the disjointification of a  family of subsets consists in collecting those small pieces generated in the Venn diagram. Let $\mathcal{F}=\{A_i\}_{i=1}^n$ be a family of sets and $I=\{1,...,n\}$ be the index set. For a non-empty set $J\cont I$ define $$P_J=\bigcap_{i\in J}A_i\sm\bigcup_{i\in I\sm J}A_i.$$ The \textit{disjointification} of $\mathcal{F}$ is the family $\mathcal{D}=\{P_J\ |\ \emptyset\neq J\cont I\}$. Let us briefly recall some properties that the disjointification of the family $\mathcal{F}$ has. Note that for $i\in I$ and a non-empty subset $J\cont I$, either $i\in J$ or $i\in I\sm J$ and, respectively, either $P_J\cont A_{i}$ or $P_J\cap A_{i}=\emptyset$. In particular, the family $\{P_J\ |\ \emptyset\neq J\cont I\}$ is pairwise disjoint.
Moreover, note that for every $i\in I$
$$A_i=\bigsqcup_{\{J\cont I\ | P_J\cont A_{i}\}}P_J.$$
In fact, if $i_0\in I$ and $x\in A_{i_0}$, we have that $x\in P_J$, where $J=\{i\in I\ |\ x\in A_i\}$. And hence $(\cont)$ holds. The other inclusion is easy.

Furthermore, if the family $\{A_i\}_{i=1}^n$ consists of open sets on a given topological space, it is easy to see that $\mathcal{D}$ forms a Borel partition for $\bigcup\limits_{i\in I}A_i$.  

We can now show that there is a correspondence between covariant pairs for $(\alpha,S,\czero(X))$ and representations of $\CCC$. Recall that a covariant pair $(\pi,\sigma)$ for $\alpha$ on $H$ gives rise to a homomorphism $\pi \times \sigma:L_\alpha \rightarrow B(H).$ Note that both  $L_\alpha$ and $\CCC$ have elements of the form $\sum_{s\in s}f_s\delta_s$, where in the former $\delta_s$ is a symbol while in the latter the meaning is explained in the discussion after Corollary \ref{pi(1u)}. Throughout the following, on $L_{\alpha}$ we replace $\delta$ by $\dtt$ and hence the elements of $L_\alpha$ are finite sums of the form $\sum_{s\in s}f_s\dtt_s$. Also, in the middle of the following proof we are going to use the concept of completely orthogonal family of operators defined below.

\begin{defi}\label{def:completelyorthgonalfamilyofoperators}
    Let $H$ be a Hilbert space. A finite family of operators $\{T_i\}_{i=1}^n \cont B(H)$ is called \textbf{completely orthogonal} if $T_i^*T_j=0$ and $T_iT_j^*=0$, whenever $i\neq j$.
\end{defi}

%
%
%
%
Note that if $\{T_i\}_{i=1}^n \cont B(H)$ is completely orthogonal then
\begin{equation}\label{eq:normcompletelyorthogonal}
\left\|\sum_{i=1}^n T_i\right\|=\max_{i=1...n} \big\{\|T_i\|\big\}.
\end{equation}  

\begin{prop}\label{lemma:integration}
Let $(\pi,\sigma)$ be a covariant pair for the system $(\alpha,S, \czero(X))$. Then, $(\pi,\sigma)$ gives rise to a representation $\overline{\pi\times \sigma }$ of $\CCC$, where $$\overline{\pi\times \sigma }\left(\sum_{s\in s}f_s\delta_s\right)=\sum_{s\in s}\pi(f_s)\sigma_s.$$
\end{prop}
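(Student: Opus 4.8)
The map $\overline{\pi\times\sigma}$ is clearly linear on $\CCC$, so by Lemma~\ref{8.4} it is well-defined: if $\sum_{s\in S}f_s\delta_s=0$ in $\CCC$, then $\sum_{s\in S}\pi(f_s)\sigma_s=0$ in $B(H)$, so the formula does not depend on the chosen expression $\sum f_s\delta_s$ (recall that such expressions are not unique, cf.\ \eqref{eq:spandelta_s}). The plan is to verify the three defining properties of a representation of $\CCC$ in Definition~\ref{definicaorepcate}: that $\overline{\pi\times\sigma}$ is an algebra homomorphism, that $\|\overline{\pi\times\sigma}(f)\|\le\|f\|_\infty$ whenever $f\in\cc(U)$ for a bisection $U$, and that $\overline{\pi\times\sigma}(\bar f)=\overline{\pi\times\sigma}(f)^*$ for $f\in\cc(\CZ)$.

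For multiplicativity, it suffices to check it on spanning elements $f\delta_s$ and $g\delta_t$ with $f\in\cc(\drs)$, $g\in\cc(\drt)$. Using \eqref{prod2} we have $f\delta_s\ast g\delta_t=\alpha_s(\alpha_s^{-1}(f)g)\delta_{st}$, and then the computation is essentially the one already done for $\pi\times\sigma$ on $L_\alpha$ (right before Definition~\ref{defipar}), namely
\[
\overline{\pi\times\sigma}(f\delta_s\ast g\delta_t)=\pi\bigl(\alpha_s(\alpha_s^{-1}(f)g)\bigr)\sigma_{st}
=\pi\bigl(\alpha_s(\alpha_s^{-1}(f)g)\bigr)\sigma_s\sigma_t,
\]
and using the covariance relation of Definition~\ref{defipar}\eqref{covariancerelation} this collapses to $\pi(f)\sigma_s\pi(g)\sigma_t=\overline{\pi\times\sigma}(f\delta_s)\,\overline{\pi\times\sigma}(g\delta_t)$. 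The adjoint condition on $\cc(\CZ)$ is immediate: for $f\in\cc(\CZ)=\cc(\CZ)$ one has $f\delta_e=f$ under the identifications, $\overline{\pi\times\sigma}(f)=\pi(f)$, and since $\pi$ is a $*$-homomorphism of $\czero(X)$, $\overline{\pi\times\sigma}(\bar f)=\pi(\bar f)=\pi(f)^*$.

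The main obstacle is the norm-domination condition \eqref{i1definicaorepcate}: given a bisection $U\cont\C$ and $f\in\cc(U)$, we must show $\|\overline{\pi\times\sigma}(f)\|\le\|f\|_\infty$, even though $f$ need not be of the form $g\delta_s$ for a single $s$ — it is only a sum $\sum_{i=1}^n f_i\delta_{s_i}$ of such terms, where the $f_i$ are supported on the pieces $U\cap\Theta_{s_i}$. This is where the disjointification machinery and the notion of completely orthogonal family (Definition~\ref{def:completelyorthgonalfamilyofoperators} and \eqref{eq:normcompletelyorthogonal}) enter. I would first cover $U$ by finitely many $\Theta_{s_i}$, then apply disjointification to the family $\{U\cap\Theta_{s_i}\}$ to write $U$ as a finite disjoint union of Borel pieces $P_J$, each contained in some $\Theta_{s_{i(J)}}$; using the Borel extension $\pti$ of $\pi$ and the covariance relation extended to Borel functions (Proposition~\ref{extendedcovariance}), I would rewrite $\overline{\pi\times\sigma}(f)=\sum_J \pti\bigl(\alpha_{s_{i(J)}}^{-1}(f\cdot 1_{P_J})\text{ suitably interpreted}\bigr)\sigma_{s_{i(J)}}$ as a sum $\sum_J T_J$ of operators indexed by the disjoint pieces. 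Because the $P_J$ have pairwise disjoint images in $\C$ that lie in a common bisection $U$, and because $U$ is a bisection (so source and range are injective on it), the operators $T_J$ should form a completely orthogonal family — checking $T_J^*T_{J'}=0=T_JT_{J'}^*$ for $J\ne J'$ via \eqref{secomute} and the disjointness of supports. Then \eqref{eq:normcompletelyorthogonal} gives $\|\overline{\pi\times\sigma}(f)\|=\max_J\|T_J\|\le\max_J\|f\cdot 1_{P_J}\|_\infty\le\|f\|_\infty$, using that each individual $T_J=\pi(\text{something})\sigma_{s}$ has norm bounded by the sup-norm of that something, which is $\le\|f\|_\infty$. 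Establishing the complete orthogonality carefully — in particular handling the non-Hausdorff subtleties that force us to work with $\pti$ on Borel sets rather than with genuine continuous functions — is the technical heart of the argument; the rest is bookkeeping.
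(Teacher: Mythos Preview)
Your proposal is correct and follows essentially the same route as the paper: well-definedness via Lemma~\ref{8.4}, multiplicativity via \eqref{prod2} and the covariance relation, the adjoint condition by reducing to $\pi$ on $\cc(\CZ)$, and the norm bound via disjointification of the covering $\{\Theta_{s_i}\}$ into Borel pieces $P_j$, defining $T_j=\pti(F_j\circ\rr_{\Theta_{i(j)}}^{-1})\sigma_{s_{i(j)}}$, checking complete orthogonality from the fact that $\rr$ and $\s$ are injective on the bisection $U$, and applying \eqref{eq:normcompletelyorthogonal}. One small imprecision: a general $f\in\cc(\CZ)$ is not a single $f\delta_e$ but a finite sum $\sum_{e\in E} f_e\delta_e$ with $f_e\in\cc(D_e)$ (as the paper writes explicitly); your conclusion $\overline{\pi\times\sigma}(f)=\pi(f)$ is still correct once you note $\pi(f_e)\sigma_e=\pi(f_e)$ on each summand via \eqref{se=pie}.
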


\begin{proof}
Define $\overline{\pi\times \sigma }\left(\sum_{s\in s}f_s\delta_s\right)=\sum_{s\in s}\pi(f_s)\sigma_s.$ By Lemma \ref{8.4}, $\overline{\pi\times \sigma }$ is well-defined. Moreover, note that $\overline{\pi\times \sigma}$ is linear and that for $f\in \cc(\drs)$ and $g\in \cc(\drt)$ we have $$\begin{aligned}\overline{\pi\times \sigma}(f\delta_s\ast g\delta_t)
	&\stackrel{\eqref{prod2}}{=}\overline{\pi\times \sigma}\left(\alpha_s(\alpha_s^{-1}(f)g)\delta_{st}\right)
	\\&=\pi(\alpha_s\left(\alpha_s^{-1}(f)g\right)\sigma_{st}
	\\&=\pi\times \sigma\left(\alpha_s(\alpha_s^{-1}(f)g)\dtt_{st}\right)
	\\&=\pi\times \sigma\left(f\dtt_sg\dtt_t\right)
	\\&=\pi\times \sigma(f\dtt_s)\pi\times \sigma( g\dtt_t)
	\\&=\pi(f)\sigma_s\pi( g)\sigma_t
	\\&=\overline{\pi\times \sigma}(f\delta_s)\overline{\pi\times \sigma}(g\delta_t).\end{aligned}$$
Thus, $\overline{\pi\times\sigma}$ is an algebra homomorphism. Furthermore, note that $\CZ=\bigcup_{e\in E}\Theta_e$ and hence every $f\in \cc\big(\CZ\big)$ is of the form $\sum_{e\in E}f_e\delta_e$ (see \cite[Theorem 2.13]{rudin}). In this case, 
$$\begin{aligned}
    \overline{\pi\times \sigma}\left(\overline{f}\right)
    &=\overline{\pi\times \sigma}\left(\sum_{e\in E}\overline{f_e}\delta_e\right)
    =\sum_{e\in E}\pi\left(\overline{f_e}\right)\sigma_e
    \\&\stackrel{\eqref{se=pie}}{=}\sum_{e\in E}\sigma_e\pi\left(\overline{f_e}\right)
    =\sum_{e\in E}\sigma_e^*\pi(f_e)^*
    \\&=\left(\sum_{e\in E}\pi(f_e)\sigma_e\right)^*
    =\left(\overline{\pi\times \sigma}(f)\right)^*.
\end{aligned}$$
It remains to show that if $F\in \cc(U)$, $U\in \bisc$, then $\|\overline{\pi\times \sigma}(F)\|\leq \|F\|_{\infty}$. Write $F=\sum_{i=1}^n f_{s_i}\delta_{s_i}$ and let $\{P_j\}_{j=1}^m$ be the disjointification of $\{\Theta_{i}\}_{i=1}^n$, where $\Theta_i:=\Theta_{s_i}$. We emphasize that $\{P_j\}_{j=1}^m$ is a Borel partition of $\bigcup_{i=1}^n\Theta_i$ and that either $P_j\cap \Theta_{i}=\emptyset$ or $P_j\cont \Theta_{i}$, for any pair $(i,j)$. Moreover 
$$\Theta_i=\bigsqcup_{\{j: P_j\cont \Theta_{i}\}}P_j$$
and, in consequence,
\begin{equation}\label{dridisjoint}D_{\rho(s_i)}=\bigsqcup_{\{j: P_j\cont \Theta_{i}\}} \rr_{\Theta_{i}}(P_j).\end{equation} 
For every $j\in \{1,...,m\}$, define $\mathfrak{L}_j=\{i\ |P_j\cont \Theta_i \}$, $i(j):=\max \ \mathfrak{L}_j$, $F_j$ to be the restriction of $F$ to $P_j$, and define the operator
\begin{equation}\label{tjdefin}
T_j= \pti (F_j\circ \rr_{\Theta_{i(j)}}^{-1})\sigma_{s_{i(j)}}.
\end{equation}
By \eqref{equalonintersection}, for every $i\in \mathfrak{L}_j,$ and for every $G \in B(P_j)$ we have that
\begin{equation}\label{equalonintersection2}
\pti\left(G\circ \rr_{\Theta_i}^{-1}\right)\sigma_{s_i}=\pti\left(G\circ \rr_{\Theta_{i(j)}}^{-1}\right)\sigma_{s_{i(j)}}.
\end{equation}
Therefore, 
\begin{align*}
    \overline{\pi\times \sigma }(F)
   &=\sum_{i=1}^n \pi(f_{s_i})\sigma_{s_i}
    \stackrel{\eqref{dridisjoint}}{=}\sum_{i=1}^n\sum_{\{j: P_j\cont \Theta_{i}\}} \pti\left(1_{\rr_{\Theta_{i}}(P_j)}f_{s_i}\right)\sigma_{s_i}
    \\&=\sum_{i=1}^n\sum_{\{j: P_j\cont \Theta_{i}\}} \pti\left(1_{P_j}\circ\rr_{\Theta_{i}}^{-1}f_{s_i}\right)\sigma_{s_i}
    \\&=\sum_{i=1}^n\sum_{\{j: P_j\cont \Theta_{i}\}} \pti\left(\big(1_{P_j}f_{s_i}\circ\rr_{\Theta_{i}}\big)\circ \rr_{\Theta_{i}}^{-1}\right)\sigma_{s_i}
    \\&=\sum_{j=1}^m\sum_{\{i: P_j\cont \Theta_{i}\}} \pti\left(\big(1_{P_j}f_{s_i}\delta_{s_i}\big)\circ \rr_{\Theta_{i}}^{-1}\right)\sigma_{s_i}
    \\&\stackrel{\eqref{equalonintersection2}}{=}\sum_{j=1}^m\sum_{\{i: P_j\cont \Theta_{i}\}} \pti\left(\big(1_{P_j}f_{s_i}\delta_{s_i}\big)\circ \rr_{\Theta_{i(j)}}^{-1}\right)\sigma_{s_{i(j)}}
    \\&=\sum_{j=1}^m \pti\left(\left(1_{P_j}\sum_{\{i: P_j\cont \Theta_{i}\}}f_{s_i}\delta_{s_i}\right)\circ \rr_{\Theta_{i(j)}}^{-1}\right)\sigma_{s_{i(j)}}
    \\&=\sum_{j=1}^m \pti\big(F_j\circ \rr_{\Theta_{i(j)}}^{-1}\big)\sigma_{s_{i(j)}}
    =\sum_{j=1}^m T_j.
    \end{align*}

Next, we show the operator family $\{T_j\}_{j=1}^m$ is completely orthogonal. Let $k,l\in \{1,...,m\}$ be such that $k\neq l$. Suppose that  $\big(\overline{F_k}\circ \rr_{\Theta_{i(k)}}^{-1}\big) \big(F_l\circ \rr_{\Theta_{i(l)}}^{-1}\big)(x)\neq 0$, for some $x \in X$. In this case, we obtain 
$$\overline{F_k}\big([s_{i(k)},\theta_{s_{i(k)}}^{-1}(x)]\big)F_l\big([s_{i(l)},\theta_{s_{i(l)}}^{-1}(x)]\big)\neq 0.$$ 
Then, since $\rr\big([s_{i(k)},$ $\theta_{s_{i(k)}}^{-1}(x)]\big)=x$, $\rr\big([s_{i(l)},\theta_{s_{i(l)}}^{-1}(x)]\big)=x$, and the support of $F$ is contained in a bisection, we have that
$$[s_{i(k)},\theta_{s_{i(k)}}^{-1}(x)]= [s_{i(l)},\theta_{s_{i(l)}}^{-1}(x)].$$
Thus, $P_k\cap P_l\neq \emptyset$, leading to a contradiction. Therefore, $\big(\overline{F_k}\circ \rr_{\Theta_{i(k)}}^{-1}\big)\big(F_l\circ \rr_{\Theta_{i(l)}}^{-1})= 0$ and, similarly,  $\big(\overline{F_k}\circ \s_{\Theta_{i(k)}}^{-1}\big)\big(F_l\circ \s_{\Theta_{i(l)}}^{-1}\big)= 0$. A simple computation then gives
$$\begin{aligned}
    T_k^*T_l&=\sigma_{s_{i(k)}}^*\pti(\overline{F_k}\circ \rr_{\Theta_{i(k)}}^{-1})\pti(F_l\circ \rr_{\Theta_{i(l)}}^{-1})\sigma_{s_{i(l)}}
    \\&=\sigma_{s_{i(k)}}^*\pti(\overline{F_k}\circ \rr_{\Theta_{i(k)}}^{-1}F_l\circ \rr_{\Theta_{i(l)}}^{-1})\sigma_{s_{i(l)}}
    =0.
\end{aligned}$$
And
$$\begin{aligned}
\ \ \ \ \ \ \ \ \ \ \ \ \ \ \ \ \ \ \ \ \ \ \ \ \
T_kT_l^*&\ \ =\pti(F_k\circ \rr_{\Theta_{i(k)}}^{-1})\sigma_{s_{i(k)}}(\pti(F_l\circ \rr_{\Theta_{i(l)}}^{-1})\sigma_{s_{i(l)}})^*
    \\&\stackrel{\eqref{extendedcovariance}}{=}\sigma_{s_{i(k)}}\pti(\alpha_{s_{i(k)}}^{-1}(F_k\circ \rr_{\Theta_{i(k)}}^{-1}))(\sigma_{s_{i(l)}}\pti(\alpha_{s_{i(l)}}^{-1}( F_l\circ \rr_{\Theta_{i(l)}}^{-1})))^*
    \\&\ \ =\sigma_{s_{i(k)}}\pti(F_k\circ \s_{\Theta_{i(k)}}^{-1})\pti(\overline{F_l}\circ \s_{\Theta_{i(l)}}^{-1})\sigma_{s_{i(l)}}^*
    \\&\ \ =\sigma_{s_{i(k)}}\pti(F_k\circ \s_{\Theta_{i(k)}}^{-1}\overline{F_l}\circ \s_{\Theta_{i(l)}}^{-1})\sigma_{s_{i(l)}}^*
    =0.
\end{aligned}$$
And, hence
$$\begin{aligned}
\left\|\overline{\pi\times \sigma }(F)\right\|
&=\left\|\sum_{j=1}^{m}T_j\right\|
\stackrel{\eqref{eq:normcompletelyorthogonal}}{=}\max_{j=1,...,m}\left\{\|T_j\|\right\}
\\&=\max_{j=1,...,m}\left\{\|\pti \left(F_j\circ \rr_{\Theta_{i(j)}}^{-1}\right)\sigma_{s_{i(j)}}\|\right\}
\\&\leq \max_{j=1,...,m}\{\|F_j\|_{\infty}\}
=\|F\|_{\infty}
\end{aligned}$$
\end{proof}
Conversely, we have the following disintegration method
\begin{prop}\label{lemma:disintegration}
Let $\Pi:\CCC\rightarrow B(H)$ be a representation of $\CCC$. Then, there exists a covariant pair $(\pi,\sigma)$ for $\alpha$ such that $\Pi=\overline{\pi\times\sigma}$.
\end{prop}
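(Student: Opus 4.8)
The plan is to reverse the construction of Proposition \ref{lemma:integration}: starting from a representation $\Pi:\CCC\rightarrow B(H)$, I will first recover the representation $\sigma$ of $S$ and the representation $\pi$ of $\czero(X)$ separately, then check the covariance relation and the nondegeneracy condition \eqref{defipari2}, and finally verify that the resulting $\overline{\pi\times\sigma}$ agrees with $\Pi$ on the spanning set $\{f\delta_s\}$.

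First I would build $\pi$. Since $X=\CZ$ is an open subset of $\C$, restriction gives $\czero(X)=\czero(\CZ)$ sitting inside $\A(\C)$ as a subalgebra (Proposition \ref{ccucontidoalgebra}); concretely, for $f\in \cc(\CZ)$ written as $\sum_{e\in E}f_e\delta_e$ (a finite sum via a partition of unity, since $\CZ=\bigcup_e \Theta_e$), the conditions in Definition \ref{definicaorepcate} say $\Pi$ is contractive and $*$-preserving there, so $\Pi|_{\cc(\CZ)}$ extends to a $C^*$-homomorphism $\pi:\czero(X)\rightarrow B(H)$ (this is exactly Remark \ref{rem: Pi_to_pi}). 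Let $\pti$ be its Borel extension from Proposition \ref{bfc}. Next I would define $\sigma_s$. The natural candidate is to approximate $\delta_s$ itself: for $s\in S$ write $D_{\lambda(s)}$ as an increasing union of compact sets $K_n$, pick $g_n\in\cc(D_{\lambda(s)})$ with $g_n|_{K_n}\equiv 1$, $\|g_n\|_\infty=1$, and set $\sigma_s := \text{(weak) limit of } \Pi(\delta_s g_n)$ — equivalently $\Pi(g_n\delta_s$'$)$ — which exists since each $\Pi(\delta_s g_n)$ has norm $\leq 1$ and one can check the net is weakly Cauchy using $\delta_s g_n\ast (\delta_s g_m)^{\text{-side}}$ type identities; here I would use that $\pti(1_{D_{\lambda(s)}})=\spf$-limit of $\pi(g_n)$ by Corollary \ref{pi(1u)}. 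The key identities to establish for $\sigma$ are: $\sigma_e = \pti(1_e)$ for $e\in E$ (so $\sigma_e$ is an orthogonal projection, giving condition (ii) of Definition \ref{repsmgrest}); the covariance relation $\pti(\alpha_s(f))\sigma_s=\sigma_s\pti(f)$ for $f\in\cc(D_{\lambda(s)})$, proved by taking limits in $\Pi$ of the convolution identity \eqref{prod2} applied to $\alpha_s(f)\delta_s\ast g_n\delta_{\lambda(s)}$; and the multiplicativity $\sigma_{st}=\sigma_s\sigma_t$, which follows from the category relation $\Theta_s\Theta_t\subseteq\Theta_{st}$ together with $\delta_s g\ast\delta_t h$-computations and the covariance relation. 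Contractivity $\|\sigma_s\|\leq 1$ is inherited from $\|\Pi(\delta_s g_n)\|\leq\|g_n\|_\infty=1$.

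Then I would check Definition \ref{defipar}: item \eqref{covariancerelation} is the covariance relation just established (extended to all of $\czero(D_{\lambda(s)})$ by density and continuity of $\pti$), and item \eqref{defipari2}, namely $\spf\ \pi(J_e)H=\sigma_e(H)$, follows from $\sigma_e=\pti(1_e)$ combined with Corollary \ref{pi(1u)} (which says $\pti(1_{D_e})H=\spf\ \pi(\czero(D_e))H$, using that $D_e$ is $\sigma$-compact). So $(\pi,\sigma)$ is a covariant pair, and by Proposition \ref{lemma:integration} it integrates to a representation $\overline{\pi\times\sigma}$ of $\CCC$. Finally, to see $\Pi=\overline{\pi\times\sigma}$, it suffices to check agreement on elements $f\delta_s$ with $f\in\cc(D_{\rho(s)})$, since these span $\CCC$ by \eqref{eq:spandelta_s}: one has $\overline{\pi\times\sigma}(f\delta_s)=\pi(f)\sigma_s=\pti(f)\sigma_s$, and on the other hand $f\delta_s = f\delta_{\rho(s)}\ast \delta_s'$ essentially (more precisely $f\delta_s = (f\delta_{\rho(s)})\ast(g_n\delta_s)$ in the limit, using $f g_n\to f$ and \eqref{prod2}), so $\Pi(f\delta_s)=\Pi(f\delta_{\rho(s)})\lim\Pi(\delta_s g_n)=\pi(f)\sigma_s$ by continuity of $\Pi$ on $\A(\C)$ and weak continuity.

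The main obstacle I anticipate is the construction of $\sigma_s$ and the proof that it is well-defined as a bona fide operator (independent of the choice of exhausting sequence $\{g_n\}$ and of the representative) with the right algebraic properties — in particular establishing weak convergence of the defining net and the multiplicativity $\sigma_{st}=\sigma_s\sigma_t$, which is where the interplay between the topology of the germ category, the covariance relation, and the $\sigma$-compactness (hence second countability) of $X$ is genuinely used. Once $\sigma$ is in hand with $\sigma_e=\pti(1_e)$ and the covariance relation, the remaining verifications are bookkeeping along the lines of the proof of Theorem \ref{thm:A(S)semicrossed}.
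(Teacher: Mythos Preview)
Your proposal is correct and follows essentially the same route as the paper. The one streamlining worth noting—precisely where you flag the ``main obstacle''—is that the paper applies the Borel-extension machinery of Proposition~\ref{bfc} not only to $\pi=\Pi|_{\cc(\CZ)}$ but also to each $\Pi|_{\czero(\Theta_s)}$ (a contractive linear map, so the Riesz-representation part of that proof still yields a well-defined extension $\Pi_s:B(\Theta_s)\to B(H)$), and then simply sets $\sigma_s:=\Pi_s(1_{\Theta_s})$; this packages your weak-limit construction and its well-definedness in one stroke, after which the identities $\Pi(g)\sigma_t=\Pi(g\circ\rr_{\Theta_s}^{-1}\circ\rr_{\Theta_{st}})$ and $\sigma_s\Pi(h)=\Pi(h\circ\s_{\Theta_t}^{-1}\circ\s_{\Theta_{st}})$ give multiplicativity, covariance, and $\Pi=\overline{\pi\times\sigma}$ exactly as you outline.
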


\begin{proof}
 We start by constructing $\sigma$. Let $\Pi_s:B(\Theta_s)\rightarrow B(H)$ denote the Borel extension of the continuous linear map $\Pi$ restricted to ${\czero(\Theta_s)}$ (see Proposition \ref{bfc}). Define for every $s\in S$
        $$\sigma_s=\Pi_s(1_{\Theta_s}).$$
We will utilize the notation from the proof of Proposition $\ref{bfc}$ to verify that the map $\sigma: S \rightarrow B(H),\ s\mapsto \sigma_s,$ constitutes a representation of $S$. Note that $\sigma_s$ is contractive since the following holds for every $\xi,\eta\in H$
$$
|\pint{\sigma_s\xi}{\eta}|=
|\pint{\Pi_s(1_{\Theta_s})\xi}{\eta}|
=|\nu_{\xi,\eta}(\Theta_s)|
\leq \|\nu_{\xi,\eta}\|
=\|\tau_{\xi,\eta}\|\leq \|\xi\|\|\eta\|
.$$
Moreover, since $\Theta_s$ is $\sigma$-compact, there is an increasing sequence $\{g_n^s\}\cont \cc(\Theta_s)$ that converges pointwise to $1_{\Theta_s}$ for every $s\in S$. For $s,t \in S$ and $g\in \cc(\Theta_s)$ note that $\{g\ast g_n^t\}\cont \cc(\Theta_{st})$ is uniformly bounded and converges to $g\circ \rr_{\Theta_s}^{-1}\circ \rr_{\Theta_{st}}\cont \cc(\Theta_{st}).$ Therefore, for $\xi,\eta\in H$ we have that
$$\begin{aligned}
	\pint{\Pi(g)\sigma_t\xi}{\eta}
	&=\lim\limits_{n \rightarrow \infty}\pint{\Pi(g)\Pi(g_n^t)\xi}{\eta}
	=\lim\limits_{n \rightarrow \infty}\pint{\Pi(g\ast g_n^t)\xi}{\eta}
	=\pint{\Pi(g\circ \rr_{\Theta_s}^{-1}\circ \rr_{\Theta_{st}})\xi}{\eta}
	.\end{aligned}$$
Which gives
\begin{equation}\label{eq:aaa}
    \Pi(g)\sigma_t=
\Pi(g\circ \rr_{\Theta_s}^{-1}\circ \rr_{\Theta_{st}}).
\end{equation}
Similarly, if $h\in \cc(\Theta_t)$ we have that
\begin{equation}\label{eq:bbb}\sigma_s\Pi(h)
=\Pi(h\circ \s_{\Theta_t}^{-1}\circ \s_{\Theta_{st}}).\end{equation}
Hence
$$\begin{aligned}
	\pint{\sigma_s\sigma_t\xi}{\eta}
	&=\lim\limits_{n \rightarrow \infty}\pint{\Pi(g_n^s)\sigma_t\xi}{\eta}
	\\&=\lim\limits_{n \rightarrow \infty}\pint{\Pi(g_n^s\circ \rr_{\Theta_s}^{-1}\circ \rr_{\Theta_{st}})\xi}{\eta}
	\\&=\pint{\Pi(1_{\Theta_{st}})\xi}{\eta}
	=\pint{\sigma_{st}\xi}{\eta},
	\end{aligned}$$
proving that $\sigma_{st}=\sigma_s\sigma_t$. 

On the other hand, recall that $\CZ=X$ and hence define $\pi:\czero(X)\rightarrow B(H)$ to be the extension of the  contractive $\ast$-homomorphism $\Pi_{|\cc(\CZ)}$. Explicitly, since every $f\in \cc(\CZ)$ is of the form $\sum_{e\in E}f_e\delta_e$, we have $$\pi(f)=\sum_{e\in E}\Pi(f_e\delta_e).$$ 
We now show the covariance relation. Let $s$ be in $S$, $f$ be in $\cc\big(\dls\big)$. For $\xi,\eta \in H$, we have that
    $$\begin{aligned}
    \pint{\sigma_s\pi(f)\xi}{\eta}
    &=\pint{\sigma_s\Pi\big(f\delta_{\lambda(s)}\big)\xi}{\eta}
    \stackrel{\eqref{eq:bbb}}{=}\pint{\Pi(f\delta_{\lambda_s}\circ \s_{\Theta_{\lambda(s)}}^{-1}\circ \s_{\Theta_{s\lambda(s)}})\xi}{\eta}
    \\&	=\pint{\Pi(f\circ\s_{\Theta_{\lambda_s}}\circ \s_{\Theta_{\lambda(s)}}^{-1}\circ \s_{\Theta_{s}})\xi}{\eta}
    =\pint{\Pi(f\circ \s_{\Theta_{s}})\xi}{\eta}\\&=\pint{\Pi(\delta_sf)\xi}{\eta}\stackrel{\eqref{eq:permutingdelta}}{=}\pint{\Pi(\alpha_s(f)\delta_s)\xi}{\eta}
    =\pint{\Pi(\alpha_s(f)\circ\rr_{\Theta_{s}})\xi}{\eta}
    \\&=\pint{\Pi(\alpha_s(f)\circ \rr_{\Theta_{\rho(s)}} \circ \rr_{\Theta_{\rho(s)}}^{-1} \circ\rr_{\Theta_{\rho(s)s}})\xi}{\eta}
    \stackrel{\eqref{eq:aaa}}{=}\pint{\pi(\alpha_{s}(f))\sigma_s\xi}{\eta}.
    \end{aligned}$$ 
The fact that $\spf\ \pi(\czero(D_e))H=\sigma_e(H)$ follows from Corollary \ref{pi(1u)}. To finish, we check that $\Pi=\overline{\pi\times \sigma}.$ Let $f\delta_s$ be in $\CCC$. Note that \begin{equation}
\begin{aligned}
\overline{\pi\times \sigma}(f\delta_s)
&=\pi(f)\sigma_s=\Pi(f\delta_{ \rho(s)})\sigma_s
\\&=\Pi(f\circ \rr_{\Theta_{\rho(s)}} \circ \rr_{\Theta_{\rho(s)}}^{-1}\circ \rr_{\Theta_{\rho(s)s}} )
\\&=\Pi(f\circ \rr_{\Theta_{s}})=\Pi(f\delta_s).
\end{aligned}	
\end{equation}
\end{proof}
 We are now ready to prove the main result of this section.
\begin{teo}\label{thm:A(C)semicrossed}
Let $(S,E,\lambda,\rho)$ be a restriction semigroup, $X$ be a second countable locally compact Hausdorff space, $\theta:S\rightarrow \mathcal{I}(X)$ be an étale action and $\alpha$ be the étale action induced by $\theta$. Then, $\A(\C(\theta,S,X))$ and $\czero(X)\rtimes_\alpha S$ are isomorphic.
\end{teo}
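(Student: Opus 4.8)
The plan is to combine the two disintegration results just established, Proposition \ref{lemma:integration} and Proposition \ref{lemma:disintegration}, to exhibit a bijective correspondence between representations of $\CCC$ and covariant pairs for $(\alpha,S,\czero(X))$, and then to show that this correspondence preserves the relevant seminorms so that it descends to an isometric isomorphism of the Hausdorff completions. Concretely, first I would fix the linear map $\Phi:\CCC\to L_\alpha$ (on $L'_\alpha$, using the dense subalgebra from Remark \ref{converg} and Equation \eqref{smallerbutdense}) sending $\sum_{s\in S}f_s\delta_s\mapsto \sum_{s\in S}f_s\widetilde{\delta}_s$, where on the left $f_s\delta_s$ has the meaning from \eqref{fdeltacategoria} and on the right $\widetilde{\delta}_s$ is the formal symbol. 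By Lemma \ref{8.4} this map is well-defined, and by the computation in Proposition \ref{lemma:integration} together with Equation \eqref{prod2} it is an algebra homomorphism; it is clearly bijective onto $L'_\alpha$ since $\eqref{smallerbutdense}$ and $\eqref{eq:spandelta_s}$ describe both algebras by the same spanning sets $\{f_s\delta_s : f_s\in\cc(\drs)\}$, and the only relations are those forced by \eqref{nucleus} on the semicrossed side and the corresponding identifications of germs on the category side, which match.

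Next I would check that $\Phi$ is isometric for the two universal seminorms. For one inequality, let $\Pi$ be any representation of $\CCC$; by Proposition \ref{lemma:disintegration} there is a covariant pair $(\pi,\sigma)$ with $\Pi=\overline{\pi\times\sigma}$, and then for $x=\sum_s f_s\delta_s$ we have $\|\Pi(x)\|=\|\sum_s \pi(f_s)\sigma_s\|=\|\pi\times\sigma(\Phi(x))\|\le\|\Phi(x)\|_0$. Taking the supremum over all $\Pi$ gives $\|x\|_0\le\|\Phi(x)\|_0$. For the reverse inequality, let $(\pi,\sigma)$ be any covariant pair; by Proposition \ref{lemma:integration} it yields a representation $\overline{\pi\times\sigma}$ of $\CCC$ with $\|\overline{\pi\times\sigma}(x)\|=\|\pi\times\sigma(\Phi(x))\|$, so $\|\pi\times\sigma(\Phi(x))\|\le\|x\|_0$; taking the supremum over covariant pairs gives $\|\Phi(x)\|_0\le\|x\|_0$. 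Hence $\Phi$ is an isometric algebra isomorphism from $(\CCC,\|\cdot\|_0)$ — or rather its dense image — onto $(L'_\alpha,\|\cdot\|_0)$, and it extends to an isometric isomorphism of the Hausdorff completions $\A(\C(\theta,S,X))\cong\czero(X)\rtimes_\alpha S$. Since both $L'_\alpha$ and the span $\sps\{f\delta_s\}$ are dense in their respective completions (Remark \ref{converg}, Proposition \ref{prop: sumofbisections}), the extension has dense range and is therefore surjective.

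The main obstacle, and the place I would spend the most care, is making precise that $\Phi$ is genuinely well-defined and bijective as a map of \emph{algebras} rather than just as a formal relabeling: one must argue that the linear relations holding among the $f_s\delta_s$ inside $\CCC$ are exactly mirrored by those holding among the $f_s\widetilde{\delta}_s$ inside $L'_\alpha$. The ``$\Rightarrow$'' direction of this is exactly Lemma \ref{8.4} applied to all covariant pairs at once (using that there are enough of them, via Proposition \ref{lemma:integration}, to separate points of $\CCC$ — equivalently, that $\|\cdot\|_0$ is a norm on $\CCC$ when there is a faithful representation, which in the left-cancellative case is guaranteed by the regular representation, and in general one works at the level of the Hausdorff completion where this is automatic). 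The ``$\Leftarrow$'' direction is symmetric, using \eqref{nucleus} and Remark \ref{converg}. Once this dictionary between relations is in place, everything else is the bookkeeping sketched above, and the isometry statement follows formally from the two propositions. I would also note explicitly that all of this lives on the dense subalgebras, so the final isomorphism is obtained by the standard continuous extension argument, and that the second countability hypothesis is used precisely where Corollary \ref{pi(1u)}, Proposition \ref{extendedcovariance}, and Lemma \ref{8.4} invoke $\sigma$-compactness of open sets.
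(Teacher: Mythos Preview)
Your proposal is correct and follows essentially the same route as the paper: both arguments hinge on the bijection between representations of $\CCC$ and covariant pairs for $\alpha$ furnished by Propositions \ref{lemma:integration} and \ref{lemma:disintegration}, together with the norm-matching identity $\|\overline{\pi\times\sigma}(\sum_s f_s\delta_s)\|=\|\pi\times\sigma(\sum_s f_s\dtt_s)\|$, which forces the universal seminorms to agree and yields an isometric isomorphism of Hausdorff completions. The only cosmetic difference is that the paper defines the comparison map in the direction $\psi:L'_\alpha\to\CCC$, which is trivially well-defined since elements of $L'_\alpha$ are formal sums, whereas your $\Phi:\CCC\to L'_\alpha$ is only well-defined after passing to the quotient by the null ideal (exactly the point you flag via Lemma \ref{8.4}); going the paper's direction spares you that extra bookkeeping.
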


\begin{proof}
By Remark \ref{converg}, $L'_{\alpha}=\left\{\sum_{s\in S}f_s\dtt_s\ |\ f_s\in \cc\big(\drs\big) \right\}$ is a dense subalgebra of $\czero(X)\rtimes_\alpha S.$ Hence, define the following map
$$\psi:L'_{\alpha}\longrightarrow \CCC , \qquad \psi\left(\sum f_s\dtt_s\right)\longmapsto \left(\sum f_s\delta_s\right).$$ 
From \eqref{eq:spandelta_s} and \eqref{prod1}, we obtain that $\psi$ is a surjective algebra homomorphism. Moreover, for any covariant pair $(\pi,\sigma)$ for $\alpha$ we have that \begin{equation}\label{same norm}
    \left\|\overline{\pi\times \sigma}\left(\psi\left(\sum_{s\in S} f_s\delta_s\right)\right)\right\|
    =\Big\|\sum_{s\in S}\pi(f_s)\sigma_s\Big\|=\left\|\pi\times\sigma\left(\sum_{s\in S} f_s\dtt_s\right)\right\|.
\end{equation} Thus, By Propositions \ref{lemma:integration} and \ref{lemma:disintegration}, we obtain that $\psi$ is isometric (on the quotient) and then it extends to an isomorphism.
\end{proof}

\begin{cor}\label{corolario}
Let $(S,E,\lambda,\rho)$ be a restriction semigroup, and suppose that $\echap$ is second countable. Then, the map $\delta_s\mapsto 1_{\Theta_s}$ defines an isomorphism between $\A(S)$ and $\A(\C(\theta,S, \echap)).$
\end{cor}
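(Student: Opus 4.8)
The plan is to chain together the two semicrossed-product identifications already established. We have Theorem \ref{thm:A(S)semicrossed}, which gives $\A(S)\cong \czero(\echap)\rtimes_\alpha S$ via the map $\psi_1$ extending $\delta_s\mapsto 1_{\rho(s)}\delta_s$, where $\alpha$ is the étale action induced by the canonical action $\theta$ of $S$ on $\echap$. On the other hand, Theorem \ref{thm:A(C)semicrossed} gives $\czero(X)\rtimes_\alpha S\cong \A(\C(\theta,S,X))$ for any second countable locally compact Hausdorff $X$, via the map extending $f_s\dtt_s\mapsto f_s\delta_s$. Under the standing hypothesis that $\echap$ is second countable, we may take $X=\echap$ in the second theorem, so both isomorphisms apply.

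First I would invoke Theorem \ref{thm:A(S)semicrossed} to get an isometric isomorphism $\psi_1\colon\A(S)\to\czero(\echap)\rtimes_\alpha S$; recall from its proof that on $\comp[S]$ it is $\sum a_s\delta_s\mapsto\sum a_s 1_{\rho(s)}\dtt_s$ (writing $\dtt$ for the formal symbol in $L_\alpha$ as in the statement of Theorem \ref{thm:A(C)semicrossed}). Then I would invoke Theorem \ref{thm:A(C)semicrossed} with $X=\echap$ to get an isomorphism $\psi_2\colon\czero(\echap)\rtimes_\alpha S\to\A(\C(\theta,S,\echap))$, which on the dense subalgebra $L'_\alpha$ is $\sum f_s\dtt_s\mapsto\sum f_s\delta_s$ (now $\delta_s$ in the sense of \eqref{fdeltacategoria}, i.e. $f_s\delta_s=f_s\circ\rr_{\Theta_s}$). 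Composing, $\psi_2\circ\psi_1$ is an isomorphism $\A(S)\to\A(\C(\theta,S,\echap))$, and the only remaining task is to check that it sends $\delta_s$ to $1_{\Theta_s}$. Tracking $\delta_s\in\comp[S]$ through $\psi_1$ yields $1_{\rho(s)}\dtt_s\in L_\alpha$, and through $\psi_2$ yields $1_{\rho(s)}\delta_s=1_{\rho(s)}\circ\rr_{\Theta_s}$ in $\CCC$; since $\rr_{\Theta_s}\colon\Theta_s\to\drs$ is a bijection (indeed a homeomorphism), $1_{\rho(s)}\circ\rr_{\Theta_s}=1_{\Theta_s}$, the characteristic function of $\Theta_s$. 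Hence the composite is the claimed map, which finishes the proof.

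I expect essentially no obstacle here: the statement is a corollary precisely because all the analytic work is contained in the two cited theorems, and the only subtlety is the bookkeeping of what the two isomorphisms do on generators and the harmless identification $X=\echap$ (valid since $\echap$ is assumed second countable, hence an admissible input to Theorem \ref{thm:A(C)semicrossed}). If anything needs a word of care, it is noting that the symbol $\delta_s$ means three different things (element of $\comp[S]$, formal symbol $\dtt_s$ in $L_\alpha$, and the function $f\mapsto f\circ\rr_{\Theta_s}$ in $\CCC$) and that the displayed correspondence $\delta_s\mapsto 1_{\Theta_s}$ refers to the first and third of these. I would write the proof as two or three sentences composing the maps and one displayed computation verifying $1_{\rho(s)}\circ\rr_{\Theta_s}=1_{\Theta_s}$.

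\begin{proof}
Since $\echap$ is second countable, we may apply Theorem \ref{thm:A(C)semicrossed} with $X=\echap$: the map sending $\sum_{s\in S}f_s\dtt_s\mapsto\sum_{s\in S}f_s\delta_s$ (with $f_s\delta_s=f_s\circ\rr_{\Theta_s}$ as in \eqref{fdeltacategoria}) extends to an isomorphism $\psi_2\colon\czero(\echap)\rtimes_\alpha S\to\A(\C(\theta,S,\echap))$. By Theorem \ref{thm:A(S)semicrossed}, the map $\sum_{s\in S}a_s\delta_s\mapsto\sum_{s\in S}a_s 1_{\rho(s)}\dtt_s$ extends to an isomorphism $\psi_1\colon\A(S)\to\czero(\echap)\rtimes_\alpha S$. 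The composition $\psi_2\circ\psi_1$ is therefore an isomorphism from $\A(S)$ onto $\A(\C(\theta,S,\echap))$.

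It remains to identify its effect on the generators $\delta_s$. Tracking $\delta_s\in\comp[S]$ through $\psi_1$ gives $1_{\rho(s)}\dtt_s\in L_\alpha$, and then through $\psi_2$ gives the function $1_{\rho(s)}\circ\rr_{\Theta_s}\in\CCC$. Now $\rr_{\Theta_s}\colon\Theta_s\to\drs$ is a homeomorphism, so for $z\in\Theta_s$ we have $1_{\rho(s)}(\rr_{\Theta_s}(z))=1$ and the function vanishes off $\Theta_s$; hence $1_{\rho(s)}\circ\rr_{\Theta_s}=1_{\Theta_s}$. Therefore $\psi_2\circ\psi_1$ sends $\delta_s\mapsto 1_{\Theta_s}$, and $\A(S)\cong\A(\C(\theta,S,\echap))$ via this correspondence.
\end{proof}
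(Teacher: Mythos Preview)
Your proposal is correct and takes essentially the same approach as the paper, whose proof consists of the single sentence ``Combine the proof of Theorem \ref{thm:A(S)semicrossed} with the proof of Theorem \ref{thm:A(C)semicrossed}.'' You have simply made explicit the composition $\psi_2\circ\psi_1$ and the verification $1_{\rho(s)}\circ\rr_{\Theta_s}=1_{\Theta_s}$ that the paper leaves to the reader.
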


\begin{proof}
Combine the proof of Theorem \ref{thm:A(S)semicrossed} with the proof of Theorem \ref{thm:A(C)semicrossed}.
\end{proof}

\begin{cor}
If $\C$ is an étale category where $\CZ$ is second countable, then $\A(\C)$ is isomorphic to a semicrossed product.
\end{cor}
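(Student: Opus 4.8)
The plan is to reduce this corollary to the two main structural theorems already available in the paper, namely Theorem~\ref{thm:everycategoryisagermscategory} and Theorem~\ref{thm:A(C)semicrossed}. The point is that an arbitrary étale category is, up to isomorphism, a category of germs of a restriction semigroup action, and for categories of germs arising from actions on second countable spaces we already know the operator algebra is a semicrossed product. So the entire argument is to chain these facts together, being careful about the second countability hypothesis.

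First I would recall, from Example~\ref{ex: bis_rest_semi} and Example~\ref{actionbisc}, that for an étale category $\C$ the bisection monoid $\bisc$ is a restriction semigroup and $\theta\colon\bisc\to\mathcal{I}(\CZ)$, $U\mapsto\theta_U=\rr_U\circ\s_U^{-1}$, is an étale action of $\bisc$ on the space $\CZ$. Then by Theorem~\ref{thm:everycategoryisagermscategory} there is an isomorphism of étale categories $\varphi\colon\C(\theta,\bisc,\CZ)\to\C$; in particular this is a homeomorphism carrying objects to objects and respecting composition, so it induces an isometric isomorphism $\A(\C(\theta,\bisc,\CZ))\cong\A(\C)$ (the construction of $\A(-)$ depends only on the topological category structure, through $\CCC$, the convolution product, and the class of representations, all of which are transported along an isomorphism of étale categories).

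Next I would invoke Theorem~\ref{thm:A(C)semicrossed} with $S=\bisc$, $X=\CZ$, and $\theta$ the action above: provided $X=\CZ$ is second countable locally compact Hausdorff — which is exactly our hypothesis — we get that $\A(\C(\theta,\bisc,\CZ))$ is isomorphic to the semicrossed product $\czero(\CZ)\rtimes_\alpha\bisc$, where $\alpha$ is the étale action of $\bisc$ on $\czero(\CZ)$ induced by $\theta$ as in Remark~\ref{topinducesalg}. Combining the two isomorphisms,
$$
\A(\C)\;\cong\;\A\big(\C(\theta,\bisc,\CZ)\big)\;\cong\;\czero(\CZ)\rtimes_\alpha\bisc,
$$
which exhibits $\A(\C)$ as a semicrossed product, as claimed.

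The only point that requires a little care — and the one I expect to be the main (if modest) obstacle — is making sure that all the running hypotheses of Theorem~\ref{thm:A(C)semicrossed} are met: the theorem is stated for an étale action on a second countable locally compact Hausdorff space, and $\CZ$ is assumed locally compact Hausdorff in the definition of an étale category, so the extra input needed is precisely the second countability assumed in the statement of the corollary. One should also note that $\bisc$ is genuinely a restriction semigroup (not merely left restriction), which is Example~\ref{ex: bis_rest_semi}, so $\czero(\CZ)\rtimes_\alpha\bisc$ is defined in the sense of Definition~\ref{defipar} and the preceding discussion. No countability assumption on the semigroup $\bisc$ is needed, since Theorem~\ref{thm:A(C)semicrossed} and Lemma~\ref{8.4} were proved without it. Thus the proof is a two-line application of the machinery already in place.
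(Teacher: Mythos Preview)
Your proposal is correct and follows exactly the same approach as the paper: reduce to a category of germs via Theorem~\ref{thm:everycategoryisagermscategory}, then apply Theorem~\ref{thm:A(C)semicrossed}. The paper's proof is a single sentence invoking Theorem~\ref{thm:everycategoryisagermscategory} and noting the algebras are therefore isomorphic; your version simply spells out the second step and checks the hypotheses more carefully.
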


\begin{proof}
By Theorem \ref{thm:everycategoryisagermscategory}, $\C$ is isomorphic to $\C\big(\theta, \bisc, \CZ\big)$ and hence the algebras are isomorphic.
\end{proof}

\subsection{The reduced case}

Let $(S,E,\lambda,\rho)$ be a left-ample restriction semigroup, and suppose that $\echap$ is second countable. Moreover, let $\theta$ be the canonical action of $S$ on $\echap$ and $\C$ be the category of germs $\C(\theta,S,\echap)$. Equations \eqref{normaparrep} and \eqref{same norm} tell us that not only $\A(S)$ and $\A(\C)$ are isomorphic but also that for any representation $\sigma$ of $S$ the following holds
\begin{equation}\label{sameclosure}
\overline{\comp[S]}^{\|\cdot\|_\sigma}\cong \overline{\CCC}^{\|\cdot\|_{\overline{\pi_{\sigma}\times \sigma}}}.
\end{equation}
Recall the Definition \ref{def:repregularcategory} of the regular representation of $\C$, $\Pi: \CCC \rightarrow B(\ell_2(\C)$, given by
    $$\Pi_f(\delta_{z})
    =\sum_{x \in \C_{\rr(\gamma)}}f(x)\delta_{xz},\ f \in \CCC.
    $$ 
By Proposition \ref{leftmultinvariant},  the subset $\widetilde{S}\cont \C$ is closed by left composition and $\ell_2(\widetilde{S})\cont \ell_2(\C)$ is an invariant subspace of the regular representation $\Pi$. Let $(\pi,\sigma)$ be a covariant pair such that $\Pi=\overline{\pi\times\sigma}$. Since the subsets $\Theta_s$ are compact and open, we do not need to take the Borel extension of $\Pi$ to define $\sigma$ (\cf Proposition \ref{lemma:disintegration}) and hence $\sigma_s=\Pi(1_{\Theta_s}).$ Note that for every $[t,\varphi]\in \C$ and $s\in S$ we have
\begin{equation}\label{eq:sigma_partial_isometry}
\begin{aligned}
 \sigma_s(\delta_{[t,\varphi]})
 =\big[\theta_t(\varphi)\in \dls\big]\ 1_{\Theta_{s}}\big([s,\theta_t(\varphi)]\big) \delta_{[st,\varphi]}
 =\big[\theta_t(\varphi)\in \dls\big] \delta_{[st,\varphi]}.
 \end{aligned}
 \end{equation}
In particular, by Proposition \ref{propdomaincomp} and Equation \eqref{stildeconditiontocompose}, if $[t,\barlt]\in \widetilde{S}$ then it holds that
$$
\sigma_s\big(\delta_{[t,\barlt]}\big)=\big[\rho(t)\leq \lambda(s)\big]\ \delta_{[st,\barlst]}.
$$ 
Thus, by \eqref{formularegulardes}, we note that that $\sigma_{|\ell_2(\widetilde{S})}:S\rightarrow B(\ell_2(\widetilde{S}))$ is unitarily equivalent to the regular representation of $S$, $\phi':S\rightarrow B(\ell_2(s))$. In fact, the unitary operator implementing the equivalence is the one induced by $s\stackrel{\varPsi}{\mapsto} [s,\barls]$, defined in \eqref{sstilde}. Moreover, since $\phi'_e=\pi_{\phi'}(1_e)$ (\cf Theorem \ref{universal}), $\sigma_e=\Pi(1_e)$, and $\sps\{1_e\mid e \in E\}$ is a dense subalgebra of $\czero(\echap)$, we have that $\pi_{|\ell_2(\widetilde{S})}$ and $\pi_{\phi'}$ are unitarily equivalent via the same unitary operator. Thus, $\pi_{\phi'}\times \phi'$ is unitarily equivalent to $\Pi_{|\ell_2(\widetilde{S})}$. 

By Lemma \ref{stilde equal}, If $E$ is finite then $\widetilde{S}=\C$ and hence $\pi_{\phi'}\times \phi'$ and $\Pi_{|\ell_2(\widetilde{S})}$ induce the same norm on $\CCC$.

If $S$ is an inverse semigroup, then we also have $\A_r(S)=\A_r(\C)$ since $\A_r(S)=C^*(S)$ and $\A_r(\C)=C^*(\C)$ (\cf subsection \ref{groupoidcase}). The result then follows from \cite[Theorem 3.5]{khoshkam2002regular}.

The case where $E$ is countable remains open. We conclude this subsection by showing that $\sigma$ is a representation by partial isometries. By Equation \eqref{eq:sigma_partial_isometry}, we just need to show 
\begin{prop}
Let $[t,\varphi]$ and $[t',\varphi']$ be in $\C$, and suppose that $\theta_t(\varphi)\in \dls$, $\theta_t'(\varphi')\in \dls$ and $[st,\varphi]=[st',\varphi']$. Then, $[t,\varphi]=[t',\varphi']$.
\end{prop}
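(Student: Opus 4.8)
The plan is to reduce the statement to Proposition~\ref{ampleimpliecancel}, which already encodes the left-ample cancellation phenomenon at the level of germs. The first step is to observe that the hypothesis $[st,\varphi]=[st',\varphi']$ is an equality of germs in $\C$, hence in particular an equality of their sources: $\varphi=\s([st,\varphi])=\s([st',\varphi'])=\varphi'$. Thus the hypothesis really reads $[st,\varphi]=[st',\varphi]$ with a common base point $\varphi\in\echap$, and since $S$ is assumed left-ample, Proposition~\ref{ampleimpliecancel} applied to the triple $(s,t,t')$ and the point $\varphi$ yields $[t,\varphi]=[t',\varphi]=[t',\varphi']$, which is the desired conclusion. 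Along the way one should remark that the standing assumptions $\theta_t(\varphi)\in\dls$ and $\theta_{t'}(\varphi')\in\dls$ are exactly what guarantees that $[st,\varphi]$ and $[st',\varphi']$ are genuine elements of $\C$ (equivalently, that $\varphi\in D_{\lambda(st)}$ and $\varphi'\in D_{\lambda(st')}$, using the identity $D_{\lambda(st)}=\theta_t^{-1}(\drt\cap\dls)$ established earlier for the canonical action together with $\varphi\in\dlt$), so that the appeal to Proposition~\ref{ampleimpliecancel} is legitimate.

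If a self-contained argument is preferred, one can unwind the germ equality directly. There is $g\in E$ with $\varphi\in D_g$ and $stg=st'g$; left-ampleness of $S$ gives $\lambda(s)tg=\lambda(s)t'g$. Combining (P7) with (P5) one has $\lambda(s)t=t\lambda(st)$ and $\lambda(s)t'=t'\lambda(st')$; substituting into the previous equality and then multiplying both sides on the right by the idempotent $\lambda(st)\lambda(st')$ (working inside the commutative semilattice $E$) produces $th=t'h$ with $h:=g\,\lambda(st)\,\lambda(st')\in E$. It then remains to check $\varphi\in D_h=D_g\cap D_{\lambda(st)}\cap D_{\lambda(st')}$: the first membership is the choice of $g$, while $\varphi\in D_{\lambda(st)}$ and $\varphi'=\varphi\in D_{\lambda(st')}$ follow from $\theta_t(\varphi)\in\dls$ and $\theta_{t'}(\varphi')\in\dls$ as above. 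Hence $h$ implements the equivalence $(t,\varphi)\sim(t',\varphi')$ and $[t,\varphi]=[t',\varphi']$.

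There is no serious obstacle here; the only point requiring care is that one cannot simply invoke left cancellativity of $\C$ (that is, $xy=xw\Rightarrow y=w$), because a priori we do not know that $[s,\theta_t(\varphi)]$ and $[s,\theta_{t'}(\varphi')]$ are the same morphism — only that the two products $[s,\theta_t(\varphi)][t,\varphi]$ and $[s,\theta_{t'}(\varphi')][t',\varphi']$ coincide. The passage from this weaker hypothesis to the conclusion is precisely the content of the left-ample computation in Proposition~\ref{ampleimpliecancel} (or the direct calculation above). Once this proposition is available, combining it with Equation~\eqref{eq:sigma_partial_isometry} shows that $\sigma_s$ restricts to an isometry on $\spf\{\delta_{[t,\varphi]}\mid\theta_t(\varphi)\in\dls\}$ and vanishes on its orthogonal complement, so that $\sigma$ is indeed a representation of $S$ by partial isometries.
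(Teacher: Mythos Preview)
Your proposal is correct, and your self-contained argument is essentially identical to the paper's proof: both extract a projection $e$ (your $g$) with $ste=st'e$, use left-ampleness and (P7)--(P5) to obtain $t\lambda(st)e=t'\lambda(st')e$, and then set $h=\lambda(st)\lambda(st')e$ to conclude $th=t'h$ with $\varphi\in D_h$. Your additional observation that the whole statement is an immediate corollary of Proposition~\ref{ampleimpliecancel} (once one notes $\varphi=\varphi'$ from the germ equality) is valid and in fact a cleaner route than the paper's, which simply re-runs that computation inline; your remark explaining why one cannot just invoke left cancellativity of $\C$ directly is also apt.
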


\begin{proof}
Since $[st,\varphi]=[st',\varphi']$, $\varphi=\varphi'$ and there exists $e\in E$ such that $\varphi \in D_e$ and $ste=st'e.$ Recall that $S$ is left-ample and hence $$t\lambda(st)e=t'\lambda(st')e.$$ Defining $h=\lambda(st)\lambda(st')e$, we have that $th=t'h$ and that $\varphi \in D_h$ since $\theta_t(\varphi)\in \dls,$ and $\theta_t'(\varphi)\in \dls$. Therefore, $[t,\varphi]=[t',\varphi']$. 
\end{proof}

\subsection{The groupoid case}\label{groupoidcase}

We now show how our work fits in the theory of groupoid $C^*$-algebras. Throughout this subsection, let $\G$ be an étale groupoid. Recall that $\CC(\G)$ has an involution given by $f^*(x)=\overline{f(x^{-1})},$ and that $C^*(\G)$ is the completion of $\CC(\G)$ equipped with the norm induced by the class of all $*$-representations. 
If $U$ is a compact open bisection of $\G$, then the $C^*$-algebra $\operatorname{C}(U)$ is unital and hence is generated by unitary elements. Recall that a unitary element of $\operatorname{C}(U)$ is just a function $f$ whose range is contained in $\mathbb{T}$, that is, $|f(x)|=1$ for every $x\in U$. 
\begin{lemma}\label{lemma:amplecase}
If $U$ is a compact open bisection of $\G$ and $f\in \operatorname{C}(U)$ is a unitary map, then $f$ is a partial isometry on $\CC(\G)$. Moreover, if $\pi:\CC(\G)\rightarrow B(H)$ is a representation in the sense of Definition \ref{definicaorepcate}, we have $\pi(f^*)=\pi(f)^*.$
\end{lemma}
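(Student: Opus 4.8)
The plan is to reduce the statement to the abstract operator-theoretic fact recorded in Corollary \ref{mbekhtacorolario}: a contraction with a contractive generalized inverse has that inverse equal to its adjoint. First I would establish that $f$ is a partial isometry on $\CC(\G)$ by computing the self-adjoint idempotent $f\ast f^*$ (and symmetrically $f^*\ast f$) directly. Since $\supp(f)\cont U$ and $f^*$ is supported on $U^{-1}$, the product $f\ast f^*$ is supported on $UU^{-1}\cont \G^{(0)}$; using item \eqref{li2} of Lemma \ref{lemma: productwelldefined} together with the groupoid inverse, for $u\in \rr(U)$ the unique composable pair in $M_u\cap(U\times U^{-1})$ is $(x,x^{-1})$ with $\rr(x)=u$, so $f\ast f^*(u)=f(x)\overline{f(x)}=|f(x)|^2=1$. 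Hence $f\ast f^*$ is the characteristic function of the compact open set $\rr(U)\cont\G^{(0)}$, which is a projection in $\CC(\G)$ by item \eqref{li5} of Lemma \ref{lemma: productwelldefined}; similarly $f^*\ast f=1_{\s(U)}$. From $f\ast f^*\ast f$ being supported on $U$ and evaluating to $f(x)$ at each $x\in U$ (because $1_{\rr(U)}\ast f(x)=1_{\rr(U)}(\rr(x))f(x)=f(x)$, using the generalized items \eqref{li3}, \eqref{li4} from the remark after Lemma \ref{lemma: productwelldefined}), we get $f\ast f^*\ast f=f$ and likewise $f^*\ast f\ast f^*=f^*$, so $f$ and $f^*$ are mutually generalized inverses in the algebra $\CC(\G)$.

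Next I would transport this to $B(H)$. Let $\pi:\CC(\G)\to B(H)$ be a representation in the sense of Definition \ref{definicaorepcate}. Applying $\pi$ to the two identities $f\ast f^*\ast f=f$ and $f^*\ast f\ast f^*=f^*$ gives $\pi(f)\pi(f^*)\pi(f)=\pi(f)$ and $\pi(f^*)\pi(f)\pi(f^*)=\pi(f^*)$, so $\pi(f^*)$ is a generalized inverse of $\pi(f)$ in the sense of Definition \ref{def:generalizedinverse}. By condition \eqref{i1definicaorepcate} of Definition \ref{definicaorepcate}, since $f$ is supported on the bisection $U$ we have $\|\pi(f)\|\le\|f\|_\infty=1$ and, because $f^*$ is supported on the bisection $U^{-1}$ with $\|f^*\|_\infty=1$, also $\|\pi(f^*)\|\le 1$. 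Thus $\pi(f)$ is a contraction admitting the contractive generalized inverse $\pi(f^*)$, and Corollary \ref{mbekhtacorolario} yields $\pi(f^*)=\pi(f)^*$, which is exactly the claim.

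I expect the only genuinely delicate point to be the careful bookkeeping in the convolution computations — making sure that $UU^{-1}$ and $U^{-1}U$ land inside $\G^{(0)}$, that the unique composable pairs are the ones claimed, and that the ``generalized'' versions of items \eqref{li3}–\eqref{li4} (stated in the remark following Lemma \ref{lemma: productwelldefined}) apply even when $U$ is only Hausdorff rather than all of $\G$. All of this is routine given the lemmas already established, so the proof is essentially a short assembly: verify $f\ast f^*$ and $f^*\ast f$ are the indicated projections, deduce the two generalized-inverse identities in $\CC(\G)$, push them through $\pi$, and invoke Corollary \ref{mbekhtacorolario}. No new machinery is needed.
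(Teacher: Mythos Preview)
Your proposal is correct and follows essentially the same route as the paper: establish the generalized-inverse identities $f\ast f^*\ast f=f$ and $f^*\ast f\ast f^*=f^*$ in $\CC(\G)$, push them through $\pi$ to get that $\pi(f^*)$ is a contractive generalized inverse of the contraction $\pi(f)$, and conclude via Corollary~\ref{mbekhtacorolario}. The only cosmetic difference is that the paper computes $f\ast f^*\ast f(x)$ directly using item~\eqref{li4} of Lemma~\ref{lemma: productwelldefined}, whereas you first identify $f\ast f^*=1_{\rr(U)}$ and $f^*\ast f=1_{\s(U)}$ and then multiply; both computations are equivalent and equally short.
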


\begin{proof}
Note that $f^*$ belongs to $\operatorname{C}\big(U^{-1}\big)$ and hence $f^*\ast f \in \cc\big(\GZ\big)$. By item \eqref{li4} of Lemma \ref{lemma: productwelldefined}, if $x\in U$ we have that 
$$\begin{aligned}
f\ast f^*\ast f(x)
=f(x)(f^*f)(\s(x))
=f(x)f^*(x^{-1})f(x)
=f(x)|f(x)|^2
=f(x).
\end{aligned}$$
Then, since $f\ast f^*\ast f\in \operatorname{C}(U)$, we have that $f\ast f^* \ast f=f$ and, similarly, $f^*\ast f \ast f^*=f^*$. To finish, note that the contraction $\pi(f^*)$ is a generalized inverse for $\pi(f)$ and $\pi(f^*)=\pi(f)^*$, by Corollary \ref{mbekhtacorolario}.
\end{proof}
We can now prove our first main result.
\begin{teo}\label{thm:gpdcase1}
Suppose that an étale groupoid $\G$ has a cover of compact open bisections $\mathcal{F}$. Then, $\A(\G)$ coincides with the $C^*$-algebra of $\G$, $C^*(\G)$.
\end{teo}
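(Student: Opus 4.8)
The plan is to prove that the seminorms defining $\A(\G)$ and $C^*(\G)$ coincide on $\CC(\G)=\A_0(\G)$ by showing that a homomorphism $\pi\colon \CC(\G)\to B(H)$ is a representation in the sense of Definition~\ref{definicaorepcate} \emph{if and only if} it is a $*$-representation, i.e.\ $\pi(f^*)=\pi(f)^*$ for all $f\in\CC(\G)$. Once this equivalence is in hand, $\mathcal{R}(\CCC)$ is precisely the class of $*$-representations of $\CC(\G)$, so $\|\cdot\|_0$ is the universal $C^*$-seminorm and the two Hausdorff completions coincide. (It is also worth remarking that $\|\cdot\|_0$ is in fact a norm: the regular representation of $\G$ lies in $\mathcal{R}(\CCC)$, hence is a $*$-representation, and it is faithful, so the Hausdorff completion is an honest completion.)

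For the direction ``$*$-representation $\Rightarrow$ Definition~\ref{definicaorepcate}'', condition~\eqref{i2definicaorepcate} is immediate since $f^*=\bar f$ whenever $f\in\cc(\GZ)$ (because $x^{-1}=x$ on $\GZ$ and $f$ vanishes off $\GZ$). For condition~\eqref{i1definicaorepcate}, I would first observe that any $*$-representation is automatically contractive on the commutative $*$-algebra $\cc(\GZ)$, whose product is pointwise by Lemma~\ref{lemma: productwelldefined}\eqref{li5}: given $0\le g\in\cc(\GZ)$, choose $u\in\cc(\GZ)$ with $0\le u\le 1$ and $u\equiv1$ on $\supp g$; then $\|g\|_\infty u-g$, $u-u^2$ and $g$ are squares of real-valued functions in $\cc(\GZ)$, hence map to positive operators, giving $0\le\pi(g)\le\|g\|_\infty\pi(u)$ and $\pi(u)^2\le\pi(u)$, so $\|\pi(u)\|\le1$ and $\|\pi(g)\|\le\|g\|_\infty$; the general $g$ follows from $\|\pi(g)\|^2=\|\pi(\bar g\ast g)\|$. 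Now if $f\in\cc(U)$ with $U\in\bisc$, then $f^*\in\cc(U^{-1})$ and Lemma~\ref{lemma: productwelldefined}\eqref{li2} (with $U^{-1}U=\s(U)\subseteq\GZ$) gives $f^*\ast f\in\cc(\GZ)$ and $(f^*\ast f)(\s(x))=|f(x)|^2$, whence $\|\pi(f)\|^2=\|\pi(f)^*\pi(f)\|=\|\pi(f^*\ast f)\|\le\|f^*\ast f\|_\infty=\|f\|_\infty^2$.

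The converse, ``Definition~\ref{definicaorepcate} $\Rightarrow$ $*$-representation'', is the heart of the matter and is where the hypothesis that $\G$ has a cover $\mathcal{F}$ by compact open bisections is used. Fix $\pi\in\mathcal{R}(\CCC)$. For a compact open bisection $U$, the algebra $\operatorname{C}(U)=\cc(U)$ is a unital commutative $C^*$-algebra, hence the linear span of its unitaries $\{h\in\operatorname{C}(U):|h|\equiv1\text{ on }U\}$; Lemma~\ref{lemma:amplecase} gives $\pi(h^*)=\pi(h)^*$ for each such $h$, and linearity upgrades this to $\pi(g^*)=\pi(g)^*$ for every $g\in\operatorname{C}(U)$. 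For a general $f\in\CCC$, Proposition~\ref{prop: sumofbisections} applied to the cover $\mathcal{F}$ lets us write $f=\sum_{i=1}^n g_i$ with $g_i\in\cc(U_i)$ and $U_i\in\mathcal{F}$; then $f^*=\sum_i g_i^*$ and $\pi(f^*)=\sum_i\pi(g_i^*)=\sum_i\pi(g_i)^*=\pi(f)^*$, so $\pi$ is a $*$-representation. Combining the two directions gives $\A(\G)=C^*(\G)$. The expected main obstacle is exactly this converse step: the compact-open-bisection cover is what makes functions supported on bisections decompose into unitaries so that Lemma~\ref{lemma:amplecase} applies; without enough compact open bisections one only controls the involution on $\cc(\GZ)$, and for a general (non-ample) étale groupoid one does not expect $\A(\G)$ and $C^*(\G)$ to agree.
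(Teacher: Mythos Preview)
Your proposal is correct and the core of the argument---using Proposition~\ref{prop: sumofbisections} to reduce to functions supported on compact open bisections in $\mathcal{F}$, then invoking Lemma~\ref{lemma:amplecase} on unitaries to obtain $\pi(f^*)=\pi(f)^*$---is exactly the paper's approach. You are in fact more thorough than the paper: the paper's proof only spells out the direction ``representation in the sense of Definition~\ref{definicaorepcate} $\Rightarrow$ $*$-representation'' and leaves the converse implicit, whereas you give a clean self-contained argument for it via $f^*\ast f\in\cc(\GZ)$ and contractivity on the commutative core.
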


\begin{proof}
By Proposition \ref{prop: sumofbisections} and the above discussion, we have 
$$ \CC (\G)=\sps\{f:\ f \in \cc(U),\ \ran(f)\cont \mathbb{T},\ U\in \mathcal{F} \}.$$
Then, it follows by Lemma \ref{lemma:amplecase} that every representation $\pi:\CC(\G)\rightarrow B(H)$ is a $*$-homomorphism. This completes the proof.
\end{proof}

The next theorem proves that a similar fact holds without assuming that $\G$ is covered by compact bisection, but now we need an assumption regarding $\GZ$.

\begin{teo}\label{thm:gpdcase2}
Suppose that an étale groupoid $\G$ has a second countable unit space. Then, $\A(\G)=C^*(\G)$.
\end{teo}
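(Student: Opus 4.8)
The plan is to reduce the statement to the semicrossed-product machinery already developed, exactly as Theorem \ref{thm:gpdcase1} reduced to Lemma \ref{lemma:amplecase}. The key observation is that an étale groupoid $\G$ is, in particular, an étale category, and by Theorem \ref{thm:everycategoryisagermscategory} it is isomorphic to the category of germs $\C\big(\theta,\bis(\G),\GZ\big)$ of the canonical action of its bisection semigroup on $\GZ$. Since $\GZ$ is assumed second countable, Theorem \ref{thm:A(C)semicrossed} applies and gives $\A(\G)\cong \czero(\GZ)\rtimes_\alpha \bis(\G)$, where $\alpha$ is the étale action induced by $\theta$. On the other hand, by Renault's disintegration theorem (or the discussion recalled at the start of Subsection \ref{groupoidcase}) $C^*(\G)$ is the completion of $\CC(\G)$ in the norm coming from all $*$-representations. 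So what must be shown is that the norm $\|\cdot\|_0$ defining $\A(\G)$ — the supremum over all representations in the sense of Definition \ref{definicaorepcate} — coincides with the $C^*$-norm, i.e. that every representation $\pi:\CCC(\G)\rightarrow B(H)$ in the sense of Definition \ref{definicaorepcate} is automatically a $*$-homomorphism.

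The strategy for that last point is: first transport $\pi$ through the isomorphism of Theorem \ref{thm:A(C)semicrossed} to a covariant pair $(\pi_0,\sigma)$ for $(\alpha,\bis(\G),\czero(\GZ))$ via Proposition \ref{lemma:disintegration}, so that $\pi=\overline{\pi_0\times\sigma}$ with $\sigma$ a representation of the semigroup $\bis(\G)$ and $\pi_0$ a (automatically $*$-preserving) representation of $\czero(\GZ)$. Next I would observe that $\bis(\G)$ is actually an \emph{inverse} semigroup: for an étale groupoid every bisection $U$ has an inverse bisection $U^{-1}=\{x^{-1}\mid x\in U\}$, and $UU^{-1}U=U$, $U^{-1}UU^{-1}=U^{-1}$, so $\bis(\G)$ is inverse. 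Then I can invoke the argument from Subsection \ref{subsection:inversesemigroupcase}: for each $s\in\bis(\G)$ the operator $\sigma_{s^{-1}}$ is a contractive generalized inverse of the contraction $\sigma_s$ (since $\sigma$ is a semigroup homomorphism and $ss^{-1}s=s$), hence by Corollary \ref{mbekhtacorolario} $\sigma_{s^{-1}}=\sigma_s^*$, so $\sigma$ is a $*$-representation of the inverse semigroup $\bis(\G)$. Combining this with the fact that $\pi_0$ is a $*$-homomorphism and with the formula $\pi(f\delta_s)=\pi_0(f)\sigma_s$ one checks that $\pi$ respects the involution $f^*(x)=\overline{f(x^{-1})}$ on $\CC(\G)$; concretely, the involution on $\CC(\G)$ is compatible with $(f\delta_s)^*=\alpha_{s^{-1}}(\bar f)\delta_{s^{-1}}$ under the identification $\CCC(\G)=\sps\{f\delta_s\}$, and applying $\pi_0(\cdot)^*=\pi_0(\overline{\cdot})$ together with $\sigma_s^*=\sigma_{s^{-1}}$ gives $\pi((f\delta_s)^*)=\pi(f\delta_s)^*$.

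Having shown every representation of $\CCC(\G)$ is a $*$-representation, the two norms on $\CC(\G)$ agree: the norm $\|\cdot\|_0$ is the sup over $*$-representations, which is precisely the $C^*(\G)$-norm, so the completions coincide, $\A(\G)=C^*(\G)$.

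The main obstacle I anticipate is the bookkeeping needed to match the involution $f^*(x)=\overline{f(x^{-1})}$ on $\CC(\G)$ with the abstract semicrossed-product involution $(f\delta_s)^*=\alpha_{s^{-1}}(\bar f)\delta_{s^{-1}}$ through the isomorphism of Theorem \ref{thm:A(C)semicrossed} and the identifications \eqref{fdeltacategoria}--\eqref{eq:permutingdelta}; one must be careful that the germ-category description of $\G$ identifies $\Theta_s^{-1}$ with $\Theta_{s^{-1}}$ and that $\rr_{\Theta_{s}}$ and $\s_{\Theta_{s^{-1}}}$ are related by the groupoid inversion, so that $f\mapsto f^*$ indeed corresponds to $f\delta_s\mapsto\overline{f\circ(\cdot)^{-1}}\,\delta_{s^{-1}}$. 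Once that dictionary is set up, the rest is a direct application of Corollary \ref{mbekhtacorolario} exactly as in Subsection \ref{subsection:inversesemigroupcase}, and no further analytic input beyond Theorem \ref{thm:A(C)semicrossed} is required (in particular second countability of $\GZ$ enters only through that theorem).
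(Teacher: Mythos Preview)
Your proposal is correct and follows essentially the same approach as the paper: realize $\G$ as a groupoid of germs via Theorem \ref{thm:everycategoryisagermscategory}, disintegrate an arbitrary representation into a covariant pair $(\pi_0,\sigma)$ using Proposition \ref{lemma:disintegration}, use that $\bis(\G)$ is inverse together with Corollary \ref{mbekhtacorolario} to get $\sigma_{s^{-1}}=\sigma_s^*$, and then verify $(f\delta_s)^*=\alpha_{s^{-1}}(\bar f)\delta_{s^{-1}}$ to conclude that every representation is a $*$-representation. The bookkeeping you flag as the main obstacle is exactly what the paper carries out by a short direct computation on germs.
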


\begin{proof}
Suppose that $\G$ is the groupoid of germs of an action $\theta$ of an inverse semigroup $S$ on a second countable locally compact Hausdorff space $X$, $\G=\G(\theta,S,X)$. Let $\Pi:\G\rightarrow B(H)$ be a representation of $\CC(\G)$. We now proceed to show that $\Pi$ is a $*$-homomorphism. From Proposition \ref{lemma:disintegration}, we have that $\Pi=\overline{\pi\times \sigma}$, for a covariant pair $(\pi,\sigma)$ for $(\alpha, S, \czero(X))$, where $\alpha$ is the induced by $\theta.$ Moreover, recall that $\sigma$ is a $*$- representation of $S$ (see Subsection \ref{subsection:inversesemigroupcase}). By \eqref{eq:spandelta_s}, to show that $\Pi(f^*)=\Pi(f)^*$ for every $f$ in $\CC(\G)$, it suffices to prove that $\Pi\big((f\delta_s)^*\big)=\Pi(f\delta_s)^*$ for every $f\in \cc\big(\drs\big)$, and for every $s\in S.$ Furthermore, recall that the inverse of a germ $[s,x]$ is $[s^*,\theta_{s}(x)]$. Then,
$$\begin{aligned}
 (f\delta_s)^*\big([s^*,x]\big)
 &=\overline{f\delta_s\big([s,\theta_{s^*}(x)]\big)}
 =\overline{f(x)}
 =(\delta_{s^*}\overline{f})\big([s^*,x]\big)
\\& \stackrel{\eqref{eq:permutingdelta}}{=}
 \alpha_{s^*}\big(~\overline{f}~\big)\delta_{s^*}\big([s^*,x]\big),
\end{aligned}$$
for every $s\in S$ and $f\in \cc\big(\drs\big)$. Which gives $(f\delta_s)^*=\alpha_{s^*}(\overline{f})\delta_{s*}$ since these functions have their support contained in $\Theta_{s^*}$. Therefore, for $\xi,\eta \in H$ we have that
$$\begin{aligned}
&~~\pint{\Pi(f\delta_s)\xi}{\eta}
 =\pint{\pi(f)\sigma_s\xi}{\eta}
 =\pint{\xi}{\sigma_{s^*}\pi\big(\overline{ f} \big)\eta}
 \\&=\pint{\xi}{\pi\left(\alpha_{s^*}\big(~\overline{f }~\big)\right)\sigma_{s^*}\eta}
 =\pint{\xi}{\Pi\big((f\delta_s)^*\big)}.
\end{aligned}$$
Which gives $\Pi(f\delta_s)^*= \Pi\big((f\delta_s)^*\big)$ and hence completes the proof.
\end{proof}
We finish this section with a result for the reduced version of these algebras.
\begin{teo}
If $\G$ is an étale groupoid, then  $\A_r(\G)=C^*_{red}(\G)$.
\end{teo}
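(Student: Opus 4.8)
The plan is to show that the regular representation of $\C=\G$, which by definition gives $\A_r(\G)$, coincides with the left regular representation of $\G$ used to define $C^*_{red}(\G)$, so that the two completions of $\CC(\G)$ agree. Recall from Definition \ref{def:repregularcategory} that $\A_r(\C)$ is the closure of $\pi(\CCC)$ in $B(\ell_2(\C))$, where $\pi_f$ acts by the formula \eqref{formularreprregularcategorias}, and that by \eqref{regularsomadereta} this decomposes as $\pi=\bigoplus_{u\in \GZ}\pi_u$ with $\pi_u$ acting on $\ell_2(\G_u)=\ell_2(\s^{-1}(u))$. On the other hand, the reduced groupoid $C^*$-algebra $C^*_{red}(\G)$ is, by definition, the closure of the image of $\CC(\G)$ under $\Lambda=\bigoplus_{u\in \GZ}\Lambda_u$, where $\Lambda_u$ also acts on $\ell_2(\G_u)$ by $\Lambda_u(f)\delta_z=\sum_{x\in \G_{\rr(z)}}f(x)\delta_{xz}$ — the same formula, since in a groupoid $xz$ is defined exactly when $\s(x)=\rr(z)$.

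Thus the first step is simply to observe that $\pi_u$ and $\Lambda_u$ are literally the same operator on the same Hilbert space for each unit $u$, so $\pi=\Lambda$ as maps $\CC(\G)\to B(\ell_2(\G))$. The only subtlety is that $C^*_{red}(\G)$ is traditionally presented with an involution, i.e. as the closure of a $\ast$-subalgebra, whereas $\A_r(\G)$ is \emph{a priori} only a (non-self-adjoint) norm-closed subalgebra of $B(\ell_2(\G))$. So the second step is to check that $\pi(\CC(\G))$ is in fact self-adjoint: for a function $f$ supported on a compact open bisection with unimodular values, Lemma \ref{lemma:amplecase} gives $\pi(f^*)=\pi(f)^*$, and by Proposition \ref{prop: sumofbisections} (together with the discussion before Theorem \ref{thm:gpdcase1}) these functions span $\CC(\G)$; alternatively one can invoke Theorem \ref{thm:gpdcase2} — if $\GZ$ is second countable the full algebra $\A(\G)$ already equals $C^*(\G)$ and every representation, in particular $\pi$, is a $\ast$-homomorphism. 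Either route shows $\overline{\pi(\CC(\G))}$ is a $C^*$-algebra, hence equals the $C^*$-completion of $\CC(\G)$ in the regular norm, which is precisely $C^*_{red}(\G)$.

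In fact the cleanest argument avoids any hypothesis on $\G$: for \emph{each} fixed unit $u$, the representation $\pi_u=\Lambda_u$ on $\ell_2(\G_u)$ is the standard left regular representation of $\CC(\G)$ at $u$, and it is a classical fact (and a direct computation using $f^*(x)=\overline{f(x^{-1})}$ and $\pint{\Lambda_u(f)\delta_z}{\delta_{z'}}=f(z'z^{-1})$ for $z,z'\in \G_u$) that $\Lambda_u(f^*)=\Lambda_u(f)^*$. Summing over $u$ gives $\pi(f^*)=\pi(f)^*$ on the nose, with no covering or countability assumption. Hence $\pi(\CC(\G))$ is a $\ast$-subalgebra of $B(\ell_2(\G))$, its norm closure is a $C^*$-algebra, and since $\pi$ is exactly the left regular representation defining the reduced groupoid $C^*$-algebra, we conclude
$$\A_r(\G)=\overline{\pi(\CC(\G))}=C^*_{red}(\G).$$

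The main obstacle — really the only point requiring care — is reconciling the two conventions: our $\CCC$ for $\C=\G$ must be identified with $\CC(\G)$ (which holds because $\G$, being a groupoid, satisfies $M_z\ne\emptyset$ in the symmetric way that makes the convolution in \eqref{eq: convolutionproduct} agree with the usual groupoid convolution), and the regular representation of Definition \ref{def:repregularcategory} must be recognized as the groupoid left regular representation. Once the formula for $\pi_u$ is unwound and compared termwise with the standard definition, everything matches, and the self-adjointness computation is routine. I would therefore present the proof as: (i) identify the Hilbert spaces and operators, (ii) verify $\pi(f^*)=\pi(f)^*$ by the direct matrix-coefficient computation on each $\ell_2(\G_u)$, and (iii) conclude that the norm closure is a $C^*$-algebra equal to $C^*_{red}(\G)$.
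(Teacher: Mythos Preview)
Your proposal is correct and takes essentially the same approach as the paper: the paper's proof is the one-line observation that the regular representation of \eqref{formularreprregularcategorias} is literally the left regular representation used to define $C^*_{red}(\G)$, so the two closures coincide. Your detour through self-adjointness is unnecessary --- once $\pi=\Lambda$ as maps into $B(\ell_2(\G))$, the closures $\overline{\pi(\CC(\G))}$ and $\overline{\Lambda(\CC(\G))}$ are equal as subsets of $B(\ell_2(\G))$ regardless of any $\ast$-structure, and the fact that this common closure happens to be a $C^*$-algebra is a consequence rather than something to be checked.
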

 
 \begin{proof}
 The regular representation defined for étale categories in \eqref{formularreprregularcategorias} is precisely the same for étale groupoids \big(see for instance \cite[Section 9.3]{sims}\big) and hence the result follows.
 \end{proof}

\section*{Acknowledgement}
The second named author was financed in part by the
Coordenação de Aperfeiçoamento de Pessoal de Nível Superior - Brasil (CAPES) - Finance Code 001.

\bibliographystyle{abbrv}
\bibliography{ECRSOA}

\end{document}